\documentclass[11pt]{article}
\usepackage{blindtext}
\usepackage{enumitem} 
\setlist[enumerate]{parsep=0pt}
 
\usepackage{ragged2e} 

\usepackage{mathrsfs}
\usepackage{amssymb}
\usepackage{layout}
\usepackage{graphicx}
\usepackage{marvosym}
\usepackage[usenames,dvipsnames]{color}
\usepackage{verbatim}
\usepackage{enumitem}
\usepackage[pdftex]{hyperref}
\usepackage{fancyhdr}
\usepackage{indentfirst}
\usepackage{centernot}

\pagestyle{fancy}
\fancyhf{}
\fancyfoot{}

\addtolength{\oddsidemargin}{-.875in}
	\addtolength{\evensidemargin}{-.875in}
	\addtolength{\textwidth}{1.75in}

	\addtolength{\topmargin}{-.875in}
	\addtolength{\textheight}{1.75in}

\setlength{\tabcolsep}{0in}

\usepackage{amsthm,etoolbox}

\makeatletter
\@addtoreset{theorem}{section}
\@addtoreset{theorem}{subsection}
\@addtoreset{theorem}{subsubsection}

\usepackage{amsmath}

\usepackage{hyperref}
\usepackage{cleveref}[noabbrev]
\newtheorem{theorem}{Theorem}
\newtheorem{definition}[theorem]{Definition}
\newtheorem{lemma}[theorem]{Lemma}
\newtheorem{prop}[theorem]{Proposition}
\newtheorem{cor}[theorem]{Corollary}
\newtheorem{fact}[theorem]{Fact}
\newtheorem{remark}[theorem]{Remark}
\crefname{theorem}{Theorem}{Theorems}
\crefname{lemma}{Lemma}{Lemmas}
\crefname{prop}{Proposition}{Propositions}
\crefname{fact}{Fact}{Facts}
\crefname{remark}{Remark}{Remarks}
\crefname{cor}{Corollary}{Corollaries}

\def\tp{\mbox{\rm tp}}
\def\qftp{\mbox{\rm qftp}}
\def\cb{\mbox{\rm cb}}
\def\wcb{\mbox{\rm wcb}}

\newcommand{\theoremprefix}{}
\let\thetheoremsaved\thetheorem
\renewcommand{\thetheorem}{\theoremprefix\thetheoremsaved}

\patchcmd{\@startsection}{\par}{\renewcommand{\theoremprefix}{\csname the#1\endcsname.}}{}{}

\cfoot{\thepage}

\makeatother

\begin{document}

\pagenumbering{arabic}

\newcommand{\forkindep}[1][]{%
  \mathrel{
    \mathop{
      \vcenter{
        \hbox{\oalign{\noalign{\kern-.3ex}\hfil$\vert$\hfil\cr
              \noalign{\kern-.7ex}
              $\smile$\cr\noalign{\kern-.3ex}}}
      }
    }\displaylimits_{#1}
  }
}
\newpage
\begin{center}

    \LARGE \MakeUppercase{Kim-forking for hyperimaginaries in NSOP1 Theories}

\vspace{5mm}

    \Large Yvon \textsc{Bossut}
\end{center}
\vspace{10mm}

\begin{center}
\justify
Abstract : We adapt the properties of Kim-independence in NSOP1 theories with existence proven in \cite{kaplan2020kim},\cite{dobrowolski2022independence} and \cite{chernikov2020transitivity} by Ramsey, Kaplan, Chernikov, Dobrowolski and Kim to hyperimaginaries by adding the assumption of existence for hyperimaginaries. We show that Kim-independence over hyperimaginaries satisfies a version of Kim’s lemma, symmetry, the independence theorem, transitivity and witnessing. As applications we adapt Kim's results around colinearity and weak canonical bases from \cite{kim2021weak} to hyperimaginaries and give some new results about Lascar strong types and Kim-forking using boundedly closed hyperimaginaries.
\end{center}

\vspace{20pt}

\paragraph{{\Large Introduction}}

\justify
NSOP1 theories have recently been studied as a generalisation of simple theories. Kim and Pillay \cite{kim1997simple} have shown the basic properties of forking in simple theories, in particular symmetry, transitivity and amalgamation, and they have shown that simplicity can be characterised in terms of the existence of an independence notion satisfying these properties (and some more which are obvious for forking independence). Byunghan Kim defined the notion of Kim-forking, a kind of generic forking independence, and Kaplan and Ramsey \cite{kaplan2020kim} proved their basic properties, symmetry, transitivity and amalgamation. Moreover, Chernikov and Ramsey \cite[Theorem 5.8]{chernikov2016model} showed that the existence of an independence relation that satisfies these properties characterise NSOP1 theories.

\justify
In this paper we generalise the known results of Kim-forking in NSOP1 theories (with existence) to hyperimaginaries. Hyperimaginaries constitute a necessary part of the study of forking in simple theories, in general canonical bases are only known to exist as hyperimaginaries (see \cite{casanovas2011simple}), but they also have an interest in more general contexts : for exemple the elements of the quotient of a group by a type definable subgroup only exist as hyperimaginaries.

\justify
We shall begin with a reminder about forking and hyperimaginaries and an introduction to NSOP1 theories. In the second section we shall study Kim-Forking for hyperimaginaries in NSOP1 theories with existence for hyperimaginaries. The results of this section are new for the most part. Lastly, in the third section, we shall generalise the results of \cite{kim2021weak} to hyperimaginaries. In that paper, Kim introduces the notion of weak canonical bases in the context of NSOP1 theories with existence, and proves some of their properties under the additional assumption of elimination of hyperimaginaries (see \cite{kim2014simplicity} for elimination of hyperimaginaries). He does so in order to only use Kim-Forking on real tuples and not on hyperimaginaries, where the properties of Kim-forking had not yet been developed. Our results on Kim-forking for hyperimaginaries will thus allow us to present these results in a more general context.

\newpage

\section{General facts}

\subsection{Forking and hyperimaginaries}

\justify
The definition and basic properties of forking and hyperimaginaries can be found in \cite{kim2014simplicity}, the following results are proved in \cite{casanovas2011simple}.

\begin{definition} Let $b_{0},e$ be hyperimaginaries and $p(x,b_{0})$ be a partial type over $b_{0}$.\begin{enumerate}
\item $p(x,b_{0})$ divides over $e$ if there is an $e$-indiscernible sequence $(b_{i}$ : $i < \omega)$ such that $ \lbrace p (x,b_{i})$ : $i<\omega \rbrace$ is inconsistent.
\item $p (x,b_{0})$ forks over $e$ if it implies a finite disjunction of formulas each of which divides over $e$.
\item Let $a \in \mathbb{M}^{heq}$. We write $a\forkindep_{e}b_{0}$ to denote the assertion that $\tp(a/eb_{0})$ does not fork over $e$ and $a\forkindep^{d}_{e}b_{0}$ to denote the assertion that $\tp(a/eb_{0})$ does not divide over $e$.
\item A sequence of hyperimaginaries $(a_{i}$ : $i\in I)$ is called $e$-Morley if it is $e$-indiscernible and if $a_{i}\forkindep_{e}a_{<i} $ for every $i \in I $.
\end{enumerate}
\end{definition}

\begin{definition} Let $p(x)\in S_{E}(e)$ for some hyperimaginary $e$ and let $A$ be a class of hyperimaginaries. We say that $p$ is finitely satisfiable in $A$ if for every formula $\varphi \in p$ (considering $p$ as a partial type over some representative of $e$, which we assume is closed under conjunctions) there is some $b_{E}\in A$ such that $\models \varphi(\hat{b})$ for every representative $\hat{b}$ of $b_{E}$. This definition is independent of the chosen representative of $e$. We will write $a\forkindep^{\mathcal{U}}_{e}b$ to mean that $\tp(a/eb)$ is finitely satisfiable in $dcl^{heq}(e)$.
\end{definition}

\begin{prop} Let $a,b,e \in \mathbb{M}^{heq}$. If $\tp(a/eb)$ is finitely satisfiable in $dcl^{heq}(e)$ then $\tp(a/eb)$ does not fork over $e$, i.e. $a\forkindep^{\mathcal{U}}_{e}b$ implies $a\forkindep_{e}b$.
\end{prop}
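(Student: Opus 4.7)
The plan is to adapt the classical argument that finite satisfiability implies non-forking, exploiting the fact that every hyperimaginary $c \in dcl^{heq}(e)$ is fixed by $\mathrm{Aut}(\mathbb{M}/e)$. First I would handle non-dividing: suppose $\varphi(x, b_0) \in \tp(a/eb)$ divides via an $e$-indiscernible sequence $(b^j)_{j<\omega}$ with $b^0 = b$ and $\{\varphi(x, b^j) : j<\omega\}$ inconsistent. By finite satisfiability, pick $c \in dcl^{heq}(e)$ such that every representative $\hat{c}$ of $c$ satisfies $\varphi(x, b_0)$. For each $j$ choose $\sigma_j \in \mathrm{Aut}(\mathbb{M}/e)$ with $\sigma_j(b) = b^j$; since $\sigma_j(c) = c$, applying $\sigma_j$ to the statement ``every representative of $c$ satisfies $\varphi(x, b_0)$'' yields ``every representative of $c$ satisfies $\varphi(x, b^j)$''. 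Fixing a representative $\hat{c}$, this gives $\hat{c} \models \varphi(x, b^j)$ for all $j$, contradicting inconsistency.

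For the general non-forking case, suppose $\tp(a/eb) \vdash \bigvee_{i<n} \varphi_i(x, d_i)$ with each $\varphi_i(x, d_i)$ dividing over $e$. I would extend $\tp(a/eb)$ to a type over $eb \cup \{d_i : i < n\}$ still finitely satisfiable in $dcl^{heq}(e)$ and containing some disjunct $\varphi_{i^*}(x, d_{i^*})$, via a coheir-style Zorn argument that adds, for each formula over the new parameters, whichever of it or its negation preserves finite satisfiability. The non-dividing step above, applied to $\varphi_{i^*}(x, d_{i^*})$, then yields the contradiction.

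The principal subtlety is the extension step: unlike in the real-tuple setting, finite satisfiability with only hyperimaginary witnesses does not grant completeness for free, because the representatives of a witness $c$ need not all agree on formulas outside the defining equivalence relation of $c$. One therefore proceeds at the level of representatives, using that an $\mathrm{Aut}(\mathbb{M}/e)$-automorphism permutes representatives of $c$ rather than fixing them pointwise, so that ``$\varphi$ holds for every representative of $c$'' transports under automorphisms over $e$ to ``$\varphi$ with the translated parameter holds for every representative of $c$''. Once this bookkeeping is in place the automorphism-translation argument of the non-dividing case goes through verbatim.
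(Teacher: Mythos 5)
Your non-dividing step is correct and is the standard argument: every $\sigma\in\mathrm{Aut}(\mathbb{M}/e)$ fixes each element of $dcl^{heq}(e)$ as a hyperimaginary, hence only permutes the representatives of a witness $c$, so the property that \emph{every} representative of $c$ satisfies a given formula of $\tp(a/eb)$ transports along an $e$-indiscernible sequence, and (using closure under conjunctions) one fixed representative then realizes the relevant finite fragments, contradicting dividing. Bear in mind that the paper gives no proof of this proposition at all (it is quoted from \cite{casanovas2011simple}), so what follows compares your plan with what a complete argument has to contain.

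The genuine gap is the step you yourself flag: extending $\tp(a/eb)$ to a complete type over $ebd_{0}\dots d_{n-1}$ that is still finitely satisfiable in $dcl^{heq}(e)$. The remedy you sketch (that ``$\varphi$ holds of every representative of $c$'' transports under automorphisms over $e$) only re-proves the translation used in the dividing case; it does not decide $\psi$ versus $\neg\psi$. The classical dichotomy ``$p\cup\{\psi\}$ or $p\cup\{\neg\psi\}$ stays finitely satisfiable'' really does fail for the paper's notion: a witness for $\chi_{1}\wedge\chi_{2}$ may have some representatives satisfying $\psi$ and others $\neg\psi$, and there need be no hyperimaginary in $dcl^{heq}(e)$ of the right sort whose whole class lies on one side. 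Concretely, in the dense circular order take $e=\emptyset$, let $E$ be the total $\emptyset$-definable equivalence relation and $a$ its unique class: every formula of $\tp(a/b)$ is witnessed by $a$ itself, so the type is finitely satisfiable in $dcl^{heq}(\emptyset)$, yet it admits no extension over two further points that remains finitely satisfiable, because no $\emptyset$-invariant $E$-class is contained in an arc (and indeed that type forks over $\emptyset$). So the forking half cannot be obtained from the coheir-completion argument by bookkeeping; some additional input is required. A route that does work in the setting of this paper: if $\tp(a/eb)$ implies $\bigvee_{i<n}\varphi_{i}(x,d_{i})$ with each $\varphi_{i}(x,d_{i})$ dividing over $e$, then by compactness and closure under conjunctions some single $\chi\in\tp(a/eb)$ implies the disjunction; choose a witness $c\in dcl^{heq}(e)$ for $\chi$. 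The class of $c$ is type-definable and invariant under $\mathrm{Aut}(\mathbb{M}/e)$, hence type-definable over a representative of $e$, and it is contained in the solution set of the disjunction; thus $\tp(c/e)$ would fork over $e$, contradicting $c\forkindep_{e}e$. That last step is an existence-for-hyperimaginaries input, which is a standing assumption in this paper, and the circular-order example shows that some hypothesis of this kind (or the precise formulation used in \cite{casanovas2011simple}) is genuinely needed for the non-forking statement, as opposed to non-dividing.
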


\begin{prop}\label{hypindisc} Let $e \in \mathbb{M}^{heq}$ and $(a_{i}$ : $i \in I)$ be a $e$-indiscernible sequence of hyperimaginaries. Then there is $\hat{e}$ and $(\hat{a}_{i}$ : $i \in I)$ such that $\hat{e}$ is a  representative of $e$, $\hat{a}_{i}$ is a representative of $a_{i}$ for every $i \in I$ and $(\hat{a}_{i}$ : $i \in I)$ is $\hat{e}$-indiscernible.
\end{prop}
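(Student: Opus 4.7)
The plan is to start with arbitrary representatives, use the standard Ramsey-plus-compactness extraction to produce an indiscernible sequence of \emph{real} tuples, and then repair the mismatch of hyperimaginary classes by a single automorphism coming from the $e$-indiscernibility of $(a_{i})$.

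First, fix arbitrary real representatives: $\hat{e}_{0}$ of $e$, and $\hat{b}_{i}$ of $a_{i}$ for each $i \in I$. By the standard extraction argument (Ramsey on each arity combined with compactness), one obtains an $\hat{e}_{0}$-indiscernible sequence $(\hat{c}_{i} : i \in I)$ whose Ehrenfeucht--Mostowski type over $\hat{e}_{0}$ is realized in $(\hat{b}_{i})_{i \in I}$; that is, for every $i_{1} < \dots < i_{n}$ in $I$ there exist $j_{1} < \dots < j_{n}$ in $I$ with $\tp(\hat{c}_{i_{1}}, \dots, \hat{c}_{i_{n}} / \hat{e}_{0}) = \tp(\hat{b}_{j_{1}}, \dots, \hat{b}_{j_{n}} / \hat{e}_{0})$.

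Let $b_{i}'$ denote the hyperimaginary class of $\hat{c}_{i}$ under the type-definable equivalence relation defining the $a_{i}$. Equality of real types over $\hat{e}_{0}$ is witnessed by an automorphism fixing $\hat{e}_{0}$ pointwise, which in particular fixes $e$, so it entails equality of hyperimaginary types over $e$: $\tp(b_{i_{1}}', \dots, b_{i_{n}}' / e) = \tp(a_{j_{1}}, \dots, a_{j_{n}} / e)$. By $e$-indiscernibility of $(a_{i})$ the latter equals $\tp(a_{i_{1}}, \dots, a_{i_{n}} / e)$. Since this holds for every finite increasing tuple, $\tp((b_{i}')_{i \in I} / e) = \tp((a_{i})_{i \in I} / e)$, so there is an automorphism $\rho$ fixing $e$ with $\rho(b_{i}') = a_{i}$ for all $i$; unpacking representatives, $\rho(\hat{e}_{0})$ represents $e$ and $\rho(\hat{c}_{i})$ represents $a_{i}$.

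Setting $\hat{e} := \rho(\hat{e}_{0})$ and $\hat{a}_{i} := \rho(\hat{c}_{i})$ then yields the required representatives, and $(\hat{a}_{i})_{i \in I}$ is $\hat{e}$-indiscernible because it is the $\rho$-image of the $\hat{e}_{0}$-indiscernible sequence $(\hat{c}_{i})_{i \in I}$. The main conceptual point is that the Ramsey-extracted sequence $(\hat{c}_{i})$, while indiscernible over $\hat{e}_{0}$ in the real sense, need not consist of representatives of the original hyperimaginaries $a_{i}$; the $e$-indiscernibility hypothesis is precisely what allows one to transport it back to a genuine choice of representatives via a single global automorphism.
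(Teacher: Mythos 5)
The skeleton of your argument (extract a real indiscernible sequence over a representative $\hat{e}_{0}$, then pull it back onto the original hyperimaginaries by a single automorphism over $e$) is the standard route, and the last steps — passing from equality of types of all finite subtuples to an automorphism $\rho$ fixing $e$, and taking $\hat{e}:=\rho(\hat{e}_{0})$, $\hat{a}_{i}:=\rho(\hat{c}_{i})$ — are fine. The gap is in the extraction step. Ramsey plus compactness does \emph{not} give a sequence $(\hat{c}_{i})$ whose complete types $\tp(\hat{c}_{i_{1}},\dots,\hat{c}_{i_{n}}/\hat{e}_{0})$ are realized by increasing tuples of $(\hat{b}_{j})$; it only gives local basedness: for each \emph{finite} set of formulas there is a matching tuple, i.e.\ $(\hat{c}_{i})$ realizes the EM-type of $(\hat{b}_{j})$ over $\hat{e}_{0}$ (the formulas true of \emph{all} increasing tuples). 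In general the complete type of a tuple of the extracted sequence is only a limit of types of tuples of the original and may be omitted in it (think of a sequence of reals whose types over $\hat{e}_{0}$ converge to a type realized by no member). Your next step genuinely uses the full type equality, since it transports the situation by an automorphism over $\hat{e}_{0}$; with only $\Delta$-type agreement you get no such automorphism, and since the type of a hyperimaginary over $e$ is an infinitary object, nothing about $\tp(b'_{\bar{\imath}}/e)$ follows formula by formula. This is exactly where the real content of the proposition lies, so as written the proof does not go through. (The paper itself gives no proof here; it cites \cite{casanovas2011simple}.)

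The argument can be repaired in two standard ways. (a) Use the fact that for a fixed finite tuple the set of real tuples representing hyperimaginaries realizing $\tp(a_{i_{1}}\dots a_{i_{n}}/e)$ is type-definable over $\hat{e}_{0}$ by a partial type invariant under the defining equivalence relations. By $e$-indiscernibility of $(a_{i})$, every increasing tuple $\hat{b}_{j_{1}},\dots,\hat{b}_{j_{n}}$ realizes this partial type, so it is part of the EM-type of $(\hat{b}_{j})$ over $\hat{e}_{0}$ and is therefore inherited by $(\hat{c}_{i})$ — this yields $\tp(b'_{i_{1}}\dots b'_{i_{n}}/e)=\tp(a_{i_{1}}\dots a_{i_{n}}/e)$ from local basedness alone, and the rest of your proof then works. (b) Alternatively, first stretch the $e$-indiscernible hyperimaginary sequence by compactness (again using the type-definability of its finite types over $e$ to choose representatives of a long sequence) and extract with Erd\"os--Rado, which \emph{does} give complete-type basedness; this stretch-then-extract pattern is the one the paper uses elsewhere, e.g.\ in \cref{kfhyp}. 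Either way, the missing ingredient is the type-definability over a representative of $e$ of the orbit of a hyperimaginary tuple under $\mathrm{Aut}(\mathbb{M}/e)$, which cannot be bypassed by strengthening the Ramsey extraction.
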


\begin{lemma}\label{dividhyp} Characterisation of $e$-dividing : For any $a,b,e \in \mathbb{M}^{heq}$ the following are equivalent :
\begin{enumerate}
\item[(1)] $\tp (a/eb)$ does not divide over $e$.
\item[(2)] For every infinite $e$-indiscernible sequence I such that $b\in I$, there is some $a'\equiv_{eb}a$ such that $I$ is $ea'$-indiscernible.
\item[(3)] For every infinite $e$-indiscernible sequence $I$ such that $b\in I$, there is some $J\equiv_{eb}I$ such that $J$ is $ea$-indiscernible.
\end{enumerate}\end{lemma}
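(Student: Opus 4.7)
The plan is to establish (2) $\iff$ (3) as a routine automorphism transport, (2) $\Rightarrow$ (1) by a direct realization argument, and (1) $\Rightarrow$ (2) via the standard Kim-style Ramsey extraction, with the hyperimaginary setting accommodated by appealing to \cref{hypindisc} whenever we need to work with representatives.

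For (2) $\iff$ (3), the key fact is that $\mathrm{Aut}(\mathbb{M}^{heq}/eb)$ acts transitively on realizations of $\tp(a/eb)$. Given $a'$ as in (2), take $\sigma$ over $eb$ sending $a'$ to $a$ and set $J := \sigma(I)$: automatically $J \equiv_{eb} I$ and $J$ is $ea$-indiscernible. For the converse, pick $\tau$ over $eb$ with $\tau(J) = I$ and set $a' := \tau(a)$. For (2) $\Rightarrow$ (1), I would argue by contraposition: if $\{p(x, b_i)\}$ is inconsistent on some $e$-indiscernible $(b_i)$ with $b_0 = b$, then (2) would produce $a' \equiv_{eb} a$ making $(b_i)$ $ea'$-indiscernible, so that $a' b_i \equiv_e a' b_0 \equiv_e a b$ for every $i$, forcing $a'$ to realize every $p(x, b_i)$, a contradiction.

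The substance lies in (1) $\Rightarrow$ (2). Let $I = (b_i)_{i<\omega}$ be $e$-indiscernible, with $b = b_0$ after reindexing by indiscernibility. Non-dividing produces $a^*$ realizing $\bigcup_i p(x, b_i)$, hence $a^* b_i \equiv_e a b$ for every $i$. Using \cref{hypindisc}, I would fix representatives $\hat e$ of $e$ and $(\hat b_i)$ of $(b_i)$ such that $(\hat b_i)$ is $\hat e$-indiscernible, and fix any representative $\hat a^*$ of $a^*$. A standard Ramsey/compactness extraction with $\hat a^*$ as a parameter produces a sequence $(\hat c_i)$ which is $\hat e \hat a^*$-indiscernible and whose $\hat e \hat a^*$-EM-type is finitely realized in $(\hat b_i)$. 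Descending to hyperimaginaries, $(c_i)$ is $ea^*$-indiscernible, shares its $e$-EM-type with $(b_i)$, and satisfies $a^* c_0 \equiv_e a b$ (every formula in $\tp(a^* c_0 / e)$ is realized by some $a^* b_j$, and every such $a^* b_j$ has $e$-type $\tp(ab/e)$). Picking $\sigma \in \mathrm{Aut}(\mathbb{M}^{heq}/e)$ sending $(c_i)$ to $(b_i)$, the element $a' := \sigma(a^*)$ satisfies $a' b_0 \equiv_e a b$ and $(b_i)$ is $ea'$-indiscernible.

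The only real obstacle is keeping the Ramsey extraction and the subsequent automorphism transport consistent with the hyperimaginary structure. This is precisely the role of \cref{hypindisc}: it lets us carry out the classical indiscernible-extraction argument on representatives and then descend cleanly to the hyperimaginary sorts, after which the proof mirrors the well-known real-tuple case.
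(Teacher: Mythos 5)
The paper does not prove this lemma at all (it is quoted as one of the facts from \cite{casanovas2011simple}, the hyperimaginary analogue of the classical characterisation of dividing), so your proposal can only be measured against the standard argument, and that is exactly the route you take: realize $\bigcup_{b'\in I}p(x,b')$, extract an indiscernible sequence over the realization locally based on $I$, and pull everything back by an automorphism over $e$. Your treatment of (2)$\Leftrightarrow$(3) and of (2)$\Rightarrow$(1) is correct, and the hyperimaginary bookkeeping is sound in spirit: the point you gloss, namely that $a^*c_0\equiv_e ab$ follows because $\tp(\hat a^*\hat c_0/\hat e\hat a^*)$ is finitely satisfiable in the pairs $(\hat a^*,\hat b_j)$, works because the set of representatives of pairs having the same type as $ab$ over $e$ is type-definable over a representative of $e$ and contains every $(\hat a^*,\hat b_j)$; the same remark justifies $(c_i)\equiv_e(b_i)$.

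The one genuine defect is the opening move of (1)$\Rightarrow$(2): ``let $I=(b_i)_{i<\omega}$ \dots with $b=b_0$ after reindexing by indiscernibility.'' This is not a without-loss reduction. The statement quantifies over \emph{all} infinite $e$-indiscernible sequences containing $b$: $I$ may have any infinite order type and $b$ may occur anywhere in it, and producing $a'\equiv_{eb}a$ which makes a tail (or a reindexed copy) of $I$ indiscernible is strictly weaker than making all of $I$ $ea'$-indiscernible. The general form is what the paper actually uses later (for instance through its Kim-dividing analogue \cref{kimdivh}, applied to sequences of length $\kappa$ and with $b$ not necessarily first). Fortunately your extraction never really uses $b=b_0$ or the order type $\omega$; the only step that does is the consistency of $\bigcup_{b'\in I}p(x,b')$, since non-dividing is formulated for sequences beginning with $b$. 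The repair is routine: by compactness and an automorphism over $e$ choose an $e$-indiscernible $(d_i)_{i<\omega}$ with $d_0=b$ and the same EM-type over $e$ as $I$; non-dividing gives consistency of $\bigcup_i p(x,d_i)$, and since every increasing $n$-tuple from $I$ has the same type over $e$ as $(d_0,\dots,d_{n-1})$, every finite subset of $\bigcup_{b'\in I}p(x,b')$ is consistent. Then carry out your Ramsey extraction and stretch it by compactness so the extracted sequence has the index set of $I$ itself; the automorphism $\sigma$ then maps it onto $I$ term by term, and since $a^*c_i\equiv_e ab$ at every index, in particular at the position of $b$, you get $a'\equiv_{eb}a$ with all of $I$ being $ea'$-indiscernible. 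With this adjustment your proof is complete and agrees with the standard (cited) one.
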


\begin{prop} Left transitivity of division : Let $a,b,c,e \in \mathbb{M}^{heq}$. If $\tp(b/ce)$ does not divide over $e$ and $\tp(a/cbe)$ does not divide over $be$ then $\tp(ab/ce)$ does not divide over $e$.
\end{prop}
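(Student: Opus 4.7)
The plan is to prove this directly from the indiscernibility-based characterization of dividing supplied by \Cref{dividhyp}, combined with a single automorphism move. The target is to show $\tp(ab/ec)$ does not divide over $e$, i.e.\ that for every $e$-indiscernible sequence $I$ containing $c$ there is $(ab)' \equiv_{ec} ab$ such that $I$ is $e(ab)'$-indiscernible.

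First I would fix an arbitrary infinite $e$-indiscernible sequence $I = (c_i : i \in \omega)$ of hyperimaginaries with $c_0 = c$. Since by hypothesis $\tp(b/ec)$ does not divide over $e$, applying clause (2) of \Cref{dividhyp} (in the form $b \forkindep^d_e c$) yields some $b^{\ast} \equiv_{ec} b$ such that $I$ is $eb^{\ast}$-indiscernible. I would then pick an automorphism $\sigma \in \mathrm{Aut}(\mathbb{M}/ec)$ with $\sigma(b) = b^{\ast}$ and set $a^{\ast} = \sigma(a)$. Then $a^{\ast} b^{\ast} \equiv_{ec} ab$, and since $\tp(a/ecb)$ does not divide over $eb$ is a type-property preserved by $\sigma$, we also have $a^{\ast} \forkindep^d_{eb^{\ast}} c$.

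Now $I$ is an infinite $eb^{\ast}$-indiscernible sequence containing $c$, so applying (2) of \Cref{dividhyp} again, this time to $a^{\ast} \forkindep^d_{eb^{\ast}} c$, produces $a^{\ast\ast} \equiv_{eb^{\ast}c} a^{\ast}$ such that $I$ is $ea^{\ast\ast}b^{\ast}$-indiscernible. Since $a^{\ast\ast} \equiv_{eb^{\ast}c} a^{\ast}$ gives $a^{\ast\ast}b^{\ast} \equiv_{ec} a^{\ast}b^{\ast} \equiv_{ec} ab$, the pair $(a^{\ast\ast}, b^{\ast})$ is the required realization of $\tp(ab/ec)$ over which $I$ is indiscernible. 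Applying (2) of \Cref{dividhyp} in the reverse direction (using $(2)\Rightarrow(1)$) then yields $ab \forkindep^d_e c$, which is the conclusion.

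The only subtlety, and the step I would pay the most attention to, is the automorphism move in the middle paragraph: one needs that $\sigma$, taken to fix $ec$ setwise at the level of hyperimaginaries and to send $b$ to $b^{\ast}$, genuinely exists and that both the sequence-level statement ``$I$ is $eb^{\ast}$-indiscernible'' and the non-dividing statement ``$a^{\ast} \forkindep^d_{eb^{\ast}} c$'' are preserved. The former uses that $b^{\ast} \equiv_{ec} b$ is a statement about the type of $b^{\ast}$ over $ec$ in $\mathbb{M}^{heq}$ and so an extending automorphism of $\mathbb{M}$ exists; the latter holds because non-dividing is automorphism-invariant. Apart from that, everything reduces to iterated applications of the indiscernibility characterization of dividing, with no interaction between the two non-dividing hypotheses beyond the clean bookkeeping of the base parameters.
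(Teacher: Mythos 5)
Your argument is correct: the two applications of \cref{dividhyp} (clause (2), first for $b\forkindep^{d}_{e}c$ and then, after transporting the second hypothesis by an $ec$-automorphism, for $a^{\ast}\forkindep^{d}_{eb^{\ast}}c$) constitute exactly the standard proof of left transitivity of dividing, which the paper does not reprove but imports from \cite{casanovas2011simple}. The only cosmetic point is your restriction to $\omega$-indexed sequences with $c_{0}=c$; this is harmless, since by the paper's definition dividing of $\tp(ab/ec)$ over $e$ is witnessed precisely by such sequences, so establishing the clause-(2) property for them already rules out dividing.
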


\begin{prop} Extension for hyperimaginaries : \begin{enumerate}

\item Let $\pi (x)$ be a partial type over $A$. If $\pi (x)$ does not fork over $e \in \mathbb{M}^{heq}$ there is $p \in S(A)$ a completion of $\pi$ that does not fork over $e$.
\item Let $a,b,c \in \mathbb{M}^{heq}$ such that $a\forkindep_{b}c $. Then, for any $d \in \mathbb{M}^{heq}$ there is $a'\equiv_{bc}a$ such that $a'\forkindep_{b}cd $.
\end{enumerate}
\end{prop}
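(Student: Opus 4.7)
The plan is to prove (1) by a Zorn's lemma argument on partial types extending $\pi$ that do not fork over $e$, and then derive (2) as a direct corollary by applying (1) to $\tp(a/bc)$ viewed as a partial type over $bcd$. This follows the classical extension theorem for forking in first-order logic; the hyperimaginary context only requires us to work with fixed representatives so that compactness can be applied to the formulas witnessing dividing.

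For (1), let $\mathcal{F}$ be the collection of partial types $\pi'$ over $A$ (thought of as partial types in the underlying real language over chosen representatives) with $\pi \subseteq \pi'$ and $\pi'$ not forking over $e$, ordered by inclusion. I first check that $\mathcal{F}$ is closed under unions of chains: if $(\pi_i)_i$ is such a chain and its union $\pi_\infty$ forked, then $\pi_\infty$ would imply a finite disjunction $\varphi_1 \vee \cdots \vee \varphi_n$ of formulas each dividing over $e$; by compactness only finitely many formulas of $\pi_\infty$ would be needed, hence some $\pi_{i_0}$ would already imply the disjunction and thereby fork, contradicting $\pi_{i_0}\in\mathcal{F}$. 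By Zorn's lemma there is a maximal $p \in \mathcal{F}$. I then claim $p$ is complete over $A$. Otherwise, some formula $\varphi$ would satisfy $\varphi, \neg\varphi \notin p$; by maximality, both $p \cup \{\varphi\}$ and $p \cup \{\neg\varphi\}$ fork, witnessed by disjunctions $\psi_1 \vee \cdots \vee \psi_n$ and $\chi_1 \vee \cdots \vee \chi_m$ of formulas dividing over $e$. A case-split on $\varphi$ then shows that $p$ itself implies $\psi_1 \vee \cdots \vee \psi_n \vee \chi_1 \vee \cdots \vee \chi_m$, so $p$ forks over $e$, contradicting $p \in \mathcal{F}$.

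For (2), the hypothesis says $\tp(a/bc)$ does not fork over $b$. Regarded as a partial type over the parameter set $bcd$, it extends by (1) to a complete type $p \in S(bcd)$ that does not fork over $b$. Any realization $a' \models p$ then satisfies $a' \equiv_{bc} a$ (since $p$ extends $\tp(a/bc)$) and $a' \forkindep_b cd$.

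The only delicate point is the interplay between the hyperimaginary-level definition of forking and the representative-level formulas to which compactness is applied; the required invariance of the definition of forking under choice of representatives is part of the standard setup of \cite{casanovas2011simple} and is what makes the above argument go through verbatim from the real-tuple case.
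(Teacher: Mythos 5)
Your proof is correct. The paper itself gives no argument for this proposition---it is quoted as one of the facts from \cite{casanovas2011simple}---and your Zorn's lemma argument is exactly the standard extension proof used there: extension for forking holds over an arbitrary base, real or hyperimaginary, purely because forking is defined as implying a finite disjunction of dividing formulas, so the compactness step for unions of chains and the case-split on $\varphi$ at a maximal element go through unchanged. The only genuinely hyperimaginary ingredients, which you correctly flag as part of the standard setup, are that a realization of the partial type $\tp(a/bc)$ yields a hyperimaginary conjugate to $a$ over $bc$, and that non-forking of the complete type over chosen representatives passes down to the weaker partial type $\tp(a'/bcd)$, giving $a'\forkindep_{b}cd$.
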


\begin{prop} Left transitivity of forking for hyperimaginaries (12.7 and 12.10 of \cite{casanovas2011simple}) :  Let $a,b,c,e \in \mathbb{M}^{heq}$. If $a\forkindep_{e}c$ and $b\forkindep_{ea}c$ then $ab\forkindep_{e}c$.
\end{prop}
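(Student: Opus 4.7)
The plan is to reduce left transitivity of non-forking to the already-stated left transitivity of non-dividing by using the extension property for hyperimaginaries twice, together with the standard characterisation: $ab\forkindep_e c$ holds iff for every hyperimaginary $D\supseteq ec$ there exists $(a',b')\equiv_{ec}(a,b)$ with $a'b'\forkindep^d_e D$. The nontrivial direction of this characterisation is a direct consequence of the extension proposition for hyperimaginaries (complete the non-forking partial type to $S(D)$ and realize it); the converse is immediate, since if some formula $\varphi(x,y,c)\in\tp(ab/ec)$ forks, witnessed by $\varphi\vdash\bigvee_{i<n}\psi_i(x,y,d_i)$ with each $\psi_i$ dividing over $e$, then with $D=ecd_0\ldots d_{n-1}$ every $ec$-conjugate of $ab$ satisfies some $\psi_i$, so its type over $D$ divides over $e$.

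With this reformulation, fix an arbitrary $D\supseteq ec$. From $a\forkindep_e c$ and the extension proposition, pick $a'\equiv_{ec} a$ with $a'\forkindep_e D$, so in particular $a'\forkindep^d_e D$. Choose an automorphism $\sigma$ over $ec$ sending $a$ to $a'$, and set $b^{*}=\sigma(b)$; then $a'b^{*}\equiv_{ec} ab$, and applying $\sigma$ to the hypothesis $b\forkindep_{ea}c$ gives $b^{*}\forkindep_{ea'} c$. A second application of extension yields $b'\equiv_{ea'c} b^{*}$ with $b'\forkindep_{ea'}D$, hence $b'\forkindep^d_{ea'}D$. Note that $a'b'\equiv_{ea'c}a'b^{*}\equiv_{ec}ab$, so the pair $(a',b')$ is an $ec$-conjugate of $(a,b)$.

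Now the left transitivity of dividing (the preceding proposition) applies verbatim to $a'\forkindep^d_e D$ and $b'\forkindep^d_{ea'}D$, giving $a'b'\forkindep^d_e D$. Since $D\supseteq ec$ was arbitrary and $(a',b')\equiv_{ec}(a,b)$, the characterisation yields $ab\forkindep_e c$. The main obstacle is purely formal: one must make sure that the "non-forking $=$ existence of a non-dividing conjugate over every extension" equivalence transfers to the hyperimaginary setting. This is however exactly the content of extension for hyperimaginaries as stated above, so no new ingredient is needed beyond what has already been recorded in this subsection.
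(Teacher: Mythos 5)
The paper offers no proof of this proposition at all; it is quoted from Casanovas (12.7 and 12.10), so your argument has to stand on its own. Its skeleton is the classical real-tuple proof and most steps do adapt: extension for hyperimaginaries gives $a'\equiv_{ec}a$ with $a'\forkindep_{e}D$, conjugation by an automorphism over $ec$ transports $b\forkindep_{ea}c$ to $b^{*}\forkindep_{ea'}c$, a second application of extension gives $b'\forkindep_{ea'}D$, and the paper's left transitivity of dividing (stated for hyperimaginaries) then yields $a'b'\forkindep^{d}_{e}D$ with $a'b'\equiv_{ec}ab$. The gap is in the direction of your characterisation that you dismiss as ``immediate'', which is in fact the only direction your conclusion uses: from ``for every $D\supseteq ec$ some $ec$-conjugate $(a',b')$ satisfies $a'b'\forkindep^{d}_{e}D$'' you must recover $ab\forkindep_{e}c$. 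Your contrapositive takes $\tp(ab/ec)\vdash\bigvee_{i<n}\psi_{i}(x,y,d_{i})$ with each $\psi_{i}(x,y,d_{i})$ dividing over $e$, sets $D=ecd_{0}\ldots d_{n-1}$, and asserts that every $ec$-conjugate ``satisfies some $\psi_{i}$, so its type over $D$ divides over $e$''. For real tuples this is exactly right, because $\tp(a'b'/D)$ is a complete type over a set containing the $d_{i}$, hence contains one specific dividing disjunct. For hyperimaginaries it breaks: $\tp(a'b'/D)$ is only the $\mathrm{Aut}(\mathbb{M}/D)$-invariant partial type, realized by all representative pairs of all $D$-conjugates, and different representatives of the very same pair $(a',b')$ may satisfy different disjuncts, since the $\psi_{i}$ need not be invariant under the defining equivalence relations. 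All you can conclude is that $\tp(a'b'/D)$ implies the disjunction, i.e.\ that it \emph{forks} over $e$; it need not \emph{divide}. Since your construction only delivers non-dividing of $a'b'$ over $D$, ``every conjugate's type over $D$ forks over $e$'' is not a contradiction, and the argument does not close.

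This is not a bookkeeping quibble: the failure of types of hyperimaginaries to be complete at the level of formulas is precisely the difficulty that makes statements like this one non-trivial, and it is the reason the present paper has to work to lift independence statements to representatives in the Kim-forking setting (compare \cref{liftkimmorley}). To repair your proof you would need either a genuinely hyperimaginary version of the equivalence ``$\tp(ab/ec)$ does not fork over $e$ iff over every larger hyperimaginary domain it has a non-dividing completion'' --- whose proof requires passing to representatives and transferring (non-)dividing between a class and its representatives --- or, alternatively, you would need your construction to produce representatives $\overline{a}',\overline{b}'$ of an $ec$-conjugate together with a representative $\overline{D}$ such that the complete real type $\tp(\overline{a}'\overline{b}'/\overline{D})$ does not divide over $e$, so that honest completeness can be invoked; the second hypothesis $b\forkindep_{ea}c$ only transfers to the hyperimaginary base $ea'$, not to a base containing a representative of $a'$, so this is not automatic either. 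Neither ingredient is supplied by the extension proposition alone, so as written there is a genuine gap exactly at the step you labelled immediate.
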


\begin{prop} Let $a,b,e\in \mathbb{M}^{heq}$. Then $a\forkindep_{e}b$ if and only if $a\forkindep_{e}bdd(b)$ iff $a\forkindep_{bdd(e)}b$.
\end{prop}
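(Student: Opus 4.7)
My plan is to deduce both equivalences from the extension property for hyperimaginaries together with the elementary observation that any $\sigma\in\mathrm{Aut}(\mathbb{M})$ fixing a hyperimaginary $c$ pointwise sends $bdd(c)$ to itself setwise. Consequently, a non-forking statement of the form ``$\tp(a/X\cup bdd(c))$ does not fork over $X$'' is preserved by $\sigma$ as soon as $\sigma$ fixes $X$ pointwise: this is the tool that transports a non-forking extension produced by the extension property back to the original tuple $a$.

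For the equivalence $a\forkindep_{e}b \Leftrightarrow a\forkindep_{e}bdd(b)$, the direction $(\Leftarrow)$ is immediate from right monotonicity. For $(\Rightarrow)$ I would apply the extension property with $d=bdd(b)$ to obtain $a'\equiv_{eb}a$ with $a'\forkindep_{e}bdd(b)$, then pick $\sigma\in\mathrm{Aut}(\mathbb{M}/eb)$ with $\sigma(a')=a$: since $\sigma$ fixes $b$ it fixes $bdd(b)$ setwise, and since it fixes $e$ pointwise, applying $\sigma$ to the non-forking statement for $a'$ yields $a\forkindep_{e}bdd(b)$.

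For the equivalence $a\forkindep_{e}b\Leftrightarrow a\forkindep_{bdd(e)}b$, I would first establish the auxiliary lemma that a partial type forks over $e$ if and only if it forks over $bdd(e)$. One direction is immediate since every $bdd(e)$-indiscernible sequence is $e$-indiscernible. For the converse I would take an $e$-indiscernible sequence witnessing dividing of some $\varphi(x,c_{0})$ over $e$, use \Cref{hypindisc} to pass to representatives and Erd\H{o}s-Rado to extract a $bdd(e)$-indiscernible sequence with the same EM-type over $e$, then conjugate by an $e$-automorphism sending its first element to $c_{0}$: this automorphism fixes $bdd(e)$ setwise, so the conjugated sequence remains $bdd(e)$-indiscernible and witnesses dividing over $bdd(e)$. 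Granting the lemma, $(\Leftarrow)$ of the main equivalence follows by restricting a non-forking type to $\tp(a/eb)$, and $(\Rightarrow)$ follows from the extension property with $d=bdd(e)$ combined with the same automorphism trick.

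The main obstacle is the auxiliary lemma equating forking over $e$ with forking over $bdd(e)$: the rest is formal bookkeeping once extension and the setwise-fixing observation are in place, and the Erd\H{o}s-Rado extraction requires some care to be carried out at the level of representatives of hyperimaginaries, which is exactly what \Cref{hypindisc} enables.
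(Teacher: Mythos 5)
Your proof is correct, and it is in substance the standard argument: the paper does not prove this proposition at all (it is among the facts quoted from \cite{casanovas2011simple}), and the proof there runs along exactly your lines---extension plus conjugation by an automorphism fixing the base pointwise and hence the bounded closure setwise, together with the auxiliary lemma that dividing (hence forking) over $e$ coincides with dividing over $bdd(e)$, obtained by stretching the witnessing sequence and extracting a $bdd(e)$-indiscernible one with the same EM-type over $e$. The only bookkeeping remark is that the Erd\H{o}s--Rado extraction should be performed, after lengthening the witness by compactness, over a set of real representatives of all of $bdd(e)$ (including one of $e$), which directly gives indiscernibility over the hyperimaginaries of $bdd(e)$; \cref{hypindisc} is not really the device needed at that point.
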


\begin{remark}\label{bondedmorley} A sequence is $e$-Morley if and only if it is $bdd(e)$-Morley.
\end{remark}

\justify
The following is the hyperimaginary version of \cite[Lemma 2.2.5]{kim2014simplicity}, the proof carries over verbatim.

\begin{lemma}\label{morleylascdist} Let $e \in \mathbb{M}^{heq}$ and $I = (a_{i}$ : $i<\omega)$ be an $e$-Morley sequence. Suppose that for some $n < \omega$, $ (a^{j}_{0},...,a^{j}_{n}$ : $j<\omega)$ is an $e$-indiscernible sequence of $\tp(a_{0} ... a_{n}/e)$. Then there is
a sequence $J\equiv_{e}I$ such that $a^{j}_{0},...,a^{j}_{n}\frown J$ is an $e$-indiscernible
sequence for each $j < \omega$, so in particular by invariance $a^{j}_{0},...,a^{j}_{n}\frown J$ is also $e$-Morley.
\end{lemma}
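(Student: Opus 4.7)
The plan is to build $J = (b_i : i<\omega)$ inductively, maintaining at each stage $N$ (writing $c^j := (a_0^j, \ldots, a_n^j)$) the two invariants
\begin{enumerate}
\item[(i)] $(c^0, b_0, \ldots, b_{N-1}) \equiv_e (a_0, \ldots, a_{n+N})$;
\item[(ii)] the sequence $\bigl((c^j, b_0, \ldots, b_{N-1}) : j<\omega\bigr)$ is $e$-indiscernible.
\end{enumerate}
Stage $N=0$ holds by hypothesis. For the inductive step, I first transport $a_{n+N+1}$ along an $e$-automorphism witnessing (i) to obtain $b_N^\circ$ with $(c^0, b_0, \ldots, b_{N-1}, b_N^\circ) \equiv_e (a_0, \ldots, a_{n+N+1})$. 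Since $I$ is $e$-Morley, $a_{n+N+1} \forkindep_e a_0 \cdots a_{n+N}$, so by invariance of non-forking $\tp(b_N^\circ / e c^0 b_0 \cdots b_{N-1})$ does not fork, and in particular does not divide, over $e$.

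I then apply \Cref{dividhyp}(2) to the $e$-indiscernible sequence from (ii), taking $(c^0, b_0, \ldots, b_{N-1})$ as its designated first term: this yields $b_N \equiv_{e c^0 b_0 \cdots b_{N-1}} b_N^\circ$ such that that sequence is $e b_N$-indiscernible. This $b_N$ preserves (i) at stage $N+1$ since the types over $e c^0 b_0 \cdots b_{N-1}$ agree, and it establishes (ii) at stage $N+1$ because appending the common element $b_N$ to every term of an $e b_N$-indiscernible sequence of tuples yields an $e$-indiscernible sequence of longer tuples.

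Passing to the limit, set $J := (b_0, b_1, \ldots)$. From (i) we get $(c^0, J) \equiv_e I$, and since $I$ is $e$-indiscernible this gives $J \equiv_e (a_{n+1}, a_{n+2}, \ldots) \equiv_e I$, which is the first claim. From (ii) in the limit one obtains $(c^j, J) \equiv_e (c^0, J) \equiv_e I$ for every $j$, so $c^j \frown J$ is $e$-indiscernible for every $j$; and as $I$ is $e$-Morley, the same holds of $c^j \frown J$ by $e$-automorphism invariance of the Morley property. The main obstacle is the non-dividing extraction at each inductive step; the hyperimaginary version of the characterisation of non-dividing \Cref{dividhyp}(2) supplies it directly, which is precisely why the real-tuple argument transfers verbatim.
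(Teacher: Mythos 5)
Your proof is correct and is essentially the paper's own argument: the paper proves this lemma by transferring verbatim the inductive construction from \cite[Lemma 2.2.5]{kim2014simplicity}, which builds $J$ one element at a time from the Morley condition $a_{n+N+1}\forkindep_{e}a_{\leq n+N}$ together with the characterisation of non-dividing (\cref{dividhyp}), exactly as you do. Your handling of the limit step and of the final ``$e$-Morley by invariance'' claim is also as intended, so nothing is missing.
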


\subsection{Existence for hyperimaginaries}

\begin{definition} A theory $T$ is said to have existence for hyperimaginaries if for every $a,e\in \mathbb{M}^{heq}$ we have $a\forkindep_{e}e$.
\end{definition}

\justify
We know that simple theories have existence for hyperimaginaries and most of the interest so far in hyperimaginaries has been in the context of simple theories : the existence of canonical bases, amalgamation of types. There is not much literature about this axiom.  We do not know if NSOP1 theories have existence, an other open question is : does existence imply existence for hyperimaginaries in NSOP1 theories?

\justify
The following is an improvement of \cite[Lemma 2.12]{dobrowolski2022independence}, the proof of the first point works verbatim. We assume that our theory $T$ has existence for hyperimaginaries.

\begin{lemma}\label{kfhyp} Let $e \in \mathbb{M}^{heq}$ and $I = ( a_{i}$ : $i<\omega )$ be an $e$-Morley sequence of hyperimaginaries that is $c$-indiscernible for $c\in \mathbb{M}^{heq}$ such that $e \in dcl^{heq}(c)$. Then for any $b  \in \mathbb{M}^{heq}$ there is $( b_{i}$ : $i<\omega )$ such that $b_{0}\equiv_{ea_{0}}b$ and $( a_{i}b_{i}$ : $i<\omega )$ is $e$-Morley and $c$-indiscernible.
\end{lemma}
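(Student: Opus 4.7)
The strategy has three stages: use existence and extension to choose an initial $b_{0}$, propagate it along $(a_{i})$ via automorphisms fixing $c$, and apply a Ramsey extraction to obtain a $c$-indiscernible sequence; then verify $e$-Morleyness.

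First, since $T$ has existence for hyperimaginaries, $\tp(b/ea_{0})$ does not fork over $ea_{0}$. By extension for hyperimaginaries there is $b_{0}\equiv_{ea_{0}}b$ with $b_{0}\forkindep_{ea_{0}}c\,a_{\geq 1}$. This choice fixes the first required property $b_{0}\equiv_{ea_{0}}b$ while giving a useful independence for later steps.

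Second, by $c$-indiscernibility of $(a_{i})$, for each $i<\omega$ pick an automorphism $\sigma_{i}$ fixing $c$ with $\sigma_{i}(a_{j})=a_{i+j}$ for all $j<\omega$, and set $b_{i}^{(0)}:=\sigma_{i}(b_{0})$. Then $(a_{i},a_{i+1},\ldots,b_{i}^{(0)})\equiv_{c}(a_{0},a_{1},\ldots,b_{0})$, and applying $\sigma_{i}$ to the independence of the first step yields $b_{i}^{(0)}\forkindep_{ea_{i}}c\,a_{\geq i+1}$. Third, extract via the standard Ramsey/Erd\H{o}s--Rado argument a $c$-indiscernible sequence $(a_{i}'',b_{i}'':i<\omega)$ whose EM-type over $c$ is contained in that of $(a_{i},b_{i}^{(0)})$. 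Since the projection $(a_{i}'')$ is $c$-indiscernible with the same EM-type over $c$ as $(a_{i})$, an automorphism over $c$ lets us assume $a_{i}''=a_{i}$. Chasing types through the extraction and using $e\in dcl^{heq}(c)$ gives $b_{0}''\equiv_{ea_{0}}b_{0}\equiv_{ea_{0}}b$, and $(a_{i},b_{i}'')$ is $c$-indiscernible, hence $e$-indiscernible.

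The main obstacle is to verify that $(a_{i},b_{i}'':i<\omega)$ is $e$-Morley, since the Ramsey step need not preserve forking independence. I would handle this by invoking \cref{morleylascdist}: viewing $(a_{i},b_{i}'':i<\omega)$ as an $e$-indiscernible sequence of realizations of $\tp(a_{0},b_{0}/e)$, and combining with the given $e$-Morley sequence $(a_{i})$, one replaces the tail by a genuine $e$-Morley tail realizing the same $e$-type data; the forward independences $b_{i}^{(0)}\forkindep_{ea_{i}}c\,a_{\geq i+1}$ together with left transitivity of forking (and the $e$-Morleyness of $(a_{i})$) then deliver $a_{i}b_{i}\forkindep_{e}a_{<i}b_{<i}$, while preserving $c$-indiscernibility and the type of $(a_{0},b_{0})$ over $e$.
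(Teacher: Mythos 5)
Your construction never establishes the $e$-Morleyness of the sequence you produce, and the device you propose for it does not work. The independences you arrange, $b_{i}^{(0)}\forkindep_{ea_{i}}c\,a_{\geq i+1}$, point in the wrong direction: they say nothing about the earlier $b_{j}^{(0)}$'s, while what is needed is $a_{i}b_{i}\forkindep_{e}a_{<i}b_{<i}$. To get that by left transitivity you would need $a_{i}\forkindep_{e}a_{<i}b_{<i}$ (not just $a_{i}\forkindep_{e}a_{<i}$, which is all the $e$-Morleyness of $I$ gives) together with $b_{i}\forkindep_{ea_{i}}a_{<i}b_{<i}$, i.e.\ independence from the \emph{past}; your $b_{i}^{(0)}$ are only independent from $c$ and the \emph{future} $a$'s, and since plain forking is not symmetric (this lemma lives in an arbitrary theory with existence for hyperimaginaries, before any NSOP$_{1}$ machinery), these cannot be flipped. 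The appeal to \cref{morleylascdist} does not repair this: that lemma produces a single sequence $J$ which is a common Morley \emph{continuation} of the tuples $a_{i}b_{i}''$, showing each of them begins an $e$-Morley sequence, not that $(a_{i}b_{i}'' : i<\omega)$ itself is $e$-Morley; and ``replacing the tail by a genuine $e$-Morley tail'' is not available, because the conclusion of the lemma requires the first coordinates to be the originally given $a_{i}$ and the whole sequence to stay $c$-indiscernible, both of which such a replacement destroys.

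Note also that your diagnosis of where the difficulty sits is inverted: the extraction step is harmless, since being finitely based over $c$ together with $e\in dcl^{heq}(c)$ means every finite increasing tuple of the extracted sequence realizes the $e$-type of an increasing tuple of the original, so by finite character and invariance $e$-Morleyness \emph{is} preserved --- provided the long sequence you extract from already has the correct independence pattern, which yours does not. This is exactly how the paper argues: first stretch $I$ to a $c$-indiscernible (hence still $e$-Morley) sequence of length $\kappa$; then apply the construction of the first point of \cite[Lemma 2.12]{dobrowolski2022independence} (imported verbatim) to obtain $(b'_{i} : i<\kappa)$ with $(a_{i}b'_{i})$ $e$-Morley and $b'_{0}\equiv_{ea_{0}}b$ --- this is where the backward independences are built in via existence, extension and left transitivity; only then extract by Erd\H{o}s--Rado a $c$-indiscernible sequence finitely based on it over $c$, and move it back onto $I$ by a $c$-automorphism. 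Your outer shell (extract over $c$, move back onto $I$, chase the type of $b_{0}$) matches the paper's, but the core step that makes the sequence $e$-Morley is missing.
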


\justify
\begin{proof} We begin by extending $I$ to a $c$-indiscernible sequence $( a_{i}$ : $i<\kappa )$. We have a sequence $(b'_{i}$ : $i<\kappa )$ such that $J' =( a_{i}b'_{i}$ : $i<\kappa )$ is $e$-Morley and $b'_{0}\equiv_{ea_{0}}b$. By Erdös-Rado we have a $c$-indiscernible sequence $\tilde{J}=( \tilde{a}_{i}\tilde{b}_{i}$ : $i<\omega )$ finitely based on $J'$ over $c$, so it is $e$-Morley. Since $I$ is $c$-indiscernible we have that $I\equiv_{c}( \tilde{a}_{i}$ : $i<\omega )$, so $( a_{i}b_{i}$ : $i<\omega )\equiv_{c}( \tilde{a}_{i} \tilde{b}_{i}$ : $i<\omega )$ for some new sequence $( b_{i}$ : $i<\omega )$. We then have that $(a_{i}b_{i}$ : $i<\omega )$ is $c$-indiscernible, $e$-Morley and that $b_{0}\equiv_{ea_{0}}b$.\end{proof}

\subsection{NSOP1 theories}

\justify
We give a proposition that will be useful for later and that gives us a more easily exploitable characterisation of NSOP1. This proposition is \cite[Proposition 2.4]{kaplan2020kim}.

\begin{definition} A formula $\varphi (x,y) $ in $\mathcal{L}$ has the strong order property of the first kind ($SOP_{1}$) in T if there is a tree of parameters $(a_{\eta})_{\eta \in 2^{<\omega}}$ in a model of T such that : \begin{center}
$\left\{ \begin{array}{l}
        \ \forall \eta \in 2^{\omega}$, $\lbrace \varphi (x,a_{\eta \lceil\alpha})$ : $\alpha < \omega \rbrace $ is consistent$  \\
        \ \forall \nu,\eta \in 2^{<\omega}$, if $\nu \unrhd \eta\frown 0  $ : $\lbrace \varphi (x,a_{\nu})$,  $\varphi (x,a_{\eta \frown 1}) \rbrace $ is inconsistent$
    \end{array}
    \right.  $ 
\end{center}
The theory T has $SOP_{1}$ if one of its formulas does, T is $NSOP_{1}$ otherwise.\end{definition}

\begin{prop}\label{syntax3} Let $T$ be a complete theory. The following are equivalent : \begin{enumerate}
\item[(1)] T has $SOP_{1}$.
\item[(2)] There is a formula $\varphi(x,y) $ and a sequence $(c_{i,j}$ : $i<\omega,j<2)$ such that : \begin{center}
$\left\{ \begin{array}{l}
        \ (a)$ $\forall i < \omega$, $c_{i,0}\equiv_{c_{<i}}c_{i,1}  \\
        \ (b)$ $\lbrace \varphi (x,c_{i,0})$ : $i <\omega \rbrace $ is consistent$ \\
        \ (c)$ $\lbrace \varphi (x,c_{i,1})$ : $i < \omega \rbrace $ is 2-inconsistent $
    \end{array}
    \right.  $
\end{center}
    \item[(3)] There is a formula $\varphi(x,y) $, an integer k and a sequence $(c_{i,j}$ : $i<\omega,j<2)$ such that :\begin{center}
     $\left\{ \begin{array}{l}
        \ (a)$ $\forall i < \omega$, $c_{i,0}\equiv_{c_{<i}}c_{i,1}  \\
        \ (b)$ $\lbrace \varphi (x,c_{i,0})$ : $i <\omega \rbrace $ is consistent$ \\
        \ (c)$ $\lbrace \varphi (x,c_{i,1})$ : $i < \omega \rbrace $ is k-inconsistent $
    \end{array}
    \right.  $
    \end{center}
\end{enumerate} \end{prop}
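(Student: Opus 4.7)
The implication $(2) \Rightarrow (3)$ is immediate, since 2-inconsistency is the case $k = 2$ of $k$-inconsistency. The plan is to establish the remaining content by proving $(1) \Rightarrow (2)$ and $(3) \Rightarrow (1)$.

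For $(1) \Rightarrow (2)$, I would start from a formula $\varphi(x,y)$ and a tree $(a_\eta)_{\eta \in 2^{<\omega}}$ witnessing $SOP_1$, and apply the modeling property for $s$-indiscernible trees together with compactness to replace $(a_\eta)$ by an $s$-indiscernible tree still witnessing $SOP_1$ for $\varphi$. Define $c_{i,0} := a_{0^i \frown 0}$ and $c_{i,1} := a_{0^i \frown 1}$. Condition (b) is branch-consistency along $0^\omega$; condition (c) with $k = 2$ is the sibling-inconsistency clause of the $SOP_1$ definition applied along the spine. For (a), observe that the index tuples $(0^0 \frown 0, 0^0 \frown 1, \ldots, 0^{i-1} \frown 0, 0^{i-1} \frown 1, 0^i \frown 0)$ and $(0^0 \frown 0, 0^0 \frown 1, \ldots, 0^{i-1} \frown 0, 0^{i-1} \frown 1, 0^i \frown 1)$ have the same quantifier-free tree type --- the swap $0 \leftrightarrow 1$ at the node $0^i$ fixes each $0^j \frown 0$ and $0^j \frown 1$ for $j < i$, none of which lies below $0^i$ --- so $s$-indiscernibility yields $c_{i,0} \equiv_{c_{<i}} c_{i,1}$.

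For $(3) \Rightarrow (1)$, given $\varphi$, $k$, and the array $(c_{i,j})$ satisfying (a),(b),(c), I would first apply Ramsey and compactness to stretch to an indiscernible array of arbitrary length, and then build a tree $(a_\eta)_{\eta \in 2^{<\omega}}$ level by level: at each node $\eta$ of depth $n$, insert a fresh copy of the pair $(c_{n,0}, c_{n,1})$, using (a) to conjugate it into place over the data already chosen. Branch-consistency of $\{\varphi(x, a_{\eta \lceil m}) : m < \omega\}$ along each path follows from (b) by compactness. The delicate part is the $SOP_1$ sibling-inconsistency: for every $\eta$ and every $\nu \unrhd \eta \frown 0$ one must force $\{\varphi(x, a_\nu), \varphi(x, a_{\eta \frown 1})\}$ to be inconsistent. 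After a further extraction, the ``right children'' of the tree assemble into an indiscernible family to which (c) applies, and one carves out a subtree via compactness in which the $k$-inconsistency of a single family of formulas is pushed down to pointwise 2-inconsistency of the required pairs.

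The main obstacle is this last step: upgrading the $k$-inconsistency of (c) to the pointwise 2-inconsistency demanded by the definition of $SOP_1$. If one started from (2), so that $k = 2$ already, this would be automatic; the general $k$-inconsistent case requires the extraction and spreading argument sketched above. The remaining ingredients --- $s$-indiscernible modeling, branch-consistency transfer, and tree automorphisms identifying sibling types --- are standard applications of Ramsey and compactness.
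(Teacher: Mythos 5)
The paper itself does not prove this proposition; it is quoted from \cite[Proposition 2.4]{kaplan2020kim} (the equivalence of (1) and (2) going back to Chernikov--Ramsey \cite{chernikov2016model}), so your proposal has to be judged against those standard arguments. Your $(2)\Rightarrow(3)$ is of course immediate, and your $(1)\Rightarrow(2)$ is essentially the standard proof and is correct: pass to an s-indiscernible tree locally based on the $SOP_1$ witness (both $SOP_1$ clauses transfer, since branch-consistency reduces to consistency along finite chains and the inconsistency clause is determined by the quantifier-free tree type of a pair), set $c_{i,0}=a_{0^{i}\frown 0}$, $c_{i,1}=a_{0^{i}\frown 1}$, and use the tree-type identity you describe for (a). One small precision: the $2$-inconsistency in (c) for $i<j$ is not the ``sibling clause along the spine'' but the clause applied to $\nu=0^{j}\frown 1\unrhd 0^{i}\frown 0$ and $\eta=0^{i}$; as stated it is exactly what the definition gives, so this direction is fine.

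The genuine gap is $(3)\Rightarrow(1)$, and in fact your sketch already fails for $k=2$, so the claim that the case $k=2$ ``would be automatic'' is not right. If, as you propose, each node $\eta$ of depth $n$ receives as its two children a conjugate of the pair $(c_{n,0},c_{n,1})$, then the sibling pair $\lbrace\varphi(x,a_{\eta\frown 0}),\varphi(x,a_{\eta\frown 1})\rbrace$ is a conjugate of $\lbrace\varphi(x,c_{n,0}),\varphi(x,c_{n,1})\rbrace$, and nothing in (a)--(c) makes that pair inconsistent: (c) only concerns pairs from the single column $(c_{i,1})_{i<\omega}$ with distinct indices, whereas $SOP_1$ demands inconsistency of $\lbrace\varphi(x,a_{\nu}),\varphi(x,a_{\eta\frown 1})\rbrace$ for \emph{every} $\nu\unrhd\eta\frown 0$ --- including $\nu=\eta\frown 0$ itself and all deeper nodes ending in $0$. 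These required inconsistencies are mostly between ``left-column'' conjugates and a fixed ``right-column'' conjugate, so no extraction or indiscernibility argument applied to the right children alone (your ``carve out a subtree via compactness'' step) can produce them, and no argument for downgrading $k$-inconsistency to $2$-inconsistency is actually given. The real content of this direction --- already for $k=2$ in Chernikov--Ramsey, and with additional combinatorial work for general $k$ in \cite[Proposition 2.4]{kaplan2020kim} --- is a careful inductive construction of the tree in which condition (a) is used to conjugate whole previously built configurations over the earlier data, so that every pair $(a_{\nu},a_{\eta\frown 1})$ with $\nu\unrhd\eta\frown 0$ is an automorphic image of a pair covered by the inconsistency hypothesis. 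As written, your proposal establishes $(1)\Rightarrow(2)\Rightarrow(3)$ but not the converse, which is the substantive half of the proposition.
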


\section{Kim-Forking for hyperimaginaries}

\justify
We can find definitions about Kim-forking in \cite{dobrowolski2022independence}, we will here extend them to hyperimaginaries.

\justify
We will prove our results about Kim-forking for hyperimaginaries in the same order as in the articles \cite{dobrowolski2022independence} and \cite{chernikov2020transitivity}, and we will mainly adapt proofs from these two articles. Adapting these proofs can be really direct if they only use types, but the proofs that uses formulas need more work. In the latter case we will often choose some representatives and work with real tuples. We will give all proofs that differs from the original ones. We shall work in a monster model $\mathbb{M}$ of an NSOP1 theory $T$ with existence for hyperimaginaries.

\subsection{Definitions and basic properties}

\begin{definition} Let $b_{0},e$ be hyperimaginaries and $p(x,b_{0})$ be a partial type over $b_{0}$.\begin{enumerate}

\item $p(x, b_{0})$ Kim-divides over $e$ if there is an $e$-Morley sequence $(b_{i}$ : $i<\omega )$ such that $\lbrace p(x,b_{i})$ : $i<\omega \rbrace$ is inconsistent. We will say that $p(x, b_{0})$ Kim-divides over $e$ with respect to $(b_{i}$ : $i<\omega)$.

\item $p(x,b_{0})$ Kim-forks over $e$ if it implies a finite disjunction of formulas each of which Kim-divides over $e$.

\item Let $a \in \mathbb{M}^{heq}$. We write $a\forkindep^{K}_{e}b_{0}$ to denote the assertion that $\tp(a/eb_{0})$ does not Kim-fork over $e$ and $a\forkindep^{Kd}_{e}b_{0}$ to denote the assertion that $\tp(a/eb_{0})$ does not Kim-divide over $e$.

\item $I=(a_{i}$ : $i<\omega)$ is Kim-Morley over $e$ if it is $e$-indiscernible and if $a_{i}\forkindep^{K}_{e}a_{<i}$ for every $i<\omega$.
\end{enumerate}
\end{definition}

\justify
As equivalent types implies the same formulas the second point does not depend on the choice of representatives. The definition we gave here is for hyperimaginaries but we can reformulate it in the sense of real tuples thanks to \cref{kfhyp} :

\begin{remark}\label{kimdivbasic}
Let $E$ be a type definable equivalence relation, $b_{0}$ be an $E$-class and $p(x,b_{0})$ be partial type over $b_{0}$. Then $p(x,b_{0})$ Kim-divides over $e$ if and only if there is a representative $\overline{b}_{0}$ such that $p(x,\overline{b}_{0})$ Kim-divides over $e$, seeing $p$ as a partial type over $\overline{b}_{0}$. So a partial type over $b$ Kim-divides over $e$ if and only if it implies a formula that Kim-divides over $e$.
\end{remark}

\begin{remark} Since Kim-division implies division we have $a\forkindep_{e}b$ implies $a\forkindep^{K}_{e}b$ for $a,b,e\in \mathbb{M}^{heq}$.
\end{remark}

\justify
The following result and it's proof are analogous to the case of usual forking \cite[Lemma 2.1.6]{kim2014simplicity} :

\begin{lemma}\label{kimdivh} Characterisation of Kim-dividing for hyperimaginaries : Let $a,e,b \in \mathbb{M}^{heq}$. TFAE: 
\begin{enumerate}
\item[(1)] $\tp(a/eb)$ does not Kim-divide over $e$.
\item[(2)] For every infinite $e$-Morley sequence $I$ such that $b\in I$, there is some $a'\equiv_{eb}a$ such that $I$ is $ea'$-indiscernible.
\item[(3)] For every infinite $e$-Morley sequence $I$ such that $b\in I$, there is some $J\equiv_{eb}I$ such that $J$ is $ea$-indiscernible.

\end{enumerate}\end{lemma}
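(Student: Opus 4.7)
The argument follows the analogous characterization of dividing (\cref{dividhyp}), replacing $e$-indiscernible sequences with $e$-Morley ones. The equivalence $(2)\Leftrightarrow(3)$ is immediate by applying an automorphism fixing $eb$: given $a'\equiv_{eb}a$ witnessing $(2)$, pick $\tau\in\mathrm{Aut}(\mathbb{M}/eb)$ with $\tau(a')=a$, so $J:=\tau(I)$ is $ea$-indiscernible and $\equiv_{eb}I$; the converse is symmetric.

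For $(2)\Rightarrow(1)$ I argue contrapositively. Suppose $\tp(a/eb)$ Kim-divides over $e$ via an $e$-Morley sequence $I=(b_i:i<\omega)$ with $b_0=b$ and $\{p(x,b_i):i<\omega\}$ inconsistent. Applying $(2)$ to $I$ produces $a'\equiv_{eb}a$ with $I$ being $ea'$-indiscernible; since $a'\equiv_{eb}a$ it realizes $p(x,b_0)$, and indiscernibility gives $b_i\equiv_{ea'}b_0$ for each $i$, so $a'$ realizes $p(x,b_i)$ for every $i$, contradicting inconsistency.

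The main work is $(1)\Rightarrow(3)$. Given $I=(b_i:i<\omega)$ an $e$-Morley sequence with $b\in I$, reduce to $b_0=b$ by passing to the tail starting at $b$ (itself $e$-Morley), the full $I$ being recovered via a compatible automorphism. With $b_0=b$, condition $(1)$ makes $\{p(x,b_i):i<\omega\}$ consistent; let $a^*$ be a realization. Using \cref{hypindisc} to pass to representatives and Erd\H{o}s--Rado applied to a long enough $e$-Morley extension of $I$, extract an $ea^*$-indiscernible sequence $J=(c_i:i<\omega)$ finitely based on $I$ over $ea^*$. The basing gives $c_0\equiv_{ea^*}b_j$ for some $j$; combined with $a^*\models p(x,b_j)$ (so $a^*b_j\equiv_e ab$), we obtain $a^*c_0\equiv_e ab$. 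Picking $\sigma\in\mathrm{Aut}(\mathbb{M}/e)$ with $\sigma(a^*c_0)=ab$, the sequence $J':=\sigma(J)$ is $ea$-indiscernible, starts with $b$, and shares its $\omega$-type over $e$ with $I$ (finite basing plus $e$-indiscernibility of both sides, then compactness); since both start with $b$, $J'\equiv_{eb}I$, giving $(3)$. The principal obstacle is this alignment step: producing a sequence simultaneously $ea$-indiscernible and compatible with $I$ over $eb$. It is resolved by realizing the \emph{full} partial type $\{p(x,b_i)\}$ at once, so that the extracted $c_0$ lies in an orbit with $a^*c_0\equiv_e ab$ and the transporting automorphism can be chosen to send $c_0$ to $b$.
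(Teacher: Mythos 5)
Your overall route is the one the paper intends: it gives no separate argument for \cref{kimdivh} and simply points to the classical proof of the dividing characterisation (\cref{dividhyp}, i.e.\ Lemma 2.1.6 of Kim's book), replacing $e$-indiscernible sequences by $e$-Morley ones; your $(2)\Leftrightarrow(3)$ and $(2)\Rightarrow(1)$ are exactly that argument and are fine.

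The one step I would not accept as written is your reduction of $(1)\Rightarrow(3)$ to the case $b_0=b$. Proving the statement for the tail $I_{\geq k}$ does not ``recover the full $I$ via a compatible automorphism'': an $ea$-indiscernible copy of the tail over $eb$ carries no information about the initial segment $b_{<k}$, and what $(3)$ demands is a sequence $J$ with $\tp(J/eb)=\tp(I/eb)$ for the \emph{whole} sequence, indiscernibility included on the new initial elements. The good news is that your main engine needs no reduction at all: consistency of $\bigcup_{i<\omega}p(x,b_i)$ over the full $I$ follows from consistency over the tail (which is $e$-Morley and starts with $b$, so $(1)$ applies) together with $e$-indiscernibility and compactness; then, after extracting an $ea^{*}$-indiscernible $J$ based on $I$ over $ea^{*}$, \emph{every} element $c$ of $J$ satisfies $a^{*}c\equiv_{e}ab$ (note that local basedness only gives this formula-by-formula, not a single $j$ with $c_{0}\equiv_{ea^{*}}b_{j}$ as you wrote, but completeness of $\tp(ab/e)$ closes that), so you may transport by an automorphism over $e$ matching the $k$-th coordinate of $J$ with $b$, and the coordinatewise automorphism witnessing $J'\equiv_{e}I$ then fixes $b$ and yields $J'\equiv_{eb}I$. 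Equivalently, and more simply, aim at $(2)$: send $J$ onto $I$ over $e$ and let $a''$ be the image of $a^{*}$; since $b\in I$, the fact that every element of $I$ pairs with $a''$ like $ab$ gives $a''\equiv_{eb}a$ with $I$ being $ea''$-indiscernible. One more point of care: if you extract via Erd\H{o}s--Rado from a long $e$-Morley extension of $I$, choose the extension first and realise $a^{*}$ over all of it (consistency again by indiscernibility and compactness); otherwise the new elements of the extension bear no relation to $a^{*}$, and the extracted sequence loses the property $a^{*}c\equiv_{e}ab$. Alternatively, Ramsey plus compactness (the standard lemma) applied directly to the $\omega$-sequence over $ea^{*}$, via \cref{hypindisc} to handle the hyperimaginary parameters as you indicate, avoids the stretching altogether.
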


\subsection{Kim's Lemma for Kim-forking over hyperimaginaries}

\justify
These proofs are adaptations of those from the section 3 of \cite{dobrowolski2022independence}. We shall reduce to the case of Kim-forking over models and then use the results proven in \cite{kaplan2020kim}. We begin by introducing a more general notion of indiscernibility for trees.

\begin{definition} For an ordinal $\alpha $ let the language $\mathcal{L}_{s,\alpha}$ be $\langle \unlhd ,\wedge ,<_{lex},(P_{\beta})_{\beta < \alpha} \rangle $. We may view a tree with $\alpha $ levels as an $\mathcal{L}_{s,\alpha}$-structure by interpreting $\unlhd$ as the tree partial order, $\wedge$ as the meet function, $<_{lex}$ as the lexicographical order and $P_{\beta}$ to define level $\beta$.\end{definition}

\begin{definition} Suppose $I$ is an $\mathcal{L}'$ structure, where $\mathcal{L}'$ is some language and let $e=\overline{e}_{E}\in \mathbb{M}^{heq}$.
\begin{enumerate}
\item We say that $(a_{i}$ : $i \in I)$ is an $I$-indexed indiscernible set (eventually hyperimaginary) over $e$ if whenever $(s_{0},...,s_{n-1}),(t_{0},...,t_{n-1})$ are tuples from $I$ with 
$\qftp_{\mathcal{L}'}(s_{0},...,s_{n-1})=\qftp_{\mathcal{L}'}(t_{0}, ...,t_{n-1})$, we have $\tp(a_{s_{0}}, ...,a_{s_{n-1}}/e)=\tp(a_{t_{0}},...,a_{t_{n-1}}/e)$
\item In the case that $\mathcal{L}'= \mathcal{L}_{s,\alpha}$ for some $\alpha$ and $I$ is a tree, we say that an $I$-indexed indiscernible set is an s-indiscernible tree. Since the only $\mathcal{L}_{s,\alpha}$ structure we consider will be trees, we will often refer $I$-indexed indiscernible sets in this case as s-indiscernible trees.
\item We say that $I$-indexed indiscernible sets have the modeling property if, given any $I$-indexed set of real tuples $(a_{i}$ : $i \in I)$ and any set of real parameters $A$, there is an $I$-indexed indiscernible set $(b_{i}$ : $i \in I)$ over $A$ locally based on $(a_{i}$ : $i \in I)$ over $A$, meaning that for any finite set of $\mathcal{L}_{A}$-formulas $\Delta$ and finite tuples $(t_{0}, ...,t_{n-1})$ from $I$, there is a tuple $(s_{0}, ...,s_{n-1})$ from $I$ such that $\qftp_{\mathcal{L}'}(s_{0},...,s_{n-1})=\qftp_{\mathcal{L}'}(t_{0}, ...,t_{n-1})$, and also $\tp_{\Delta}(b_{s_{0}},...,b_{s_{n-1}})=\tp_{\Delta}(a_{t_{0}}, ...,a_{t_{n-1}})$.
\end{enumerate}\end{definition}

\begin{fact}\label{modprop0} \cite[Theorem 4.3]{kim2014tree} : $\omega^{<\omega}$ indexed trees have the modeling property. \end{fact}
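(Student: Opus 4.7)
The plan is to derive the modeling property by the standard two-step argument: reduce to a finite partition problem by compactness, then apply a Ramsey-type partition theorem for trees (Milliken's theorem). Fix a set of real parameters $A$ and an $\omega^{<\omega}$-indexed family $(a_i)$. The assertion that $(b_i : i \in \omega^{<\omega})$ is s-indiscernible over $A$ and locally based on $(a_i)$ over $A$ is a partial type in the variables $(b_i)_{i \in \omega^{<\omega}}$; the task is to show this partial type is consistent.

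By compactness it suffices, for each finite $\Delta \subseteq \mathcal{L}_A$ and each finite set $F$ of quantifier-free $\mathcal{L}_{s,\omega}$-types of tuples in $\omega^{<\omega}$, to produce a tuple $(b_i : i \in \omega^{<\omega})$ satisfying: (i) any two finite configurations realizing the same type in $F$ have the same $\Delta$-type under $(b_i)$, and (ii) for each $q \in F$ there is a finite configuration in $\omega^{<\omega}$ of type $q$ on which the $\Delta$-type under $(b_i)$ agrees with some configuration of type $q$ under $(a_i)$. For each $q \in F$ I would color the finite configurations in $\omega^{<\omega}$ of quantifier-free type $q$ by their $\Delta$-type (under the assignment $i \mapsto a_i$); this is a coloring with finitely many colors. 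Milliken's tree theorem provides a strong subtree $T \subseteq \omega^{<\omega}$ isomorphic to $\omega^{<\omega}$ on which any such coloring becomes monochromatic. Iterating over the finitely many types in $F$ and the finitely many $\Delta$-colorings, I pass successively to strong subtrees and obtain a single $T \cong \omega^{<\omega}$ on which all the colorings are simultaneously constant. Relabeling $T$ as $\omega^{<\omega}$ via a strong-tree isomorphism yields the required tuple $(b_i)$.

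The main obstacle is matching Milliken's combinatorial notion of a strong subtree with the full $\mathcal{L}_{s,\omega}$-structure used in the definition of s-indiscernibility: one must check that the canonical identification of a strong subtree with $\omega^{<\omega}$ preserves quantifier-free $\mathcal{L}_{s,\omega}$-types, that is, it respects the tree order $\unlhd$, the meet $\wedge$, the lexicographic order $<_{lex}$, and the level predicates $(P_\beta)$. Once this alignment is verified, the ``same quantifier-free type'' coloring is well-defined on strong subtrees and passes through the Milliken extraction, so the compactness argument closes. This is precisely the verification carried out in Kim--Kim--Scow, and it is where the particular choice of language $\mathcal{L}_{s,\alpha}$ (designed to be compatible with strong subtree embeddings) becomes essential.
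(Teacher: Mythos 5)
This statement enters the paper as an imported black box: it is stated as a Fact with a citation to \cite{kim2014tree} and no proof is given, so there is no internal argument to compare yours against. Your outline does follow the strategy of the cited source (compactness reduction plus a Milliken-type partition theorem for strong subtrees), but as written it has two concrete gaps. First, Milliken's tree theorem is a theorem about \emph{finitely branching} trees of height $\omega$; it does not, as you assert, produce a monochromatic strong subtree of the infinitely branching tree $\omega^{<\omega}$ isomorphic to $\omega^{<\omega}$. The standard repair is to observe that each finite fragment of the partial type (finite $\Delta$-indiscernibility together with local basedness) mentions only a finite subtree of the index structure, hence only finitely much branching and finitely many levels, so it suffices to homogenize inside finitely branching restrictions such as $k^{<\omega}$, where Milliken genuinely applies, and to let compactness restore the $\omega$-branching index tree at the very end. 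Your sketch skips this reduction and invokes a version of the partition theorem that is not available off the shelf. Second, Milliken homogenizes colourings of strong subtrees of a fixed finite height, not colourings of arbitrary tuples of a fixed quantifier-free $\mathcal{L}_{s,\omega}$-type; passing from tuples to their meet-closures and enveloping strong subtrees, and reconciling the level predicates $P_{\beta}$ with the re-indexing (a strong subtree is meet-closed and preserves $\unlhd$, $\wedge$ and $<_{lex}$, but its $n$-th level sits at an arbitrary ambient level, so the canonical identification with $\omega^{<\omega}$ does \emph{not} preserve the $P_{\beta}$), is precisely the nontrivial content of the theorem.

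You do flag this second point as ``the main obstacle,'' but you then resolve it by appealing to ``the verification carried out in Kim--Kim--Scow,'' i.e.\ to the very reference whose theorem you are asked to prove. That makes your text an accurate pointer to where the proof lives rather than a proof: the tuple-to-strong-subtree reduction with the level bookkeeping, and the finite-branching-plus-compactness step above, are exactly the parts left unproved. If you want a self-contained argument, those are the two steps you must actually carry out.
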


\justify
We will now describe the notion that we will use in the context of hyperimaginaries as the equivalent of what locally based is in \cite{dobrowolski2022independence} and \cite{chernikov2020transitivity}, it will be useful to help us preserve type definable properties when inductively building trees :

\begin{definition}
If $(a_{i}$ : $i \in I)$, $(b_{i}$ : $i \in I)$ are $I$-indexed hyperimaginaries and $e=(\overline{e})_{E} \in \mathbb{M}^{heq}$, we say that $(b_{i}$ : $i \in I)$ satisfies the EM-type of $(a_{i}$ : $i \in I)$ over $e$ if :

\justify
When $q(y_{0},..,y_{n-1})$ is a quantifier free type in $\mathcal{L}'$, $E_{i}$ is a $\emptyset$-type definable equivalence relation for $i<n$ and $p(\overline{x},e)=\exists z \overline{x}'(E(z,\overline{e})\wedge \bigwedge \limits_{i< n}^{} E_{i}(x'_{i},x_{i})\wedge \pi(\overline{x}',z))$ is a partial $(E_{0},..,E_{n-1})$-type over $e$ such that for every tuple of indices $s_{0}, ...,s_{n-1} \in I$ such that $\models q(\overline{s})$ we have that $a_{s_{i}}$ is of sort of $E_{i}$ and $\models p(\overline{a}_{s_{0}},...,\overline{a}_{s_{n-1}},\overline{e})$ for some (any) representatives, then the same holds in $(b_{i}$ : $i \in I)$ : for every tuple of indices $s_{0}, ...,s_{n-1} \in I$ such that $\models q(\overline{s})$ we have that $b_{s_{i}}$ is of sort of $E_{i}$ and $\models p(\overline{b}_{s_{0}},...,\overline{b}_{s_{n-1}},\overline{e})$ for some (any) representatives. We will write this $(b_{i}$ : $i \in I) \models EM((a_{i}$ : $i \in I)/e)$.\end{definition}

\begin{lemma}\label{modprophyperimag}
If $(a_{\eta})_{\eta \in \omega^{<\omega}}$ is a tree of hyperimaginaries, with $a_{\eta}$ of sort $E_{i}$ if $dom(\eta)=[0,i)$, and if $e\in \mathbb{M}^{heq}$ then there is an $\omega^{<\omega}$-indexed indiscernible tree over $e$ that satisfies $EM((a_{\eta})_{\eta \in \omega^{<\omega}}/e)$ (meaning that we can complete the EM-type).
\end{lemma}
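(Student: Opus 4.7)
The plan is to lift the real-tuple modeling property (\cref{modprop0}) to hyperimaginaries by working with representatives. First, fix a representative $\bar{e}$ of $e$ and, for each $\eta\in\omega^{<\omega}$, a representative $\bar{a}_\eta$ of $a_\eta$ in the home sort of $E_{|\eta|}$. Applying \cref{modprop0} to the tree of real tuples $(\bar{a}_\eta)_{\eta\in\omega^{<\omega}}$ with parameters $\bar{e}$ yields an $\omega^{<\omega}$-indexed $s$-indiscernible tree $(\bar{b}_\eta)_{\eta\in\omega^{<\omega}}$ over $\bar{e}$, locally based on $(\bar{a}_\eta)$ over $\bar{e}$. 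Since being locally based preserves the level of an index (via the $P_\beta$ predicates of $\mathcal{L}_{s,\omega}$), each $\bar{b}_\eta$ lies in the home sort of $E_{|\eta|}$, so one may set $b_\eta := (\bar{b}_\eta)_{E_{|\eta|}}$.

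That $(b_\eta)$ is $s$-indiscernible over $e$ is then automatic: since $e\in dcl^{heq}(\bar{e})$, the type of a tuple over $e$ is determined by its type over $\bar{e}$, and hence $s$-indiscernibility of $(\bar{b}_\eta)$ over $\bar{e}$ passes to $s$-indiscernibility of $(b_\eta)$ over $e$.

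The heart of the argument is verifying that the EM-type is preserved. Consider $q(\bar{y})$, equivalence relations $E_0,\ldots,E_{n-1}$, and a partial type $p(\bar{x},e) = \exists z\,\bar{x}'\,(E(z,\bar{e}) \wedge \bigwedge_{i<n} E_i(x'_i,x_i) \wedge \pi(\bar{x}',z))$ as in the definition, with the hypothesis $\models p(\bar{a}_{\bar{t}},\bar{e})$ for every $\bar{t}$ with $\models q(\bar{t})$. Since truth of $p$ does not depend on the choice of representatives, this is equivalent to $\models \pi(\bar{a}_{\bar{t}},\bar{e})$ for all such $\bar{t}$. Fix $\varphi\in\pi$ and any $\bar{s}$ with $\models q(\bar{s})$. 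Applying the locally-based clause to $\Delta=\{\varphi\}$ and the index tuple $\bar{s}$ (in the role of $\bar{t}$ in the definition) produces $\bar{s}'$ with $\qftp(\bar{s}')=\qftp(\bar{s})$ and $\tp_\Delta(\bar{b}_{\bar{s}'}/\bar{e})=\tp_\Delta(\bar{a}_{\bar{s}}/\bar{e})$, so $\models\varphi(\bar{b}_{\bar{s}'},\bar{e})$. The $s$-indiscernibility of $(\bar{b}_\eta)$ over $\bar{e}$ then gives $\models\varphi(\bar{b}_{\bar{s}},\bar{e})$. Quantifying over $\varphi\in\pi$ yields $\models\pi(\bar{b}_{\bar{s}},\bar{e})$, and witnessing with $z=\bar{e}$, $x'_i=\bar{b}_{s_i}$ gives $\models p(b_{\bar{s}},e)$.

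The main obstacle is this last step: keeping track of the existential-over-representatives structure of a hyperimaginary partial type and transferring it through the modeling property, which only speaks of formulas on real tuples. The key observation that unlocks it is the independence of the truth of $p(\bar{a}_{\bar{t}},\bar{e})$ from the chosen representatives, which lets one reduce to a real partial type $\pi$ and apply \cref{modprop0} formula by formula, using $s$-indiscernibility to remove the mismatch between the index tuples furnished by the locally-based clause and the one we want to test.
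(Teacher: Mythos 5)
Your overall strategy --- pass to representatives, apply \cref{modprop0}, take $E_{i}$-classes, and note that $s$-indiscernibility over $\overline{e}$ gives $s$-indiscernibility over $e$ since $e\in dcl^{heq}(\overline{e})$ --- is exactly the paper's (whose proof leaves the EM-verification implicit). But your verification contains a genuine gap: the claimed equivalence between $\models p(\overline{a}_{\overline{t}},\overline{e})$ and $\models \pi(\overline{a}_{\overline{t}},\overline{e})$ is false. The hypothesis $\models p(\overline{a}_{\overline{t}},\overline{e})$ only asserts that \emph{some} tuple $\overline{x}'$ with $E_{i}(x'_{i},\overline{a}_{t_{i}})$ and \emph{some} $z$ with $E(z,\overline{e})$ satisfy $\pi$; it does not say that the representatives you fixed at the outset satisfy $\pi$. ``Truth of $p$ is representative-independent'' only licenses evaluating $p$ at any representatives, not replacing $p$ by $\pi$. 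Nor can you re-choose the representatives $(\overline{a}_{\eta})$ so that $\pi$ holds literally: each $a_{\eta}$ occurs in many index tuples $\overline{t}$, the EM-type contains many pairs $(q,\pi)$ simultaneously, and the witnesses provided by $p$ vary with $\overline{t}$ and with the finite fragment of $\pi$ considered, so no uniform choice is available. Since your whole transfer paragraph runs formula-by-formula through $\pi$ evaluated at the fixed representatives (and the last step ``witness with $z=\overline{e}$, $x'_{i}=\overline{b}_{s_{i}}$'' relies on this), the argument as written does not establish $EM((a_{\eta})_{\eta}/e)$.

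The repair is local and keeps your structure: transfer, instead of formulas of $\pi$, the first-order approximations of $p$ itself, namely the $\mathcal{L}_{\overline{e}}$-formulas $\exists z\,\overline{x}'\,(\phi_{E}(z,\overline{e})\wedge\bigwedge_{i<n}\phi_{E_{i}}(x'_{i},x_{i})\wedge\phi_{\pi}(\overline{x}',z))$ with $\phi_{E}\in E$, $\phi_{E_{i}}\in E_{i}$ and $\phi_{\pi}$ a finite conjunction from $\pi$. By saturation these hold of $\overline{a}_{\overline{t}}$ for every $\overline{t}\models q$, and they are honest formulas with parameter $\overline{e}$, so your local-basedness-plus-$s$-indiscernibility step applies verbatim to each of them and yields them at $\overline{b}_{\overline{s}}$; a final saturation argument (finite satisfiability of the witness type in $z,\overline{x}'$ over $\overline{b}_{\overline{s}}\overline{e}$) then gives $\models p(\overline{b}_{\overline{s}},\overline{e})$, i.e. $(b_{\eta})_{\eta}\models EM((a_{\eta})_{\eta}/e)$. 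With that change your proof is correct and coincides with the paper's.
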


\justify
\begin{proof}
We consider a tree of representatives $(\overline{a}\eta)_{\eta \in \omega^{<\omega}}$ and a representative $\overline{e}$ of $e$. By \cref{modprop0} there is a tree $(\overline{b}\eta)_{\eta \in \omega^{<\omega}}$ finitely based over $\overline{e}$ on $(\overline{a}\eta)_{\eta \in \omega^{<\omega}}$ and indiscernible over $\overline{e}$, now set $b_{\eta}= (\overline{b}_{\eta})_{E_{i}}$ if $dom(\eta)=[0,i)$. This is an indiscernible tree over $e$ and it satisfies $EM((a_{\eta})_{\eta \in \omega^{<\omega}}/e)$.
\end{proof}

\justify
The following lemma is an adaptation of \cite[Lemma 2.16]{dobrowolski2022independence}, it is the same proof but we need to give a meaning of $Cl$ for hyperimaginaries. Let $e = (\overline{e})_{E}\in \mathbb{M}^{heq}$. For $M$ a model such that $e \in dcl^{heq}(M)$ and a finite real tuple $a$ let : $Cl_{e}(a/M):=\lbrace \exists z(E(z,\overline{e})\wedge \varphi(x,y,z))$ : $\models \exists z(E(z,\overline{e})\wedge \varphi(a,m,z))$ for some $m\in M$ and $\varphi(x,y,z)$ an $\mathcal{L}$-formula$\rbrace$.

\begin{lemma}\label{hyp6} Let $I =(a_{i}$ : $i<\omega)$ be an $e$-Morley sequence of real tuples. Then there is a model $M$ such that $e\in dcl^{heq}(M)$, $M \forkindep_{e}I$, $I$ is $M$-indiscernible, and $\tp(a_{<k}/M a_{\geq k})$ is an heir extension of $\tp(a_{<k}/M)$ for any $k < \omega$ (so $I$ is a coheir sequence over $M$).
\end{lemma}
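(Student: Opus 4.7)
\justify
The plan is to adapt the proof of \cite[Lemma 2.16]{dobrowolski2022independence} to the hyperimaginary setting. The strategy is to extract, from a long $e$-Morley extension of $I$, an $M$-indiscernible coheir subsequence having the same type over $e$ as $I$, and then transfer back to $I$ via an $e$-automorphism.

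\justify
First, extend $I$ to an $e$-Morley sequence $I^{*} = (a_{i} : i < \kappa)$ for a cardinal $\kappa$ large enough for the Erd\H{o}s--Rado extractions used below; this is available by iterated use of the extension property together with existence for hyperimaginaries. Then, starting from any small model $M_{1}$ whose $dcl^{heq}$ contains $e$, apply extension once more to obtain $M_{0} \equiv_{e} M_{1}$ with $M_{0} \forkindep_{e} I^{*}$, so in particular $M_{0} \forkindep_{e} I$ and $e \in dcl^{heq}(M_{0})$. In the hyperimaginary setting this step requires choosing representatives of $e$ and of $M_{0}$ as permitted by \cref{hypindisc} and working with EM-types in the sense of \cref{modprophyperimag}.

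\justify
Next, an Erd\H{o}s--Rado / pigeonhole extraction applied to $I^{*}$ over $M_{0}$ yields an $\omega$-subsequence $I' = (a_{\alpha_{n}} : n < \omega)$ that is $M_{0}$-indiscernible. Because $I^{*}$ is $e$-indiscernible, any increasing $\omega$-subsequence has the same type over $e$ as $I$, so $I' \equiv_{e} I$; the $e$-Morley property transfers from $I^{*}$ to $I'$ by monotonicity of forking, using $a_{\alpha_{<n}} \subseteq a_{<\alpha_{n}}$. To arrange the coheir property, I would refine the extraction so that the indices $\alpha_{n}$ are cofinal in $\kappa$ and exploit $|M_{0}| < \mathrm{cof}(\kappa)$ together with $M_{0} \forkindep_{e} I^{*}$: any formula $\varphi(x, a_{\alpha_{<n}}, m)$ with $m \in M_{0}$ satisfied by $a_{\alpha_{n}}$ is, by $M_{0}$-indiscernibility, satisfied by cofinally many $a_{\alpha_{k}}$ for $k \geq n$, and the independence $M_{0} \forkindep_{e} I^{*}$ combined with the $e$-indiscernibility of $I^{*}$ lets one reflect such realizations to witnesses in $M_{0}$, giving $a_{\alpha_{n}} \forkindep^{\mathcal{U}}_{M_{0}} a_{\alpha_{<n}}$. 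Finally, choose an $e$-automorphism $\sigma$ with $\sigma(I') = I$ (possible since $I' \equiv_{e} I$) and set $M := \sigma(M_{0})$; all four desired properties of $M$ follow from the corresponding properties of $M_{0}$ and $I'$.

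\justify
The main obstacle is the coheir refinement step: $M_{0}$-indiscernibility produced by Erd\H{o}s--Rado does not by itself yield finite satisfiability of $\tp(a_{\alpha_{n}} / M_{0} a_{\alpha_{<n}})$ in $M_{0}$. Establishing coheir requires carefully intertwining cofinality of the extracted indices in $\kappa$, smallness of $M_{0}$, and the independence $M_{0} \forkindep_{e} I^{*}$. In the hyperimaginary setting the only additional bookkeeping is the choice of representatives of $e$ and the translation of local basedness to EM-types in the sense of \cref{modprophyperimag}, which is characteristic of the method of \cite{dobrowolski2022independence}.
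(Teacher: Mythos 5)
Your opening moves (stretching $I$ to a long $e$-Morley sequence, using existence and extension to get a small model $M_{0}$ with $e\in dcl^{heq}(M_{0})$ and $M_{0}\forkindep_{e}I^{*}$, extracting an $M_{0}$-indiscernible copy of $I$ and pulling back by an $e$-automorphism) are exactly how the paper shows that suitable models exist at all, and your target -- finite satisfiability of $\tp(a_{\alpha_{n}}/M_{0}a_{\alpha_{<n}})$ in $M_{0}$ -- is indeed the heir condition in dual form. The gap is precisely the step you flag as the main obstacle: the claim that $M_{0}\forkindep_{e}I^{*}$, $M_{0}$-indiscernibility, cofinality of the extracted indices and $\vert M_{0}\vert<\mathrm{cof}(\kappa)$ allow you to ``reflect'' a realization of $\varphi(x,a_{\alpha_{<n}},m)$ into $M_{0}$. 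Nonforking independence of $M_{0}$ from the sequence carries no finite-satisfiability information, and indiscernibility only says that the later $a_{\alpha_{k}}$ realize the formula, not that any element of $M_{0}$ does. Concretely, in the random graph (simple, hence NSOP$_{1}$ with existence) take $e=\emptyset$, let $I^{*}$ be a long sequence of pairwise adjacent vertices none of which is adjacent to any element of a small model $M_{0}$, arranged to be $M_{0}$-indiscernible; then $M_{0}\forkindep_{e}I^{*}$ and all your cardinality and cofinality requirements can be met, yet $\tp(a_{\alpha_{1}}/M_{0}a_{\alpha_{0}})$ contains $xRa_{\alpha_{0}}$ and is not finitely satisfiable in $M_{0}$. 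So no refinement of the extraction can close the gap: the defect lies in the choice of $M_{0}$, not of the subsequence.

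What is needed, and what the paper (following \cite[Lemma 2.16]{dobrowolski2022independence}) actually does, is a maximality argument. One considers the class $\mathcal{U}_{0}$ of small models $N$ with $e\in dcl^{heq}(N)$, $N\forkindep_{e}I$ and $I$ $N$-indiscernible, ordered by elementary extension together with strict growth of the closure $Cl_{e}(I/N)$; your construction essentially establishes that $\mathcal{U}_{0}$ is nonempty, which is where existence, the stretching of $I$ and the extraction are used in the paper. Zorn's lemma then provides a maximal $M_{0}\in\mathcal{U}_{0}$, and the heir property of $\tp(a_{<k}/M_{0}a_{\geq k})$ is derived from the Claim that any formula $\exists z(E(z,\overline{e})\wedge\varphi(a_{<k},m,a_{k},\dots,a_{l},z))$ can be reflected to parameters $m'\overline{m}''\in M_{0}$; this Claim is proved using maximality (a failure would let one pass to a model strictly above $M_{0}$ in the $Cl_{e}$-order while staying in $\mathcal{U}_{0}$). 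Without this, or some equivalent mechanism that enlarges the model until it absorbs the required witnesses, your argument yields only the first three properties of $M$ and not the coheir condition.
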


\justify
\begin{proof}
We consider the class of models :
\begin{center}
$\mathcal{U}_{0}:=\lbrace N\prec \mathbb{M}$ : $e \in dcl^{heq}(N),$ $\vert N \vert < \overline{\kappa}$, $I$ is $N$-indiscernible and $N \forkindep_{e}I\rbrace$.
\end{center}We order it by $N_{1}<N_{1}$ if $N_{1}\prec N_{2}$ and $Cl_{e}(I/N_{1}) \varsubsetneq Cl_{e}(I/N_{2})$.
\justify
By compactness we can lengthen $I$ to a still $e$-Morley sequence $\tilde{I}=(a_{i}$ : $i<\kappa)$.
By existence there is a model $M'$ that contains a representative of $e$ and such that $M'\forkindep_{e}\tilde{I}$. We can then extract an $M'$-indiscernible sequence  $I'$ locally based on $\tilde{I}$ over $M'$, in particular $I'$ is $e$-indiscernible, $I\equiv_{e}I'$ and $M'\forkindep_{e}I'$. By sending $I'$ to $I$ over $e$ we find a model $N$ such that $I$ is $N$-indiscernible, $e \in dcl^{heq}(N)$ and $N\forkindep_{e}I$. So $\mathcal{U}_{0}$ is a non empty inductive order, so it has a maximal element $M_{0}$ by Zorn's Lemma.

\justify
\textbf{Claim :} For any $k<l<\omega$, $\varphi(\overline{x},y,z)$ $\mathcal{L}$-formula and $m \in M_{0}$ such that $\models \exists z(E(z,\overline{e})\wedge\varphi(a_{<k},m,a_{k},...a_{l},z))$ there is $m'\overline{m}''\in M_{0}$ such that $\models \exists z(E(z,\overline{e})\wedge\varphi(a_{<k},m',\overline{m}'',z))$.

\justify
From this point the proof carries over similarly as the original one.\end{proof}

\justify
The following lemma is an adaptation of \cite[Lemma 3.4]{dobrowolski2022independence}, the proof carries over verbatim.

\begin{lemma}\label{hyp4} Suppose $e \in \mathbb{M}^{heq}$, $I = (a_{i}$ : $i<\omega)$ and $J =  (b_{i}$ : $i<\omega)$
are $e$-indiscernible sequences of real elements with $a_{0} = b_{0}$ and $b_{>0} \forkindep^{d}_{e}b_{0}$. Then there is a tree $(c_{\eta})_{\eta \in \omega^{< \omega}}$ satisfying the following properties:

\begin{enumerate}
\item[(1)] For all $\eta \in \omega^{<\omega}$, $(c_{\eta \frown \langle i\rangle})_{i < \omega}\equiv_{e}I $.
\item[(2)] For all $\eta \in \omega^{<\omega}$, $(c_{\eta},c_{\eta \lceil l(\eta) -1 },...,c_{\eta \lceil 0}) \equiv_{e}(b_{0},...,b_{l(\eta)}) $.
\item[(3)] $(c_{\eta})_{\eta \in \omega^{< \omega}}$ is s-indiscernible over $e$.
\end{enumerate}
\end{lemma}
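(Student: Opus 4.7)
The argument follows the template of \cite[Lemma~3.4]{dobrowolski2022independence}. First, I would normalise $I$: applying \cref{dividhyp} to $b_{>0}\forkindep^{d}_{e}b_{0}$ and the $e$-indiscernible sequence $I$ (which contains $a_0=b_0$), I obtain $I'\equiv_{eb_0}I$ such that $I'$ is $eb_{>0}$-indiscernible. Since the three conclusions depend only on the $e$-type of $I$, I replace $I$ by $I'$ and assume $I=(a_i:i<\omega)$ is $eb_{>0}$-indiscernible with $a_0=b_0$. I would also extend $J$ to a long $e$-indiscernible sequence $(b_i:i<\kappa)$, so that every block $(b_i,\ldots,b_{i+n})$ realises $\tp(b_0\cdots b_n/e)$.

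Next, I would build a tree $(c^{*}_\eta)_{\eta\in\omega^{<\omega}}$ of real tuples satisfying (1) and (2) by induction on level. Put $c^{*}_\emptyset=b_0$. Assuming the tree has been constructed up through level $n$, for each leaf $c^{*}_\eta$ with spine $\bar{c}=(c^{*}_\eta,c^{*}_{\eta\lceil n-1},\ldots,c^{*}_\emptyset)\equiv_e(b_0,\ldots,b_n)$, I pick $\sigma\in\mathrm{Aut}(\mathbb{M}/e)$ sending $(b_1,\ldots,b_{n+1})$ to $\bar c$ (possible by $e$-indiscernibility of the extended $J$), and I define $c^{*}_{\eta\frown\langle i\rangle}:=\sigma(a_i)$. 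Then $(c^{*}_{\eta\frown\langle i\rangle})_{i<\omega}=\sigma(I)\equiv_e I$, giving (1); and each extended spine $(\sigma(a_i),\bar c)$ pulls back under $\sigma^{-1}$ to $(a_i,b_1,\ldots,b_{n+1})$, which realises $\tp(b_0 b_1\cdots b_{n+1}/e)$ because the $eb_{>0}$-indiscernibility of $I$ gives $a_i\equiv_{eb_1\cdots b_{n+1}}a_0=b_0$, yielding (2). Performing this step in parallel at every leaf (via compactness) extends the tree one level at a time.

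Finally, I would apply the modelling property (\cref{modprop0} suffices since the $c^{*}_\eta$ are real, with \cref{modprophyperimag} available for the hyperimaginary variant) to extract an $\omega^{<\omega}$-indexed $s$-indiscernible tree $(c_\eta)$ over $e$ locally based on $(c^{*}_\eta)$. Conditions (1) and (2) are captured by the $e$-types of finite sub-tuples whose index tuples have a fixed quantifier-free $\mathcal{L}_{s,\omega}$-type, so they belong to the $EM$-type of $(c^{*}_\eta)$ over $e$ and transfer to $(c_\eta)$, while $s$-indiscernibility supplies (3).

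The main obstacle is the inductive step of the tree construction, where one must simultaneously preserve (1) and (2) when attaching children to an existing leaf. This is precisely where the Kim-dividing hypothesis, reformulated via \cref{dividhyp} as $eb_{>0}$-indiscernibility of $I$, is essential: without it the $\sigma$-image of each $a_i$ would not have the correct type relative to the existing spine, and condition (2) would break at the next level.
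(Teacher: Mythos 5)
Your proposal is correct and follows essentially the same route as the paper's proof (which defers verbatim to \cite[Lemma 3.4]{dobrowolski2022independence}): use \cref{dividhyp} to convert the hypothesis $b_{>0}\forkindep^{d}_{e}b_{0}$ into $eb_{>0}$-indiscernibility of a copy of $I$, build the pre-tree level by level by attaching $\sigma$-images of $I$ at each leaf, and finish with the modeling property (\cref{modprop0}, or \cref{modprophyperimag} over the hyperimaginary $e$), conditions (1) and (2) transferring because every index-tuple of the relevant quantifier-free $\mathcal{L}_{s,\omega}$-type in the pre-tree realizes the same complete $e$-type. Only a terminological slip: the hypothesis you exploit is ordinary non-dividing ($\forkindep^{d}$), not Kim-dividing.
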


\justify
We now prove the main result of this subsection, for the purpose of which we recall : 

\begin{definition} Let $e \in \mathbb{M}^{heq}$, $(a_{i,j})_{i<\kappa , j<\lambda}$ is a mutually indiscernible array over $e$ if, for each $i < \kappa$, the sequence $\overline{a}_{i} = (a_{i,j}$ : $j < \lambda)$ is indiscernible over $e\overline{a}_{\neq i}$.
\end{definition}

\justify
Here we use \cref{hyp6} to reduce to the case of Kim-forking over models, for which results have already been proven in \cite{kaplan2020kim}. This proof is a simplified version of \cite[Theorem 3.5]{dobrowolski2022independence}.

\begin{theorem}\label{kimlemmahyp} $T$ satisfies Kim’s lemma for Kim-dividing over an hyperimaginary $e$ : a formula $\varphi(x, a)$ Kim-divides over $e$ with respect to some Morley sequence in $\tp(a/e)$ if and only if it Kim-divides over $e$ with respect to any such sequence.
\end{theorem}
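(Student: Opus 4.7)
The strategy is to reduce Kim's lemma over the hyperimaginary $e$ to the version over a model, already proved by Kaplan--Ramsey \cite{kaplan2020kim}, with \cref{hyp6} serving as the bridge: it attaches to any $e$-Morley sequence of real tuples an ambient model containing $e$ in its $dcl^{heq}$ over which the sequence becomes a coheir Morley sequence.

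One direction of the biconditional is trivial. For the other, suppose $\varphi(x, a)$ Kim-divides over $e$ with respect to some $e$-Morley sequence $I = (a_i)_{i<\omega}$ with $a_0 = a$, and let $I' = (a'_i)_{i<\omega}$ be any $e$-Morley sequence with $a'_0 \equiv_e a$; after an $e$-automorphism we may assume $a'_0 = a$. The goal is then to show that $\{\varphi(x, a'_i)\}_{i<\omega}$ is inconsistent. Passing to real representatives via \cref{hypindisc}, I would apply \cref{hyp6} to $I'$ to obtain a model $M$ with $e \in dcl^{heq}(M)$, $M \forkindep_e I'$, and $I'$ an $M$-coheir Morley sequence; in particular $I'$ is $M$-Morley, starts with $a$, and $M \forkindep_e a$. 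By Kim's lemma over the model $M$ from \cite{kaplan2020kim}, it then suffices to show that $\varphi(x, a)$ Kim-divides over $M$, since this will force the inconsistency along $I'$.

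To produce an $M$-Morley witness, I would transport the given $e$-Morley witness $I$ to a sequence adapted to $M$. Extension for forking supplies $I^* \equiv_{ea} I$ with $I^* \forkindep_{ea} M$, which is still $e$-Morley, starts with $a$, and satisfies inconsistency of $\{\varphi(x, a^*_i)\}_{i<\omega}$. An Erd\"os--Rado extraction over $M$, carried out analogously to the proof of \cref{hyp6}, then yields an $M$-coheir Morley sequence $\tilde I$ locally based on $I^*$ over $M$: local basedness preserves the $e$-EM-type of $I^*$, and in particular the inconsistency condition. A final alignment matches the first element $\tilde a_0$, which a priori is only $\equiv_e a$, with $a$ itself; this uses the amalgamation machinery for Kim-independence over models from \cite{kaplan2020kim}, applied to the two $\forkindep_e$-independent realizations $a$ and $\tilde a_0$ of $\tp(a/e)$ sitting above $M$.

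The hard part will be precisely this transport-and-alignment step. Taken individually, extension, Erd\"os--Rado extraction to a coheir sequence, and amalgamation over a model are all standard tools, but their simultaneous use to arrange that the extracted sequence is $M$-coheir Morley, starts exactly with $a$ (rather than some conjugate over $e$), and still witnesses inconsistency of $\varphi$ is delicate. It is precisely at this juncture that invoking the NSOP1 machinery over models from \cite{kaplan2020kim} replaces, in a streamlined form, the direct s-indiscernible tree construction underlying \cite[Theorem 3.5]{dobrowolski2022independence}.
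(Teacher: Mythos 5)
Your reduction handles the ``target'' side correctly and in the same spirit as the paper: attaching, via \cref{hyp6}, a model $M$ with $e \in dcl^{heq}(M)$ to the arbitrary sequence $I'$, so that $I'$ becomes a coheir Morley sequence over $M$ and Kim-dividing of $\varphi(x,a)$ over $M$ would force the desired inconsistency along $I'$ by Kim's lemma over models. The genuine gap is on the ``source'' side, the step you yourself flag as hard: nothing in your transport produces a witness to Kim-dividing over the \emph{specific} model $M$. Extension gives $I^{*}\equiv_{ea}I$ with $I^{*}\forkindep_{ea}M$, but an Erd\"os--Rado extraction over $M$ only yields an $M$-indiscernible sequence locally based on $I^{*}$; such a sequence shows at most that $\varphi$ \emph{divides} over $M$, and dividing does not imply Kim-dividing (the implication goes the other way), since there is no reason the extracted sequence is coheir or Morley in any $M$-invariant type. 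Nor can the proof of \cref{hyp6} be ``carried out analogously'': its Zorn-maximality argument builds the model as a function of the given sequence, whereas here $M$ is already fixed by $I'$; running it on $I^{*}$ would produce a different model $M^{*}$, and Kim-dividing over $M^{*}$ says nothing about the coheir sequence $I'$ over $M$. The background claim you are implicitly using --- that Kim-dividing over $e$ transfers to Kim-dividing over a model independent from the data over $e$ --- is, in this development, a consequence of Kim's lemma, symmetry and transitivity (cf.\ the proposition characterizing $a\forkindep^{K}_{e}b$ via models in the witnessing subsection), so invoking it here is circular; similarly, the final ``alignment'' of $\tilde a_{0}$ with $a$ via the independence theorem over $M$ requires Kim-independence side conditions over $M$ that have not been arranged.

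The paper's proof avoids this by making the two sequences interact from the start rather than moving one witness onto a model chosen for the other. Assuming consistency along $I$ and inconsistency along $J$, it uses \cref{hyp4} to build an s-indiscernible tree over $e$ whose sibling-sequences are copies of $I$ and whose paths realize $J$ reversed, extracts from it a mutually indiscernible array $(d_{i,j})$ with $d_{i,j}=c_{0^{i}\frown\langle j\rangle}$, and only then applies \cref{hyp6} to a row to get a model $M$; the contradiction is reached by a pigeonhole argument combined with Kim's lemma, symmetry and the independence theorem over $M$, which propagates consistency along a diagonal of the array and contradicts the inconsistency carried by the first column. That array construction (or some substitute allowing the consistent and inconsistent sequences to be amalgamated over a common model) is exactly what your transport-and-alignment step elides, so as written the argument does not go through.
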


\begin{proof}

\justify
Towards contradiction, assume we are given $\varphi(x, a)$ and $e$-Morley sequences $I = (a_{i}$ : $i <\omega)$ and $J = (b_{i}$ : $i <\omega)$ both of which are in $\tp(a/e)$, and such that $\lbrace\varphi(x, a_{i})$  : $ i < \omega\rbrace$ is consistent and $\lbrace\varphi(x, b_{i})$  : $ i < \omega\rbrace$ is inconsistent.

\justify
As $J$ is a Morley sequence over $e$, we have $b_{>0} \forkindep^{d}_{e}b_{0}$ so by \cref{hyp4} there is a tree $(c_{\eta})_{\eta \in \omega^{< \omega}}$ satisfying :

\begin{enumerate}
\item[(1)] For all $\eta \in \omega^{<\omega}$, $(c_{\eta \frown \langle i\rangle})_{i < \omega}\equiv_{e}I $.
\item[(2)] For all $\eta \in \omega^{<\omega}$, $(c_{\eta},c_{\eta \lceil l(\eta) -1 },...,c_{\eta \lceil 0}) \equiv_{e}(b_{0},...,b_{l(\eta)}) $.
\item[(3)] $(c_{\eta})_{\eta \in \omega^{< \omega}}$ is s-indiscernible over $e$.
\end{enumerate}

\justify
Define an array $(d_{i,j} )_{i,j < \omega}$ by $d_{i,j} = c_{0^{i}\frown \langle j \rangle}$. By s-indiscernibility, $(d_{i,j} )_{i,j < \omega}$ is mutually indiscernible over $e$ and, moreover, $\overline{d}_{i}\equiv_{e}I $ for all $i < \omega$ by (1). By compactness applied to $(d_{i,j} )_{i,j < \omega}$, for $\kappa$ large enough, we can find an $e$-mutually indiscernible array $(h_{i,j} )_{i< \kappa,j < \omega}$ such that $(h_{i,0} $ : $i< \kappa)$ has the same EM-type over $e$ as $J$ with the reversed order - by (2) - so in particular ($\dagger$) : $\lbrace\varphi(x, h_{i,0})$  : $ i < \omega\rbrace$ is inconsistent and $\overline{h}_{i} \equiv_{e} I$ (by (1)) for all $i<\kappa$. By Erdös-Rado, we may assume $\kappa = \omega$ and $(\overline{h}_{i}$ : $i<\omega)$ is $e$-indiscernible.
 
\justify
By applying \cref{hyp6}, we can find a model $M$ such that $e \in dcl^{heq}(M) $, $\overline{h}_{0}$ is a Morley sequence over $M$ and $M \forkindep_{e} \overline{h}_{0} $. Then because $(\overline{h}_{i}$ : $i<\omega)$ is $e$-indiscernible, we may,
moreover, assume that $M$ has been chosen so that $(\overline{h}_{i}$ : $i<\omega)$ is $M$-indiscernible by \cref{kimdivh}. Let $\lambda$ be any cardinal larger than $2^{\vert \mathcal{L} \vert + \vert M \vert}$ and apply compactness to stretch the array to $(h_{i,j} )_{i < \omega , j < \lambda}$, preserving $e$-mutual indiscernibility, the $M$-indiscernibility of $(\overline{h}_{i}$ : $i<\omega)$ and the EM-type of $(h_{i,0}$ : $i< \omega)$ over $e$.

\justify
Now by induction on $n < \omega$, we find $\alpha_{n} < \lambda$ so that $h_{n,\alpha_{n}}\forkindep^{K}_{M}h_{<n,\alpha_{<n}}$ for all $n < \omega$ :

\justify
Suppose we have found $(\alpha_{m}$ : $m < n)$. Then by the pigeonhole principle
and the choice of $\lambda$, there is an infinite subsequence $I_{n}$ of $\overline{h}_{n}$ so that every tuple of $I_{n}$ has the same type over $Mh_{<n,\alpha_{<n}}$. Let $\alpha_{n} < \lambda$ be least such that $h_{n,\alpha_{n}} \in I_{n}$.
As $I_{n}$ is a subsequence of a Morley sequence over $M$, $I_{n}$ is also Morley over $M$ and $Mh_{<n,\alpha_{<n}}$-indiscernible hence, by Kim’s lemma for Kim-dividing (over models), we have $h_{<n,\alpha_{<n}}\forkindep^{K}_{M}I_{n}$, so $h_{<n,\alpha_{<n}}\forkindep^{K}_{M}h_{n,\alpha_{n}}$ which, by symmetry over models, is what we need to complete the induction.

\justify

We claim that $\lbrace \varphi (x, h_{n,\alpha_{n}})$ : $n < \omega\rbrace$ is consistent. By compactness, it suffices to show that $\lbrace \varphi (x, h_{n,\alpha_{n}})$ : $n < N \rbrace$ does not Kim-divide over $M$ for any $N$. This is true for $N = 1$ by Kim’s lemma, since $\lbrace \varphi (x, h_{0,j})$ : $j < \lambda\rbrace$ is consistent and $\overline{h}_{0}$ is a Morley sequence over $M$. Assuming we have shown it for $N$, we can choose $c_{N} \models \lbrace \varphi (x, h_{n,\alpha_{n}})$ : $n < N \rbrace$ with $c_{N}\forkindep^{K}_{M}h_{<N,\alpha_{<N}}$. Additionally,
since $h_{0\alpha_{0}} \equiv_{M} h_{N,\alpha_{N}}$, we can choose $c$ so that $c_{N} h_{0,\alpha_{0}} \equiv _{M}ch_{N,\alpha_{N}}$, from which follows that $c\forkindep^{K}_{M}h_{N,\alpha_{N}}$.

\justify
By applying the independence theorem over
$M$ (\cite[Theorem 6.5]{kaplan2020kim}), we find :\begin{center}
$c_{N+1} \models \tp(c_{N} /Mh_{<N,\alpha_{<N}} ) \cup \tp(c/Mh_{N,\alpha_{N}} )$ such that $c_{N+1}\forkindep^{K}_{M}Mh_{\leq N,\alpha_{\leq N}}$.
\end{center} In particular, we have $c_{N+1} \models \lbrace \varphi (x, h_{n,\alpha_{n}})$ : $n < N +1 \rbrace$, and therefore $\lbrace \varphi (x, h_{n,\alpha_{n}})$ : $n < N+1 \rbrace$ does not Kim-divide over $M$, completing the induction.

\justify
By mutual indiscernibility over $e$, we have that $\lbrace \varphi (x, h_{i,0})$ : $i < \omega \rbrace$ is consistent, contradicting ($\dagger$).\end{proof}

\begin{prop}\label{kimfkimd} Kim-forking = Kim-dividing : For any $e\in \mathbb{M}^{heq}$, $\varphi(x,y)$ $\mathcal{L}$-formula and $b$ real tuple if $\varphi(x, b)$ Kim-forks over $e$ then $\varphi(x, b)$ Kim-divides over $e$.
\end{prop}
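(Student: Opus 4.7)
Suppose $\varphi(x,b)$ Kim-forks over $e$, so by definition there exist $\mathcal{L}$-formulas $\psi_0(x,c_0),\ldots,\psi_{n-1}(x,c_{n-1})$ with real parameters such that $\varphi(x,b)\vdash\bigvee_{i<n}\psi_i(x,c_i)$ and each $\psi_i(x,c_i)$ Kim-divides over $e$. The plan is to produce one $e$-Morley sequence in $\tp(bc_0\ldots c_{n-1}/e)$ whose projections simultaneously witness the Kim-dividing of all $n$ disjuncts, and then use Kim's lemma (\cref{kimlemmahyp}) together with a pigeonhole argument to conclude that $\varphi(x,b)$ itself Kim-divides.

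Set $C=(b,c_0,\ldots,c_{n-1})$. By existence for hyperimaginaries $\tp(C/e)$ does not fork over $e$, so repeated application of extension together with an Erd\"os--Rado extraction produces an $e$-Morley sequence $(C^j)_{j<\omega}=(b^j,c_0^j,\ldots,c_{n-1}^j)_{j<\omega}$ in $\tp(C/e)$. For each $i$ the projection $(c_i^j)_{j<\omega}$ is $e$-indiscernible by restriction and satisfies $c_i^j\forkindep_e c_i^{<j}$ by monotonicity of $\forkindep_e$, hence is an $e$-Morley sequence in $\tp(c_i/e)$; likewise $(b^j)_{j<\omega}$ is an $e$-Morley sequence in $\tp(b/e)$.

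By \cref{kimlemmahyp}, for each $i$ the set $\{\psi_i(x,c_i^j):j<\omega\}$ is inconsistent, and by compactness together with $e$-indiscernibility it is $k_i$-inconsistent for some integer $k_i$; put $k=\max_i k_i$. Assume for contradiction that $\varphi(x,b)$ does not Kim-divide over $e$. Then by \cref{kimlemmahyp} the set $\{\varphi(x,b^j):j<\omega\}$ is consistent, realized by some $a$. The implication $\forall x(\varphi(x,b)\to\bigvee_i\psi_i(x,c_i))$ is a first-order statement over $C$, hence belongs to $\tp(C/e)$ and transfers to each $C^j$, so $a\models\bigvee_i\psi_i(x,c_i^j)$ for every $j$. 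Pigeonhole on the first $n(k-1)+1$ values of $j$ yields an index $i^*$ and a set of $k$ values of $j$ on which $a\models\psi_{i^*}(x,c_{i^*}^j)$, contradicting the $k$-inconsistency of $\{\psi_{i^*}(x,c_{i^*}^j):j<\omega\}$. Hence $\varphi(x,b)$ Kim-divides over $e$.

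The main technical point is mostly bookkeeping: one must verify that projections of an $e$-Morley sequence of real tuples stay $e$-Morley, which is immediate from monotonicity of $\forkindep_e$, and that Kim's lemma is applicable on both sides of the argument (for each disjunct and for $\varphi(x,b)$ itself), which is exactly the content of \cref{kimlemmahyp}. Once these are in hand the proof collapses to a finite pigeonhole.
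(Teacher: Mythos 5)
Your proof is correct and is essentially the paper's own argument: both fix a single $e$-Morley sequence in $\tp(bc_0\cdots c_{n-1}/e)$, observe that the projections remain $e$-Morley, and combine Kim's lemma (\cref{kimlemmahyp}) with a pigeonhole to reach a contradiction. The only cosmetic difference is that you first convert the Kim-dividing of each $\psi_i$ into $k_i$-inconsistency along the chosen sequence and pigeonhole finitarily, whereas the paper pigeonholes infinitarily and then applies Kim's lemma to the resulting infinite subsequence; the content is the same.
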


\justify

\begin{proof} Suppose $\varphi (x,b) \vdash \bigvee\limits_{j< n }^{}  \psi_{j}(x,c^{j})$, where each $\psi_{j}(x,c^{j}) $ Kim-divides over $e$. Let $(b_{i}, c_{i}^{0},...,c^{n-1}_{i}$ : $i< \omega)$ be a Morley sequence in $\tp(b_{i}, c^{0},...,c^{n-1}/e)$. Since $(b_{i}$ : $i< \omega)$ is a Morley sequence in $\tp(b/e)$, to get that $\varphi(x, b)$ Kim-divides over $e$ it is enough to show that $\lbrace \varphi(x, b_{i})$ : $i < \omega\rbrace$ is inconsistent.

\justify
If not, then there is some $a \models \lbrace \varphi(x, b_{i})$ : $i < \omega\rbrace$. Then, by the pigeonhole principle, we get that, for some
$j < n$, $a$ realizes $\psi_{j} (x, c_{i}^{j})$, for infinitely many $i$’s. As $(c_{i}^{j}$ : $i< \omega)$ is a Morley sequence in $\tp(c^{j}/e)$, it follows from Kim’s Lemma for Kim-forking (\cref{kimlemmahyp}) that $\psi_{j} (x, c^{j})$ does not Kim-divide over $e$, a contradiction.\end{proof}

\begin{lemma}\label{kimlemmahyp2+} Let $I = ( a_{i}$ : $i<\omega )$ be $e$-Morley and $ea'$-indiscernible, then $a'\forkindep^{K}_{e}a_{0}$.
\end{lemma}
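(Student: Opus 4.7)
The plan is to argue by contradiction and reduce the whole statement to a direct application of Kim's lemma for Kim-dividing (\cref{kimlemmahyp}), once we have aligned representatives so that formulas and sequences speak about the same real tuples.

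First I would invoke \cref{hypindisc} on the $ea'$-indiscernible sequence $(a_{i}:i<\omega)$ to fix once and for all a representative $\hat{e}\hat{a}'$ of $ea'$ and representatives $\hat{a}_{i}$ of $a_{i}$ such that $(\hat{a}_{i}:i<\omega)$ is $\hat{e}\hat{a}'$-indiscernible. Since the Morley property of $(a_{i})$ over $e$ is defined in terms of types and forking of the hyperimaginaries, this same sequence of real tuples is automatically an $e$-Morley sequence in $\tp(\hat{a}_{0}/e)$. Now suppose for contradiction that $\tp(a'/ea_{0})$ Kim-forks over $e$: by \cref{kimfkimd} it Kim-divides, and \cref{kimdivbasic} applied to the representative $\hat{a}_{0}$ produces an $\mathcal{L}$-formula $\varphi(x,\hat{a}_{0})$ implied by $\tp(\hat{a}'/e\hat{a}_{0})$ that Kim-divides over $e$. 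By Kim's lemma (\cref{kimlemmahyp}), $\varphi(x,\hat{a}_{0})$ then Kim-divides with respect to the Morley sequence $(\hat{a}_{i})$, so $\lbrace\varphi(x,\hat{a}_{i}):i<\omega\rbrace$ is inconsistent. On the other hand $\models\varphi(\hat{a}',\hat{a}_{0})$ by choice of $\varphi$, and the $\hat{e}\hat{a}'$-indiscernibility of $(\hat{a}_{i})$ gives $\models\varphi(\hat{a}',\hat{a}_{i})$ for every $i<\omega$, contradicting the above inconsistency.

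The main obstacle is purely bookkeeping with hyperimaginary representatives: in \cref{kimdivbasic} the Kim-dividing formula lives over some representative of $a_{0}$, while the Morley sequence of real tuples and the indiscernibility over $a'$ each demand compatible representative choices, so one has to orchestrate the call to \cref{hypindisc} early enough that $\hat{a}_{0}$ can simultaneously play the role of the first term of the Morley sequence, the parameter of $\varphi$, and the element over which $(\hat{a}_{i})$ is indiscernible together with $\hat{a}'$. Once these representatives are harmonized, the core argument reduces to the standard witnessing-by-Kim's-lemma observation.
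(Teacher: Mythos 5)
There is a genuine gap at the crucial step where you claim that the representatives $(\hat{a}_{i}:i<\omega)$ supplied by \cref{hypindisc} are ``automatically an $e$-Morley sequence in $\tp(\hat{a}_{0}/e)$''. Non-forking passes downwards, not upwards: since $a_{i}\in dcl^{heq}(\hat{a}_{i})$, from $\hat{a}_{i}\forkindep_{e}\hat{a}_{<i}$ one could deduce $a_{i}\forkindep_{e}a_{<i}$, but the hypothesis only gives independence of the classes, and an arbitrary choice of representatives --- \cref{hypindisc} controls indiscernibility and nothing else --- may perfectly well be forking-dependent over $e$ (the classes $a_{i}$ could even be bounded hyperimaginaries while the representatives form a highly dependent indiscernible sequence). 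Without the Morley property of the real sequence, \cref{kimlemmahyp} simply cannot be applied to $(\hat{a}_{i})$, and this is exactly the point the paper's proof is built around: it invokes \cref{kfhyp} (whose proof uses existence for hyperimaginaries, extension and Erd\"os--Rado) to manufacture representatives $\hat{e}_{i}\hat{a}_{i}$ of $e$ and $a_{i}$ such that the augmented sequence $(a_{i}\hat{e}_{i}\hat{a}_{i}:i<\omega)$ is simultaneously $e$-Morley and $ea'$-indiscernible, and only then runs the Kim's-lemma argument you have in mind.

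A second, related issue: the Kim-dividing formula you extract via \cref{kimdivbasic} from the assumption that $\tp(a'/ea_{0})$ Kim-forks over $e$ has parameters coming from a representative of $ea_{0}$, hence from a representative of $e$ as well, not just from $\hat{a}_{0}$; to contradict \cref{kimlemmahyp} you need the whole parameter tuple, $e$-part included, to run along an $e$-Morley sequence. Fixing a single representative $\hat{e}$ once and for all does not give this (the constant sequence $\hat{e},\hat{e},\dots$ is not independent over $e$ unless $\hat{e}\in bdd(e)$), whereas the paper's use of \cref{kfhyp} produces representatives $\hat{e}_{i}$ of $e$ varying along the sequence precisely for this reason. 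Apart from these two points, your overall strategy --- realize the type along the sequence by $ea'$-indiscernibility and combine \cref{kimlemmahyp} with \cref{kimfkimd} --- is the same as the paper's; the missing ingredient is the passage through \cref{kfhyp} rather than \cref{hypindisc}.
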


\begin{proof}
\justify
We will reduce to the case of $I$ being a sequence of real tuples : By \cref{kfhyp} there is $( \hat{e}'_{i}\hat{a}'_{i}$ : $i<\omega ) $ such that $\hat{a}'_{i}$ is a representative of $a_{i}$, $\hat{e}'_{i}$ is a representative of $e$ and $(a_{i}\hat{e}'_{i}\hat{a}'_{i}$ : $i<\omega ) $ is $e$-Morley and  $ea'$-indiscernible.

\justify
Let $p(x,\hat{e}_{0},\hat{a}_{0}) = \tp(a'/\hat{e}_{0}\hat{a}_{0})$. Then by indiscernibility $a' \models \lbrace p(x,\hat{e}_{i},\hat{a}_{i})$ : $i<\omega \rbrace$, so by \cref{kimlemmahyp} $p(x,\hat{e}_{0},\hat{a}_{0})$ does not Kim-divide over $e$. By \cref{kimfkimd} it does not Kim-fork over $e$, so $a'\forkindep^{K}_{e}\hat{e}_{0},\hat{a}_{0}$, so $a'\forkindep^{K}_{e}a_{0}$.\end{proof} 

\begin{cor}\label{kimlemmahyp2} Kim's Lemma for Kim-forking over hyperimaginaries : If $I = ( a_{i}$ : $i<\omega )$ is $e$-Morley and $ea'$-indiscernible then $a'\forkindep^{K}_{e}I$.
\end{cor}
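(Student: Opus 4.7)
My plan is to bootstrap Lemma \ref{kimlemmahyp2+}, which only gives $a' \forkindep^{K}_{e} a_{0}$, to Kim-independence of $a'$ from the whole sequence $I$, via the finite character of Kim-forking combined with a ``blocking'' construction that turns any desired initial segment of $I$ into the first term of a new $e$-Morley, $ea'$-indiscernible sequence.

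\textbf{Reduction to initial segments.} First I would argue that it is enough to prove $a' \forkindep^{K}_{e} (a_{0},\ldots,a_{n})$ for every $n<\omega$. Indeed, if $\tp(a'/eI)$ Kim-forked over $e$, then by definition it would imply a finite disjunction $\bigvee_{j<m}\psi_{j}(x,d_{j})$ of formulas each Kim-dividing over $e$, and by compactness this implication would already be witnessed by a single formula $\varphi(x,c) \in \tp(a'/eI)$ whose parameters $c$ involve only finitely many terms of $I$, say $a_{i_{0}},\ldots,a_{i_{N}}$ with $i_{0}<\cdots<i_{N}$. Then $\varphi(x,c)$ Kim-forks over $e$, so $a' \not\forkindep^{K}_{e} (a_{i_{0}},\ldots,a_{i_{N}})$. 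But by $ea'$-indiscernibility we have $(a_{i_{0}},\ldots,a_{i_{N}}) \equiv_{ea'} (a_{0},\ldots,a_{N})$, so by invariance of Kim-independence under automorphisms fixing $ea'$ this contradicts $a' \forkindep^{K}_{e} (a_{0},\ldots,a_{N})$.

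\textbf{Blocking.} Fix $n<\omega$ and define $b_{k} := (a_{(n+1)k},a_{(n+1)k+1},\ldots,a_{(n+1)k+n})$ for $k<\omega$, so that $b_{0} = (a_{0},\ldots,a_{n})$. The central claim is that $(b_{k})_{k<\omega}$ is itself an $e$-Morley sequence that is $ea'$-indiscernible. Both $e$-indiscernibility and $ea'$-indiscernibility of $(b_{k})$ are inherited directly from the corresponding properties of $I$. The forking-independence $b_{k} \forkindep_{e} b_{<k}$ is the technical heart: since $I$ is $e$-Morley we have $a_{(n+1)k+j} \forkindep_{e} a_{<(n+1)k+j}$ for each $j=0,1,\ldots,n$, and iterated application of left transitivity of forking for hyperimaginaries (available in the paper's framework) yields that the tuple $b_{k} = (a_{(n+1)k},\ldots,a_{(n+1)k+n})$ is $\forkindep_{e}$-independent from $b_{<k} = (a_{0},\ldots,a_{(n+1)k-1})$.

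\textbf{Conclusion.} Having verified that $(b_{k})_{k<\omega}$ is an $e$-Morley, $ea'$-indiscernible sequence, I would apply Lemma \ref{kimlemmahyp2+} to it directly to obtain $a' \forkindep^{K}_{e} b_{0} = (a_{0},\ldots,a_{n})$, completing the reduction. The main obstacle I anticipate is the clean verification that the blocked sequence is $e$-Morley: one must chain left transitivity of forking $n$ times in the hyperimaginary setting, keeping track of what is absorbed into the base at each step. This is routine given Proposition~1.9 in the excerpt, but it is the only nontrivial bookkeeping in the argument.
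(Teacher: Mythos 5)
Your proof is correct and is essentially the paper's own argument: reduce by finite character to showing $a'\forkindep^{K}_{e}a_{[0,n)}$ for each $n$, observe that the blocked sequence $(a_{[n\cdot i,\,n\cdot(i+1))} : i<\omega)$ is again $e$-Morley and $ea'$-indiscernible, and apply \cref{kimlemmahyp2+}. The only difference is that you spell out the left-transitivity (plus base-monotonicity) bookkeeping for the blocks, which the paper leaves as an implicit routine fact.
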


\begin{proof}\justify
It is enough to show that $a'\forkindep_{e}a_{[0,n)}$ for every $n < \omega $, but this is a direct consequence of \cref{kimlemmahyp2+} and of the fact that $(a_{[n\cdot i,n \cdot (i +1))}$ : $i<\omega)$ is also an $e$-Morley sequence.
\end{proof}

\begin{remark}\label{bondedkimmorley} Since a sequence is $a$-indiscernible if and only if it is $bdd(a)$-indiscernible for any $a$ we have $a\forkindep^{K}_{e}b$ if and only if $a\forkindep^{K}_{bdd(e)}b$ if and only if $bdd(a)\forkindep^{K}_{e}b$ 
\end{remark}

\begin{prop}\label{kimhyp1} Let $a,b,e \in \mathbb{M}^{heq}$. If $a\forkindep^{K}_{e}b$ and $I = (b_{i}$ : $i< \omega)$ is a Morley sequence in $\tp(b/e)$ with $b_{0} = b$, then there is some $a'\equiv_{eb} a$ such that $a'\forkindep^{K}_{e}I$ and $I$ is $ea'$-indiscernible
\end{prop}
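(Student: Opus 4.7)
The plan is to combine the characterization of Kim-dividing (\cref{kimdivh}) with Kim's Lemma for Kim-forking (\cref{kimlemmahyp2}): the former produces a conjugate of $a$ making $I$ indiscernible, while the latter upgrades that indiscernibility to full independence over the whole sequence.

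First I would translate the hypothesis $a \forkindep^{K}_{e} b$ into the statement that $\tp(a/eb)$ does not Kim-divide over $e$. By the definition of Kim-forking, no formula in $\tp(a/eb)$ implies a finite disjunction of Kim-dividing formulas, so in particular no single formula in $\tp(a/eb)$ Kim-divides over $e$; here we use \cref{kimfkimd} at the level of formulas, together with \cref{kimdivbasic} to reduce from hyperimaginary parameters to their real representatives. A standard compactness argument, closing $\tp(a/eb)$ under finite conjunctions and invoking indiscernibility of an $e$-Morley witness, then promotes this to the statement that $\tp(a/eb)$ itself does not Kim-divide over $e$.

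Next, since $b = b_{0} \in I$ and $I$ is an $e$-Morley sequence, clause (2) of \cref{kimdivh} applied to $\tp(a/eb)$ yields some $a' \equiv_{eb} a$ such that $I$ is $ea'$-indiscernible. This already secures the $ea'$-indiscernibility part of the conclusion.

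Finally, since $I$ is simultaneously $e$-Morley and $ea'$-indiscernible, Corollary \cref{kimlemmahyp2} immediately gives $a' \forkindep^{K}_{e} I$, completing the proof. The only non-trivial step is the compactness reduction from non-Kim-forking of the type to non-Kim-dividing of the type; once this is in hand, everything else is a direct citation of the lemmas already established in this section.
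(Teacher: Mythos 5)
Your proposal is correct and follows essentially the same route as the paper: pass from $a\forkindep^{K}_{e}b$ to $a\forkindep^{Kd}_{e}b$, apply clause (2) of \cref{kimdivh} to get $a'\equiv_{eb}a$ with $I$ $ea'$-indiscernible, and then conclude $a'\forkindep^{K}_{e}I$ by \cref{kimlemmahyp2}. The only difference is that your first step is more elaborate than needed (and the appeal to \cref{kimfkimd} is superfluous): non-Kim-forking of $\tp(a/eb)$ gives non-Kim-dividing directly, since by \cref{kimdivbasic} a type that Kim-divides implies a Kim-dividing formula and hence Kim-forks.
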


\begin{proof}

\justify
 We have that $a\forkindep^{Kd}_{e}b$, so by \cref{kimdivh} there is $a'\equiv_{eb} a$ such that $I$ is $ea'$-indiscernible, then by \cref{kimlemmahyp2} we have $a'\forkindep^{K}_{e}I$.\end{proof}

\justify
The following is a new result, we will use it a lot to reduce to the case of real tuples.

\begin{lemma}\label{liftkimmorley} Let $a,b,e \in \mathbb{M}^{heq}$ such that $a\forkindep^{K}_{e}b$. There are $\overline{a},\overline{b}$ representatives of $a$ and $b$ respectively such that $\overline{a}\forkindep^{K}_{e}\overline{b}$.
\end{lemma}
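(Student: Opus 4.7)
\justify
The plan is to lift the hyperimaginary independence $a\forkindep^{K}_{e}b$ to the level of real tuples by producing an $e$-Morley sequence in $b$ consisting of real representatives, over which we can witness indiscernibility with a real representative of a witness $a'$ for non-Kim-dividing.

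\justify
First, take any $e$-Morley sequence $(b_{i}$ : $i<\omega)$ of hyperimaginaries with $b_{0}=b$. Since $a\forkindep^{K}_{e}b$, by \cref{kimfkimd} and \cref{kimdivh} there is $a'\equiv_{eb}a$ such that $(b_{i})$ is $ea'$-indiscernible. Fix any representative $\overline{b}$ of $b$ and apply \cref{kfhyp} with $c=ea'$ (which has $e\in dcl^{heq}(c)$) to obtain real tuples $(\overline{b}_{i})$ with $\overline{b}_{0}\equiv_{eb_{0}}\overline{b}$ such that $(b_{i}\overline{b}_{i}$ : $i<\omega)$ is $e$-Morley and $ea'$-indiscernible. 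An automorphism witnessing $\overline{b}_{0}\equiv_{eb_{0}}\overline{b}$ shows that $\overline{b}_{0}$ is a representative of $b_{0}=b$, and $e$-indiscernibility then propagates this so that each $\overline{b}_{i}$ is a representative of $b_{i}$. Since $b_{i}\in dcl^{heq}(\overline{b}_{i})$, the real-tuple sequence $(\overline{b}_{i})$ is itself $e$-Morley and inherits $ea'$-indiscernibility.

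\justify
Next, apply \cref{hypindisc} to $(\overline{b}_{i})$ viewed as a sequence of hyperimaginaries that is $ea'$-indiscernible. Since a real tuple's only hyperimaginary representative is itself, the proposition yields representatives $\overline{e},\overline{a}'$ of $e,a'$ for which $(\overline{b}_{i})$ is $\overline{e}\overline{a}'$-indiscernible, and hence also $e\overline{a}'$-indiscernible. Then \cref{kimlemmahyp2} applied to this $e$-Morley, $e\overline{a}'$-indiscernible real-tuple sequence gives $\overline{a}'\forkindep^{K}_{e}\overline{b}_{0}$. Finally, transferring by an automorphism $\sigma$ fixing $eb$ with $\sigma(a')=a$ produces $\overline{a}:=\sigma(\overline{a}')$ representing $a$ and $\overline{b}^{*}:=\sigma(\overline{b}_{0})$ representing $b$, with $\overline{a}\forkindep^{K}_{e}\overline{b}^{*}$ by automorphism invariance.

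\justify
The main delicate point is bookkeeping: confirming that the tuples produced by \cref{kfhyp} are genuine representatives of the $b_{i}$, and that \cref{hypindisc} can be applied to an $ea'$-indiscernible sequence of real tuples without disturbing the tuples themselves. Once these are settled, each step is a direct application of a previously established result.
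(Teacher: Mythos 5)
Your proof is correct, and it reaches the paper's conclusion by the same endgame but with a different mechanism in the first half. The paper first fixes the representative of $b$: it uses extension for $\forkindep^{K}$ to find $a'\equiv_{eb}a$ with $a'\forkindep^{K}_{e}\hat{b}b$, pulls this back by an automorphism to get a representative $\overline{b}$ of $b$ with $a\forkindep^{K}_{e}\overline{b}$, and only then produces a real-tuple $e$-Morley sequence in $\tp(\overline{b}/e)$ via \cref{kimhyp1}. You instead start from a hyperimaginary $e$-Morley sequence in $\tp(b/e)$ made $ea'$-indiscernible via \cref{kimdivh}, and lift it to real representatives with \cref{kfhyp}; your bookkeeping that $\overline{b}_{0}\equiv_{eb_{0}}\overline{b}$ forces $\overline{b}_{0}$ to be a representative of $b$ (and that this propagates along the sequence, though only the $0$-th term is actually needed) is sound, as is the observation that \cref{hypindisc} leaves a sequence of real tuples unchanged. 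From that point on the two arguments coincide: realify the witness $a'$ by \cref{hypindisc}, apply Kim's Lemma for Kim-forking (\cref{kimlemmahyp2}, or just \cref{kimlemmahyp2+}), and transfer back by an $eb$-automorphism. What your route buys is that it avoids invoking extension for $\forkindep^{K}$ over hyperimaginaries, which the paper uses at this point without having stated it separately (it does follow from the definition of Kim-forking by the usual argument, but your use of \cref{kfhyp}, already proved under existence, sidesteps the issue); the cost is the extra lifting step and the representative bookkeeping, which the paper's choice of $\overline{b}$ up front renders unnecessary.
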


\begin{proof}

\justify
Let $\hat{b}$ be a representative of $b$, by extension there is $a'\equiv_{eb}a$ such that $a'\forkindep^{K}_{e}\hat{b}b$, and by sending $a'$ to $a$ over $eb$ we find an other representative of $b$, $\overline{b}$, such that $a\forkindep^{K}_{e}\overline{b}$. Now by \cref{kimhyp1} there is $I=(b_{i}$ : $i<\omega)$ an $e$-Morley sequence that is $ea$-indiscernible. By \cref{hypindisc} there is $\overline{a}$ a representative of $a$ such that this sequence is $e\overline{a}$-indiscernible. By Kim's Lemma for Kim-forking (\cref{kimlemmahyp2}) we have that $\overline{a}\forkindep^{K}_{e}I$, so $\overline{a}\forkindep^{K}_{e}\overline{b}$.\end{proof}

\subsection{Symmetry of Kim-Forking for hyperimaginaries}

\justify
Proofs in this section are for the most part rewritings of those from the section 4 of \cite{dobrowolski2022independence}. We will begin by introducing the key tools for this section and the following ones, tree Morley sequences :

\begin{definition} Suppose $\alpha$ is an ordinal. Let $[\alpha ]:=\lbrace \beta <\alpha$ : $ \beta $ is not limit$\rbrace$. We define $\mathcal{T}_{\alpha}$ to be the set of functions $f$ such that :
\begin{enumerate}
\item[•] $dom(f)$ is an end segment $[\beta ,\alpha )$ for $\beta \in [\alpha ]$.
\item[•] $ran(f)\subseteq \omega$.
\item[•] $f$ has a finite support, meaning that the set $ \lbrace \gamma \in dom(f)$ : $f(\gamma )\neq \emptyset \rbrace $ is finite.
\end{enumerate}\end{definition}
\justify
We interpret $\mathcal{T}_{\alpha}$  as an $\mathcal{L}_{s,\alpha}$-structure by defining : 
\begin{enumerate}
\item[•] $f \unlhd g $ iff $f\subseteq g$. If $\neg (f \unlhd g) $ and $\neg (g \unlhd f) $ we write $f\perp g $.
\item[•] $f\wedge g = f\vert_{[\beta , \alpha )} = g\vert_{[\beta , \alpha )} $ where $ \beta = $min$\lbrace \gamma$ $: $ $f\vert_{[\gamma , \alpha )} = g\vert_{[\gamma , \alpha )}\rbrace $, it is in $\mathcal{T}_{\alpha}$ by finite support.
\item[•] $f<_{lex}g$ iff $f\triangleleft g $ or  $f\perp g $ with $dom(f\wedge g)=[\gamma ,\alpha )$ and $f(\gamma ) < g(\gamma )$.
\item[•] For all $\beta \leq \alpha $, $P_{\beta}= \lbrace f\in \mathcal{T}_{\alpha} $ : $dom(f)=[\beta,\alpha )\rbrace$.
\end{enumerate}

\begin{definition} Suppose that $\alpha $ is an ordinal. 
\begin{enumerate}
\item Restriction : If $w \subseteq \alpha $, the restriction of $\mathcal{T}_{\alpha}$ to the set of levels $w$ is given by : \begin{center}
$\mathcal{T}_{\alpha}\lceil w=\lbrace \eta \in \mathcal{T}_{\alpha}$ : $min(dom(\eta))\in w $ and $supp(\eta )\subseteq w \rbrace $,
\end{center} $supp$ being the support of the functions.
\item Concatenation : If $\eta\in \mathcal{T}_{\alpha} $, $dom(\eta )=[\beta +1, \alpha )$, and $i<\omega $, let $\eta\frown \langle i \rangle $ denote the function $\eta \cup \lbrace (\beta ,i)\rbrace $. We define $\langle i \rangle \frown \eta \in \mathcal{T}_{\alpha+1}$ to be $\eta \cup \lbrace (\alpha ,i)\rbrace $. We write $\langle i \rangle $ for $\langle i \rangle \frown  \emptyset $.
\item Canonical inclusions : If $\alpha < \beta $, we define the map $\iota_{\alpha \beta}$ : $\mathcal{T}_{\alpha}\rightarrow \mathcal{T}_{\beta}$ by $\iota_{\alpha \beta}(f)=f\cup \lbrace (\gamma , \emptyset)$ : $\gamma \in \beta \setminus \alpha \rbrace $.
\item The all 0's path : If $\beta <\alpha $, let $\zeta_{\beta} $ denote the function $\lbrace (\gamma ,0)$ : $\gamma \in [\beta ,\alpha) \rbrace $, if $\beta \in [\alpha ]$ it is an element of $\mathcal{T}_{\alpha}$ and it is the $<_{lex}$-smallest element of $P_{\beta}$. 
\end{enumerate}\end{definition}

\begin{definition} Suppose that $(a_{\eta})_{\eta \in \mathcal{T}_{\alpha}}$ is a tree of hyperimaginaries and $e$ an hyperimaginary.
\begin{enumerate}
\item We say  $(a_{\eta})_{\eta \in \mathcal{T}_{\alpha}}$ is weakly spread out over $e$ if for all $\eta \in \mathcal{T}_{\alpha}$ with $P_{\beta +1}(\eta)$ for some $\beta < \alpha $, $(a_{\unrhd \eta\frown \langle i\rangle}$ : $i<\omega)$ is a Morley sequence in $\tp(a_{\unrhd \eta\frown \langle 0\rangle}/e)$.
\item Suppose $(a_{\eta})_{\eta \in \mathcal{T}_{\alpha}}$ is a tree which is weakly spread out and s-indiscernible over $e$, and that for all $v,w \in [\alpha ]^{<\omega}$ with $\vert v \vert = \vert w \vert $,
$(a_{\eta})_{\eta \in \mathcal{T}_{\alpha}\lceil v} \equiv_{e} (a_{\eta})_{\eta \in \mathcal{T}_{\alpha}\lceil w}$, we then say that $(a_{\eta})_{\eta \in \mathcal{T}_{\alpha}}$ is a weakly Morley tree over $e$.

\item A weak tree Morley sequence over $e$ is a $e$-indiscernible sequence of the form $(a_{\zeta_{\beta}})_{\beta \in [\alpha ]}$ for some weakly Morley tree $(a_{\eta})_{\eta \in \mathcal{T}_{\alpha}}$ over $e$.

\end{enumerate}\end{definition}

\justify
In an s-indiscernible tree over $e$ any two paths have the same type over $e$. Hence, (3) may be stated as : a weak tree Morley sequence over $e$ is a path in some weakly Morley tree over $e$.

\begin{fact}\label{modprop}\cite[Theorem 5.6]{kaplan2020kim} : $\mathcal{T}_{\alpha}$-indexed indiscernibles have the modeling property for any $\alpha $. By the same argument as in \cref{modprophyperimag} if $(a_{\eta})_{\eta \in \mathcal{T}_{\alpha}}$ is a tree of hyperimaginaries with $a_{\eta}$ of sort $E_{\beta}$ if $dom(\eta)=[\beta,\alpha)$ and $e\in \mathbb{M}^{heq}$ there is $(b_{\eta})_{\eta \in \mathcal{T}_{\alpha}}\models EM((a_{\eta})_{\eta \in \mathcal{T}_{\alpha}}/e)$ that is s-indiscernible over $e$.\end{fact}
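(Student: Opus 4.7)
The first part of the fact is cited from Kaplan--Ramsey \cite[Theorem 5.6]{kaplan2020kim}, so only the hyperimaginary extension actually requires a proof; my plan is to imitate the proof of Lemma~\ref{modprophyperimag} verbatim, simply replacing \cref{modprop0} by the first half of the present statement. The essential point is that once we have the $\mathcal{T}_\alpha$-indexed modeling property at the level of real tuples, the passage to $E$-classes only requires us to see that the EM-type condition --- an existential statement about representatives --- is preserved under local basing.

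Concretely, I would fix a representative $\overline{e}$ of $e$ and, for each $\eta\in\mathcal{T}_\alpha$ with $dom(\eta)=[\beta,\alpha)$, a representative $\overline{a}_\eta$ of $a_\eta$ of the appropriate sort. Applying the $\mathcal{T}_\alpha$-indexed modeling property to the real tree $(\overline{a}_\eta)_{\eta\in\mathcal{T}_\alpha}$ with parameter $\overline{e}$ yields an s-indiscernible tree of real tuples $(\overline{b}_\eta)_{\eta\in\mathcal{T}_\alpha}$ over $\overline{e}$ locally based on $(\overline{a}_\eta)_{\eta\in\mathcal{T}_\alpha}$ over $\overline{e}$. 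Then set $b_\eta := (\overline{b}_\eta)_{E_\beta}$ whenever $dom(\eta)=[\beta,\alpha)$. That $(b_\eta)_{\eta\in\mathcal{T}_\alpha}$ is s-indiscernible over $e$ is immediate: tuples with equal quantifier-free $\mathcal{L}_{s,\alpha}$-types inherit equal complete $\mathcal{L}$-types over $\overline{e}$ at the representative level, and quotienting by the appropriate $E_i$'s gives equal hyperimaginary types over $e$.

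For the EM-type, let $q(\overline{y})$ be a quantifier-free $\mathcal{L}_{s,\alpha}$-type and suppose the partial type $p(\overline{x},e) = \exists z\,\overline{x}'\bigl(E(z,\overline{e}) \wedge \bigwedge_i E_i(x'_i,x_i) \wedge \pi(\overline{x}',z)\bigr)$ is realized by every representative $\overline{a}$-tuple indexed by a realization of $q$. For any finite conjunction $\varphi(\overline{x}',z)$ from $\pi$, the formula $\exists z\,\overline{x}'\bigl(E(z,\overline{e})\wedge \bigwedge_i E_i(x'_i,x_i)\wedge\varphi(\overline{x}',z)\bigr)$ is a genuine $\mathcal{L}_{\overline{e}}$-formula in the variables $\overline{x}$, so by local basing its truth on every $\overline{a}$-tuple indexed by a realization of $q$ transfers to every $\overline{b}$-tuple indexed by a realization of $q$; letting $\varphi$ range over finite conjunctions in $\pi$ gives $(b_\eta)_{\eta\in\mathcal{T}_\alpha}\models EM((a_\eta)_{\eta\in\mathcal{T}_\alpha}/e)$. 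There is no real obstacle beyond the cited modeling property itself --- once that is in hand, the hyperimaginary extension amounts to the bookkeeping exercise just described, exactly as in Lemma~\ref{modprophyperimag}.
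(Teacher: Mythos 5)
Your proposal is correct and follows essentially the same route as the paper, which gives no separate proof here beyond invoking the argument of \cref{modprophyperimag}: pass to representatives, apply the cited $\mathcal{T}_{\alpha}$-modeling property over $\overline{e}$, take $E_{\beta}$-classes, and note that s-indiscernibility and the EM-type transfer. The only minor imprecision is your claim that $\exists z\,\overline{x}'\bigl(E(z,\overline{e})\wedge\bigwedge_i E_i(x'_i,x_i)\wedge\varphi(\overline{x}',z)\bigr)$ is a genuine $\mathcal{L}_{\overline{e}}$-formula --- since $E$ and the $E_i$ are only type-definable one must also replace them by finite approximations and recover the full condition by compactness, which is routine and does not affect the argument.
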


\justify
The following lemma will help us build weak Morley trees - it is actually our only reliable provider, it is proved over models (\cite[Lemma 5.10]{kaplan2020kim}). The proof for the case of hyperimaginaries works verbatim.

\begin{lemma}\label{morleytree0} Suppose that $(a_{\eta})_{\eta \in \mathcal{T}_{\kappa}}$ is a tree of hyperimaginaries, weakly spread out over $e$ and s-indiscernible over $e \in \mathbb{M}^{heq}$. If $\kappa$ is sufficiently large, then there is a weak Morley tree over $e$ $(b_{\eta})_{\eta \in \mathcal{T}_{\omega}}$ such that for all $v \in \omega^{<\omega}$, there is $v' \in [\kappa]^{<\omega}$ so that : $$(a_{\eta})_{\eta \in \mathcal{T}_{\kappa}\lceil v} \equiv_{e} (b_{\eta})_{\eta \in \mathcal{T}_{\omega}\lceil v'}.$$
\end{lemma}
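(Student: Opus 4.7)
The plan is to extract, via an iterated Erd\H{o}s--Rado argument, a sufficiently homogeneous infinite sub-index $I \subseteq [\kappa]$ of order-type $\omega$, re-index the corresponding subtree of $(a_\eta)$ as a $\mathcal{T}_\omega$-indexed tree $(b_\eta)$, and verify the three clauses in the definition of a weakly Morley tree. More precisely, I would take $\kappa$ large enough that the Erd\H{o}s--Rado theorem applies to colorings of finite subsets of $[\kappa]$ by $e$-types of the corresponding finite subtrees: the number of possible $e$-types of $(a_\eta)_{\eta \in \mathcal{T}_\kappa \lceil v}$, for $v$ an $n$-element subset of $[\kappa]$, is bounded uniformly in $n$ by $2^{|T|+|e|}$, so an iterated application of Erd\H{o}s--Rado (one color per $n$) yields an infinite $I \subseteq [\kappa]$ of order-type $\omega$ such that for any two finite $v, w \subseteq I$ with $|v| = |w|$ one has $(a_\eta)_{\eta \in \mathcal{T}_\kappa \lceil v} \equiv_e (a_\eta)_{\eta \in \mathcal{T}_\kappa \lceil w}$.

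Next, I would fix the unique order-preserving bijection $\pi : [\omega] \to I$ and the induced tree-embedding $\iota : \mathcal{T}_\omega \to \mathcal{T}_\kappa \lceil I$ (which extends $\eta \in \mathcal{T}_\omega$ by the value $0$ on the levels in $[\kappa] \setminus I$), and set $b_\eta := a_{\iota(\eta)}$. With this definition the modeling clause of the conclusion is immediate: for any $v \in [\omega]^{<\omega}$, taking $v' := \pi(v) \in [\kappa]^{<\omega}$ gives $(a_\eta)_{\eta \in \mathcal{T}_\kappa \lceil v'} \equiv_e (b_\eta)_{\eta \in \mathcal{T}_\omega \lceil v}$ by construction. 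The $s$-indiscernibility of $(b_\eta)$ over $e$ follows directly from the $s$-indiscernibility of $(a_\eta)$ together with the fact that $\iota$ preserves the $\mathcal{L}_{s,\cdot}$-structure, and the equal-size restriction homogeneity $(b_\eta)_{\eta \in \mathcal{T}_\omega \lceil v} \equiv_e (b_\eta)_{\eta \in \mathcal{T}_\omega \lceil w}$ for $|v|=|w|$ is immediate from the Erd\H{o}s--Rado step.

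The step I expect to be the main obstacle is checking the weakly spread out clause for $(b_\eta)$. Given $\eta \in \mathcal{T}_\omega$ at level $\beta+1$, I want $(b_{\unrhd \eta \frown \langle i \rangle})_{i<\omega}$ to be a Morley sequence over $e$ in $\tp(b_{\unrhd \eta \frown \langle 0 \rangle}/e)$; but the images $\iota(\eta \frown \langle i \rangle)$ sit at level $\pi(\beta)$ of the ambient tree, which is generally strictly below $\pi(\beta+1)-1$, so they are not immediate successors of $\iota(\eta)$ in $\mathcal{T}_\kappa$ and the weakly spread out hypothesis for $(a_\eta)$ does not apply at $\iota(\eta)$ directly. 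The way around this is to introduce the auxiliary ambient node $\eta'' := \iota(\eta) \cup \{(\gamma, 0) : \pi(\beta) < \gamma < \pi(\beta+1)\}$, which lives at level $\pi(\beta)+1$ of $\mathcal{T}_\kappa$ and whose immediate successors are precisely $\eta'' \frown \langle i \rangle = \iota(\eta \frown \langle i \rangle)$. Weakly spread out for $(a_\eta)$ applied at $\eta''$ then gives that $(a_{\unrhd \eta'' \frown \langle i \rangle})_{i<\omega}$ is Morley over $e$; since each $b_{\unrhd \eta \frown \langle i \rangle}$ is precisely the sub-tuple of $a_{\unrhd \eta'' \frown \langle i \rangle}$ obtained by restricting to nodes of $\mathcal{T}_\kappa$ with support in $I$, and both $e$-indiscernibility and forking-independence over $e$ pass to left subtuples, the desired Morley property of $(b_{\unrhd \eta \frown \langle i \rangle})_{i<\omega}$ over $e$ follows, completing the verification.
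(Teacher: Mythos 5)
Your overall architecture (stabilise the types of finite-level restrictions, then check s-indiscernibility, equal-size homogeneity and weak spread-outness for the extracted tree) is the right shape, and your third paragraph identifies exactly the one delicate verification: the images of immediate successors in $\mathcal{T}_{\omega}$ are not immediate successors in $\mathcal{T}_{\kappa}$, and one must pass to the auxiliary node $\eta''$ at a successor level, apply the weakly-spread-out hypothesis there, and then use monotonicity and finite character of $\forkindep$ to transfer the Morley condition to the restricted subtuples. That observation is correct and is needed in any proof of this lemma.

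However, the first step — extracting an \emph{infinite} set $I\subseteq[\kappa]$ such that for \emph{all} $n$ simultaneously any two $n$-element subsets $v,w\subseteq I$ give $(a_{\eta})_{\eta\in\mathcal{T}_{\kappa}\lceil v}\equiv_{e}(a_{\eta})_{\eta\in\mathcal{T}_{\kappa}\lceil w}$ — is not something iterated Erd\H{o}s--Rado can deliver. What you are asking for is the partition relation $\kappa\rightarrow(\omega)^{<\omega}_{\lambda}$ (with $\lambda=2^{|T|+|e|+\aleph_{0}}$ colours, one colouring per arity, homogeneous for all arities at once); the least $\kappa$ satisfying even $\kappa\rightarrow(\omega)^{<\omega}_{2}$ is the Erd\H{o}s cardinal, whose existence is not provable in ZFC, so no choice of ``$\kappa$ sufficiently large'' makes your extraction work. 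Iterating Erd\H{o}s--Rado arity by arity only produces a decreasing chain of large sets $H_{2}\supseteq H_{3}\supseteq\cdots$, each homogeneous for its own arity; picking points from them yields at best an end-homogeneous (tail-homogeneous) set, not a homogeneous one, and your definition $b_{\eta}:=a_{\iota(\eta)}$ then fails the equal-size restriction clause of a weak Morley tree for restrictions meeting the low levels. This is precisely why the lemma is stated with ``there is $v'$ such that $(a_{\eta})_{\eta\in\mathcal{T}_{\kappa}\lceil v'}\equiv_{e}(b_{\eta})_{\eta\in\mathcal{T}_{\omega}\lceil v}$'' rather than producing a literal subtree: in the proof the paper relies on (following Kaplan--Ramsey, Lemma 5.10), one uses Erd\H{o}s--Rado only to stabilise, for each $n$ separately and coherently along the nested sets, a single type $p_{n}$ of restrictions to $n$ levels, and then one realises the resulting coherent family of types by \emph{compactness} to obtain a new tree $(b_{\eta})_{\eta\in\mathcal{T}_{\omega}}$ that need not sit inside the original. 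All three clauses (s-indiscernibility, equal-size homogeneity, weak spread-outness) are then properties of this realised type, the last one being checked exactly by your $\eta''$-plus-finite-character argument applied inside the original tree. So the gap is the homogeneous-set extraction; replace it by the stabilise-then-compactness step and the rest of your outline goes through.
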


\justify
The following result is the analogous of \cref{kfhyp} for weak tree Morley sequence, it is original and holds in any theory with existence for hyperimaginaries :

\begin{lemma}\label{morleytree1} Let $b,e \in \mathbb{M}^{heq}$ and $(a_{i}$ : $i<\omega)$ be a weak tree Morley sequence of hyperimaginaries over $e$. Then there is $(b_{i}$ : $i<\omega)$ such that $(a_{i}b_{i}$ : $i<\omega)$ is a weak tree Morley sequence over $e$ such that $a_{i}b_{i} \equiv_{e} ba_{0} $ for every $i<\omega$.
\end{lemma}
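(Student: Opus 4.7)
The plan is to lift the problem to the level of a witnessing weakly Morley tree: attach a $b_\eta$ at every node of a large weakly Morley tree containing the given sequence as a path, extract an s-indiscernible version of the augmented tree via the modeling property, and transport the path back to $(a_i)_{i<\omega}$ via an $e$-automorphism. To set this up, fix a weakly Morley tree $(a_\eta)_{\eta \in \mathcal{T}_\alpha}$ over $e$ witnessing that $(a_i)_{i<\omega}=(a_{\zeta_\beta})_{\beta\in [\alpha]}$ is a weak tree Morley sequence; by \cref{morleytree0} we may take $\alpha$ arbitrarily large.

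The first step is to build $(b_\eta)_{\eta \in \mathcal{T}_\alpha}$ so that $a_\eta b_\eta \equiv_e a_0 b$ for every $\eta$ and $(a_\eta b_\eta)_{\eta \in \mathcal{T}_\alpha}$ is weakly spread out over $e$. This is done by recursion on subtree height, made uniform by compactness on finite subtrees. At an internal node $\eta$, the children subtrees $(a_{\unrhd \eta \frown \langle i\rangle})_{i<\omega}$ form an $e$-Morley sequence of hyperimaginary tuples, and by s-indiscernibility of the original tree this sequence is indiscernible over $c_\eta$, the tuple of all ancestors of $\eta$ together with the $b$-attachments already defined on them. Feeding this Morley sequence into \cref{kfhyp} with $c=c_\eta$, and using the inductively constructed weakly-spread-out $b$-structure on the leftmost child subtree as the seed, yields a $c_\eta$-indiscernible, $e$-Morley sequence $(a_{\unrhd \eta \frown \langle i\rangle} B_i)_{i<\omega}$ whose indiscernibility propagates the equivalence $a_\nu b_\nu \equiv_e a_0 b$ and the weakly-spread-out structure to every descendant. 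Iterating top-down through the tree produces the desired $(b_\eta)$.

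Next, apply the hyperimaginary form of the tree modeling property (combining \cref{modprop} with the argument of \cref{modprophyperimag}) to $(a_\eta b_\eta)$ over $e$ to extract a tree $(\tilde a_\eta \tilde b_\eta)_{\eta \in \mathcal{T}_\omega}$ that is s-indiscernible over $e$ and realizes $EM((a_\eta b_\eta)/e)$. For an $e$-indiscernible sequence of a fixed $e$-type, being Morley depends only on the type of the first pair over $e$, which lies in the EM-type; so the extracted tree is weakly spread out, hence a weakly Morley tree, with $\tilde a_\eta \tilde b_\eta \equiv_e a_0 b$ at every node. Its path $(\tilde a_{\zeta_\beta})_{\beta \in [\omega]}$ is $e$-indiscernible with the same initial-segment types over $e$ as $(a_i)_{i<\omega}$, so an $e$-automorphism moves it onto $(a_i)$, and the corresponding $(\tilde b_{\zeta_\beta})$ supplies the required $(b_i)_{i<\omega}$.

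The main obstacle is the recursive construction: successive applications of \cref{kfhyp} at ascending ancestors of a node must be threaded so that the $b$-choice at any ancestor does not disturb the indiscernibility of the Morley sequences of its descendants' subtrees --- precisely what the $c$-indiscernibility conclusion of \cref{kfhyp} is designed to preserve, and where the existence axiom for hyperimaginaries is crucially used.
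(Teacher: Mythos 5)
Your overall architecture (decorate a large tree witnessing the sequence, regularize it, then transport the path by an $e$-automorphism) is the right family of ideas, but the step that carries all the weight does not work. The crucial claim is that the tree extracted by the modeling property from your decorated tree $(a_{\eta}b_{\eta})$ is still weakly spread out, justified by ``being Morley depends only on the type of the first pair over $e$, which lies in the EM-type''. Neither half of this is true: an $e$-indiscernible sequence $(c_{i})$ is $e$-Morley only if $c_{i}\forkindep_{e}c_{<i}$ for \emph{every} $i$, a condition on the types over $e$ of all finite initial segments, not of the first pair; and, more importantly, nonforking is not type-definable, so Morley-ness of the children-subtree sequences is not recorded in the EM-type in the sense of the paper's definition, which only transfers partial types over $e$ satisfied by \emph{all} tuples of a given quantifier-free index type. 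Your decorated tree from Step 1 is not s-indiscernible over $e$ (that is exactly what the final extraction is meant to fix), so the children sequences at different nodes generally have different complete types over $e$; only their common partial type transfers, and a realization of that common partial type need not be an $e$-Morley sequence. Hence the extracted tree need not be weakly spread out, and the proof collapses precisely where weak-spread-out-ness is needed to conclude it is a weakly Morley tree. There is also a bookkeeping inconsistency in Step 1: if you recurse on subtree height, the $b$-attachments on the ancestors of $\eta$ are not yet defined when you form $c_{\eta}$, while if you go top-down the seed decoration of the leftmost child subtree is not yet available; note also that each application of \cref{kfhyp} overwrites the decorations below the current node, so nothing persists on descendants beyond the type over $e$ of the decorated subtree. (Minor: stretching the witnessing tree to large $\alpha$ is by compactness, not by \cref{morleytree0}, which goes in the opposite direction.)

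The paper avoids exactly this trap: it never tries to recover Morley-ness from the EM-type of a non-s-indiscernible tree. Instead it builds, by transfinite induction, decorated trees $S^{\alpha}$ that are \emph{simultaneously} s-indiscernible and weakly spread out over $e$, keeping the $a$-part only up to $\equiv_{e}$ with restrictions of the original weakly Morley tree rather than literally fixed. At successor steps the decoration is spread along the genuinely $e$-Morley sequence of sibling subtrees by \cref{kfhyp} (which keeps the joint sequence $e$-Morley and indiscernible over the new root), and indiscernibility over the new root's pair is then recovered by an Erd\H{o}s--Rado extraction over a base containing $e$; this preserves Morley-ness because the types over $e$ of finite initial segments are inherited from an actual Morley sequence, Morley-ness being an invariant of those complete types. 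Only at the end is \cref{morleytree0} applied to $S^{\kappa}$, and the resulting path is moved onto $(a_{i})$ by an $e$-automorphism. To salvage your plan you would have to interleave the regularization with the construction in this way, or give a genuine argument that the independence requirements survive your single final application of the modeling property; as written, that step is a gap.
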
 
 
\justify\begin{proof}
Let $(a_{\eta})_{\eta \in \mathcal{T}_{\omega}}$ be a weak Morley tree over $e$ such that $a_{\zeta_{i}} = a_{i}$ for every $i<\omega$. By compactness we can stretch it to a tree $(a_{\eta})_{\eta \in \mathcal{T}_{\kappa}}$ such that if $v \in [\kappa ]^{<\omega}$ then for all $w \in [\omega ]^{<\omega}$ such that $\vert v \vert = \vert w \vert $ we have $(a_{\eta})_{\eta \in \mathcal{T}_{\kappa}\lceil v} \equiv_{e} (a_{\eta})_{\eta \in \mathcal{T}_{\omega}\lceil w}$. Then this tree is also weakly Morley over $e$.

\justify
We build by induction on $\alpha \leq \kappa$ a tree $S^{\alpha} = (a'^{\alpha}_{\eta}b'^{\alpha}_{\eta})_{\eta \in \mathcal{T}_{\alpha}}$ weakly spread out over $e$ and $s$-indiscernible over $e$ such that : 
\begin{enumerate}
\item[(1)] $(a'^{\alpha}_{\eta})_{\eta \in \mathcal{T}_{\alpha}}\equiv_{e}(a_{\iota_{\alpha,\kappa}(\eta)})_{\eta \in \mathcal{T}_{\alpha}}$.
\item[(2)] $a'^{\alpha}_{\eta}b'^{\alpha}_{\eta}\equiv_{e}a_{0}b$ for all $\eta \in \mathcal{T}_{\alpha}$.
\item[(3)] If $\alpha < \beta < \kappa$ we have $a'^{\alpha}_{\eta}= a'^{\beta}_{\iota_{\alpha, \beta}(\eta)}$ for all $\eta \in \mathcal{T}_{\alpha}$ and $(a'^{\alpha}_{\eta}b'^{\alpha}_{\eta})_{\eta \in \mathcal{T}_{\alpha}}\equiv_{e}(a'^{\beta}_{\iota_{\alpha,\beta}(\eta)}b'^{\beta}_{\iota_{\alpha,\beta}(\eta)})_{\eta \in \mathcal{T}_{\alpha}}$.
\end{enumerate}

\justify
For $\alpha = 0$ we take $a'^{0}_{\emptyset}b'^{0}_{\emptyset}\equiv_{e}a_{0}b$. For the limit step we take $a'^{\alpha}_{\eta}= a'^{\beta}_{\nu}$ for any $\beta < \alpha$ and $\nu \in \mathcal{T}_{\beta }$ such that $\iota_{\beta , \alpha}(\nu) = \eta $, by $(3)$ this is well defined, then by compactness we can find $(b'^{\alpha}_{\eta})_{\eta \in \mathcal{T}_{\alpha}}$ that satisfies the requirements : we take the limit type of the trees previously built.

\justify
Assume that $S^{\alpha}$ has been built for $\alpha$ a limit ordinal. There is $c'$ such that $c'(a'^{\alpha}_{\eta})_{\eta \in \mathcal{T}_{\alpha}}\equiv_{e}a_{\iota_{\alpha+1,\kappa}(\emptyset)}(a_{\iota_{\alpha,\kappa}(\eta)})_{\eta \in \mathcal{T}_{\alpha}}$. Then we also have a $d'$ such that $c'd'\equiv_{e}a_{0}b$. Now let $(c_{\eta}d_{\eta})_{\eta \in \mathcal{T}_{\alpha+1}}$ be an s-indiscernible tree over $e$ that satisfies $EM((c'd')S^{\alpha}/e)$ by \cref{modprop}. Then we have $(c_{\eta})_{\eta \in \mathcal{T}_{\alpha+1}}\equiv_{e}(a_{\iota_{\alpha +1,\kappa}(\eta)})_{\eta \in \mathcal{T}_{\alpha +1}}$, and we take $S^{\alpha+1}$ to be the image of $(c_{\eta}d_{\eta})_{\eta \in \mathcal{T}_{\alpha+1}}$ under this $e$-automorphism.

\justify
Assume that $S^{\alpha}$ has been built for $\alpha$ a successor ordinal. We can find $a'_{\emptyset}\langle (a'_{\langle i \rangle\frown \eta})_{\eta \in \mathcal{T}_{\alpha}}$ : $i<\omega \rangle$ such that $a'_{\emptyset}\langle (a'_{\langle i \rangle\frown \eta})_{\eta \in \mathcal{T}_{\alpha}}$ : $i<\omega \rangle \equiv_{e} (a_{\iota_{\alpha +1,\kappa}(\eta)})_{\eta \in \mathcal{T}_{\alpha+1}}$ and $(a'_{\langle 0 \rangle\frown \eta})_{\eta \in \mathcal{T}_{\alpha}} = (a'^{\alpha}_{\eta})_{\eta \in \mathcal{T}_{\alpha}}$. We then find $b'_{\emptyset}$ such that $a'_{\emptyset}b'_{\emptyset}\equiv_{e}a_{0}b$. Then $\langle (a'_{\langle i \rangle\frown \eta})_{\eta \in \mathcal{T}_{\alpha}}$ : $i<\omega \rangle$ is an $e$-Morley sequence that is $ea'_{\emptyset}$-indiscernible, we extend it to $\langle (a'_{\langle i \rangle\frown \eta})_{\eta \in \mathcal{T}_{\alpha}}$ : $i<\kappa \rangle$ also $ea'_{\emptyset}$-indiscernible. Set $a'^{\alpha+1}_{\emptyset}=a'_{\emptyset}$ and $a'^{\alpha+1}_{\langle i \rangle \frown \eta} = a'_{\langle i \rangle\frown \eta}$.

\justify
We apply \cref{kfhyp} to find $\langle (b'_{\langle i \rangle\frown \eta})_{\eta \in \mathcal{T}_{\alpha}}$ : $i<\kappa \rangle$ such that $(a'_{\langle 0 \rangle\frown \eta})_{\eta \in \mathcal{T}_{\alpha}}(b'_{\langle 0 \rangle\frown \eta})_{\eta \in \mathcal{T}_{\alpha}}\equiv_{e} (a'^{\alpha}_{\eta})_{\eta \in \mathcal{T}_{\alpha}}(b'^{\alpha}_{\eta})_{\eta \in \mathcal{T}_{\alpha}}$ and $\langle (a'_{\langle i \rangle\frown \eta})_{\eta \in \mathcal{T}_{\alpha}}(b'_{\langle i \rangle\frown \eta})_{\eta \in \mathcal{T}_{\alpha}}$ : $i<\kappa \rangle$ is $e$-Morley and $ea'_{\emptyset}$-indiscernible. Then by Erdös-Rado we can extract an $ea'_{\emptyset}b'_{\emptyset}$-indiscernible sequence $\langle (a''_{\langle i \rangle\frown \eta})_{\eta \in \mathcal{T}_{\alpha}}(b''_{\langle i \rangle\frown \eta})_{\eta \in \mathcal{T}_{\alpha}}$ : $i<\omega \rangle$, by sending $\langle (a''_{\langle i \rangle\frown \eta})_{\eta \in \mathcal{T}_{\alpha}}$ : $i<\omega \rangle$ to $\langle (a'_{\langle i \rangle\frown \eta})_{\eta \in \mathcal{T}_{\alpha}}$ : $i<\omega \rangle$ over $ea'_{\emptyset}$ we find $b'^{\alpha+1}_{\emptyset}$ and $\langle (b'^{\alpha+1}_{\langle i \rangle\frown \eta})_{\eta \in \mathcal{T}_{\alpha}}$ : $i<\omega \rangle$ such that $\langle (a'^{\alpha+1}_{\langle i \rangle\frown \eta})_{\eta \in \mathcal{T}_{\alpha}}(b'^{\alpha+1}_{\langle i \rangle\frown \eta})_{\eta \in \mathcal{T}_{\alpha}}$ : $i<\kappa \rangle$ is $e$-Morley and $ea'^{\alpha+1}_{\emptyset}b'^{\alpha+1}_{\emptyset}$-indiscernible. This gives us the desired $S^{\alpha+1}$ and concludes our induction.

\justify
Now by considering $S^{\kappa}$ for a large enough $\kappa$ and applying \cref{morleytree0} we find a weak tree Morley sequence $(a'_{i}b'_{i}$ : $i<\omega)$ such that $(a'_{i}$ : $i<\omega)\equiv_{e}(a_{i}$ : $i<\omega)$ and $a'_{0}b'_{0}\equiv_{e}a_{0}b$. Sending $(a'_{i}$ : $i<\omega)$ to $(a_{i}$ : $i<\omega)$ over $e$ gives us the desired sequence.\end{proof}

\justify
The following lemma is an adaptation of \cite[Lemma 4.6]{dobrowolski2022independence}, the proof carries over verbatim.

\begin{lemma}\label{morleytreehyp} Let $a,b,e \in \mathbb{M}^{heq}$. If $a\forkindep^{K}_{e}b$, then for any ordinal $\alpha \geq 1$, there is a tree $(c_{\eta})_{\eta \in \mathcal{T}_{\alpha}}$ weakly spread out over $e$ and s-indiscernible over $e$ such that if $\eta \triangleleft \nu$ and $dom(\nu) = \alpha$, then
$c_{\eta}c_{\nu}\equiv_{e}ab$.\end{lemma}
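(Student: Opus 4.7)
The plan is to proceed by transfinite induction on $\alpha$, following the template of \cite[Lemma 4.6]{dobrowolski2022independence}. At each stage I first build a concrete tree configuration over $e$ that is weakly spread out and satisfies the leaf-ancestor type condition $c_{\eta}c_{\nu}\equiv_{e}ab$, then apply the modeling property \cref{modprop} (in its hyperimaginary form analogous to \cref{modprophyperimag}) to extract an $s$-indiscernible tree over $e$ realising the EM-type of this configuration. Both the weakly spread out condition --- which decomposes into $e$-indiscernibility of the sibling-subtree sequences plus a type-definable non-forking condition --- and the leaf-ancestor condition are captured by EM-types, and so persist through the extraction.

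For the base case $\alpha = 1$, take an $e$-Morley sequence $(b_{i})_{i<\omega}$ in $\tp(b/e)$ starting at $b$; since $a \forkindep^{K}_{e} b$, \cref{kimhyp1} yields $a' \equiv_{eb} a$ such that this sequence is $ea'$-indiscernible, and the tree with $c_{\emptyset} = a'$ and $c_{\langle i \rangle} = b_{i}$ fulfils the required properties. For the successor step $\alpha = \beta + 1$, apply the induction hypothesis to obtain a tree $T^{\beta} = (c^{\beta}_{\eta})_{\eta \in \mathcal{T}_{\beta}}$, then use extension to arrange $a \forkindep^{K}_{e} T^{\beta}$ so that \cref{kimhyp1} applied to the pair $(a, T^{\beta})$ furnishes an enriched configuration $(\hat{a}, T^{\beta})$ with $\hat{a} \equiv_{e} a$ relating correctly to all leaves of $T^{\beta}$. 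Take an $e$-indiscernible $e$-Morley sequence $(\hat{a}^{i}, T^{i})_{i<\omega}$ of copies of this configuration over $e$, and identify each $T^{i}$ with the subtree below the new top-level node $c_{\{(\beta, i)\}} := \hat{a}^{i}$. The resulting tree on $\mathcal{T}_{\beta+1}$ is weakly spread out at the new level by the $e$-Morley property of the $T^{i}$'s, weakly spread out below by induction, and satisfies the leaf-ancestor condition at every level because this is encoded in the preserved type $\tp(\hat{a}, T^{\beta}/e)$. A final invocation of \cref{modprop} extracts the $s$-indiscernible tree. The limit case assembles the previously constructed trees into a coherent inverse system via the canonical inclusions $\iota_{\gamma \alpha}$, realises the limit by compactness, and invokes \cref{modprop} once more.

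The main obstacle is the successor step, specifically the attachment of the fresh $a$-like element $\hat{a}$ above $T^{\beta}$ fulfilling the leaf-ancestor condition with every leaf. This requires first using extension to make $a$ Kim-independent from $T^{\beta}$, then combining \cref{kimhyp1} with the weak tree Morley structure of the leaves of $T^{\beta}$ to produce $\hat{a}$ compatible with all of them simultaneously. Once this is done, the remainder of the successor step is formal manipulation involving Morley sequences and the preservation of type-definable properties through EM-type extraction.
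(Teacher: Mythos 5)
Your top-level skeleton is the same as that of \cite[Lemma 4.6]{dobrowolski2022independence}, which is the proof the paper invokes: induction on $\alpha$, base case via the chain condition, successor step by forming an $e$-Morley sequence of copies of (new root $+$ old tree) and restoring s-indiscernibility with \cref{modprop} through EM-types, limits by coherence and compactness. The gap is precisely at the point you yourself flag as the main obstacle, and the mechanism you propose for it does not work. Extension applied to $a$ over $e$ only preserves $\tp(a/e)$ (at best its type over $e$ together with one designated copy of $b$ inside $T^{\beta}$), so after arranging $a\forkindep^{K}_{e}T^{\beta}$ you have no control whatsoever over $\tp(\hat{a}c_{\nu}/e)$ for the level-$0$ nodes $\nu$ of $T^{\beta}$; and \cref{kimhyp1} cannot be applied ``to the pair $(a,T^{\beta})$'' to gain that control, because the chain condition spreads a type along an $e$-Morley sequence, whereas the collection of all leaves of $T^{\beta}$ is not an $e$-Morley sequence (it is not even linearly $e$-indiscernible; only the sequences of sibling subtrees $(c_{\unrhd\eta\frown\langle i\rangle})_{i<\omega}$ are Morley over $e$). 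Appealing to ``the weak tree Morley structure of the leaves'' is also not available here: the leaves do not form a weak tree Morley sequence, and the witnessing properties of such sequences are developed only after, and partly by means of, this very lemma, so invoking them would be circular.

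What the actual successor step requires is a recursion through the levels of $T^{\beta}$, with $p(x,b)=\tp(a/eb)$: for a leaf $\eta$ one takes $a_{\eta}$ with $a_{\eta}c_{\eta}\equiv_{e}ab$, so $a_{\eta}\forkindep^{K}_{e}c_{\eta}$; at a node $\eta$ one level down, the subtrees above its children form an $e$-Morley sequence by weak spread-out-ness, so \cref{kimhyp1} applied to a realization of the leaf-types of the first child subtree (given by the recursion, together with its Kim-independence over $e$ from that subtree) produces a single element that realizes $p(x,c_{\nu})$ for the leaves $\nu$ of \emph{all} the sibling subtrees simultaneously and is Kim-independent over $e$ from their union, so the recursion can continue; limit levels and the passage to the full tree are handled by finite character and compactness (finitely many leaves sit inside a finite-level restriction). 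Only once this new root is produced can you take the $e$-Morley sequence of copies and extract with \cref{modprop}. Your proposal replaces this level-by-level propagation by a single application of extension plus \cref{kimhyp1}, which does not yield the required joint types $\hat{a}c_{\nu}\equiv_{e}ab$; as written, the successor step — and hence the induction — does not go through.
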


\justify
The following lemma is an adaptation of \cite[Lemma 4.7]{dobrowolski2022independence} :

\begin{lemma}\label{morleytree2} Let $a,b,e \in \mathbb{M}^{heq}$. If $a \forkindep^{K}_{e} b$, then there is a weak tree
Morley sequence $(a_{i}$ : $i<\omega)$ over $e$ which is $eb$-indiscernible with $a_{0} = a$.
\end{lemma}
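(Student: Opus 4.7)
The plan is to first apply Lemma~\ref{morleytreehyp} to $a\forkindep^K_e b$ with a sufficiently large ordinal $\kappa$, obtaining a tree $(c_\eta)_{\eta\in\mathcal{T}_\kappa}$ weakly spread out and s-indiscernible over $e$ with $c_\eta c_\nu\equiv_e ab$ whenever $\eta\triangleleft\nu$ and $dom(\nu)=[0,\kappa)$. Feeding this tree into Lemma~\ref{morleytree0} produces a weakly Morley tree $(d_\eta)_{\eta\in\mathcal{T}_\omega}$ over $e$ locally based on $(c_\eta)$; the ancestor-descendant property is preserved by local basing, so $d_\eta d_\nu\equiv_e ab$ for every $\eta\triangleleft\nu$ in $\mathcal{T}_\omega$ with $dom(\nu)=[0,\omega)$. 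The canonical path $(d_{\zeta_n})_{n<\omega}$ is then a weak tree Morley sequence over $e$, and setting $b':=d_{\zeta_0}$ gives $d_{\zeta_n}b'\equiv_e ab$ for every $n\geq 1$.

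I would next observe that the tail $(d_{\zeta_n})_{n\geq 1}$ is $eb'$-indiscernible: for any two strictly increasing tuples $(n_1,\dots,n_k)$ and $(n_1',\dots,n_k')$ in $\{1,2,\dots\}$, prepending $0$ to each yields strictly increasing tuples in $\omega$, so $e$-indiscernibility of the full path gives $(d_{\zeta_0},d_{\zeta_{n_1}},\dots,d_{\zeta_{n_k}})\equiv_e(d_{\zeta_0},d_{\zeta_{n_1'}},\dots,d_{\zeta_{n_k'}})$, which is exactly what is required.

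It then remains to show that the tail is itself a weak tree Morley sequence over $e$: the sub-tree $(d_\eta)_{\eta\in\mathcal{T}_\omega\lceil(\omega\setminus\{0\})}$ inherits weak spread-out-ness, s-indiscernibility and the uniformity property from the ambient weakly Morley tree, and after the natural reindexing of levels $n\mapsto n-1$ it becomes a $\mathcal{T}_\omega$-indexed weakly Morley tree over $e$ whose canonical path is exactly $(d_{\zeta_n})_{n\geq 1}$. Finally, an $e$-automorphism sending $b'$ to $b$ (using $b'\equiv_e b$) yields an $eb$-indiscernible weak tree Morley sequence $(a_i')_{i<\omega}$ over $e$ with $a_0'\equiv_{eb}a$, and a further $eb$-automorphism aligns $a_0=a$. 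The main obstacle is this sub-tree step, which requires carefully verifying that all three conditions defining a weakly Morley tree survive the removal of the bottom level and the reindexing; everything else is formal once the tree produced by Lemma~\ref{morleytreehyp} and Lemma~\ref{morleytree0} is in hand.
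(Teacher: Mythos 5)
The step that breaks is the claim that ``the ancestor-descendant property is preserved by local basing''. \cref{morleytree0} only guarantees that every finite-level restriction of the extracted tree $(d_{\eta})_{\eta\in\mathcal{T}_{\omega}}$ has the same type over $e$ as the restriction of $(c_{\eta})_{\eta\in\mathcal{T}_{\kappa}}$ to \emph{some} finite set of levels $v\subseteq\kappa$, and nothing forces $\min v=0$; the hypothesis supplied by \cref{morleytreehyp} concerns only pairs whose lower element lies at level $0$ of the original tree, while all nodes at levels $>0$ there are copies of $a$. Worse, the transferred statement is outright incompatible with $(d_{\eta})$ being a weakly Morley tree unless $\tp(a/e)=\tp(b/e)$: applying the defining condition of a weakly Morley tree to the one-element level sets $v=\{0\}$ and $w=\{1\}$ gives $d_{\zeta_{0}}\equiv_{e}d_{\zeta_{1}}$, so if one had $d_{\zeta_{1}}d_{\zeta_{0}}\equiv_{e}ab$ then $a\equiv_{e}b$ would follow, which of course need not hold (the sorts of $a$ and $b$ may even differ). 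In other words, the copies of $b$ sitting at level $0$ of the tree from \cref{morleytreehyp} are necessarily averaged away when one passes to a weakly Morley tree; in general $b':=d_{\zeta_{0}}$ is a copy of $a$, the pairs $d_{\zeta_{n}}b'$ do not realize $\tp(ab/e)$, and the final $e$-automorphism ``sending $b'$ to $b$'' does not exist. Since the $eb$-indiscernibility and the normalization $a_{0}=a$ both rest on this step, the proposal only yields a weak tree Morley sequence over $e$ in $\tp(a/e)$, which is far short of the lemma.

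The remaining ingredients are fine: the prepend-$0$ argument does show the tail of the path is indiscernible over $ed_{\zeta_{0}}$, and restricting a weakly Morley tree to the levels $\geq 1$ and reindexing does give a weakly Morley tree, so tails of weak tree Morley sequences are again such. But the entire difficulty of the lemma is precisely to keep a copy of $b$ attached to the path while the tree is being homogenized, and this cannot be done by quoting \cref{morleytree0} as a black box after the fact: the level-$0$ node $c_{\zeta_{0}}$ (equivalently, the instances of $\tp(b/ea)$ along the all-zeros branch) has to be tracked as a parameter inside the extraction argument itself. This is what the original proof of Lemma 4.7 of \cite{dobrowolski2022independence} does, and the paper's proof consists exactly of reducing to real representatives of $a$ and $b$ via \cref{liftkimmorley} and then running that argument verbatim with \cref{morleytreehyp} and \cref{morleytree0}; your outline replaces the delicate part of that argument with an assertion that is false in general.
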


\begin{proof}
\justify
By \cref{liftkimmorley} we have $\overline{a} \forkindep^{K}_{e} \overline{b}$ for some representatives. Clearly, proving the lemma for these real tuples is enough as we can then take the sequence of the $E$-classes for the right type equivalence relation $E$. The proof in this case carries over verbatim using \cref{morleytreehyp} and \cref{morleytree0}.\end{proof}

\begin{prop} Suppose that $(a_{i}$ : $i<\omega)$ is a weak tree Morley sequence of real tuples over $e \in \mathbb{M}^{heq}$. Then $\lbrace \varphi(x, a_{i})$ : $i < \omega \rbrace$ is inconsistent if and only if
$\varphi(x, a_{0})$ Kim-divides over $e$.
\end{prop}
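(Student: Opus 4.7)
The plan is to reduce both directions to Kim's Lemma for Kim-dividing (\cref{kimlemmahyp}) via the following key claim: \emph{a weak tree Morley sequence of real tuples over $e$ is an $e$-Morley sequence in $\tp(a_{0}/e)$}.

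Granting the claim, the proposition is immediate. For the direction $(\Rightarrow)$, if $\lbrace \varphi(x,a_{i}) : i<\omega\rbrace$ is inconsistent and $(a_{i})$ is an $e$-Morley sequence with $a_{0}=a_{0}$, then $\varphi(x,a_{0})$ Kim-divides over $e$ by definition. For the direction $(\Leftarrow)$, if $\varphi(x,a_{0})$ Kim-divides over $e$, then \cref{kimlemmahyp} yields that $\lbrace \varphi(x,b_{i}) : i<\omega \rbrace$ is inconsistent for every $e$-Morley sequence $(b_{i})$ in $\tp(a_{0}/e)$ starting with $a_{0}$; taking $(b_{i})=(a_{i})$ closes the argument.

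To prove the key claim I would let $(a_{\eta})_{\eta \in \mathcal{T}_{\alpha}}$ be a weakly Morley tree over $e$ giving rise to $(a_{i})=(a_{\zeta_{\beta}})_{\beta \in [\alpha]}$. The $e$-indiscernibility of $(a_{i})$ is part of the definition of a weakly Morley tree (via $s$-indiscernibility). For the forking condition $a_{i}\forkindep_{e}a_{<i}$, I would use the weakly spread out property: at each branching node $\eta$ with $P_{\beta+1}(\eta)$, the subtree sequence $(a_{\unrhd \eta\frown \langle i\rangle})_{i<\omega}$ is an $e$-Morley sequence, so in particular the path-extending subtree $a_{\unrhd \eta \frown \langle 0\rangle}$ is forking-independent over $e$ from the sibling subtrees, and hence from any tuple contained in them. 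Using $s$-indiscernibility to identify path elements with their siblings and left transitivity of forking for hyperimaginaries, I would inductively propagate these sibling-level independences up the path.

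The main obstacle is the proof of the key claim: passing from the Morley condition on sibling subtrees (the weakly spread out property) to forking independence along a linear path is not automatic and requires a careful inductive tree traversal combined with $s$-indiscernibility and transitivity of forking. The restriction to real tuples in the statement is exactly what makes the final appeal to \cref{kimlemmahyp}, which is formulated at the formula level, go through without further reduction.
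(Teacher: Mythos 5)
Your reduction rests on the claim that a weak tree Morley sequence over $e$ is an $e$-Morley sequence (in the sense of forking) in $\tp(a_{0}/e)$, and that claim is exactly what fails: it is not justified by the definitions and is false in general, which is why the paper does not argue this way. The weakly spread out condition only says that, at a node $\zeta_{i+1}$, the sibling subtrees $(a_{\unrhd \zeta_{i+1}\frown\langle j\rangle} : j<\omega)$ form an $e$-Morley sequence; it gives forking independence \emph{between distinct sibling subtrees}, but says nothing about the relation of a node to its own descendants. The consecutive path elements $a_{\zeta_{i}}, a_{\zeta_{i-1}},\dots$ all lie inside the single subtree $a_{\unrhd \zeta_{i+1}\frown\langle 0\rangle}$, so no instance of the spread-out condition yields $a_{\zeta_{i}}\forkindep_{e}a_{\zeta_{<i}}$ (in the paper's enumeration, the leaf-ward initial segment). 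Your proposed repair via s-indiscernibility does not close this: s-indiscernibility gives $a_{\eta_{i}}\equiv_{e I_{>i}}a_{\zeta_{i}}$, i.e.\ a path element and its sibling have the same type over the \emph{root-ward} part of the path, and having the same type over that tail cannot transfer an independence statement over the leaf-ward initial segment from $a_{\eta_{i}}$ to $a_{\zeta_{i}}$. Indeed, if weak tree Morley sequences were always forking-Morley, the whole point of introducing them (they form a strictly broader class, later shown to coincide with total Kim-Morley sequences, not with Morley sequences) would collapse, and this proposition would be an immediate corollary of \cref{kimlemmahyp}, with no further use of NSOP1.

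The paper's proof supplies precisely the missing ingredient. It passes to the sibling path $\eta_{i}=\zeta_{i+1}\frown\langle 1\rangle$: the sequence $(a_{\eta_{i}} : i<\omega)$ \emph{is} $e$-Morley, because $a_{\eta_{i}}$ sits in the subtree $a_{\unrhd\zeta_{i+1}\frown\langle 1\rangle}$ while $a_{\eta_{<i}}$ sits in $a_{\unrhd\zeta_{i+1}\frown\langle 0\rangle}$, and these are independent by the spread-out condition. Then, using $a_{\eta_{i}}\equiv_{eI_{>i}}a_{\zeta_{i}}$ and the syntactic characterisation of $NSOP_{1}$ (\cref{syntax3}), consistency of $\lbrace\varphi(x,a_{\zeta_{i}}) : i<\omega\rbrace$ is equivalent to consistency of $\lbrace\varphi(x,a_{\eta_{i}}) : i<\omega\rbrace$; only after this transfer does Kim's Lemma (\cref{kimlemmahyp}) apply, to the genuinely Morley sequence $(a_{\eta_{i}})$. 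This appeal to $NSOP_{1}$ beyond Kim's Lemma is essential and is absent from your plan, so the argument as proposed has a genuine gap at its central step.
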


\begin{proof}
\justify
Let $(a_{\eta})_{\eta \in \mathcal{T}_{\omega}}$ be a weak Morley tree over $e$ with $a_{i}=a_{\zeta_{i}}$. Let $\eta_{i} \in \mathcal{T}_{\omega}$ be the function with $dom(\eta_{i}) = [i, \omega)$ and :

\begin{center}
$ \eta_{i}(j) = $ $\left\{ \begin{array}{l}
        \ 1$ if $j=i\\
        \ 0$ otherwise.$
    \end{array}
    \right. $ 
\end{center}

\justify
We consider the sequence $I = (a_{\eta_{i}},a_{\zeta_{i}}$ : $i<\omega)$. Because $(a_{\eta})_{\eta \in \mathcal{T}_{\omega}}$ is a weak Morley tree over $e$,
$I$ is an $e$-indiscernible sequence. Moreover, by s-indiscernibility, $a_{\eta_{i}}\equiv_{eI_{>i}}a_{\zeta_{i}}$ for all $i<\omega$.

\justify
By \cref{syntax3} and $NSOP_{1}$, it follows that  $\lbrace \varphi(x,a_{\eta_{i}})$ : $i<\omega\rbrace $ is consistent if and only if $\lbrace \varphi(x,a_{\zeta_{i}})$ : $i<\omega\rbrace $ is consistent. Because $(a_{\eta})_{\eta \in \mathcal{T}_{\omega}}$ is a spread out tree over $e$ $(a_{\unrhd \zeta_{i}\frown \langle k \rangle}$ : $ k<\omega)$ is $e$-Morley. So $a_{\unrhd \zeta_{i}\frown \langle 1 \rangle}\forkindep_{e}a_{\unrhd \zeta_{i}\frown \langle 0\rangle} $ and since $a_{\eta_{i}} \in a_{\unrhd \zeta_{i}\frown \langle 1 \rangle} $ and $a_{\eta_{<i}} \subseteq a_{\unrhd \zeta_{i}\frown \langle 0\rangle} $ $a_{\eta_{i}}\forkindep_{e}a_{\eta_{<i}} $ for all $i<\omega$, meaning that $(a_{\eta_{i}}$ : $i<\omega)$ is $e$-Morley. 

\justify
By Kim's Lemma for Kim-dividing we have that $\varphi (x,a_{0})$ Kim-divides over $e$ if and only if $\varphi (x,a_{\eta_{0}})$ Kim-divides over $e$ if and only if $\lbrace \varphi(x,a_{\eta_{i}})$ : $i<\omega\rbrace $ is inconsistent if and only if $\lbrace \varphi(x,a_{i})$ : $i<\omega\rbrace $ is inconsistent.\end{proof}

\justify
From this we deduce that weak Morley tree sequences of real tuples witness Kim-Forking :

\begin{cor}\label{kimlemmatree} (Kim’s lemma for weak tree Morley sequences) Let $e \in \mathbb{M}^{heq}$. TFAE :
\begin{enumerate}
\item $\varphi (x,a_{0})$ Kim-divides over $e$.
\item For some weak tree Morley sequence $(a_{i}$ : $i<\omega)$ over $e$ $\lbrace \varphi(x,a_{i})$ : $i<\omega\rbrace $ is inconsistent.
\item For any weak tree Morley sequence $(a_{i}$ : $i<\omega)$ over $e$ $\lbrace \varphi(x,a_{i})$ : $i<\omega\rbrace $ is inconsistent.
\end{enumerate}\end{cor}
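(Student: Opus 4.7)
The plan is to derive the three-way equivalence directly from the preceding Proposition, which establishes, for a given weak tree Morley sequence $(a_i : i<\omega)$ of real tuples over $e$ with first term $a_0$, the biconditional between inconsistency of $\lbrace \varphi(x,a_i) : i<\omega \rbrace$ and $\varphi(x,a_0)$ Kim-dividing over $e$. The implications $(2) \Rightarrow (1)$ and $(1) \Rightarrow (3)$ are then immediate: applying the Proposition to an arbitrary weak tree Morley sequence with first term $a_0$ converts each quantifier directly into the corresponding statement about Kim-dividing.

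The only content not already contained in the Proposition is the implication $(3) \Rightarrow (2)$, for which I must exhibit at least one weak tree Morley sequence of real tuples over $e$ whose first term is $a_0$. By existence for hyperimaginaries $a_0 \forkindep_{e} e$, and since forking implies Kim-forking we have $a_0 \forkindep^{K}_{e} e$. Applying \cref{morleytree2} with $a = a_0$ and $b = e$ then produces a weak tree Morley sequence $(a_i : i<\omega)$ over $e$ with $a_0 = a$; because $a_0$ is a real tuple, the construction in the proof of \cref{morleytree2} (which passes through representatives by \cref{liftkimmorley}) delivers a sequence of real tuples, as required by $(2)$.

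There is no genuine obstacle here: the corollary is a direct repackaging of the preceding Proposition into the standard ``Kim's lemma'' template, with existence of a witnessing sequence supplied by \cref{morleytree2}. The heavy lifting, namely the NSOP$_1$-based switch between the path $(a_{\zeta_i})$ and the twin branch $(a_{\eta_i})$ together with Kim's lemma for $e$-Morley sequences (\cref{kimlemmahyp}), was already carried out in the preceding Proposition.
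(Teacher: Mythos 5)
Your proposal is correct and matches the paper's (implicit) proof: the corollary is stated as an immediate consequence of the preceding Proposition, exactly the repackaging you describe. Your extra step for $(3)\Rightarrow(2)$ — producing a witnessing weak tree Morley sequence starting at $a_{0}$ via existence for hyperimaginaries, the fact that $\forkindep$ implies $\forkindep^{K}$, and \cref{morleytree2} — is a sound way to make explicit the existence point the paper leaves tacit.
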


\justify
As in \cref{kimdivbasic} thanks to \cref{morleytree1} we can deduce the following hyperimaginary version :

\begin{cor}\label{kimlemmatreeversionhyp}
Let $b\in \mathbb{M}^{heq}$ and $p(x,b_{0})$ be a partial type over $b_{0}$. Then $p(x,b_{0})$ Kim-divides over $e$ iff there is an weak tree Morley sequence $(b_{i}$ : $i < \omega)$ of hyperimaginaries such that $\lbrace p(x,b_{i})$ : $i<\omega \rbrace$ is inconsistent iff for any weak tree Morley sequence $(b_{i}$ : $i < \omega)$ of hyperimaginaries $\lbrace p(x,b_{i})$ : $i<\omega \rbrace$ is inconsistent.
\end{cor}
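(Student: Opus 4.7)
The plan is to reduce the problem from hyperimaginary sequences to sequences of real representatives via \cref{morleytree1}, and then invoke \cref{kimlemmatree} together with \cref{kimdivbasic}. The key technical step is the following transfer: given a weak tree Morley sequence $(b_i : i<\omega)$ of hyperimaginaries over $e$ with first element $b_0$, and any representative $\overline{b}_0$ of $b_0$, I apply \cref{morleytree1} with $(a_i)=(b_i)$ and $b=\overline{b}_0$ to obtain a sequence $(\overline{b}_i : i<\omega)$ such that $(b_i\overline{b}_i : i<\omega)$ is a weak tree Morley sequence over $e$ with $b_i\overline{b}_i \equiv_e b_0\overline{b}_0$. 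The $e$-automorphism realizing this type equality sends $\overline{b}_0$ to $\overline{b}_i$ while preserving the relation between a representative and its class, so $\overline{b}_i$ is a representative of $b_i$. Since being weakly spread out, $s$-indiscernible, and the level-symmetry condition in the definition of a weakly Morley tree are all preserved under reducts of the tuples, the real-tuple sequence $(\overline{b}_i)$ is itself a weak tree Morley sequence over $e$.

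With this transfer in hand, I address each of the three implications in turn. For the forward direction, if $p(x,b_0)$ Kim-divides over $e$, then by \cref{kimdivbasic} I may pick a representative $\overline{b}_0$ of $b_0$ and a formula $\varphi(x,\overline{b}_0) \in p$ that Kim-divides over $e$. Given any weak tree Morley sequence $(b_i)$ of hyperimaginaries starting at $b_0$, I extract real representatives $(\overline{b}_i)$ as above and apply \cref{kimlemmatree} to $\varphi$ and $(\overline{b}_i)$ to conclude that $\{\varphi(x,\overline{b}_i) : i<\omega\}$, and therefore $\{p(x,b_i) : i<\omega\}$, is inconsistent. For the implication from the universal version to the existential one, I need only the existence of some weak tree Morley sequence starting at $b_0$, which follows from existence for hyperimaginaries (giving $b_0\forkindep^{K}_{e} e$ via the remark that dividing implies Kim-dividing) combined with \cref{morleytree2} applied with $b=e$. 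For the remaining direction I argue contrapositively, assuming as usual that $p$ is closed under finite conjunctions: if $(b_i)$ is a weak tree Morley sequence with $\{p(x,b_i) : i<\omega\}$ inconsistent, I pass to representatives $(\overline{b}_i)$ and use compactness to obtain a finite $F \subseteq \omega$ and formulas $\varphi_i \in p$ so that $\bigwedge_{i \in F}\varphi_i(x,\overline{b}_i)$ is inconsistent. Setting $\psi := \bigwedge_{i \in F}\varphi_i \in p$, the set $\{\psi(x,\overline{b}_i) : i<\omega\}$ is then inconsistent as well, so \cref{kimlemmatree} yields that $\psi(x,\overline{b}_0)$ Kim-divides over $e$, and a final appeal to \cref{kimdivbasic} gives that $p(x,b_0)$ Kim-divides over $e$.

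The main obstacle is the passage from hyperimaginary sequences to real sequences, which is exactly the content of \cref{morleytree1} and requires care in identifying representatives via the type-equality $b_i\overline{b}_i \equiv_e b_0\overline{b}_0$; once this transfer is set up, the rest is a routine combination of the real-tuple statement \cref{kimlemmatree}, the remark \cref{kimdivbasic}, and the standard compactness reduction from partial types to single formulas.
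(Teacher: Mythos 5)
Your proposal is correct and follows essentially the same route the paper intends: the paper's one-line justification ("as in \cref{kimdivbasic} thanks to \cref{morleytree1}") is precisely your transfer step, using \cref{morleytree1} to attach representatives to the hyperimaginary weak tree Morley sequence, noting the projected real sequence is again weak tree Morley (cf.\ \cref{treemorleysequ}), and then reducing to \cref{kimlemmatree} via the formula/compactness argument of \cref{kimdivbasic}. The extra details you supply (invariance under the $e$-automorphism identifying representatives, and existence of some weak tree Morley sequence from existence plus \cref{morleytree2}) are exactly the right way to fill in the paper's abbreviated proof.
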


\begin{cor}\label{remarkmorleytree} If $a,b_{0},e\in \mathbb{M}^{heq}$ and if $I=(b_{i}$ : $i<\omega )$ is a weak tree sequence that is $ea$-indiscernible then $a\forkindep^{K}_{e}b_{0}$.
\end{cor}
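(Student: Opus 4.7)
The plan is to apply the hyperimaginary Kim's lemma for weak tree Morley sequences (\cref{kimlemmatreeversionhyp}) directly to the partial type $p(x,b_{0}):=\tp(a/eb_{0})$. The key observation is that $ea$-indiscernibility of $I$ forces $b_{i}\equiv_{ea}b_{0}$ for every $i<\omega$, so $a$ realises $p(x,b_{i})$ for every $i$, and in particular the set $\{p(x,b_{i}) : i<\omega\}$ is consistent. By \cref{kimlemmatreeversionhyp}, this is exactly what is needed to conclude that $p(x,b_{0})$ does not Kim-divide over $e$, that is $a\forkindep^{Kd}_{e}b_{0}$.

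To upgrade this to non-Kim-forking, I would combine \cref{kimdivbasic} (a partial type over a hyperimaginary Kim-divides over $e$ iff it implies a formula that Kim-divides over $e$) with \cref{kimfkimd} (for formulas, Kim-forking implies Kim-dividing). Concretely, if $p$ Kim-forks over $e$ then it implies a finite disjunction of Kim-dividing formulas; that disjunction is itself a Kim-forking formula, hence by \cref{kimfkimd} it Kim-divides, and then by \cref{kimdivbasic} $p$ itself Kim-divides. The converse is immediate. Thus non-Kim-dividing and non-Kim-forking coincide for partial types over hyperimaginaries, and the conclusion of the previous paragraph upgrades to $a\forkindep^{K}_{e}b_{0}$.

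I do not anticipate any substantive obstacle: the statement is essentially the ``easy half'' of Kim's lemma repackaged as an independence statement, and the only bookkeeping is the passage from formulas to partial types over hyperimaginaries, which is entirely handled by the two references cited above. Care is only needed to invoke the hyperimaginary form \cref{kimlemmatreeversionhyp} rather than the real-tuple form \cref{kimlemmatree}, so that no passage through representatives of $b_{0}$ is required.
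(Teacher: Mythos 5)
Your argument is correct and coincides with the paper's intended (in fact omitted) proof: the corollary is stated as an immediate consequence of \cref{kimlemmatreeversionhyp}, with $ea$-indiscernibility giving that $a$ realises every $p(x,b_{i})$, hence consistency, hence non-Kim-dividing of $\tp(a/eb_{0})$. Your upgrade from non-Kim-dividing to $a\forkindep^{K}_{e}b_{0}$ via \cref{kimdivbasic} and \cref{kimfkimd} is exactly how the paper handles the same step in its analogous arguments (e.g.\ \cref{kimlemmahyp2+} and \cref{symh}).
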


\begin{cor}\label{conjmorleytreebis} Let $a,e \in \mathbb{M}^{heq}$ and $I=(b_{i}$ : $i<\omega)$ be a weak tree Morley sequence of hyperimaginaries. Then if $a\forkindep^{K}_{e}b_{0}$ there is $a'\equiv_{eb_{0}}a$ such that $I$ is a $ea'$-indiscernible.
\end{cor}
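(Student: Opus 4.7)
The plan is to adapt the classical strategy from Lemma \cref{kimdivh} --- realize the type consistently along the sequence, then extract an indiscernible subsequence --- to weak tree Morley sequences, using \cref{kimlemmatreeversionhyp} in place of the formula-level Kim's lemma.

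Since $a\forkindep^{K}_{e}b_{0}$ the partial type $p(x,b_{0}) := \tp(a/eb_{0})$ does not Kim-divide over $e$, so by \cref{kimlemmatreeversionhyp} applied to the weak tree Morley sequence $I$ the set $\{p(x,b_{i}) : i<\omega\}$ is consistent. I would first stretch $I$ by compactness to an $e$-indiscernible sequence $I'=(b_{i} : i<\lambda)$ with $\lambda$ above the Erd\H{o}s--Rado threshold for the relevant parameters. By $e$-indiscernibility of $I'$, finite consistency of $\{p(x,b_{i}) : i<\lambda\}$ reduces to the length-$\omega$ case, so by compactness the whole set is consistent; let $a^{*}$ realize it. This amounts to $\tp(a^{*},b_{i}/e)=\tp(a,b_{0}/e)$ for every $i<\lambda$, and composing the associated $e$-automorphisms forces all $b_{i}$ to share the common $1$-type $\tp(b_{0}/ea^{*})$ over $ea^{*}$.

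Next, Erd\H{o}s--Rado yields an $ea^{*}$-indiscernible subsequence $(b'_{j} : j<\omega)$ of $I'$. Being a subsequence of an $e$-indiscernible sequence, it is $e$-indiscernible with the same $e$-EM-type as $I$, hence $(b'_{j})\equiv_{e}I$, and there is $\sigma\in\mathrm{Aut}(\mathbb{M}/e)$ with $\sigma(b'_{j})=b_{j}$ for all $j<\omega$. Set $a':=\sigma(a^{*})$: the $ea^{*}$-indiscernibility of $(b'_{j})$ transports to $ea'$-indiscernibility of $I$. For the type condition, since $b'_{0}\equiv_{ea^{*}}b_{0}$ we have $\tp(a^{*},b'_{0}/e)=\tp(a^{*},b_{0}/e)=\tp(a,b_{0}/e)$, whence applying $\sigma$ gives $\tp(a',b_{0}/e)=\tp(a,b_{0}/e)$, i.e.\ $a'\equiv_{eb_{0}}a$.

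The main delicate point is matching the type $\tp(a'/eb_{0})$ at the end, since the extracted $b'_{0}$ corresponds only to some unspecified $b_{i_{0}}\in I'$ rather than to $b_{0}$ itself. The uniformity $\tp(a^{*},b_{i}/e)=\tp(a,b_{0}/e)$ for every $i<\lambda$, which makes all $b_{i}$ conjugate over $ea^{*}$, is precisely what renders the choice of $i_{0}$ immaterial in the final type computation.
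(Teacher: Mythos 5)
Your argument is correct and is essentially the paper's own proof: the paper likewise obtains consistency of $\lbrace p(x,b_{i})$ : $i<\omega\rbrace$ from \cref{kimlemmatreeversionhyp} and then runs the stretch--realize--extract--pull-back argument of $(1)\Rightarrow(2)$ of \cref{kimdivh}. One cosmetic point: Erd\H{o}s--Rado does not yield an actual $ea^{*}$-indiscernible \emph{subsequence} of $I'$ but a sequence finitely based on $I'$ over $ea^{*}$; your argument goes through unchanged with that formulation, since every element of $I'$ realizes the same type over $ea^{*}$ and the extracted sequence still has the same EM-type as $I$ over $e$.
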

\begin{proof}
It is similar to the one of $(1)\Rightarrow (2)$ of \cref{kimdivh}, by \cref{kimlemmatreeversionhyp} $\lbrace p(x,e,b_{i})$ : $i<\omega\rbrace$ is consistent for $p(x,e,b_{0})=\tp(a/eb_{0})$.\end{proof}

\begin{theorem}\label{symh} Kim-forking satisfies symmetry for hyperimaginaries.
\end{theorem}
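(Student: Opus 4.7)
The plan is to derive symmetry by chaining two results already established in the previous subsection. \cref{morleytree2} supplies, from any Kim-independent pair, a weak tree Morley sequence in one coordinate that is indiscernible over the other; \cref{remarkmorleytree} reads Kim-independence off from the existence of exactly such a sequence. Symmetry should therefore reduce to a one-step application of each, with no further tree construction or formula manipulation required.

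Explicitly, I would assume $a \forkindep^K_e b$ and invoke \cref{morleytree2} to produce a weak tree Morley sequence $I=(a_i : i<\omega)$ over $e$ with $a_0=a$ that is $eb$-indiscernible. Then \cref{remarkmorleytree}, applied with the roles of the distinguished element and of the sequence interchanged (its ``$a$'' becomes our $b$ and its ``$(b_i)$'' becomes our $I$, which is a weak tree Morley sequence over $e$ and is $eb$-indiscernible by construction), yields $b \forkindep^K_e a_0 = a$. The reverse implication is obtained by running the same argument with $a$ and $b$ exchanged, which gives the full equivalence.

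The main obstacle has in fact already been overcome upstream, namely in the proofs of the two cited results: the construction of weakly Morley trees underlying \cref{morleytree2} (which itself descends to real representatives via \cref{liftkimmorley} and relies on \cref{morleytreehyp} and \cref{morleytree0}), and the Kim-lemma for weak tree Morley sequences (\cref{kimlemmatree}) which powers \cref{remarkmorleytree}. With these two tools in hand the deduction of symmetry is essentially a two-line consequence, so I do not anticipate any genuinely new difficulty at this step; the only care needed is bookkeeping of which side of the independence is playing the role of the ``element'' versus the ``sequence'' when each lemma is invoked.
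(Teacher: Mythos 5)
Your proposal is correct and follows essentially the paper's own route: the paper likewise applies \cref{morleytree2} to get an $eb$-indiscernible weak tree Morley sequence starting at $a$, and then concludes via \cref{kimlemmatreeversionhyp}, of which your cited \cref{remarkmorleytree} is just the packaged corollary. No genuine difference in approach.
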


\begin{proof}
\justify
Assume that $a\forkindep^{K}_{e}b$. Let $p(x,\overline{e}, \overline{a}) := \tp(b/ea)$. Since $a\forkindep^{K}_{e}b$ by \cref{morleytree2} there is a weak tree Morley sequence $(a_{i}$ : $i<\omega)$ over $e$ with $a_{0} = a$ which is $eb$-indiscernible. Then $\models p(\overline{b},\overline{e}, \overline{a}_{i})$ for all $i<\omega$ and any representatives, so by \cref{kimlemmatreeversionhyp} $\tp(b/ea)$ does not Kim-divide over $e$.\end{proof}

\justify
The following is \cite[Lemma 5.9]{kaplan2020kim}, it is going to be useful to exploit the fact that weak tree Morley sequences witness Kim-forking, the proof carries over verbatim.

\begin{lemma}\label{treemorleysequ} Suppose $(a_{i}$ : $i<\omega)$ is a weak tree Morley sequence of hyperimaginaries over $e\in \mathbb{M}^{heq}$.
\begin{enumerate}
\item[(1)] If $a_{i} \sim (b_{i}, c_{i})$ for all $i < \omega$, where the $b_{i}$ are all hyperimaginaries of the same type, then $(b_{i}$ : $i<\omega)$ is a weak tree Morley sequence over $e$.
\item[(2)] Given $1 \leq n < \omega$, let $d_{i}: = a_{[n\cdot i,n\cdot (i+1))}$. Then $(d_{i}$ : $i<\omega)$ is a weak tree Morley sequence over $e$.
\end{enumerate}
\end{lemma}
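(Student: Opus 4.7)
The plan for both parts is to exhibit, over the given hyperimaginary $e$, a weakly Morley tree whose spine realises the target sequence.

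For (1), fix a weakly Morley tree $(a_{\eta})_{\eta \in \mathcal{T}_{\alpha}}$ over $e$ with spine $(a_{\beta})_{\beta \in [\alpha]}$. Since the $b_{i}$ are all of a common sort and $a_{i} \sim (b_{i}, c_{i})$, there is a $\emptyset$-definable hyperimaginary projection $\pi$ with $\pi(a_{i}) = b_{i}$ for every $i$. By s-indiscernibility over $e$, every node $a_{\eta}$ has the same sort as $a_{0}$, so we can set $b_{\eta} := \pi(a_{\eta})$ for every $\eta \in \mathcal{T}_{\alpha}$. The tree $(b_{\eta})_{\eta \in \mathcal{T}_{\alpha}}$ then inherits from $(a_{\eta})_{\eta \in \mathcal{T}_{\alpha}}$: s-indiscernibility over $e$ (since $\pi$ is $\emptyset$-definable), the weak spread-out property (non-forking passes to $dcl^{heq}$-projections, so each $e$-Morley sequence $(a_{\unrhd \eta \frown \langle i\rangle})_{i}$ projects termwise to an $e$-Morley sequence $(b_{\unrhd \eta \frown \langle i\rangle})_{i}$), and the restriction-type coincidences on $[\alpha]^{<\omega}$. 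Hence $(b_{\eta})_{\eta \in \mathcal{T}_{\alpha}}$ is a weakly Morley tree and its spine is $(b_{i})_{i}$.

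For (2), fix a weakly Morley tree $(a_{\eta})_{\eta \in \mathcal{T}_{\alpha}}$ with spine $(a_{i})_{i}$, and by compactness plus \cref{modprop} stretch it to a tree indexed by an ordinal whose non-limit order type accommodates $n$-fold regrouping. For each $\eta \in \mathcal{T}_{\omega}$, define $d_{\eta}$ as the concatenation of $a_{\nu}$ over $n$ appropriately chosen nodes above $\eta$ in the stretched tree, so that along the spine we recover $d_{i} = a_{[ni, n(i+1))}$. Verify that $(d_{\eta})_{\eta \in \mathcal{T}_{\omega}}$ is s-indiscernible over $e$ (inherited from s-indiscernibility of $(a_{\eta})$ once the regrouping is coherent) and weakly spread out over $e$: this reduces to showing that grouping $n$ consecutive increments of an $e$-Morley sequence into single tuples leaves the sequence $e$-Morley, which follows from iterated left transitivity of $\forkindep$.

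The main obstacle is part (2): choosing the regrouping so that the weakly-spread-out condition survives. S-indiscernibility and the restriction-type coincidences are essentially automatic once the index map is set up coherently, but the Morley condition for the blocked tuples requires iterated left transitivity of $\forkindep$ over hyperimaginaries, which is available by the facts recalled in Section 1. Part (1) is comparatively routine since a $\emptyset$-definable projection commutes with every property involved in being a weakly Morley tree.
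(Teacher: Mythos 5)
Your overall strategy for both parts -- manipulate a weakly Morley tree whose spine is $(a_{i})$ and read off the new sequence as the spine of the modified tree -- is exactly the approach the paper relies on (it imports the proof of \cite[Lemma 5.9]{kaplan2020kim} verbatim). For (1) the idea is right, but the step ``there is a $\emptyset$-definable hyperimaginary projection $\pi$ with $\pi(a_{i})=b_{i}$'' is not justified: interdefinability gives no $\emptyset$-definable map. What the argument actually needs is that $b_{0}c_{0}\in dcl^{heq}(a_{0})$, so on realisations of $\tp(a_{0}/e)$ there is a canonical map $a'\mapsto b'c'$ (the image of $b_{0}c_{0}$ under any $e$-automorphism sending $a_{0}$ to $a'$, well defined precisely because $b_{0}c_{0}$ lies in the definable closure); since every node of the tree realises $\tp(a_{0}/e)$ by s-indiscernibility, this decorates the whole tree, and then your observations (monotonicity of $\forkindep$ for the spread-out condition, invariance for s-indiscernibility and the restriction condition) go through. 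Note this also uses the intended uniform reading of the hypothesis, namely $a_{i}b_{i}c_{i}\equiv_{e}a_{0}b_{0}c_{0}$ for all $i$; without that, $(b_{i})$ need not even be $e$-indiscernible.

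For (2) there is a genuine gap: the whole content of the proof is the explicit re-indexing, and you leave it at ``$n$ appropriately chosen nodes above $\eta$.'' The danger is concrete. If for a new node $\eta\frown\langle i\rangle$ at new level $m$ you place the branching index $i$ at the bottom old level $n\cdot m$ of the block and take as the block that node together with its $n-1$ ancestors at old levels $n\cdot m+1,\dots,n\cdot(m+1)-1$, then sibling blocks share those ancestors, the new sibling subtrees overlap, and the weakly-spread-out condition fails outright. The branching must be placed at the top level of each block (value $i$ at old level $n\cdot(m+1)-1$, zeros below), so that the entire new subtree above $\eta\frown\langle i\rangle$ consists of old nodes $\unrhd$ the $i$-th child of one fixed old node at level $n\cdot(m+1)$; then $(d_{\unrhd \eta\frown\langle i\rangle})_{i<\omega}$ is, uniformly in $i$, a subtuple of the old $e$-Morley sequence of sibling subtrees, and Morley-ness transfers by monotonicity -- no left transitivity of $\forkindep$ is involved. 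Your proposed reduction (``blocking an $e$-Morley sequence into $n$-tuples stays $e$-Morley by iterated left transitivity'') is a statement about linear sequences and does not touch the spread-out condition of the tree, which is where the construction can fail; likewise s-indiscernibility and the equality of types of restrictions to equal-sized level sets are not automatic, but are exactly what the explicit level map $m\mapsto [n\cdot m,n\cdot(m+1))$ (checked to respect $\unlhd$, $\wedge$, $<_{lex}$ and to send equal-sized sets of new levels to equal-sized unions of blocks) is needed for. Finally, for $\alpha=\omega$ no stretching is needed, and \cref{modprop} is in any case not the tool for stretching a weakly Morley tree; when the paper needs to lengthen such a tree (as in the proof of \cref{morleytree1}) it does so by compactness using the restriction-type condition.
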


\justify
From this, finite character, \cref{remarkmorleytree} and \cref{morleytree1} we can deduce :
\begin{cor}\label{remarkmorleytree2bis} If $e\in \mathbb{M}^{heq}$ and $I=(b_{i}$ : $i<\omega)$ is a weak tree Morley sequence of hyperimaginaries that is $ea$-indiscernible for $a\in \mathbb{M}^{heq}$, then $a\forkindep^{K}_{e}I$.
\end{cor}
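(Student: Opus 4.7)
The plan is to combine finite character of Kim-dividing with the regrouping tool \cref{treemorleysequ}(2) and with \cref{remarkmorleytree}, in a manner entirely parallel to the proof of \cref{kimlemmahyp2}. Informally: if any finite initial segment of $I$ is Kim-independent from $a$ over $e$, then so is the whole sequence, and the finite initial segment is handled by observing that the block-regrouping of $I$ is again a weak tree Morley sequence to which \cref{remarkmorleytree} directly applies.

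First I would invoke finite character of Kim-forking (which holds by \cref{kimfkimd} since Kim-dividing over $e$ is defined formulaically) to reduce the claim $a\forkindep^{K}_{e}I$ to the family of statements $a\forkindep^{K}_{e}b_{[0,n)}$ for each $n<\omega$. Fix such an $n$ and set $c_{i}:=b_{[n\cdot i,\,n\cdot(i+1))}$. By \cref{treemorleysequ}(2), the sequence $J=(c_{i}:i<\omega)$ is again a weak tree Morley sequence over $e$ (of hyperimaginaries, via \cref{morleytree1} if one needs to pass to an auxiliary companion sequence so that \cref{treemorleysequ}(2) is available in its real-tuple form). Because $I$ is $ea$-indiscernible and $J$ is merely a coarser indexing of $I$ by consecutive blocks of length $n$, $J$ is $ea$-indiscernible as well.

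Now apply \cref{remarkmorleytree} to the weak tree Morley sequence $J$ that is $ea$-indiscernible: its first element is $c_{0}=b_{[0,n)}$, so we obtain $a\forkindep^{K}_{e}b_{[0,n)}$, which is exactly what was needed for the induction on $n$. Since $n$ was arbitrary, finite character delivers $a\forkindep^{K}_{e}I$.

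I do not expect any serious obstacle here; the work has already been done in building up the machinery of weak tree Morley sequences for hyperimaginaries. The only bookkeeping point worth checking is that \cref{treemorleysequ}(2), whose statement groups elements $a_{[n\cdot i,n\cdot(i+1))}$, genuinely applies at the level of hyperimaginary sequences — this is where \cref{morleytree1} may be used, either to pair $I$ with a sequence of representatives to which \cref{treemorleysequ}(2) applies verbatim, or to verify that the grouped hyperimaginary sequence itself arises from a weakly Morley tree. Once that detail is in place, the rest of the proof is a one-line consequence of finite character.
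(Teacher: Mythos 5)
Your argument is correct and is exactly the deduction the paper intends: reduce via finite character to $a\forkindep^{K}_{e}b_{[0,n)}$, regroup $I$ into length-$n$ blocks using \cref{treemorleysequ}(2) (with \cref{morleytree1} covering the hyperimaginary bookkeeping), note the blocked sequence remains $ea$-indiscernible, and apply \cref{remarkmorleytree}. No gaps; the only cosmetic remark is that there is no genuine induction on $n$, just a separate argument for each $n$.
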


\justify
Now to put a bit of order between the many notions of sequences we have :

\begin{prop}\label{totalmorleyh} If $I=(a_{i}$ : $i<\omega)$ is weak tree Morley sequence over $e$ then it is a total Kim-Morley sequence over $e$, meaning that $a_{ > i}\forkindep^{K}_{e}a_{\leq i}$ for all $i<\omega$.\end{prop}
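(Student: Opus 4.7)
The plan is to reduce the claim, for each fixed $i<\omega$, to a direct application of \cref{remarkmorleytree2bis} followed by symmetry. The target is $a_{>i}\forkindep^{K}_{e}a_{\leq i}$, which by \cref{symh} is equivalent to $a_{\leq i}\forkindep^{K}_{e}a_{>i}$, and this second form is exactly what \cref{remarkmorleytree2bis} delivers provided we can exhibit the tail of $I$ as a weak tree Morley sequence over $e$ that is $ea_{\leq i}$-indiscernible.

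First I would check that the tail $I' := (a_{j})_{j>i}$ is a weak tree Morley sequence over $e$. Fix a weakly Morley tree $(a_{\eta})_{\eta\in \mathcal{T}_{\alpha}}$ over $e$ witnessing that $I$ is a weak tree Morley sequence, with $a_{j}=a_{\zeta_{j}}$. Restrict to the final segment of levels $w=[i+1,\alpha)$ and consider $(a_{\eta})_{\eta\in \mathcal{T}_{\alpha}\lceil w}$. Each defining property of a weakly Morley tree is inherited by this subtree: s-indiscernibility is preserved under restriction to a sub-set of levels; the weakly spread out condition, which only talks about sequences $(a_{\unrhd \eta\frown\langle k\rangle})_{k<\omega}$ for $\eta$ whose levels lie in $w$, is unchanged, and projecting an $e$-Morley sequence to a subtuple keeps it $e$-Morley; finally, the equal-order-type condition $(a_{\eta})_{\eta\in \mathcal{T}_{\alpha}\lceil v}\equiv_{e}(a_{\eta})_{\eta\in \mathcal{T}_{\alpha}\lceil v'}$ for $|v|=|v'|$, $v,v'\subseteq w$, is directly inherited. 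The corresponding path is $(a_{\zeta_{i+1}}, a_{\zeta_{i+2}}, \ldots)=(a_{j})_{j>i}$, which is $e$-indiscernible as a subsequence of an $e$-indiscernible sequence, so $I'$ is indeed a weak tree Morley sequence over $e$.

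Next I would observe that $I'$ is $ea_{\leq i}$-indiscernible. For any increasing tuple $(a_{j_{1}},\ldots,a_{j_{n}})$ from $I'$ we have $i<j_{1}<\cdots<j_{n}$, so the combined tuple $(a_{0},\ldots,a_{i},a_{j_{1}},\ldots,a_{j_{n}})$ has an order type depending only on $(j_{1},\ldots,j_{n})$'s relative order. The $e$-indiscernibility of $I$ then forces the type of $(a_{j_{1}},\ldots,a_{j_{n}})$ over $ea_{\leq i}$ to depend only on this relative order, which is $ea_{\leq i}$-indiscernibility of $I'$.

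Applying \cref{remarkmorleytree2bis} with the hyperimaginary $a_{\leq i}$ and the weak tree Morley sequence $I'$ now yields $a_{\leq i}\forkindep^{K}_{e}I'$, i.e.\ $a_{\leq i}\forkindep^{K}_{e}a_{>i}$, and \cref{symh} gives the desired $a_{>i}\forkindep^{K}_{e}a_{\leq i}$. The only non-routine point in the plan is the verification that the restricted tree is still weakly Morley; this is essentially formal, since each of the three defining properties is closed under passing to a final segment of levels.
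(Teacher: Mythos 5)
Your argument is correct, and it does not follow the paper's route. The paper's proof stays inside the witnessing weakly Morley tree and works level by level: for each $i$ it uses the weakly-spread-out condition to get that the sequence of cones $(a_{\unrhd \zeta_{i+1}\frown\langle j\rangle} : j<\omega)$ is an $e$-Morley sequence which, by s-indiscernibility, is indiscernible over $e$ together with the later path elements $a_{\zeta_{\geq i+1}}$; Kim's Lemma for Kim-forking (\cref{kimlemmahyp2}) then gives $a_{>i}\forkindep^{K}_{e}a_{\unrhd\zeta_{i+1}\frown\langle 0\rangle}$, and since $a_{\leq i}$ sits inside that cone, monotonicity yields the statement directly, in the stated direction and without invoking symmetry. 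You instead prove a closure fact the paper never states — that a tail of a weak tree Morley sequence is again weak tree Morley, via restriction of the tree to a final segment of levels — and then apply \cref{remarkmorleytree2bis} followed by \cref{symh}. Your restriction argument is sound (meets of nodes in the restricted level set stay in it, so s-indiscernibility passes to the substructure; the restricted cones are uniformly chosen subtuples of the original ones, so the spread-out condition survives by monotonicity of $\forkindep$; the equal-level-set condition is inherited), so the proof goes through, and as a bonus it isolates a reusable fact about tails. What it costs is reliance on heavier machinery: \cref{remarkmorleytree2bis} rests on the weak-tree-Morley Kim's lemma (\cref{kimlemmatree}), \cref{morleytree1} and \cref{treemorleysequ}, and you additionally need symmetry, whereas the paper's argument needs only \cref{kimlemmahyp2} and the tree structure itself; since no later result is used circularly (both \cref{remarkmorleytree2bis} and \cref{symh} precede the proposition), this is purely a matter of economy, not correctness.
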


\justify
\begin{proof}Suppose $I = ( a_{i} : i < \omega ) $ is a weak tree Morley sequence
over $e$. Let $(a_{\eta})_{\eta \in \mathcal{T}_{\alpha}}$ be a weak Morley tree over $e$ such that $a_{i}=a_{ \zeta_{i}} $ for all $i < \omega $. Then for all $i<\omega $, we have that $(a_{\unrhd \zeta_{i} \frown \langle j\rangle}$ : $j<\omega)$ is a Morley sequence in $\tp(a_{\unrhd \zeta_{i}\frown \langle 0\rangle}/e)$ and is $ea_{ \zeta_{ \geq  i +1}}$-indiscernible. Therefore, by Kim's Lemma for Kim-forking, $a_{ \zeta_{ \geq  i +1}}\forkindep^{K}_{e}a_{\unrhd \zeta_{i+1} \frown \langle 0\rangle}$, which implies $a_{ > i}\forkindep^{K}_{e}a_{\leq i}$ for all $i< \omega $.\end{proof} 

\begin{definition} A sequence $I = ( a_{i} : i < \omega )$ is called strong Kim-Morley over $e \in \mathbb{M}^{heq}$ if for every $n < \omega $ the sequence of the $\overline{a}_{i}:= (a_{i\cdot n},...,a_{i\cdot n+n-1})$ is Kim-Morley over $e$.\end{definition}

\begin{prop} $I = ( a_{i} : i < \omega ) $ is strong Kim-Morley over $e$ iff it is total Kim-Morley over $e$.\end{prop}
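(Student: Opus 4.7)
The plan is to prove the two implications separately. The direction \emph{total $\Rightarrow$ strong} should follow essentially from left monotonicity of Kim-forking, once one notes that every block sequence inherits $e$-indiscernibility from $I$. The direction \emph{strong $\Rightarrow$ total} will require a reindexing via $e$-indiscernibility combined with finite character of Kim-forking on the left.

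For \emph{total $\Rightarrow$ strong}: assume $I$ is total Kim-Morley over $e$, so in particular $e$-indiscernible. Fix $n<\omega$ and consider the blocks $\overline{a}_k := (a_{k\cdot n},\ldots,a_{k\cdot n+n-1})$; their $e$-indiscernibility is inherited from $I$. For $k\geq 1$, applying the defining condition of total Kim-Morley at index $k\cdot n - 1$ gives $a_{\geq k\cdot n}\forkindep^{K}_{e}a_{<k\cdot n}$, and left monotonicity yields $\overline{a}_k\forkindep^{K}_{e}\overline{a}_{<k}$; the case $k=0$ is immediate from existence for hyperimaginaries. Conversely, for \emph{strong $\Rightarrow$ total}: the case $n=1$ in the hypothesis shows that $I$ itself is Kim-Morley, hence $e$-indiscernible. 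By finite character of Kim-forking on the left, the goal $a_{>i}\forkindep^{K}_{e}a_{\leq i}$ reduces to proving $a_{(i,j]}\forkindep^{K}_{e}a_{\leq i}$ for every $j>i$. Fix $i,j$ and set $n := \max(i+1,\,j-i)$, so that $[0,i+1)\subseteq [0,n)$ and $[n,n+j-i)\subseteq [n,2n)$. By strong Kim-Morley with block size $n$, the first two blocks satisfy $\overline{a}_1\forkindep^{K}_{e}\overline{a}_0$, and monotonicity gives $a_{[n,n+j-i)}\forkindep^{K}_{e}a_{\leq i}$. By $e$-indiscernibility of $I$ the increasing tuples $(a_0,\ldots,a_i,a_n,\ldots,a_{n+j-i-1})$ and $(a_0,\ldots,a_i,a_{i+1},\ldots,a_j)$ have the same type over $e$, so $a_{(i,j]}\equiv_{ea_{\leq i}} a_{[n,n+j-i)}$, and type-invariance of Kim-forking concludes.

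The main obstacle will be the reindexing in this latter direction: the Kim-Morley property of a block sequence only directly controls block $1$ against block $0$, so one must choose $n$ large enough that both $a_{\leq i}$ and an indiscernible copy of $a_{(i,j]}$ sit inside the first two blocks. Once $n$ is picked, the remaining work is routine monotonicity, finite character, and type transport via $e$-indiscernibility.
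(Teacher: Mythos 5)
Your proof is correct and takes essentially the same route as the paper's: the total-to-strong direction is the easy monotonicity observation, and for strong-to-total you reduce by finite character to a finite segment and then transport $\overline{a}_{1}\forkindep^{K}_{e}\overline{a}_{0}$ (for a suitably large block size, your $n=\max(i+1,j-i)$ versus the paper's $k>j$) along $e$-indiscernibility and invariance. The only difference is that you spell out the details the paper leaves as "immediate".
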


\justify
\begin{proof}

$[\Leftarrow ]$ : Immediate by finite character. $[\Rightarrow ]$ : Let $i < \omega $, we show that $a_{ \geq i}\forkindep^{K}_{e}a_{< i}$ for all $i< \omega $.
By finite character, it is enough to show that for all $i < j < \omega $, $a_{[i,j]}\forkindep^{K}_{e}a_{< i}$. For this let $k > j$, by assumption we have that $a_{[k,2k-1]}\forkindep^{K}_{e}a_{[0,k-1]}$, so by indiscernibility and invariance, $a_{[i,j]}\forkindep^{K}_{e}a_{< i}$.\end{proof}

\subsection{Hyperdefinability of Kim-Forking}

\justify
The following lemma is the adaptation of \cite[Lemma 1.18]{chernikov2020transitivity}, it is a new result.

\begin{lemma}\label{typedef1} Let $e\in \mathbb{M}^{heq}$ of sort $E$ and $p \in S_{F}(e)$ a complete $F$-type over $e$.
\begin{enumerate}
\item[(1)]  Given any type definable equivalence relation $E'$, the set of hyperimaginaries $(a,b)$ such that $b\models p$, $a$ of sort $H$ and $a\forkindep^{K}_{e}b$ is $e$-hyperdefinable.
\item[(2)] The set of Kim-Morley sequences in $p$ is $e$-hyperdefinable.
\end{enumerate}
\end{lemma}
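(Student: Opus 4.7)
The plan is to prove (1), after which (2) follows readily from the hyperdefinability of indiscernibility and of $p$.

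For (1), the starting observation is the characterization: $a\forkindep^{K}_{e}b$ iff there exists a weak tree Morley sequence $(a_{i}:i<\omega)$ of hyperimaginaries of sort $H$ over $e$ with $a_{0}=a$ that is $eb$-indiscernible. The forward direction is Lemma~\ref{morleytree2}; the converse follows from Corollary~\ref{remarkmorleytree2bis} (giving $b\forkindep^{K}_{e}a_{0}=a$) combined with symmetry (Theorem~\ref{symh}). Fixing a representative $\bar{e}$ of $e$, this existence statement is encoded as a partial type $\Pi(x,y,(\bar{z}_{\eta})_{\eta\in\mathcal{T}_{\omega}})$ over $\bar{e}$ whose free variables stand for representatives of $a$, $b$, and of a weak Morley tree over $e$: namely, $y$ satisfies the partial type defining $p$; $\bar{z}_{\zeta_{0}}$ is $H$-equivalent to $x$; the tree $(\bar{z}_{\eta})$ has the EM-type over $\bar{e}$ of a weak Morley tree over $e$ with root of sort $H$; and the path $(\bar{z}_{\zeta_{i}})_{i}$ is $\bar{e}y$-indiscernible. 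The conditions on $y$, on $x$ versus $\bar{z}_{\zeta_{0}}$, and on path-indiscernibility are immediately type-definable, and by compactness the projection of the set of realizations of $\Pi$ onto the $(x,y)$-variables is type-definable over $\bar{e}$. This projection is exactly the preimage, under the quotient map to hyperimaginaries, of the set described in (1); fixing $E'$, restricting to the sort of $E'$ gives the full hyperdefinability statement.

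The main obstacle is articulating the condition that $(\bar{z}_{\eta})$ has the EM-type of a weak Morley tree over $e$, because the weakly-spread-out clause in the definition refers to $e$-Morley (i.e.\ non-forking) sequences, which are not obviously type-definable for hyperimaginaries. To address this, I would invoke Lemma~\ref{morleytreehyp}: by existence for hyperimaginaries we have $a\forkindep^{K}_{e}e$ for any $a$ of sort $H$, so Lemma~\ref{morleytreehyp} applied to the pair $(a,e)$ produces an explicit weak Morley tree over $e$ with root of sort $H$. The EM-type over $\bar{e}$ of such a tree, viewed as parameterized by the sort-$H$ root $x$, is a partial type in $(x,(\bar{z}_{\eta}))$, and any realization of it is by definition a tree with the EM-type of a weak Morley tree over $e$. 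This partial type is what we plug into $\Pi$; by Fact~\ref{modprop} applied to the modeling property for $\mathcal{T}_{\omega}$-indexed trees, it is realized whenever the remainder of $\Pi$ is consistent, which is precisely when the required weak tree Morley sequence exists.

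For (2), a Kim-Morley sequence in $p$ is, by definition, a sequence $(a_{i}:i<\omega)$ with each $a_{i}\models p$, which is $e$-indiscernible, and which satisfies $a_{i}\forkindep^{K}_{e}a_{<i}$ for every $i<\omega$. The first condition is hyperdefinable by hypothesis on $p$, the second is the standard type-definable indiscernibility scheme, and the third is hyperdefinable by part (1) applied with $a:=a_{i}$, $b:=a_{<i}$, and the type of $a_{<i}$ over $e$ in place of $p$ (handled uniformly in $i$). The conjunction of these hyperdefinable conditions yields the hyperdefinability of the set of Kim-Morley sequences in $p$.
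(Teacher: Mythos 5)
There is a genuine gap, and it sits exactly where you locate the ``main obstacle''. Your scheme needs the clause ``$(\bar{z}_{\eta})$ is (or has the EM-type of) a weak Morley tree over $e$'' to be a single partial type over $\bar{e}$, uniformly in the pair $(x,y)$, but your fix does not deliver this. The weakly-spread-out condition is a non-forking condition over the hyperimaginary $e$, which is not type-definable, and the surrogate you propose --- the EM-type of the tree produced by \cref{morleytreehyp} (plus \cref{morleytree0}) from $a\forkindep^{K}_{e}e$ --- is not one partial type ``parameterized by the root $x$'': it depends on $\tp(a/e)$ (which varies over the whole sort $H$) and, worse, the tree witnessing $a\forkindep^{K}_{e}b$ via \cref{morleytree2} must have its path $eb$-indiscernible, so its EM-type over $e$ depends on $\tp(a/eb)$ and need not agree with the EM-type of any tree built in advance from $(a,e)$ alone. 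Replacing ``realizes this fixed EM-type'' by ``realizes the EM-type of \emph{some} weak Morley tree over $e$'' is an infinite disjunction of partial types, not a type. Consequently the forward direction of your intended equivalence fails: from $a\forkindep^{K}_{e}b$ you cannot conclude that your specific $\Pi(x,y,(\bar{z}_{\eta}))$ is realized, and the appeal to the modeling property (\cref{modprop}) does not repair this, since extracting an s-indiscernible tree locally based on a witness neither forces it to realize the prescribed EM-type nor preserves the $\bar{e}y$-indiscernibility of the path. (The projection step itself is not the problem: in the monster, projections of partial types in a small set of variables are type-definable by saturation; and the converse direction of your characterization, via \cref{remarkmorleytree2bis} and \cref{symh}, is fine.)

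The paper's proof avoids quantifying over Morley trees or Morley sequences altogether, and this is the idea your proposal is missing. Since $p=\tp(b/e)$ is fixed, symmetry, invariance, Kim's lemma and Kim-forking $=$ Kim-dividing (\cref{kimlemmahyp}, \cref{kimfkimd}, \cref{symh}) give: for $b\models p$ and real $a$, $a\forkindep^{K}_{e}b$ if and only if $a$ omits every formula, written over representatives of $b$ and $e$, that Kim-divides over $e$. ``Omitting this fixed family of formulas'' is manifestly a partial type $\Gamma(x,y,\bar{e})$ together with $p(y)$; the only remaining work in the paper is to check that this condition is independent of the chosen representatives (the step with $\theta^{2}\vdash\theta'$, $\rho^{2}\vdash\rho'$), and then to pass from real $a$ to hyperimaginary $a$ using \cref{liftkimmorley} by existentially quantifying over an $E'$-representative. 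If you want to keep your tree-based picture, you would have to prove separately that the set of weak Morley trees over $e$ (in the relevant types) is $e$-type-definable, which is not available and is essentially equivalent in difficulty to the statement you are trying to prove; so the argument should be rerouted through the syntactic characterization. Part (2) of your proposal is fine and is exactly the paper's deduction from (1).
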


\justify
\begin{proof}
(1) : Lets first assume that $a$ is a real tuple.

\justify
Let $c\models p$, $\overline{c},\overline{e}$ be representatives of $c$ and $e$, and : \begin{center}
$\Gamma (x,y,e) := p(y) \cup\lbrace \exists y'z (F(y',y) \wedge E (z,e) \wedge ( \neg \exists y''z'( \varphi (x,y'',z')\wedge \theta(y'',y')\wedge \rho(z',z))))$ : $\neg \exists y''z'( \varphi (x,y'',z')\wedge \theta(y'',\overline{c}')\wedge \rho(z',\overline{e}'))$ Kim-divides over $e$ for any representatives $\overline{c}',\overline{e}'$ of $c$ and $e\rbrace$.\end{center}

\justify
By symmetry, invariance, and Kim-forking = Kim-dividing, if $b\equiv_{e}c$ we have that $a\centernot\forkindep^{K}_{e}b$ if and only if $a \models \exists y''z'( \varphi (x,y'',z')\wedge \theta'(y'',\overline{b})\wedge \rho'(z',\overline{e}))$ with $\overline{b},\overline{e}$ representatives of $b$ and $e$, $\theta'\in F$, $\rho'\in E$ and $\exists y''z'( \varphi (x,y'',z')\wedge \theta'(y'',\overline{b})\wedge \rho'(z',\overline{e}))$ Kim divides over $e$. Now if $\theta \in F$ is such that $\theta^{2}\vdash\theta'$ and $\rho \in E$ is such that $\rho^{2}\vdash\rho'$ we have that $a \models \exists y''z'( \varphi (x,y'',z')\wedge \theta(y'',\overline{b}')\wedge \rho(z',\overline{e}'))$ and $\exists y''z'( \varphi (x,y'',z')\wedge \theta(y'',\overline{b}')\wedge \rho(z',\overline{e}'))$ Kim divides over $e$ for any representatives  $\overline{b}',\overline{e}'$ of $b$ and $e$. So $\exists y''z'( \varphi (x,y'',z')\wedge \theta(y'',\overline{c}')\wedge \rho(z',\overline{e}'))$ Kim divides over $e$ for any representatives  $\overline{c}',\overline{e}'$ of $c$ and $e$.

\justify
From this we deduce that this partial type is as desired. Now for the case of $a$ hyperimaginary by \cref{liftkimmorley} we have that $a$ is Kim-independent with $b$ over $e$ if and only if some of its representatives are. So if $\Gamma (x,y,\overline{e})$ is as previously we just take the type $\exists x' (E'(x',x)\wedge \Gamma (x',y,\overline{e}))$.

\justify
(2) :  One can take $\Delta$ to be the partial type that asserts $(x_{i}$ : $i<\omega)$ is $e$-indiscernible, every $x_{i}\models p$, and $x_{i}\forkindep^{K}_{e}x_{<i}$ for every $i<\omega$, by (1) it defines a partial type over $e$.\end{proof}

\subsection{Amalgamation for hyperimaginaries}

\justify
The content of this section consists in a rewriting of the proofs of section 5 of \cite{dobrowolski2022independence}.

\begin{remark}\label{weaktrans} We do have the following results :
\begin{enumerate}
\item[•] Symmetry : For all $a,b,e \in \mathbb{M}^{heq}$, $a\forkindep^{K}_{e}b$ if and only if $b\forkindep^{K}_{e}a$
\item[•] Some weak version of transitivity : if $a\forkindep^{d}_{e}bc$ and $b\forkindep^{K}_{e}c$ then $ab\forkindep^{K}_{e}c$
\end{enumerate}

\end{remark}

\begin{remark}\label{lascd} For any $a,b,e \in \mathbb{M}^{heq}$ we have that $a \equiv_{e}^{L}b$ if and only if the Lascar distance $d_{e}(a,b)$ is finite. This is equivalent to the "Morley"-Lascar distance $Md_{e}(a,b)$ being finite, where $Md_{e}(a,b)$ is defined in the same way as $d_{e}(a,b)$, except $e$-indiscernible sequences involved in the distance definition are all $e$-Morley sequences.
\end{remark}

\begin{proof}
\justify
It is enough to show that for an $e$-indiscernible sequence $I$ containing $a_{0}, a_{1}$, there is an $e$-Morley $J$ such that $a_{0}\frown J$ and $a_{1}\frown J$ are $e$-Morley, and this follows from \cref{morleylascdist}.\end{proof}

\justify
The following corollary is going to be useful, in the context of hyperimaginaries we can often replace hyperimaginaries with their bounded closure and assume that types are Lascar strong types :

\begin{remark} By \cref{kimhyp1} we have that $a\forkindep^{K}_{e}b$ if and only if there is an $ea$-indiscernible $e$-Morley sequence containing $b$. A sequence is $c$-indiscernible if and only if it is $bdd(c)$-indiscernible for any $c \in \mathbb{M}^{heq}$. Using symmetry, $a\forkindep^{K}_{e}b$ iff $bdd(a)\forkindep^{K}_{e}bdd(b)$. By using extension on the type over the bounded closure, we have an extension of types over the bounded closure (and we already know that NSOP1e theories are G-compact from \cite[Corollary 5.9]{dobrowolski2022independence}).
\end{remark}

\begin{cor}\label{exthyp} Let $b,c,e \in \mathbb{M}^{heq}$. Assume $b\forkindep^{K}_{e}c$. Then for any $a$ there is $h \equiv_{be}^{L}a$ such that $hb\forkindep^{K}_{e}c$. Moreover, for any $d$ there is $b'\equiv^{L}_{ec}b$ such that $b'\forkindep^{K}_{e}cd$. \end{cor}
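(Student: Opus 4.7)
The plan is to reduce the corollary to the ordinary extension property for non-forking (the extension Proposition in Section~1.1), applied over a boundedly closed base. Since equality of types over $\mathrm{bdd}(X)$ coincides with equality of Lascar strong types over $X$ (by $G$-compactness, as recalled in the preceding Remark), working over $\mathrm{bdd}(be)$ (respectively $\mathrm{bdd}(ec)$) will automatically produce an element with the correct Lascar strong type over $be$ (resp.\ $ec$). The non-forking relation thus obtained upgrades to Kim-non-forking over the same base, and will be combined with the hypothesis $b\forkindep^{K}_{e}c$ using Kim-symmetry and transitivity of Kim-forking.

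I would treat the second half first, as it is slightly more direct. By existence for hyperimaginaries applied to $\mathrm{bdd}(ec)$, we have $b\forkindep_{\mathrm{bdd}(ec)}\mathrm{bdd}(ec)$. Applying the extension property with parameter $d$ gives some $b'\equiv_{\mathrm{bdd}(ec)}b$ with $b'\forkindep_{\mathrm{bdd}(ec)}d$. The first is exactly $b'\equiv^{L}_{ec}b$; the second, combined with the proposition that $x\forkindep_{e}y$ iff $x\forkindep_{\mathrm{bdd}(e)}y$ and with the fact that forking implies Kim-forking, yields $b'\forkindep^{K}_{ec}d$. By invariance, since $b'\equiv_{ec}b$, the hypothesis also gives $b'\forkindep^{K}_{e}c$. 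Combining $b'\forkindep^{K}_{e}c$ and $b'\forkindep^{K}_{ec}d$ via transitivity of Kim-forking for hyperimaginaries delivers $b'\forkindep^{K}_{e}cd$.

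The first half proceeds in exactly the same way. By existence, $a\forkindep_{\mathrm{bdd}(be)}\mathrm{bdd}(be)$, so by extension there is $h\equiv_{\mathrm{bdd}(be)}a$ with $h\forkindep_{\mathrm{bdd}(be)}c$, whence $h\equiv^{L}_{be}a$ and $h\forkindep^{K}_{eb}c$. Kim-symmetry (\cref{symh}) turns both pieces into $c\forkindep^{K}_{e}b$ and $c\forkindep^{K}_{eb}h$; transitivity then yields $c\forkindep^{K}_{e}bh$, and a final application of symmetry gives $hb\forkindep^{K}_{e}c$.

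The main obstacle is that both halves rely on transitivity of Kim-forking for hyperimaginaries in the form $x\forkindep^{K}_{e}y$ and $x\forkindep^{K}_{ey}z$ $\Rightarrow$ $x\forkindep^{K}_{e}yz$. This is one of the principal theorems the paper is adapting from the case of models (\cite{chernikov2020transitivity}) and is advertised in the introduction as part of the present development; once it has been established, the two constructions above are routine applications of existence and the extension property for ordinary forking, with $G$-compactness doing the bookkeeping between types over bounded closures and Lascar strong types.
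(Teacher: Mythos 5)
Your argument is circular at the point in the paper where this corollary sits. Both halves of your proof invoke transitivity of Kim-independence in the form $x\forkindep^{K}_{e}y$ and $x\forkindep^{K}_{ey}z$ $\Rightarrow$ $x\forkindep^{K}_{e}yz$, i.e.\ \cref{transitkim}. But \cref{transitkim} is proved only in the following subsection, and its proof rests on \cref{p733h}, which rests on \cref{p732h}, whose proof explicitly uses \cref{exthyp} (``By \cref{exthyp} there is $\overline{c}'\equiv^{L}_{e}\overline{c}$ such that $\overline{c}'\forkindep^{K}_{e}\overline{a}_{n}$''). So the corollary you are asked to prove is itself an ingredient of the transitivity theorem you appeal to; flagging the reliance as something that ``once established'' makes the rest routine does not repair this, because at this stage of the development it has not been established and cannot be without the corollary. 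Your remaining steps (existence plus extension for ordinary forking over $bdd(ec)$, the transfer $\forkindep_{bdd(ec)}\Rightarrow\forkindep_{ec}\Rightarrow\forkindep^{K}_{ec}$, invariance, and $G$-compactness identifying $\equiv_{bdd(X)}$ with $\equiv^{L}_{X}$) are individually fine, and the whole argument would be correct if the statement were placed after \cref{transitkim}; as a proof here it is not.

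The intended route, sketched in the remark immediately preceding the corollary, avoids transitivity entirely by applying extension for $\forkindep^{K}$ itself (already used freely, e.g.\ in \cref{liftkimmorley}, and valid by the standard argument since Kim-forking is defined via implying disjunctions of Kim-dividing formulas) over the bounded closure. For the second half: $b\forkindep^{K}_{e}c$ gives $b\forkindep^{K}_{e}bdd(ec)$, so extension for $\forkindep^{K}$ yields $b'\equiv_{bdd(ec)}b$ with $b'\forkindep^{K}_{e}bdd(ec)d$, hence $b'\forkindep^{K}_{e}cd$, and $b'\equiv_{bdd(ec)}b$ is exactly $b'\equiv^{L}_{ec}b$ by $G$-compactness. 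For the first half, extend on the other side: by \cref{symh}, $c\forkindep^{K}_{e}bdd(be)$, so there is $c'\equiv_{bdd(be)}c$ with $c'\forkindep^{K}_{e}bdd(be)a$, i.e.\ $ab\forkindep^{K}_{e}c'$; an automorphism fixing $bdd(be)$ and sending $c'$ back to $c$ sends $a$ to some $h\equiv^{L}_{be}a$ with $hb\forkindep^{K}_{e}c$. No base-change transitivity is needed anywhere, which is precisely what allows the corollary to feed into \cref{p732h} and ultimately into the proof of \cref{transitkim}.
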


\begin{lemma}\label{5.3} Let $a,b,c,e \in \mathbb{M}^{heq}$. Assume that $a\forkindep^{K}_{e}b$ and $a\forkindep^{K}_{e}c$. Then there is $h\in \mathbb{M}^{heq}$ such that $ac \equiv_{e} ah$, $b\forkindep_{e} h$ and $a\forkindep^{K}_{e} bh$.

\end{lemma}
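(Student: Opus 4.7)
The approach is to use a Morley witness for $a\forkindep^K_e b$, to extend $c$ via Kim-independence to an initial candidate for $h$, and to refine this candidate so that the Morley sequence is also indiscernible over $h$. Concretely, I would apply \cref{kimhyp1} to $a\forkindep^K_e b$ to obtain an $e$-Morley sequence $I=(b_i:i<\omega)$ in $\tp(b/e)$ with $b_0=b$ that is $ea$-indiscernible; Kim's Lemma (\cref{kimlemmahyp2}) then yields $a\forkindep^K_e I$. Next, apply \cref{exthyp} to $a\forkindep^K_e c$ with extra parameter $aI$ to produce $c'\equiv^L_{ea}c$ with $c'\forkindep^K_e aI$, which gives $ac\equiv_e ac'$ for free.

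The key technical step will be to conjugate $c'$ within its $eaI$-type to a final $h$ such that $I$ itself becomes $eh$-indiscernible while $h\equiv_{ea}c$ is preserved. Using symmetry of Kim-forking (\cref{symh}) we have $aI\forkindep^K_e c'$; treating $aI$ as a single hyperimaginary tuple and applying the characterisation of non-Kim-dividing (\cref{kimdivh}(2)) to a well-chosen $e$-Morley sequence of copies of $aI$---extracted via the modelling property (\cref{modprop}) so as to recover the internal sequence structure of $I$---we produce $h\equiv_{eaI}c'$ making $I$ be $eh$-indiscernible. Since the conjugation is over $eaI$ (hence over $ea$), the type $\tp(h/ea)=\tp(c/ea)$ is preserved.

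With $I$ an $e$-Morley sequence that is both $ea$- and $eh$-indiscernible (and thus $eah$-indiscernible), the three conclusions follow. The equality $ac\equiv_e ah$ holds by construction. Since $I$ is $e$-Morley with $b_0=b$ and $eah$-indiscernible, the non-forking chain along $I$ together with $eah$-indiscernibility yields $b\forkindep_e ah$, hence in particular $b\forkindep_e h$. For $a\forkindep^K_e bh$, apply the weak transitivity (\cref{weaktrans}): combining $h\forkindep^K_e a$ (from $c'\forkindep^K_e aI$ by monotonicity and \cref{symh}, using $h\equiv_{ea}c'$) with $b\forkindep^d_e ah$ (implied by $b\forkindep_e ah$) gives $bh\forkindep^K_e a$, and \cref{symh} concludes $a\forkindep^K_e bh$.

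The hard part is the refinement step in the second paragraph: securing $eh$-indiscernibility of the original sequence $I$ while preserving the $ea$-type of $h$. The crucial idea is to apply \cref{kimdivh} to the combined tuple $aI$ (so that $eaI$-conjugation keeps the $ea$-type of $c'$ unchanged), and to use the modelling property (\cref{modprop}) to transfer indiscernibility of a Morley sequence of copies of $aI$ back to indiscernibility of $I$ itself as a sequence.
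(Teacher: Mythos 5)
There are two genuine gaps here, one of them fatal to the whole strategy. First, your key technical step is impossible as stated: if $h\equiv_{eaI}c'$, then $hI\equiv_{e}c'I$, so $I$ is $eh$-indiscernible if and only if it was already $ec'$-indiscernible --- conjugating $c'$ \emph{within its $eaI$-type} cannot create indiscernibility of $I$ over the conjugate, and no amount of modelling-property extraction changes this, since the target type over $eaI$ is fixed. The real tension is that to gain indiscernibility of $I$ over the new copy via \cref{kimdivh}(2) you must conjugate over $eb_{0}$ only, and then you lose control of the $ea$-type, i.e.\ of $ah\equiv_{e}ac$. The paper resolves exactly this tension by running the argument on the other side: it takes an $e$-Morley sequence $J$ of copies of $a$ (with $a_{0}=a$, $eb$-indiscernible, from \cref{kimhyp1} applied to $b\forkindep^{K}_{e}a$), uses $c\forkindep^{Kd}_{e}a$ and \cref{kimdivh} to find $c''\equiv_{ea}c$ with $J$ being $ec''$-indiscernible (conjugation over $ea$, so $ac''\equiv_{e}ac$ is kept), and then upgrades to $ebc'$-indiscernibility by Erd\"os--Rado plus an $eb$-automorphism, so that Kim's lemma (\cref{kimlemmahyp2}) yields $a\forkindep^{K}_{e}bc'$ with $ac'\equiv_{e}ac$.

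Second, and more fundamentally, your derivation of the genuine non-forking is unjustified: from ``$I$ is $e$-Morley with $b_{0}=b$ and $eah$-indiscernible'' you conclude $b\forkindep_{e}ah$, but that configuration only gives $ah\forkindep^{K}_{e}b$ via \cref{kimlemmahyp2}; in an NSOP$_1$ theory ordinary forking satisfies neither Kim's lemma nor symmetry, so nothing in your construction ever produces $b\forkindep_{e}h$ (as opposed to $b\forkindep^{K}_{e}h$), and this independence is the crux of \cref{5.3}. The paper obtains it by a different device which your proposal is missing: having secured $c'$ with $ac'\equiv_{e}ac$ and $a\forkindep^{K}_{e}bc'$, it takes an $e$-Morley sequence $(b_{i}c_{i}:i<\omega)$ in $\tp(bc'/e)$ with $b_{0}c_{0}=bc'$, made $ea$-indiscernible with $a\forkindep^{K}_{e}(b_{i}c_{i})_{i<\omega}$ by \cref{kimhyp1}, and sets $h:=c_{1}$; the Morley (non-forking) condition along that sequence is what supplies the true non-forking between $h$ and $b$, while $ea$-indiscernibility gives $ah\equiv_{e}ac'\equiv_{e}ac$ and $a\forkindep^{K}_{e}bh$. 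Without some analogue of this ``take the next element of a Morley sequence in $\tp(bc'/e)$'' step, the weak-transitivity finish you propose has no valid input $b\forkindep^{d}_{e}ah$.
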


\begin{proof}
\justify
We begin by establishing the following :

\justify
\textbf{Claim :} There is $c'$ such that $ac'\equiv_{e} ac$ and $a\forkindep^{K}_{e}bc'$.

\justify
Proof of the claim : Due to symmetry, we have $c\forkindep^{K}_{e}a$ and $b\forkindep^{K}_{e}a$. Hence there is a $e$-Morley sequence $J = (a_{i}$ : $i<\kappa)$ with $a_{0} = a$ which is $eb$-indiscernible by \cref{kimhyp1} for a large enough $\kappa$. By \cref{kimdivh} there is $c'' \equiv_{ea} c$ such that $J$ is $ec''$-indiscernible. By Erdös-Rado there is $J'= (a'_{i}$ : $i<\omega)$ a $ebc''$-indiscernible sequence finitely based on $J$ over $ebc''$, then we have $J'\equiv_{eb}J_{\omega}(:=(a_{i}$ : $i<\omega))$, and also $a_{i}c'' \equiv_{e} a'_{j}c$ for any $i,j<\omega$. Now if $f$ is some $eb$-automorphism sending $J'$ to $J_{\omega}$ then $J_{\omega}$ is $ac'$-indiscernible where $c'= f(c'')$, and $ac \equiv_{e}a'_{0}c'' \equiv_{e} ac'$. Hence  $bc'\forkindep^{K}_{e}a$, so $a\forkindep^{K}_{e}bc'$.

\justify

Choose $c'$ as in the claim and take a $e$-Morley sequence $I = (b_{i}c_{i}$ : $i < \omega)$ with
$b_{0}c_{0} = bc'$. By \cref{kimhyp1}, we can assume that $I$ is $ea$-indiscernible and $a\forkindep^{K}_{e}I$. Let $h = c_{1}$. Then $a\forkindep^{K}_{e}bh$, and since I is a Morley sequence over $e$, we have $b\forkindep_{e}h$.\end{proof}

\begin{lemma}\label{5.4} Let $a\forkindep^{K}_{e}b$ and $a\forkindep^{K}_{e}c$. Then there is $h$ such that $ac \equiv^{L}_{e} ah$ and $b\forkindep_{e}h$ and $a\forkindep^{K}_{e}bh$.
\end{lemma}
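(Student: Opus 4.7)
The plan is to derive this lemma from \cref{5.3} by working over $bdd(e)$ rather than $e$, following the same pattern that yields \cref{exthyp} from extension for Kim-forking. The key observation is that $\equiv_{bdd(e)}$ coincides with $\equiv^L_e$ (since NSOP1 theories with existence for hyperimaginaries are G-compact, as noted just before \cref{exthyp}), so strengthening $\equiv_e$ to $\equiv^L_e$ in a conclusion is equivalent to proving that conclusion over $bdd(e)$ instead of $e$. We treat $bdd(e)$ as a (bounded) hyperimaginary via a small enumeration, as is standard.

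First I would lift the hypotheses: \cref{bondedkimmorley} gives $a \forkindep^K_e b$ iff $a \forkindep^K_{bdd(e)} b$, so our assumptions become $a \forkindep^K_{bdd(e)} b$ and $a \forkindep^K_{bdd(e)} c$. Then I would apply \cref{5.3} with $bdd(e)$ playing the role of $e$ to produce some $h$ satisfying $ac \equiv_{bdd(e)} ah$, $b \forkindep_{bdd(e)} h$ and $a \forkindep^K_{bdd(e)} bh$.

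Finally I would translate each conclusion back to a statement over $e$. By G-compactness, $ac \equiv_{bdd(e)} ah$ is precisely $ac \equiv^L_e ah$. By the earlier proposition stating $a \forkindep_e b$ iff $a \forkindep_{bdd(e)} b$, the relation $b \forkindep_{bdd(e)} h$ yields $b \forkindep_e h$. And by \cref{bondedkimmorley}, $a \forkindep^K_{bdd(e)} bh$ yields $a \forkindep^K_e bh$. There is no genuine obstacle: the whole point of the bounded-closure formalism and the G-compactness observation made just before \cref{exthyp} is that upgrades from $\equiv_e$ to $\equiv^L_e$ of this kind are automatic, so once \cref{5.3} is in hand the present statement comes at no extra cost.
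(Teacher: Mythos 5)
Your argument is correct in substance, but it follows a genuinely different route from the paper. You reduce to working over $bdd(e)$ and invoke the G-compactness principle (types over the bounded closure coincide with Lascar strong types) to upgrade the $\equiv_{bdd(e)}$-conclusion of \cref{5.3} to $\equiv^{L}_{e}$; since the paper explicitly states this replacement principle just before \cref{exthyp}, and the translations of the two independence conclusions via \cref{bondedkimmorley} and the proposition on forking over $bdd(e)$ are unproblematic, the deduction goes through. The paper instead avoids any appeal to G-compactness: it uses existence and extension to find a model $M$ with $e \in dcl^{heq}(M)$ and $M \forkindep_{e} abc$, applies the weak transitivity of \cref{weaktrans} to get $Ma \forkindep^{K}_{e} b$ and $Ma \forkindep^{K}_{e} c$, applies \cref{5.3} to $Ma$, $b$, $c$, and then reads off $ah \equiv^{L}_{e} ac$ from $ah \equiv_{M} ac$, using only the elementary fact that equality of types over a model yields Lascar equivalence over any hyperimaginary in its definable closure. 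The trade-off is worth noting: your route is shorter but leans on G-compactness over a \emph{hyperimaginary} base, which is strictly stronger than the cited result of Dobrowolski--Kim--Ramsey for real bases and is of the sort usually extracted from the amalgamation machinery still being developed at this point (so there is a mild risk of circularity if one insists on a self-contained justification), whereas the paper's model trick is unconditional and illustrates a device (adding a model Kim- or forking-independent from everything over $e$ to the left-hand side) that is reused elsewhere. If you keep your version, you should at least make explicit that you are treating $bdd(e)$ as a single hyperimaginary and that the Lascar upgrade is exactly the bounded-closure principle the paper asserts, rather than a formal consequence of the cited corollary.
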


\justify
\begin{proof} By extension and existence for $\forkindep$, there is a model $M$ such that $M\forkindep_{e} abc$ and $e \in dcl^{heq}(M)$.
Then by \cref{weaktrans}, we have $Ma\forkindep^{K}_{e}b$ and $Ma\forkindep^{K}_{e}c$. Now we apply \cref{5.3} to $Ma, b$ and $c$ to obtain $h$ such that $Mah \equiv_{e} Mac$, $b\forkindep_{e}h$ and $Ma\forkindep^{K}_{e}bh$. So $ah \equiv_{M} ac$ and $ah \equiv^{L}_{e} ac$ follows.\end{proof}

\justify
The following result is the adaptation of \cite[Lemma 5.5]{dobrowolski2022independence}, the proof we give here is similar to the original one but is slightly more detailed.

\begin{lemma}\label{5.5} Zig-zag lemma : Let $b\forkindep^{K}_{e}c_{0}c_{1}$ and suppose there is an $e$-Morley sequence $I = (c_{i}$ : $i<\omega)$. Then there is a weak tree Morley sequence $(b_{i}c_{i}$ : $i<\omega)$ such that we have $b_{i}c_{i}\equiv_{e}bc_{0}$ for all $i<\omega$ and $b_{i}c_{j}\equiv_{e}bc_{1}$ for all $j>i$.

\end{lemma}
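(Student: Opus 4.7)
The strategy is to first exhibit the zig-zag type pattern along an ordinary $e$-Morley sequence via \cref{kimhyp1}, and then upgrade the resulting sequence to a weak tree Morley sequence via a tree construction. The first stage uses the observation that $((c_0, c_{j+1}) : j < \omega)$ is an $e$-Morley sequence in $\tp(c_0c_1/e)$ starting with $(c_0, c_1)$. Applying \cref{kimhyp1} to $b \forkindep^{K}_{e} c_0c_1$ furnishes $b^{*} \equiv_{ec_0c_1} b$ such that this pair sequence is $eb^{*}$-indiscernible, hence $b^{*}c_0c_j \equiv_e bc_0c_1$ for every $j \geq 1$. Pulling back via an $ec_0c_1$-automorphism $\tau$ with $\tau(b^{*}) = b$ and replacing $c_j$ by $\tau(c_j)$ for $j \geq 2$ yields an $e$-Morley sequence (still denoted $(c_j)$, with $c_0, c_1$ unchanged) satisfying $bc_0c_j \equiv_e bc_0c_1$ for all $j \geq 1$; it is $e$-Morley because $\tau$ is an $e$-automorphism.

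For the second stage, $e$-indiscernibility of $(c_j)$ provides, for each $i < \omega$, an $e$-automorphism $\sigma_i$ shifting $(c_j)_{j<\omega}$ to $(c_{i+j})_{j<\omega}$, and I set $b_i := \sigma_i(b)$. A direct calculation then gives $b_ic_i = \sigma_i(bc_0) \equiv_e bc_0$ and, for $j > i$,
\[ b_ic_ic_j \;=\; \sigma_i(bc_0c_{j-i}) \;\equiv_e\; bc_0c_{j-i} \;\equiv_e\; bc_0c_1 \]
since $j - i \geq 1$, whence $b_ic_j \equiv_e bc_1$ for $j > i$. The sequence $(b_i, c_i)_{i<\omega}$ therefore carries the desired zig-zag type pattern.

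The remaining task, which is the main technical obstacle, is to upgrade $(b_i, c_i)_{i<\omega}$ to a weak tree Morley sequence while preserving the zig-zag types. My plan is to transfer the preceding argument into a weak Morley tree. Applying \cref{morleytreehyp} to $b \forkindep^{K}_{e} c_0c_1$ produces a weakly spread out, $s$-indiscernible tree over $e$ realizing $\tp(b, c_0, c_1 /e)$ along ancestor/leaf pairs; the shift construction of the second stage is then reproduced inductively at each level of the tree, using \cref{morleytree1} to attach the correct $b$-component at each node while preserving $s$-indiscernibility, and \cref{liftkimmorley} together with \cref{kfhyp} to manage hyperimaginary representatives. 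The modelling property \cref{morleytree0} then extracts from a sufficiently tall such tree a weak tree Morley sequence in which the zig-zag types are preserved. The key difficulty is coordinating weak spreading-out and $s$-indiscernibility with the simultaneous enforcement of $(b, c_0)$-types on the diagonal and $(b, c_1)$-types across comparable nodes, which is handled by the inductive tree-level shift together with the modelling property.
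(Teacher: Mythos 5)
There is a genuine gap, and it starts at the very first step. Your Stage 1 rests on the claim that $((c_{0},c_{j+1}) : j<\omega)$ is an $e$-Morley sequence in $\tp(c_{0}c_{1}/e)$. It is $e$-indiscernible, but it is not Morley: Morley over $e$ would require $c_{0}c_{j+1}\forkindep_{e}c_{0}c_{1}\dots c_{j}$, which in particular forces $c_{0}\forkindep_{e}c_{0}$, false unless $c_{0}\in bdd(e)$. So \cref{kimhyp1} cannot be applied to that sequence, and since you only have $b\forkindep^{K}_{e}c_{0}c_{1}$ (not ordinary non-dividing), you also cannot fall back on \cref{dividhyp} to exploit mere indiscernibility. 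This matters: the conclusion you want from Stage 1 (keeping $c_{0},c_{1}$ fixed and getting $c_{j}\equiv_{ebc_{0}}c_{1}$ for all $j\geq 1$) is exactly what the paper's proof does \emph{not} attempt. Its Claim instead applies the chain condition to the genuinely Morley sequence of consecutive pairs $(c_{2i}c_{2i+1} : i<\omega)$ and then uses Erd\"os--Rado to extract a new $e$-Morley sequence $J=(d_{i})$ with $d_{0}=c_{0}$, $d_{>0}$ indiscernible over $ebc_{0}$ and $d_{i}\equiv_{eb}c_{1}$ (not $\equiv_{ebc_{0}}c_{1}$), together with $b\forkindep^{K}_{e}J$ --- a deliberately weaker statement that is all the later construction needs. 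Your Stage 2 shift-automorphism computation is fine as algebra, but it inherits the unproved Stage 1 hypothesis, and in any case it only produces a sequence with the right pattern of types, not a weak tree Morley sequence.

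That last upgrade is in fact the heart of the lemma, and your proposal leaves it as a plan rather than a proof. The paper carries it out by a transfinite induction building trees $L_{\alpha}$: at each successor stage one uses the Standard Lemma to make the next copy of $J$ indiscernible over everything built so far, extension of $\forkindep^{K}$ to place a new $h_{\alpha+1}$ with $h_{\alpha+1}J_{\alpha+1}\equiv_{e}h_{0}J$ Kim-independent from the previous tree, and then \cref{kimhyp1} to turn the previous tree into an $e$-Morley sequence of copies forming the next level; only then does \cref{morleytree0} shrink the resulting weakly spread out, s-indiscernible tree to a weakly Morley tree while preserving the path conditions. The tools you name do not substitute for this: \cref{morleytreehyp} only produces trees whose ancestor--leaf pairs realize the single type $\tp(ab/e)$ (its role is symmetry), not a tree whose paths carry the zig-zag pattern of a whole sequence, and \cref{morleytree1} operates on an already-completed weak tree Morley sequence, so it cannot be used to ``attach the correct $b$-component at each node'' during the tree-building induction. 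To repair the proposal you would need to replace Stage 1 by the paper's pair-block Claim (or prove your stronger statement by some other means) and then actually carry out the inductive tree construction with explicit inductive hypotheses guaranteeing weak spreading out, s-indiscernibility, and the two type conditions along paths.
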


\justify
\begin{proof} Let $\kappa$ be an infinite cardinal. We begin by proving the following :

\justify
\textbf{Claim :} There is a $e$-Morley sequence $J = (d_{i}$ : $i<\kappa)$ of the same EM-type as $I$ over $e$ with $d_{0}=c_{0}$ such that $b\forkindep^{K}_{e}J$, $bd_{1}\equiv_{e}bc_{1}$ and $d_{>0}$ is $ebc_{0}$-indiscernible (so $bd_{i}\equiv_{e}bd_{1}\equiv_{e}bc_{1}$ for all $i>0$).

\justify
Proof of the claim : Let $I_{2} = (c_{2i}c_{2i+1}$ : $i<\omega)$. $I_{2}$ is $e$-Morley, and since $b\forkindep^{K}_{e}c_{0}c_{1}$ by \cref{kimhyp1} we can assume that $I_{2}$ is $eb$-indiscernible and $b\forkindep^{K}_{e}I_{2}$. 

\justify
We can extend $I_{2}$ to $I_{2}' = (\tilde{c}_{i}\tilde{c}'_{i}$ : $i<\kappa)$ still $eb$-indiscernible for a large enough $\kappa$. Let $J'=(\tilde{c}'_{i}$ : $i<\kappa)$, by Erdös-Rado we can extract $J_{1} = (c'_{2i+1}$ : $i<\omega)$ $ebc_{0}$-indiscernible and finitely based on $J'$ over $ebc_{0}$, and then extend $J_{1}$ to $J'_{1}$ an also $ebc_{0}$-indiscernible and finitely based on $J'$ over $ebc_{0}$ sequence of lenght $\kappa$. Then let $J:=c_{0}\frown J'_{1}$. $J'_{1}\equiv_{ec_{0}}I'_{>0}$, so $J=c_{0}\frown J'_{1}\equiv_{e} I$, and $J$ satisfies our conditions.

\justify
Now let $h_{0}=b$, we have that $d_{\geq 1}$ is an $e$-Morley sequence and is $eh_{0}d_{0}$-indiscernible, so $h_{0}d_{0}\forkindep^{K}_{e}d_{\geq 1}$. By extension there is $h_{1}$ such that $h_{1}d_{\geq 1}\equiv_{e}h_{0}J$ and $h_{0}d_{0}\forkindep^{K}_{e}h_{1}d_{\geq 1}$ $(\dagger)$. By \cref{kimhyp1} and symmetry there is $L_{1}=(h^{i}_{1}d^{i}_{1}$ : $i<\omega)$ $e$-Morley sequence with $h^{0}_{1}d^{0}_{1}=h_{0}d_{0}$ that is $eh_{1}d_{\geq 1}$-indiscernible, we define $d'_{1}=d_{1}$.

\justify
By the Standard Lemma there is $J_{2}$ such that $J_{2}\equiv_{ed_{0}d_{1}}d_{\geq 2}$ and $J_{2}$ is $eL_{1}h_{1}d_{1}$-indiscernible (we extend the EM-type of $d_{\geq 2}$ over $eL_{0}h_{1}d_{1}$). In particular $J_{2}$ is $e$-Morley and so $L_{1}h_{1}d_{1}\forkindep^{K}_{e}J_{2}$. Again by extension (as in $\dagger$) there is $h_{2}$ such that $h_{2}J_{2 }\equiv_{e} h_{1}d_{\geq 1}(\equiv_{e}h_{0}J)$ and $L_{1}h_{1}d_{1}\forkindep^{K}_{e}h_{2}J_{2}$. Hence there is a $e$-Morley sequence $L_{2}=(L^{i}_{1}h^{i}_{1}d^{i}_{1}$ : $i<\omega)$ such that $L^{0}_{1}h^{0}_{1}d^{0}_{1}=L_{1}h_{1}d'_{1}$ and that $L_{1}$ is $eh_{2}J_{2}$-indiscernible, let $d'_{2}$ be the first component of $J_{2}$.

\justify
Again by extension and the Standard Lemma there is  $h_{3}$ and $J_{3}$ $L_{2}h_{2}d'_{2}$-indiscernible such that $d_{0}d_{1}d'_{2}J_{3 }\equiv_{e} J$ and $L_{2}h_{2}d'_{2}\forkindep^{K}_{e}h_{3}J_{3}$ Then there is a $e$-Morley sequence $L_{3}=(L^{i}_{2}h^{i}_{2}d^{i}_{2}$ : $i<\omega)$ such that $L^{0}_{2}h^{0}_{2}d^{0}_{2}=L_{2}h_{2}d'_{2}$ and that $L_{3}$ is $eh_{3}J_{3}$-indiscernible.

\justify
We repeat this process to inductively construct $L_{\alpha}d'_{\alpha}h_{\alpha}J_{\alpha}$ for every $\alpha < \kappa$ such that $d'_{\alpha}$ is the first term of $J_{\alpha}$, $d'_{<\alpha}\frown J_{\alpha}\equiv_{e}J$, $h_{\alpha}J_{\alpha}\equiv_{e}h_{0}J$, $J_{\alpha}$ is $L_{<\alpha}h_{<\alpha}d'_{<\alpha}$-indiscernible and for $\alpha < \kappa$ limit $L_{\alpha} = \bigcup\limits_{\beta < \kappa }^{}  L_{\beta}$.

\justify
For $\alpha < \kappa$ limit let $L_{\alpha}: = \bigcup\limits_{\beta < \kappa }^{}  L_{\beta}$. By compactness there is $J_{\alpha}h_{\alpha}$ such that $d'_{<\alpha}\frown J_{\alpha}\equiv_{e}J$ and $h_{\alpha}J_{\alpha}\equiv_{e}h_{0}J$, then we let $d'_{\alpha}$ be the first term of $J_{\alpha}$.

\justify
Assume that $L_{\leq \alpha}d'_{\leq \alpha}J_{\leq \alpha}$ and $h_{\leq \alpha}$ have been constructed. By the Standard Lemma there is $J_{\alpha +1}$ such that $J_{\alpha +1}\equiv_{ed'_{\leq \alpha}}J_{\alpha>0}$ and $J_{\alpha +1}$ is $L_{\leq \alpha}h_{\leq \alpha}d'_{\leq \alpha}$-indiscernible. Then $d'_{<\alpha+1}\frown J_{\alpha+1}\equiv_{e}J$. So $L_{\alpha}h_{\leq \alpha}d'_{\alpha}\forkindep^{K}_{e}J_{\alpha+1}$, and by extension there is $h_{\alpha+1}$ such that $h_{\alpha+1}J_{\alpha+1}\equiv_{e}h_{0}J$ and $L_{\alpha}h_{\leq \alpha}d'_{\alpha}\forkindep^{K}_{e}h_{\alpha+1}J_{\alpha+1}$. Then there is an $e$-Morley sequence $L_{\alpha+1}=(L^{i}_{\alpha}h^{i}_{\alpha}d^{i}_{\alpha}$ : $i<\omega)$ such that $L^{0}_{\alpha}h^{0}_{\alpha}d^{0}_{\alpha}=L_{\alpha}h_{\alpha}d'_{\alpha}$ and that $L_{\alpha+1}$ is $eh_{\alpha+1}J_{\alpha+1}$-indiscernible.

\justify
For every $\alpha < \kappa$ $L_{\alpha}$ is a tree indexed on $\mathcal{T}_{\alpha}$ weakly spread out and s-indiscernible over $e$ such that : 

\begin{enumerate}
\item[•] If $\langle (uv)_{\eta_{\beta} }$ : $ \beta \leq \alpha \rangle $ with $dom(\eta_{\beta}) = [\beta, \alpha + 1)$ is an arbitrary path in the tree, then $\langle v_{\eta_{\beta} }$ : $ \beta \leq \alpha \rangle $ has the same EM-type as $J \equiv_{e} I$ over $e$, in particular any countable increasing subsequence is elementary equivalent to $I$ over $e$.
\item[•] For any $\beta \leq \alpha $ we have $ (uv)_{\eta_{\beta} }\equiv_{e}bc_{0}$.
\item[•] For any $\gamma$ with
$\beta < \gamma\leq \alpha $, we have $u_{\eta_{\beta}}v_{\eta_{\gamma}} \equiv_{e} bc_{1}$.
\end{enumerate} 

\justify
Consequently, for a large enough $\kappa$ when we shrink the tree into a weakly Morley tree using \cref{morleytree0} we can preserve the above conditions and the resulting weakly Morley tree also meets the conditions. Therefore we can find a weak tree Morley sequence as described in this lemma.\end{proof}

\justify
This theorem is an adaptation of \cite[Theorem 5.6]{dobrowolski2022independence} :

\begin{theorem}\label{5.6} Let $e \in \mathbb{M}^{heq}$ and $a,a',b,c$ be real tuples. Assume that $b\forkindep_{e}^{K} c$, $a \equiv_{e}^{L} a'$, and $a\forkindep_{e}^{K} b$ with $p(x, b) = \tp(a/eb)$, $a'\forkindep_{e}^{K} c$ with $q(x, c) = \tp(a'/ec)$. Then $p(x,b) \cup q(x, c)$ does not Kim-divide over $e$. 

\end{theorem}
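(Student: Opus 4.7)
The plan is to adapt the proof of \cite[Theorem 5.6]{dobrowolski2022independence}: produce a weak tree Morley sequence in $\tp(bc/e)$ along which the combined type admits a simultaneous realization, and conclude via \cref{kimlemmatreeversionhyp} that $p(x,b)\cup q(x,c)$ does not Kim-divide over $e$. Equivalently, the goal is to find $d$ with $db\equiv_e ab$, $dc\equiv_e a'c$, and $d\forkindep^K_e bc$.

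First I would set up the sequence. Starting from $b\forkindep^K_e c$, ordinary extension and \cref{weaktrans} give $c_1\equiv_e c$ with $c_1\forkindep_e bc$ and $b\forkindep^K_e cc_1$; extend $(c,c_1)$ to an $e$-Morley sequence $(c_i:i<\omega)$ with $c_0=c$. Applying the zig-zag lemma \cref{5.5} yields a weak tree Morley sequence $(b_ic_i:i<\omega)$ over $e$ with $b_ic_i\equiv_e bc$ for all $i$ and $b_ic_j\equiv_e bc_1$ for $j>i$. By \cref{treemorleysequ}(1), the projections $(b_i)$ and $(c_i)$ are themselves weak tree Morley over $e$. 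Since $a\forkindep^K_e b$ and $a'\forkindep^K_e c$, \cref{conjmorleytreebis} produces $\tilde a\equiv_{eb_0}a$ and $\tilde a'\equiv_{ec_0}a'$ with $(b_i)$ being $e\tilde a$-indiscernible and $(c_i)$ being $e\tilde a'$-indiscernible.

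The main obstacle is to fuse $\tilde a$ and $\tilde a'$ into a single $d$ realizing both $p(x,b_0)$ and $q(x,c_0)$ while remaining Kim-independent from $b_0c_0$. Following the original argument, I would use \cref{lascd} to realize the hypothesized Lascar equivalence $a\equiv^L_e a'$ by a finite chain of common $e$-Morley sequences, and then inductively apply the amalgamation lemmas \cref{5.3} and \cref{5.4}, leveraging the weak tree Morley structure of $(b_ic_i)$ together with the Lascar extension \cref{exthyp} to preserve Kim-independence of the evolving realization from growing finite initial segments of the tree. The outcome is some $d$ with $d\equiv_{eb_0}a$ and $dc_j$ realizing the correct type for $j$ sufficiently large; since $b_0c_j\equiv_e bc_1$, $c_1\equiv_e c$, and $(b_ic_i)$ is weak tree Morley in $\tp(bc/e)$, this yields a simultaneous realization of $\{p(x,b_i)\cup q(x,c_i):i<\omega\}$ along the sequence, and \cref{kimlemmatreeversionhyp} delivers the conclusion. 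The delicate point is controlling how the Kim-independence is maintained as the Lascar chain is traversed; this is precisely where the weak tree Morley witnessing from \cref{kimlemmatree} replaces the simple-theory amalgamation argument.
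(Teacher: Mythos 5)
There is a genuine gap, and it sits exactly at the point you flag as ``the main obstacle''. Your zig-zag sequence is anchored at the target pair: you arrange $b_ic_i\equiv_e bc$ on the diagonal and $b_ic_j\equiv_e bc_1$ off it, extract $\tilde a$ and $\tilde a'$ separately by \cref{conjmorleytreebis}, and then assert that the induction ``yields a simultaneous realization of $\lbrace p(x,b_i)\cup q(x,c_i) : i<\omega\rbrace$''. But obtaining that simultaneous realization along a weak tree Morley sequence whose pairs have type $\tp(bc/e)$ is, via \cref{kimlemmatreeversionhyp}, equivalent to $p(x,b)\cup q(x,c)$ not Kim-dividing over $e$ — the statement being proved; invoking the weak-tree-Morley Kim's lemma there is circular. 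Moreover, \cref{5.3} and \cref{5.4} cannot carry the fusion you ask of them: they only produce a copy $h$ of $c$ with $ac\equiv_e ah$ (resp.\ $\equiv^L_e$), $b\forkindep_e h$ and $a\forkindep^K_e bh$; they never merge two realizations $\tilde a\models p(x,b_0)$ and $\tilde a'\models q(x,c_0)$ of types over different bases, so ``inductively apply the amalgamation lemmas along the Lascar chain for $a\equiv^L_e a'$'' has no actual mechanism behind it.

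The missing idea is the paper's base case and the direction of the induction. One first applies $f\in Autf(\mathbb{M}/e)$ with $f(a')=a$, so that the \emph{single} element $a$ realizes $p(x,b)\cup q(x,c'')$ with $c''=f(c)\equiv^L_e c$; \cref{5.4} then upgrades this to $a\forkindep^K_e bc''$, so the combined type does not Kim-divide at $c''$. A second application of \cref{5.4} and \cref{lascd} produces a chain $c'=c_0,\dots,c_n=c$ of Lascar conjugates in which consecutive pairs start common $e$-Morley sequences, with $b\forkindep^K_e c_{\leq n}$ after an extension move. The zig-zag lemma \cref{5.5} is then run at each step with the \emph{known-good} pair on the diagonal ($b'_ic'_i\equiv_e bc_i$, where consistency of the combined type is already established) and the next pair off the diagonal ($b'_ic'_j\equiv_e bc_{i+1}$ for $j>i$); consistency along the diagonal plus the fact that the shifted diagonal $(b'_{2i}c'_{2i+1})_{i<\omega}$ is again weak tree Morley (\cref{treemorleysequ}) pushes non-Kim-dividing from $c_i$ to $c_{i+1}$, ending at $c_n=c$. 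Your proposal walks the chain on the $a$-side and starts the zig-zag at $c$ itself, so it has no step at which a single realization of both types is ever produced. (A smaller point: $b\forkindep^K_e cc_1$ does not follow from $c_1\forkindep_e bc$ via \cref{weaktrans}, which gives $bc_1\forkindep^K_e c$; the chain condition \cref{kimhyp1} is the correct tool there, though this is easily repaired.)
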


\justify
\begin{proof} We can assume that $e \in dcl^{heq}(b)\cap dcl^{heq}(c)$ by normality and symmetry (by replacing $a,b$ and $c$ by $ea,eb$ and $ec$). Since $a \equiv^{L}_{e} a'$, there is an automorphism $f \in Autf(\mathbb{M}/e)$ sending $a'$ to $a$. Then clearly $a \models p(x, b) \cup q(x, c'')$ where $c'' = f(c)$, so $c \equiv^{L}_{e} c''$. In particular $a\forkindep_{e}^{K} c''$. Due to \cref{5.4}, we can assume that $b\forkindep_{e} c''$ and $a\forkindep_{e}^{K} bc''$, so $p(x, b) \cup q(x, c'')$ does not Kim-divide over $e$.

\justify
Now applying \cref{5.4} again to $b, c, c''$ we can
find $c'$ such that $bc'' \equiv_{e} bc'$, $c'\equiv^{L}_{e}c''  \equiv^{L}_{e} c$, $b\forkindep_{e}^{K}cc'$ and $c\forkindep_{e}^{K}c'$. Now
as pointed out in \cref{lascd}, there are $c'= c_{0}, c_{1}, . . . , c_{n} = c$ such that each pair
$c_{i}c_{i+1}$ starts a $e$-Morley sequence in $\tp(c_{i}/e)$. Moreover, due to extension of $\forkindep^{K}$ applied to $b\forkindep_{e}^{K}cc'$, by moving $b$ over $cc'$ we can assume that $b\forkindep_{e}^{K}c_{\leq n}$.

\justify
Recall that $p(x, b) \cup q(x, c_{0} )$ does not Kim-divide over $e$ (*), we shall show that $p(x, b) \cup q(x, c_{1} )$ does not Kim-divide over $e$. Then the same iterative argument will shows that each of $p(x, b) \cup q(x, c_{2} )$, . . . , $p(x, b) \cup q(x, c_{n} )$ does not Kim-divide over $e$ either, as wanted.

\justify
Now due to \cref{5.5}, there is a weak tree Morley sequence $I = (b'_{i}c'_{i}$ : $i<\omega)$ over $e$ such that $b'_{i}c'_{i} \equiv_{e}bc_{0}$ for any $i<\omega$ and $b'_{i}c'_{j} \equiv_{e}bc_{1}$ for any $j > i$. Then due to (*) and \cref{conjmorleytreebis}, $\lbrace p(x,b'_{i}) \cup q(x,c'_{i}) $ : $i<\omega \rbrace$  is consistent.

\justify
In particular, $\bigcup\limits_{i< \omega }^{}  p(x,b'_{2i}) \cup q(x,c'_{2i+1}) $ is consistent. Since $(b'_{2i}c'_{2i+1}$ : $i<\omega)$ is a weak tree Morley sequence over $e$ as well by \cref{treemorleysequ}, we have proved that $p(x, b)\cup q(x, c_{1})$ does not Kim-divide, and we can just repeat this argument starting with $p(x, b)\cup q(x, c_{1})$.\end{proof}

\begin{lemma}\label{lemmaamalg} Let $a,a',b,c,e \in \mathbb{M}^{heq}$ such that $b\forkindep^{K}_{e}c$, $a\equiv^{L}_{e}a'$, $a\forkindep^{K}_{e}b$ and $a'\forkindep^{K}_{e}c$. Then there are real tuples $\overline{a},\overline{a}',\overline{b},\overline{c}$ such that $\overline{b}\forkindep^{K}_{e}\overline{c}$, $\overline{a}\equiv^{L}_{e}\overline{a}'$, $\overline{a}\forkindep^{K}_{e}\overline{b}$, $\overline{a}'\forkindep^{K}_{e}\overline{c}$ and $bdd(ea)\in dcl^{heq}(\overline{a})$, $bdd(ea')\in dcl^{heq}(\overline{a})$, $bdd(eb)\in dcl^{heq}(\overline{b})$, $bdd(ec)\in dcl^{heq}(\overline{c})$.
\end{lemma}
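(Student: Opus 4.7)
The plan is to reduce to the real-tuple case by lifting each of the four hyperimaginaries to a real representative of its bounded closure over $e$, using \cref{liftkimmorley} and Kim-extension (\cref{exthyp}); the required alignments are then produced by Lascar automorphism adjustments that exploit the fact that two representatives of the same hyperimaginary lie in the same $E$-class.

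First, write $h_x := bdd(ex)$ for each $x \in \{a,a',b,c\}$. By \cref{bondedkimmorley} the four hypotheses of the lemma still hold with the $h_x$'s substituted, and $h_a \equiv^L_e h_{a'}$. I apply \cref{liftkimmorley} to $h_b \forkindep^K_e h_c$ to obtain real representatives $\overline{b}$ of $h_b$ and $\overline{c}$ of $h_c$ with $\overline{b} \forkindep^K_e \overline{c}$; the $bdd$-conditions on the $b$- and $c$-sides are automatic since $h_b \in dcl^{heq}(\overline{b})$ and $h_c \in dcl^{heq}(\overline{c})$.

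Next, I apply Kim-extension (\cref{exthyp}) to $h_a \forkindep^K_e h_b$ with parameter $\overline{b}$: there is $h_a^* \equiv^L_{eh_b} h_a$ with $h_a^* \forkindep^K_e \overline{b}$. Inspection of the proof of \cref{liftkimmorley} shows that when one side is already a real tuple it is left unchanged, so \cref{liftkimmorley} applied to $h_a^* \forkindep^K_e \overline{b}$ produces a real representative $\overline{a}^*$ of $h_a^*$ with $\overline{a}^* \forkindep^K_e \overline{b}$. Fixing $\sigma \in Autf(\mathbb{M}/eh_b)$ with $\sigma(h_a) = h_a^*$, I set $\overline{a} := \sigma^{-1}(\overline{a}^*)$; this is a representative of $h_a$, and by invariance of Kim-independence $\overline{a} \forkindep^K_e \sigma^{-1}(\overline{b})$. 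Since $\sigma$ fixes $h_b$, the tuple $\sigma^{-1}(\overline{b})$ is still a representative of $h_b$, so I may replace $\overline{b}$ by $\sigma^{-1}(\overline{b})$ throughout. I perform the symmetric construction on the $(a',c)$-side to produce $\overline{a}'$ representative of $h_{a'}$ with $\overline{a}' \forkindep^K_e \overline{c}$. The relation $\overline{a} \equiv^L_e \overline{a}'$ follows from $h_a \equiv^L_e h_{a'}$: if $f \in Autf(\mathbb{M}/e)$ sends $h_a$ to $h_{a'}$, then $f(\overline{a})$ and $\overline{a}'$ are both representatives of $h_{a'}$ and hence Lascar-equivalent over $eh_{a'}$, so $\overline{a} \equiv^L_e f(\overline{a}) \equiv^L_e \overline{a}'$.

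The main obstacle is the cascade of replacements: applying $\sigma^{-1}$ to pull $\overline{a}^*$ into the $E_a$-class of $h_a$ simultaneously shifts $\overline{b}$ within its $E_b$-class, and the Lascar automorphism $\sigma$ fixes $h_b$ but need not fix $h_c$, so applying it to $\overline{c}$ would destroy the $bdd$-condition on the $c$-side and the relation $\overline{b} \forkindep^K_e \overline{c}$. The resolution is to perform the adjustments in an order that preserves the previously arranged conditions: after the Lascar shift on the $(a,b)$-side, propagate the new choice of $\overline{b}$ to a new representative of $h_c$ (using that $\sigma$ fixes $e$ Lascar-ly, hence preserves $\overline{b} \forkindep^K_e \overline{c}$ up to another representative of $h_c$) and then perform the symmetric adjustment on the $(a',c)$-side relative to this updated $\overline{c}$. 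Careful bookkeeping of these compatible representatives, using that Kim-independence is preserved under Lascar automorphisms over $e$, yields a quadruple $\overline{a},\overline{a}',\overline{b},\overline{c}$ satisfying all four relations and all four $bdd$-conditions simultaneously.
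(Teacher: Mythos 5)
There is a genuine gap, and it sits exactly at the point where your construction is supposed to produce $\overline{a}\equiv^{L}_{e}\overline{a}'$. You argue that if $f\in Autf(\mathbb{M}/e)$ sends $h_{a}$ to $h_{a'}$, then $f(\overline{a})$ and $\overline{a}'$, being ``both representatives of $h_{a'}$'', are Lascar-equivalent over $eh_{a'}$. That is false: two representatives of the same hyperimaginary are merely $E$-equivalent; they need not even have the same type over $\emptyset$, let alone the same Lascar strong type over $eh_{a'}$ (take $E$ on pairs to be equality of first coordinates: $(0,r)$ and $(0,s)$ represent the same class for arbitrary $r,s$). So the representative $\overline{a}'$ you construct on the $(a',c)$-side, chosen only so that $\overline{a}'\forkindep^{K}_{e}\overline{c}$, has no reason to be Lascar-equivalent over $e$ to $\overline{a}$, and the crucial conclusion $\overline{a}\equiv^{L}_{e}\overline{a}'$ is not established. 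The paper's proof avoids this by never choosing the $a'$-side representative independently: it transports the already chosen representative $\overline{a}_{0}$ by a Lascar automorphism over $e$ witnessing $a_{0}\equiv^{L}_{e}a'_{0}$, so that $\overline{a}_{0}\equiv^{L}_{e}\overline{a}'_{1}$ holds by construction, and then repairs the missing independence $\overline{a}'_{1}\forkindep^{K}_{e}\overline{c}$ by extension followed by a Lascar automorphism over the $a'$-side base, an automorphism which fixes $\overline{a}_{0},\overline{b},\overline{c}$, so nothing already arranged is disturbed.

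The second problem is the ``cascade'' you acknowledge: it is described, not resolved. Pulling $\overline{a}^{*}$ back by $\sigma^{-1}$ forces you to replace $\overline{b}$ by $\sigma^{-1}(\overline{b})$, and then $\overline{b}\forkindep^{K}_{e}\overline{c}$ only survives with $\sigma^{-1}(\overline{c})$, which represents $\sigma^{-1}(h_{c})$ rather than $h_{c}$; the proposed ``propagation'' to a new representative of $h_{c}$ is another instance of the same difficulty (any automorphism restoring $h_{c}$ moves $\overline{b}$ again, which in turn threatens $\overline{a}\forkindep^{K}_{e}\overline{b}$), and no order of repairs is exhibited in which the last adjustment fixes all previously arranged data. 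The source of the regress is your insistence that the final tuples represent the original $bdd(ea)$ and $bdd(ea')$. The paper does not attempt this: using \cref{exthyp} it first replaces $bdd(ea)$ and $bdd(ea')$ by Lascar-conjugates over $bdd(eb)$ and $bdd(ec)$ that are Kim-independent from the already fixed real tuples $\overline{b}$ and $\overline{c}$, which is all the application in \cref{amalghyp} requires, and which is precisely what allows every subsequent adjustment to be performed by automorphisms fixing the data already in place. To salvage your argument you would either have to adopt the same relaxation, or give an actual simultaneous construction replacing the ``careful bookkeeping'' sentence.
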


\begin{proof}
By \cref{bondedkimmorley} we can replace $a,a',b,c$ and $e$ by $bdd(ea),bdd(ea'),bdd(eb),bdd(ec)$ and $bdd(e)$ respectively, which we can see as hyperimaginaries. By \cref{liftkimmorley} let $\overline{b},\overline{c}$ be representatives of $bdd(eb)$ and $bdd(ec)$ respectly such that $\overline{b}\forkindep^{K}_{e}\overline{c}$. Then by extension there is : \begin{center}
$\left\{ \begin{array}{l}
        \ a_{0}\equiv^{L}_{bdd(eb)} bdd(ea)$ such that $a_{0}\forkindep^{K}_{e}\overline{b}\\
        \ a'_{0}\equiv^{L}_{bdd(ec)} bdd(ea')$ such that $a'_{0}\forkindep^{K}_{e}\overline{c}
    \end{array}
    \right. $,
\end{center} (the equality of Lacar strong types just comes from the fact that these hyperimaginaries are boundedly closed) in particular we have that $a_{0}\equiv^{L}_{e}a'_{0}$, and we can replace $bdd(eb)$ and $bdd(ec)$ by $\overline{b}$ and $\overline{c}$.

\justify 
Now we choose a representative $\overline{a}_{0}$ of $a_{0}$ such that $\overline{a}_{0}\forkindep^{K}_{e}\overline{b}$. We can choose a representative $\overline{a}'_{1}$ of $a'_{0}$ such that $a_{0}\overline{a}_{0}\equiv^{L}_{e}a'_{0}\overline{a}'_{1}$. Then by extension there is $\overline{c}'$ such that $\overline{c}'\equiv^{L}_{bdd(ea')}\overline{c}$ and $\overline{a}'_{1}\forkindep^{K}_{e}\overline{c}'$, by moving $\overline{c}'$ to $\overline{c}$ over $bdd(ea')$ we now get $\overline{a}'_{0}\equiv^{L}_{ea'}\overline{a}'_{1}$ such that $\overline{a}'_{0}\forkindep^{K}_{e}c$ and we still have $\overline{a}'_{0}\equiv^{L}_{e}\overline{a}_{0}$.
\end{proof}

\begin{theorem}\label{amalghyp} (3-amalgamation of Lstp for Kim-dividing) Let $a,a',b,c,e \in \mathbb{M}^{heq}$. Let $b\forkindep^{K}_{e}c$, $a\equiv^{L}_{e}a'$, $a\forkindep^{K}_{e}b$ and $a'\forkindep^{K}_{e}c$. Then there is $a''$ such that $a''\forkindep^{K}_{e}bc$ and $a''\equiv^{L}_{eb}a$, $a''\equiv^{L}_{ec}a'$.
\end{theorem}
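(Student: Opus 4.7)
\justify
The plan is to reduce to the case of real tuples via \cref{lemmaamalg}, apply \cref{5.6}, and then strengthen ``does not Kim-divide'' to the existence of a Kim-independent realization using \cref{kimfkimd}. First I would invoke \cref{lemmaamalg} to obtain real tuples $\overline{a},\overline{a}',\overline{b},\overline{c}$ satisfying the same four hypotheses as the theorem ($\overline{b}\forkindep^{K}_{e}\overline{c}$, $\overline{a}\equiv^{L}_{e}\overline{a}'$, $\overline{a}\forkindep^{K}_{e}\overline{b}$, $\overline{a}'\forkindep^{K}_{e}\overline{c}$), with $bdd(ea),\,bdd(ea'),\,bdd(eb),\,bdd(ec)$ lying in the $dcl^{heq}$ of $\overline{a},\overline{a}',\overline{b},\overline{c}$ respectively. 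Setting $p(x,\overline{b}):=\tp(\overline{a}/e\overline{b})$ and $q(x,\overline{c}):=\tp(\overline{a}'/e\overline{c})$, \cref{5.6} applies directly and yields that $p(x,\overline{b})\cup q(x,\overline{c})$ does not Kim-divide over $e$.

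\justify
Next I would upgrade this to the existence of a realization $\overline{a}''$ with $\overline{a}''\forkindep^{K}_{e}\overline{b}\overline{c}$. The equivalence of Kim-forking and Kim-dividing for formulas (\cref{kimfkimd}) propagates to partial types: if $p\cup q$ implied a finite disjunction of Kim-dividing formulas, the disjunction would itself Kim-fork, hence Kim-divide, contradicting the previous step. Therefore the partial type $p(x,\overline{b})\cup q(x,\overline{c})\cup\{\neg\phi(x,\overline{b},\overline{c}) : \phi\text{ Kim-divides over }e\}$ is consistent by compactness, and any realization $\overline{a}''$ satisfies $\overline{a}''\forkindep^{K}_{e}\overline{b}\overline{c}$ together with $\overline{a}''\equiv_{e\overline{b}}\overline{a}$ and $\overline{a}''\equiv_{e\overline{c}}\overline{a}'$.

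\justify
For the last step I would project back to the original hyperimaginary sort. The construction in \cref{lemmaamalg} arranges the representatives coherently so that a single $\emptyset$-definable map $\pi$ in $\mathbb{M}^{heq}$ sends $\overline{a}$ to $a$ and $\overline{a}'$ to $a'$ (this is made possible by choosing $\overline{a}'$ as the image of $\overline{a}$ under an $e$-Lascar automorphism realizing $a\equiv^{L}_{e}a'$). Setting $a'':=\pi(\overline{a}'')$, the relations $\overline{a}''\equiv_{e\overline{b}}\overline{a}$ and $\overline{a}''\equiv_{e\overline{c}}\overline{a}'$ become $a''\equiv_{e\overline{b}}a$ and $a''\equiv_{e\overline{c}}a'$, which restrict to $a''\equiv_{bdd(eb)}a$ and $a''\equiv_{bdd(ec)}a'$ using the $dcl^{heq}$-inclusions from \cref{lemmaamalg}. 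Since types over boundedly closed hyperimaginaries coincide with Lascar strong types, this gives $a''\equiv^{L}_{eb}a$ and $a''\equiv^{L}_{ec}a'$, and $\overline{a}''\forkindep^{K}_{e}\overline{b}\overline{c}$ restricts to $a''\forkindep^{K}_{e}bc$ because $a''\in dcl^{heq}(\overline{a}'')$ and $bc\in dcl^{heq}(\overline{b}\overline{c})$.

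\justify
The main obstacle is precisely the last step: arranging a single $a''$ that simultaneously witnesses the correct Lascar strong type over $eb$ and over $ec$. This requires the extraction map $\pi$ to be uniform across $\overline{a}$ and $\overline{a}'$, which depends on choosing the representatives compatibly with the Lascar automorphism from $a$ to $a'$; this compatibility is exactly what justifies passing through \cref{lemmaamalg} rather than working with arbitrary representatives of the hyperimaginaries.
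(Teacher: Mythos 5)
Your proposal follows essentially the same route as the paper's proof: reduce to real tuples via \cref{lemmaamalg}, apply \cref{5.6} to the types $\tp(\overline{a}/e\overline{b})$ and $\tp(\overline{a}'/e\overline{c})$, and take the class of the resulting realization, using that types over the boundedly closed hyperimaginaries give the Lascar strong type conditions. The details you make explicit --- upgrading ``does not Kim-divide'' to a Kim-independent realization via \cref{kimfkimd} and compactness, and the coherent choice of representatives making the projection uniform --- are precisely the steps the paper leaves implicit, and they are correct.
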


\begin{proof}
\justify
We begin by reducing to the case of real tuples using \cref{lemmaamalg} (we keep the notations) : 
    
\justify
We can apply \cref{5.6} to find $\overline{a}'' \models \tp(\overline{a}/\overline{b}) \cup \tp(\overline{a}'/\overline{c})$ such that $\overline{a}''\forkindep^{K}_{e}\overline{b}\overline{c}$. Now by the choice of $\overline{a},\overline{a}',\overline{b}$ and $\overline{c}$ we have $a''\equiv^{L}_{eb}a$ and $a''\equiv^{L}_{ec}a'$ with $a''=\overline{a}''_{E}$ and clearly $a''\forkindep^{K}_{e}bc$, as we wanted.\end{proof}

\justify
We now exploit our new amalgamation result. As for the case of simple theories (with \cite[Proposition 10.10]{casanovas2011simple} for exemple), we have the following :

\begin{cor}\label{remarkmorley} Let $e \in \mathbb{M}^{heq}$, $I = (a_{i}$ : $i<\omega)$ a Kim-Morley sequence of hyperimaginaries over $e$ and $p_{i}$ a partial type over $a_{i}$ that does not Kim-fork over $e$ such that for any realisations $b_{i} \models p_{i}$, $b_{j} \models p_{j}$ we have $b_{i}\equiv^{L}_{e}b_{j} $. Then $\lbrace  p_{i} (x)$ : $i<\omega \rbrace$ does not Kim-fork over $e$.
\end{cor}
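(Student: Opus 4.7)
The plan is to argue by induction on $n$ that for every $n<\omega$, the finite union $\bigcup_{i<n} p_i(x)$ is realised by some $c_n$ with $c_n\forkindep^{K}_{e}a_{<n}$. Since a formula witnessing Kim-forking of $\bigcup_{i<\omega} p_i$ over $e$ would only mention finitely many of the parameters $a_i$, this will be enough by finite character and \cref{kimfkimd}: the finite union $\bigcup_{i<n}p_i$ already would have to Kim-fork over $e$, contradicting the existence of $c_n$.

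The base case $n=1$ is immediate from the hypothesis that $p_0$ does not Kim-fork over $e$. For the inductive step, assume we have $c_n$ realising $\bigcup_{i<n} p_i$ with $c_n\forkindep^{K}_{e}a_{<n}$. By hypothesis there is $b_n\models p_n$ with $b_n\forkindep^{K}_{e}a_n$, and by assumption on the $p_i$ we have $c_n\equiv^{L}_{e}b_n$. Since $I$ is Kim-Morley, $a_n\forkindep^{K}_{e}a_{<n}$, equivalently (by symmetry, \cref{symh}) $a_{<n}\forkindep^{K}_{e}a_n$. We are now exactly in the situation to apply the independence theorem \cref{amalghyp} with parameters $b=a_{<n}$, $c=a_n$, $a=c_n$, $a'=b_n$ over $e$: the four required hypotheses $b\forkindep^{K}_{e}c$, $a\equiv^{L}_{e}a'$, $a\forkindep^{K}_{e}b$, $a'\forkindep^{K}_{e}c$ are all verified.

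The amalgamation yields some $c_{n+1}$ with $c_{n+1}\forkindep^{K}_{e}a_{\leq n}$, $c_{n+1}\equiv^{L}_{ea_{<n}}c_n$, and $c_{n+1}\equiv^{L}_{ea_n}b_n$. In particular, Lascar equivalence over $ea_{<n}$ implies equality of type, so $c_{n+1}$ realises $\bigcup_{i<n}p_i$; and Lascar equivalence over $ea_n$ yields $c_{n+1}\models p_n$. Hence $c_{n+1}$ realises $\bigcup_{i<n+1}p_i$ and is Kim-independent from $a_{\leq n}$ over $e$, completing the induction.

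The only subtle step will be the amalgamation application; everything else is bookkeeping. I expect no real obstacle beyond matching the hypotheses of \cref{amalghyp}, which is where the assumption that all realisations of the various $p_i$ share a common Lascar strong type over $e$ is essential — it is exactly what enables the Lascar-strong-type amalgamation at every induction step, even though we never chose representatives or worked with real tuples explicitly (the hyperimaginary version of \cref{amalghyp} already handled that internally).
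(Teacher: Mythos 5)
Your proof is correct and follows essentially the same route as the paper: an induction on finite unions using the Lascar-strong-type amalgamation \cref{amalghyp} with exactly the parameters $a=c_n$, $a'=b_n$, $b=a_{<n}$, $c=a_n$ (the Kim-Morley hypothesis plus symmetry supplying $a_{<n}\forkindep^{K}_{e}a_{n}$), followed by a compactness/finite-character argument to pass to the infinite union. Your write-up is if anything slightly more explicit than the paper's about why Lascar equivalence over $ea_{<n}$ and $ea_{n}$ preserves the realised types, and both arguments implicitly use extension for $\forkindep^{K}$ to realise the non-Kim-forking partial types by Kim-independent elements.
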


\begin{proof}
\justify
We show by induction on $n <\omega$ that $\lbrace  p_{i} (x)$ : $i\leq n \rbrace$ does not Kim-divide over $e$. For $n=0$ it is clear, if it is true for $n \geq 0$ we have a realisation $b \models \lbrace  p_{i} (x)$ : $i\leq n \rbrace$ such that $b\forkindep^{K}_{e}a_{\leq n}$. By hypothesis there is $b' \models  p_{n+1} (x)$ such that $b'\forkindep^{K}_{e}a_{n+1}$. Since $I$ is Kim-Morley over $e$ $a_{n+1}\forkindep^{K}_{e}a_{\leq n}$ and by hypothesis $b\equiv^{L}_{e}b' $. By \cref{amalghyp} there is $b''  \models \lbrace  p_{i} (x)$ : $i\leq n+1 \rbrace$ such that $b'\forkindep^{K}_{e}a_{\leq n+1}$, and so $\lbrace  p_{i} (x)$ : $i\leq n+1 \rbrace$ does not Kim-divide over $e$, which finishes our induction, and also the proof by the local character of $\forkindep^{K}$.
\end{proof}

\justify
The following is \cite[Corollary 6.6]{kaplan2020kim}, an important result, and here we also follow the same track as in simple theories (see 10.11 of \cite{casanovas2011simple}) :

\begin{cor}\label{newsequence} Let $b,b',e \in \mathbb{M}^{heq}$, if $b\forkindep^{K}_{e}b'$ and $b\equiv^{L}_{e}b'$ then there is a weak tree Morley sequence $(b_{i}$ : $i<\omega)$ over $e$, with $b_{0} = b$ and $b_{1} = b'$.
\end{cor}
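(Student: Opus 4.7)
The plan is to construct a weakly Morley tree over $e$ whose derived weak tree Morley sequence has its first two terms of type $(b,b')$ over $e$, and then transport the whole tree by an $e$-automorphism so that those terms become exactly $b$ and $b'$.

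By symmetry of Kim-forking (\cref{symh}), we have $b'\forkindep^{K}_{e}b$. Fix a sufficiently large ordinal $\kappa$ and apply \cref{morleytreehyp} with its inputs $a,b$ taken to be $b',b$ respectively: this produces a tree $(c_{\eta})_{\eta\in\mathcal{T}_{\kappa}}$ that is weakly spread out and s-indiscernible over $e$, and such that $c_{\eta}c_{\nu}\equiv_{e}b'b$ whenever $\eta\triangleleft\nu$ with $\nu$ a full-length path. Specialising to $\eta=\zeta_{1}$ and $\nu=\zeta_{0}$ (so $\zeta_{1}\triangleleft\zeta_{0}$ and $\zeta_{0}$ is full-length) gives $(c_{\zeta_{0}},c_{\zeta_{1}})\equiv_{e}(b,b')$; more generally, $(c_{\zeta_{0}},c_{\zeta_{\gamma}})\equiv_{e}(b,b')$ for every $\gamma\geq 1$.

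Next, for $\kappa$ large enough, apply \cref{morleytree0} to extract a weakly Morley tree $(d_{\eta})_{\eta\in\mathcal{T}_{\omega}}$ over $e$ whose finite level restrictions match those of $(c_{\eta})$ over $e$. By the level-uniformity clause in the definition of a weakly Morley tree, the type over $e$ of the pair $(d_{\zeta_{i}},d_{\zeta_{j}})$ is the same for all $i<j$ in $[\omega]$, so it suffices to identify this common type with that of one of the pairs $(c_{\zeta_{0}},c_{\zeta_{\gamma}})$ produced by \cref{morleytreehyp}. This can be done by choosing a matching finite level set $v$ in $[\omega]^{<\omega}$ for which the associated $v'\in[\kappa]^{<\omega}$ provided by \cref{morleytree0} contains the level $0$, so that the relevant pair on the $(c_{\eta})$-side is of the form $(c_{\zeta_{0}},c_{\zeta_{\gamma}})$; this forces $(d_{\zeta_{0}},d_{\zeta_{1}})\equiv_{e}(b,b')$.

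Finally, pick $\sigma\in\operatorname{Aut}(\mathbb{M}/e)$ with $\sigma(d_{\zeta_{0}},d_{\zeta_{1}})=(b,b')$ and apply $\sigma$ to the whole tree $(d_{\eta})$. Since the defining properties of a weakly Morley tree (being weakly spread out, s-indiscernible, and level-type uniform over $e$) are all $e$-invariant, the image $(\sigma(d_{\eta}))_{\eta\in\mathcal{T}_{\omega}}$ is again a weakly Morley tree over $e$, and its derived weak tree Morley sequence $(\sigma(d_{\zeta_{\beta}}))_{\beta<\omega}$ begins with $b,b'$ and is the sequence we want. The hypothesis $b\equiv^{L}_{e}b'$ is not visibly used in the construction but is necessary for the conclusion, since any weak tree Morley sequence over $e$ is $bdd(e)$-indiscernible and therefore all of its terms are automatically pairwise $\equiv^{L}_{e}$. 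The main obstacle is the type transfer at the extraction step: the condition from \cref{morleytreehyp} holds only for pairs $(c_{\eta},c_{\nu})$ with $\nu$ a full-length path, so one must combine this with the matching from \cref{morleytree0} and the uniformity of pair types in a weakly Morley tree to conclude that the pair at levels $(0,1)$ of $(d_{\eta})$ has type $(b,b')$ over $e$.
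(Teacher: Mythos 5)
There is a genuine gap, and it sits exactly where you flag it: the type transfer at the extraction step. \cref{morleytreehyp} only controls the types of pairs $(c_{\eta},c_{\nu})$ in which $\nu$ has full domain, i.e.\ lies at level $0$ of $\mathcal{T}_{\kappa}$; it says nothing about a pair of nodes both at positive levels. When you pass to a weakly Morley tree via \cref{morleytree0}, that lemma only says that each finite level-restriction of the new tree realizes the type of the restriction of the old tree to \emph{some} finite set of levels $v'$ -- you get no control whatsoever over $v'$, and the Erd\H{o}s--Rado homogenisation behind the proof cannot be made to retain one distinguished level such as $0$. So the pair $(d_{\zeta_{0}},d_{\zeta_{1}})$ of the extracted tree will in general have the type of $(c_{\zeta_{\beta_{0}}},c_{\zeta_{\beta_{1}}})$ for some $0<\beta_{0}<\beta_{1}<\kappa$, i.e.\ of two internal nodes of the original tree, about which \cref{morleytreehyp} gives no information; your claim that one can ``choose'' the matching so that $0\in v'$ is not something \cref{morleytree0} provides, and the argument collapses at that point.

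That this gap is fatal, and not a fixable technicality, is shown by your own closing remark: your construction never uses $b\equiv^{L}_{e}b'$, yet the statement is false without it. For instance, in the (stable, hence NSOP$_{1}$ with existence) theory of an equivalence relation with two infinite classes, two inequivalent elements $b,b'$ satisfy $b\equiv_{\emptyset}b'$ and $b\forkindep^{K}_{\emptyset}b'$, but they cannot even lie on a common $\emptyset$-indiscernible sequence, since any two terms of an infinite $e$-indiscernible sequence are automatically Lascar-equivalent over $e$; so no argument avoiding the Lascar hypothesis can succeed. The paper's route is different: it first uses \cref{lemmaamalg} to replace everything by real tuples with $e=bdd(e)\in dcl^{heq}(b)\cap dcl^{heq}(b')$, so that $\equiv_{e}$ and $\equiv^{L}_{e}$ coincide, and then runs the proof of \cite[Corollary 6.6]{kaplan2020kim} verbatim; that proof relies on the independence theorem (here \cref{5.6} and \cref{amalghyp}) to build, by induction, a configuration in which \emph{all} relevant pairs realize $\tp(bb'/e)$ before any extraction is performed, which is precisely what makes the pair type survive along the path. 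If you want a self-contained argument, you must bring in the amalgamation machinery at that inductive stage rather than rely on \cref{morleytreehyp} plus \cref{morleytree0} alone.
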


\begin{proof}
\justify
By \cref{lemmaamalg} we can assume that $b$ and $b'$ are real tuples, that $e = bdd(e)$ and $e\in dcl^{heq}(b)\cap dcl^{heq}(b')$, in particular $\equiv_{e}$ $ =$ $ \equiv^{L}_{e}$. Let $p(x, b) = \tp(b'/eb)$. Clearly, proving the result for these real tuples is enough since we can then reduce the weak tree Morley sequence to obtain the desired one for hyperimaginaries. The proof of the result for these real tuples carries over verbatim.
\end{proof}

\justify
The following is a strengthening of \cref{amalghyp} (it is \cite[Lemma 1.19]{chernikov2020transitivity}) :

\begin{lemma}\label{hypamalg2} Let $a_{0},a_{1},b,c,e \in \mathbb{M}^{heq}$. Let $b\forkindep^{K}_{e}c$, $a_{0}\equiv^{L}_{e}a_{1}$, $a_{0}\forkindep^{K}_{e}b$ and $a_{1}\forkindep^{K}_{e}c$. Then there is $a$ such that $a\equiv^{L}_{eb}a_{0}$, $a\equiv^{L}_{ec}a_{1}$ and additionally $a\forkindep^{K}_{e}bc$, $b\forkindep^{K}_{e}ac$ and $c\forkindep^{K}_{e}ab$. 
\end{lemma}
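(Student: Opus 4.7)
The plan is to iteratively apply Theorem~\ref{amalghyp} to add the missing independence conditions to the output of one direct application. First, applying Theorem~\ref{amalghyp} to the given data $(a_0, a_1, b, c)$ produces $a'$ with $a' \equiv^L_{eb} a_0$, $a' \equiv^L_{ec} a_1$, and $a' \forkindep^K_{e} bc$. This accounts for the two required Lascar equivalences and the central independence; it then remains to upgrade $a'$ to an $a$ that additionally satisfies $b \forkindep^K_{e} ac$ and $c \forkindep^K_{e} ab$.

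For the condition $b \forkindep^K_{e} a'c$, I would reapply Theorem~\ref{amalghyp} with $b$ in both of the ``$a, a'$'' slots and $(a', c)$ in the ``$b, c$'' slots, using the trivial Lascar equivalence $b \equiv^L_{e} b$ together with the auxiliary independences $a' \forkindep^K_{e} c$, $b \forkindep^K_{e} a'$, and $b \forkindep^K_{e} c$; these all follow from symmetry (Theorem~\ref{symh}), right-monotonicity of Kim-independence (immediate from the definition, since a Kim-forking formula over $eb$ is automatically a Kim-forking formula over $ebc$), and the given hypotheses. The output is $b'$ with $b' \equiv^L_{ea'} b$, $b' \equiv^L_{ec} b$, and $b' \forkindep^K_{e} a'c$. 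Picking a Lascar strong automorphism $\sigma \in Autf(\mathbb{M}/ea')$ with $\sigma(b) = b'$ and applying $\sigma^{-1}$ yields $b \forkindep^K_{e} a' c^{\ast}$ with $c^{\ast} := \sigma^{-1}(c) \equiv^L_{ea'} c$. A third, symmetric application of Theorem~\ref{amalghyp} with $c$ in both of the ``$a, a'$'' slots handles $c \forkindep^K_{e} a' b$ analogously.

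The main obstacle is bookkeeping: each pullback via a Lascar automorphism fixes only one of the bases ($ea'$, $eb$, or $ec$), so it may displace one of $b, c$ to a Lascar-equivalent copy over the preserved base, threatening the external Lascar equivalences $a' \equiv^L_{eb} a_0$ and $a' \equiv^L_{ec} a_1$ previously secured. The resolution is to observe that at each step the displacement stays within a single Lascar class over the preserved base, and then to combine the partial outputs via a final application of Theorem~\ref{amalghyp} that simultaneously amalgamates the three symmetric conditions. Equivalently, one can run a compactness argument over a sufficiently long $e$-Morley sequence in $\tp(bc/e)$ rendered $ea'$-indiscernible via Proposition~\ref{kimhyp1} (using $a' \forkindep^K_{e} bc$), which transfers the symmetric independences from cross-terms of the sequence back to the original pair $(b, c)$ and yields a single $a$ satisfying all five required conditions at once. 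The technically delicate point is verifying that after each automorphism-pullback the two original Lascar equivalences can be simultaneously reconciled with the newly acquired symmetric independence, which is ultimately where Theorem~\ref{amalghyp} plays its role one final time.
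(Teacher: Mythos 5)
Your first application of \cref{amalghyp} is fine, but everything after it is where the actual content of the lemma lies, and that part is not closed. The second application produces a \emph{new} element $b'$ with $b'\forkindep^{K}_{e}a'c$, $b'\equiv^{L}_{ea'}b$, $b'\equiv^{L}_{ec}b$; pulling back by $\sigma^{-1}$ for $\sigma\in Autf(\mathbb{M}/ea')$ with $\sigma(b)=b'$ only gives $b\forkindep^{K}_{e}a'c^{*}$ for the displaced copy $c^{*}=\sigma^{-1}(c)$, i.e.\ a statement about a different triple: the independence travels with $(a',b,c^{*})$ and never attaches to $(a',b,c)$. Note also that whether $b\forkindep^{K}_{e}a'c$ holds is invariant under $Aut(\mathbb{M}/ebc)$, so it depends only on $\tp(a'/ebc)$; since your procedure never revisits or changes the type of the amalgam over $ebc$, no amount of conjugating the auxiliary outputs can create the missing side independences for the original pair $(b,c)$ --- a genuinely different construction of the amalgam is needed, not a post hoc repair. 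Neither of your proposed resolutions supplies one. The ``final application of \cref{amalghyp}'' is unspecified: that theorem amalgamates two types of a single element over $eb$ and $ec$, and you do not say which two types are to be amalgamated nor why its output would satisfy $b\forkindep^{K}_{e}ac$ and $c\forkindep^{K}_{e}ab$ for the original $b,c$. The Morley-sequence sketch founders on the cross-term problem: from an $ea'$-indiscernible $e$-Morley sequence $(b_{i}c_{i} : i<\omega)$ with $b_{0}c_{0}=bc$ (available via \cref{kimhyp1}) one does get, applying \cref{kimlemmahyp2} to the tail and using symmetry, facts like $b_{1}c_{1}\forkindep^{K}_{e}a'b_{0}c_{0}$, hence $b_{1}\forkindep^{K}_{e}a'c_{0}$; but transferring this to $b_{0}\forkindep^{K}_{e}a'c_{0}$ would require $a'b_{1}c_{0}\equiv_{e}a'b_{0}c_{0}$, and indiscernibility of the sequence of \emph{pairs} gives no control over such mixed types. (Be careful, too, that the chain condition for plain Kim-Morley sequences and witnessing, \cref{chaintotal1} and \cref{witnessinghyp}, are proved later and rest on this very lemma through \cref{p732h}, so they are off limits here.)

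For comparison, the paper does not argue by iterating the amalgamation theorem at all: it first reduces to real tuples by \cref{lemmaamalg} and then runs the proof of Lemma 1.19 of \cite{chernikov2020transitivity} verbatim, which builds the amalgam together with a weak tree Morley sequence using \cref{newsequence}, \cref{conjmorleytreebis} and \cref{remarkmorleytree}, so that the two extra independences hold by construction rather than being retrofitted. In its present form your last paragraph acknowledges the obstacle but does not overcome it, so the proposal has a genuine gap.
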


\begin{proof}
\justify
By \cref{lemmaamalg} we can assume that $a_{0},a_{1},b$ and $c$ are real tuples. Once we have replaced hyperimaginaries with real tuples the proof carries over verbatim using \cref{newsequence}, \cref{amalghyp}, \cref{conjmorleytreebis}, \cref{bondedkimmorley} and \cref{remarkmorleytree}. We then take the reductions.\end{proof}

\subsection{Transitivity of Kim-Forking for hyperimaginaries}

\justify
This section consists in rewritings of the proofs of section 2 of \cite{chernikov2020transitivity}. The proofs of \cref{p732h} and \cref{transitkim} are new. The following is \cite[Lemma 2.1]{chernikov2020transitivity}, the proof carries over verbatim.

\begin{lemma}\label{2.1trans} Let $a_{0},b,e \in \mathbb{M}^{heq}$. If $a_{0}\forkindep^{K}_{e}b$, $e\in dcl^{heq}(b)$ then there is a weak tree Morley sequence $( a_{i}$ : $i<\omega )$ over $b$ such that $a_{i}\forkindep^{K}_{e}ba_{<i}$ for all $i<\omega$.\end{lemma}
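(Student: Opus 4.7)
The strategy is to build $(a_i)_{i<\omega}$ inductively under a strengthened inductive hypothesis: after step $n$, the initial segment $(a_0,\ldots,a_{n-1})$ is part of a weak tree Morley sequence over $b$ and satisfies $a_{\geq i}\forkindep^{K}_{e}ba_{<i}$ for every $i\leq n-1$. This strengthening implies in particular that $a_{<n}\forkindep^{K}_{e}b$, which is the independence premise needed to run amalgamation at the next step. Since $e\in dcl^{heq}(b)$, Lascar types over $b$ automatically refine those over $e$, which I will use repeatedly to transfer conditions between the two bases.

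For the inductive step, given such $(a_0,\ldots,a_{n-1})$, I first apply extension for $\forkindep^{K}$ (\cref{exthyp}) to $a_0\forkindep^{K}_{e}b$ with parameter $a_{<n}$ to obtain $a'_n\equiv^{L}_{b}a_0$ with $a'_n\forkindep^{K}_{e}ba_{<n}$; in particular, $a'_n\equiv^{L}_{e}a_0$. To also arrange the Kim-independence over $b$ needed to continue the weak tree Morley structure (cf. \cref{totalmorleyh}), I apply the independence theorem \cref{hypamalg2} to the data: $a_0\equiv^{L}_{e}a'_n$, $a_0\forkindep^{K}_{e}b$, $a'_n\forkindep^{K}_{e}a_{<n}$, and $b\forkindep^{K}_{e}a_{<n}$ (this last ingredient from the strengthened IH together with symmetry, \cref{symh}). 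The theorem produces $a_n$ with $a_n\equiv^{L}_{eb}a_0$ and $a_n\equiv^{L}_{ea_{<n}}a'_n$, together with the additional conclusions $a_n\forkindep^{K}_{e}ba_{<n}$, $b\forkindep^{K}_{e}a_n a_{<n}$, and $a_{<n}\forkindep^{K}_{e}ba_n$. The first equality keeps the sequence $b$-indiscernible, while the three extra Kim-independence conclusions update the strengthened inductive hypothesis for the next step.

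The main obstacle is certifying that the $\omega$-sequence built this way is actually a weak tree Morley sequence over $b$, not merely a $b$-indiscernible Kim-Morley sequence over $b$. To overcome this I would run the induction at the level of trees: construct a tree $(a_{\eta})_{\eta\in\mathcal{T}_{\kappa}}$ weakly spread out and s-indiscernible over $b$, for $\kappa$ sufficiently large, in which a designated path satisfies the $\forkindep^{K}_{e}$ conditions of the strengthened hypothesis. The amalgamation-plus-extension step above is applied at every node when adding the new children $(a_{\eta\frown\langle j\rangle})_{j<\omega}$, and s-indiscernibility is preserved after each level by invoking \cref{modprop} and \cref{modprophyperimag} to pass to an s-indiscernible tree satisfying the relevant EM-type over $b$. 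Finally, \cref{morleytree0} extracts from this tree the desired weak tree Morley sequence over $b$; the path conditions $a_i\forkindep^{K}_{e}ba_{<i}$ are type-definable by \cref{typedef1}, so they persist through the shrinking process.
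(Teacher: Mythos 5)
Your final paragraph does land on the right framework, and it is essentially the paper's: the paper takes the proof of \cite[Lemma 2.1]{chernikov2020transitivity} verbatim, i.e.\ it grows a tree that is weakly spread out and s-indiscernible over $b$ whose nodes carry the $\forkindep^{K}_{e}$-conditions, restores s-indiscernibility with the modeling property at each stage, and extracts a weakly Morley tree over $b$ via \cref{morleytree0}, using the hyperdefinability of Kim-independence (\cref{typedef1}) to see that the path conditions $a_{i}\forkindep^{K}_{e}ba_{<i}$ survive. However, as written your argument has genuine gaps.

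First, in the sequence-level construction the claim that $a_{n}\equiv^{L}_{eb}a_{0}$ ``keeps the sequence $b$-indiscernible'' is false: controlling each $a_{n}$'s Lascar type over $eb$ separately says nothing about the types of tuples $(a_{i_{0}},\dots,a_{i_{k}})$ over $b$, so that induction does not even produce a $b$-indiscernible sequence, let alone a weak tree Morley one. You partly acknowledge this by passing to trees, but the analogous problem reappears there and is not solved. A weakly Morley tree over $b$ requires weak spread-outness over $b$: the sibling subtrees $(a_{\unrhd\eta\frown\langle j\rangle}:j<\omega)$ must form Morley sequences for \emph{ordinary nonforking} $\forkindep$ over $b$. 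The ``amalgamation-plus-extension step applied at every node when adding the new children'' only produces Kim-independence over $e$; it neither yields nor preserves $\forkindep$-Morley sequences over $b$, and it is not how $\mathcal{T}_{\alpha}$-trees are grown. The actual successor step is: form an $\forkindep$-Morley sequence over $b$ of copies of the \emph{entire} previously built tree (using existence and extension for ordinary forking over hyperimaginaries), place these copies above a new root obtained by extension for $\forkindep^{K}$ (\cref{exthyp}) applied to $a_{0}\forkindep^{K}_{e}b$, so that the new root is Kim-independent over $e$ from $b$ together with everything above it, and then re-extract an s-indiscernible tree over $b$ via \cref{modprop}/\cref{modprophyperimag}; the Kim-independence conditions persist because they are part of the EM-type by \cref{typedef1}, and no use of \cref{hypamalg2} is needed. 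Preservation of spread-outness under this re-extraction is itself a point that needs an argument (handled as in \cref{morleytreehyp} and \cref{morleytree1}) and is not addressed in your sketch. Without the spread-out-over-$b$ mechanism your tree is not weakly Morley over $b$, so \cref{morleytree0} does not apply and the extracted sequence is not known to be weak tree Morley over $b$ --- which is precisely the content of the lemma.
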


\begin{lemma}\label{p732h} Let $a,b,c,e \in \mathbb{M}^{heq}$. If $a\forkindep^{K}_{e}b$ and $c\forkindep^{K}_{e}b$ then there is $c'$ such that $c'\equiv_{eb}c$, $ac'\forkindep^{K}_{e}b$ and $a\forkindep^{K}_{eb}c'$.\end{lemma}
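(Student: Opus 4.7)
The plan is to combine the extension property of Kim-forking over $eb$ (Corollary \ref{exthyp}) with the 3-amalgamation / Independence Theorem over $e$ (Lemma \ref{hypamalg2}), linked by a Lascar-automorphism transfer.

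First I would apply extension to secure the third condition. Since existence for hyperimaginaries gives the trivial $a \forkindep^K_{eb} eb$, Corollary \ref{exthyp} produces $a^{\ast} \equiv^L_{eb} a$ with $a^{\ast} \forkindep^K_{eb} c$. Moving $a^{\ast}$ back to $a$ by an $eb$-Lascar automorphism yields $c_0 \equiv^L_{eb} c$ with $a \forkindep^K_{eb} c_0$; by invariance from $c \forkindep^K_e b$ we also have $c_0 \forkindep^K_e b$.

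Next, to secure the joint Kim-independence over $e$, I would run the 3-amalgamation with base $e$ and sides $b, c_0$. By symmetry (\cref{symh}), $b \forkindep^K_e c_0$; by a further application of extension there is $a_1 \equiv^L_e a$ with $a_1 \forkindep^K_e c_0$. Lemma \ref{hypamalg2} then yields $h$ with $h \equiv^L_{eb} a$, $h \equiv^L_{ec_0} a_1$, together with the triangle of independences $h \forkindep^K_e bc_0$, $b \forkindep^K_e hc_0$, and $c_0 \forkindep^K_e hb$. Let $\sigma$ be an $eb$-Lascar automorphism sending $h$ to $a$ and set $c' := \sigma(c_0)$. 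Then $c' \equiv^L_{eb} c_0 \equiv^L_{eb} c$, and applying $\sigma$ to $b \forkindep^K_e hc_0$ gives $b \forkindep^K_e ac'$, hence $ac' \forkindep^K_e b$ by symmetry. This establishes two of the three required properties.

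The last step is to combine these so that the condition $a \forkindep^K_{eb} c_0$ from Step 1 is preserved as $a \forkindep^K_{eb} c'$ after the transfer of Step 2. Naively, $\sigma$ fixes $eb$ but may displace $a$ to $\sigma(a)$, so $a \forkindep^K_{eb} c_0$ transfers only to $\sigma(a) \forkindep^K_{eb} c'$. The resolution is to refine the amalgamation input — choosing $a_1$ with the stronger Lascar-strong equivalence $a_1 \equiv^L_{ec_0} a$ via extension over $ec_0$, and using the extra independences in the output of Lemma \ref{hypamalg2} — so that $\sigma$ may additionally be taken to fix $c_0$ (equivalently, $h \equiv^L_{ebc_0} a$). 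With this, $c' = c_0$ and the third condition is inherited directly from Step~1. Alternatively, one may follow Step 3 with a second extension within the $\equiv^L_{eb}$-orbit of $c'$, verifying that the joint independence $ac'\forkindep^K_{e} b$ is preserved.

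The main obstacle is precisely this compatibility: the 3-amalgamation produces independences over the base $e$, while the third target condition lives over the larger base $eb$, and transporting one to the other would be a base-monotonicity statement for Kim-forking that is not available at this point in the paper (indeed, transitivity of Kim-forking is the goal of the next lemma, \cref{transitkim}). Thus the crux of the argument is to coordinate the extension step and the amalgamation step so that the transfer automorphism $\sigma$ acts trivially on $c_0$, allowing both the $e$-independence $ac'\forkindep^{K}_{e} b$ and the $eb$-independence $a \forkindep^K_{eb} c'$ to survive simultaneously.
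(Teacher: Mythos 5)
Your plan founders at exactly the point you flag as the crux, and the proposed resolution does not close it. The step ``$\sigma$ may additionally be taken to fix $c_0$, equivalently $h\equiv^L_{ebc_0}a$'' is not obtainable from the tools you invoke: \cref{hypamalg2} only produces $h$ with $h\equiv^L_{eb}a$ and $h\equiv^L_{ec_0}a_1$ \emph{separately}, and two such equivalences over $eb$ and over $ec_0$ never combine into Lascar equivalence over the union $ebc_0$. Moreover, your ``refined input'' $a_1\equiv^L_{ec_0}a$ with $a_1\forkindep^{K}_{e}c_0$ already presupposes that $\tp(a/ec_0)$ does not Kim-fork over $e$ (since $a_1\equiv_{ec_0}a$), i.e. $a\forkindep^{K}_{e}c_0$ --- and this is not available: from $a\forkindep^{K}_{e}b$ and $a\forkindep^{K}_{eb}c_0$ one could only get it by transitivity, which is \cref{transitkim}, proved \emph{after} and by means of the present lemma (via \cref{p733h}), so the argument would be circular. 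The ``alternative'' of a second extension after Step 3 merely mirrors the defect: the $eb$-automorphism restoring $a\forkindep^{K}_{eb}c'$ displaces $a$ inside the condition $ac'\forkindep^{K}_{e}b$, so at every stage exactly one of the two independences is sacrificed. There is no verification that both survive, and with only extension, symmetry and the independence theorem over $e$ at hand there is no mechanism to make them survive simultaneously.

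The paper's proof uses a genuinely different mechanism, which is what makes the simultaneity possible: after reducing to real tuples (\cref{lemmaamalg}), the pair of conditions ``$x\equiv_{eb}c$ and $ax\forkindep^{K}_{e}b$'' is packaged into a single hyperdefinable partial type $\Gamma(x,\overline{b},\overline{a})$ using the hyperdefinability of Kim-independence (\cref{typedef1}). One then builds, via \cref{2.1trans} and \cref{morleytree1}, a weak tree Morley sequence $(\overline{a}_i\overline{b}_i : i<\omega)$ over $b$ with $\overline{a}_0=\overline{a}$ and $\overline{a}_i\forkindep^{K}_{e}b\overline{a}_{<i}$, and shows by induction --- this is where \cref{hypamalg2} is used, but only to amalgamate over the base $e$ along the sequence --- that $\bigcup_{i<\omega}\Gamma(x,\overline{b}_i,\overline{a}_i)$ is consistent. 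Kim's lemma for weak tree Morley sequences (\cref{kimlemmatree}) then yields that $\Gamma(x,\overline{b}_0,\overline{a}_0)$ does not Kim-fork over $b$, so it has a realization $\overline{c}'$ with $\overline{c}'\forkindep^{K}_{b}\overline{a}$; since $e\in dcl^{heq}(\overline{b})$, symmetry turns this into $a\forkindep^{K}_{eb}c'$, while membership in $\Gamma$ gives $c'\equiv_{eb}c$ and $ac'\forkindep^{K}_{e}b$ at the same time. In other words, the independence over the larger base is extracted from the witnessing property of the tree Morley sequence applied to the encoded type, not from any automorphism transfer; some such device (or an equivalent witnessing argument) is what your proposal is missing.
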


\begin{proof}

\justify
By \cref{lemmaamalg} there are real tuples $\overline{b}, \overline{a}, \overline{c}$ such that $\overline{a}\forkindep^{K}_{e}\overline{b}$, $\overline{c}\forkindep^{K}_{e}\overline{b}$, and $b,e \in dcl^{heq}(\overline{b})$, $a,e \in dcl^{heq}(\overline{a})$, $c,e \in dcl^{heq}(\overline{c})$. Let $\Gamma'(x,\overline{b},y)$ be the type that defines the set of $\overline{a}'\overline{c}'$ such that $\overline{a}'\overline{c}'\forkindep^{K}_{e}b$ for any $\overline{b}$ representative of $b$ (\cref{typedef1}). Let $\Gamma (x,\overline{b},\overline{a}) =\Gamma' (x,\overline{b},\overline{a}) \cup \tp(\overline{c}/eb)$.

\justify
By \cref{2.1trans} there is $(\overline{a}_{i}$ : $i<\omega ) $ is an $eb$-indiscernible sequence satisfying $\overline{a}_{0}=\overline{a}$ and $\overline{a}_{i}\forkindep^{K}_{e}b\overline{a}_{<i}$ for all $i<\omega $. By \cref{morleytree1} we can complete it to a weak tree Morley sequence $(\overline{a}_{i}\overline{b}_{i}$ : $i<\omega )$ with $\overline{b}_{i}$ a representative of $b$. We show by induction on $n < \omega $ that we can find $\overline{c}_{n}\equiv^{L}_{e}\overline{c}$ such that $\overline{c}_{n}\forkindep^{K}_{e}b\overline{a}_{<n}$ and $\overline{c}_{n}\overline{a}_{m}\forkindep^{K}_{e}b\overline{a}_{<m}$ for all $m<n$ : For $n=0$ we take $\overline{c}_{0} =\overline{c}$.

\justify
Assume that we have found $\overline{c}_{n}$. So we have $\overline{c}_{n}\equiv^{L}_{e}\overline{c}$ and $\overline{c}_{n}\forkindep^{K}_{e}b\overline{a}_{<n}$. By \cref{exthyp} there is $\overline{c}'\equiv_{e}^{L}\overline{c}$ such that $\overline{c}'\forkindep^{K}_{e}\overline{a}_{n}$.
Then $\overline{c}'\equiv_{e}^{L}\overline{c}\equiv_{e}^{L}\overline{c}_{n}$ and since $\overline{a}_{n}\forkindep^{K}_{e}b\overline{a}_{<n}$ by \cref{hypamalg2} we can find $\overline{c}_{n+1}\equiv_{e}^{L}\overline{c}$ such that $\overline{c}_{n+1} \models \tp(\overline{c}_{n}/eb\overline{a}_{<n})\cup \tp(\overline{c}'/e\overline{a}_{n})$, $\overline{c}_{n+1}\forkindep^{K}_{e}b\overline{a}_{<n+1}$ and $\overline{c}_{n+1}\overline{a}_{n}\forkindep^{K}_{e}b\overline{a}_{<n}$, which completes our induction. Then $\overline{c}_{n+1} \models \bigcup\limits_{i< n+1 }^{} \Gamma (x,\overline{b}_{i},\overline{a}_{i})$, so $\bigcup\limits_{i< \omega }^{} \Gamma (x,\overline{b}_{i},\overline{a}_{i}) $ is consistent.

\justify
By \cref{kimlemmatree} $\Gamma (x,\overline{b}_{0},\overline{a}_{0})$ does not Kim-fork over $b$, so there is $\overline{c}' \models \Gamma (x,\overline{b}_{0},\overline{a}_{0})$ such that $\overline{c}'\forkindep^{K}_{b}\overline{a}_{0}$, which gives the desired hyperimaginary after reducing by the right equivalence relation.\end{proof}

\justify
The following is \cite[Proposition 2.3]{chernikov2020transitivity}, the proof carries over verbatim using \cref{p732h}.

\begin{prop}\label{p733h} Let $a_{0},b,e \in \mathbb{M}^{heq}$. If $a_{0}\forkindep^{K}_{e}b$, then  there is an $eb$-indiscernible sequence $I=( a_{i}$ : $i<\omega )$ which is a weak tree Morley sequence over $e$ and a Kim-Morley sequence over $eb$.\end{prop}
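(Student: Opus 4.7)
The strategy is to mimic the proof of the analogous fact for models in \cite[Proposition 2.3]{chernikov2020transitivity}, which the author notes carries over verbatim once \cref{p732h} is available. As a first step I would replace $b$ by $eb$ in order to assume $e\in dcl^{heq}(b)$; this replacement preserves all three desired properties ($eb$-indiscernibility, weak tree Morley over $e$, and Kim-Morley over $eb$) as well as the hypothesis $a_{0}\forkindep^{K}_{e}b$.

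The main step is an inductive construction driven by \cref{p732h}. Starting from $a_{0}$, at stage $n$, assume we have built $(a_{i})_{i<n}$ with $a_{i}\equiv_{eb}a_{0}$, $a_{<n}\forkindep^{K}_{e}b$, and $a_{i}\forkindep^{K}_{eb}a_{<i}$ for each $i<n$. Applying \cref{p732h} with parameters $a\mapsto a_{<n}$, $b\mapsto b$, $c\mapsto a_{0}$, both Kim-independence hypotheses over $e$ hold, producing $a_{n}\equiv_{eb}a_{0}$ such that $a_{<n}a_{n}\forkindep^{K}_{e}b$ and $a_{<n}\forkindep^{K}_{eb}a_{n}$; symmetry then gives $a_{n}\forkindep^{K}_{eb}a_{<n}$, closing the induction. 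Then I would extract an $eb$-indiscernible sequence locally based on $(a_{i})_{i<\omega}$ over $eb$ via the Standard Lemma. By the type-definability of Kim-forking (\cref{typedef1}), the properties $a_{<n}\forkindep^{K}_{e}b$ and $a_{n}\forkindep^{K}_{eb}a_{<n}$ pass to the extracted sequence, simultaneously delivering $eb$-indiscernibility and the Kim-Morley condition over $eb$.

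The delicate and main obstacle is the weak tree Morley over $e$ condition, since a Kim-Morley sequence is not automatically a path in a weak Morley tree. For this I would exploit the stronger relation $a_{\leq n}\forkindep^{K}_{e}b$ maintained throughout the construction (which implies $a_{n}\forkindep^{K}_{e}a_{<n}$), together with the tree-building machinery of \cref{modprop}, \cref{morleytree0}, and the hyperimaginary \cref{morleytree2}: one fits the extracted sequence as a path of a weak Morley tree over $e$ by iteratively invoking \cref{p732h} at each successor node, arranging at each stage both the Morley condition over $e$ required by weak-spread-outness and the Kim-independence over $eb$ between nodes. The heart of the argument is making these two incompatible-looking demands coexist through the successive Erd\"os--Rado reductions, and the type-definability of Kim-forking (\cref{typedef1}) is what keeps both the $e$-tree-Morley and $eb$-Morley data alive under these extractions.
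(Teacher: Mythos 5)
The first half of your construction is fine and matches the intended use of \cref{p732h}: the induction producing $a_{n}\equiv_{eb}a_{0}$ with $a_{\leq n}\forkindep^{K}_{e}b$ and $a_{n}\forkindep^{K}_{eb}a_{<n}$, followed by extraction of an $eb$-indiscernible sequence and preservation of these conditions via \cref{typedef1}, correctly yields $eb$-indiscernibility and the Kim-Morley condition over $eb$. The gap is exactly where you place "the delicate and main obstacle", and it is not closed. First, the parenthetical claim that $a_{\leq n}\forkindep^{K}_{e}b$ implies $a_{n}\forkindep^{K}_{e}a_{<n}$ is unjustified: monotonicity only gives $a_{n}\forkindep^{K}_{e}b$, and combining $a_{n}\forkindep^{K}_{eb}a_{<n}$ with $a_{n}\forkindep^{K}_{e}b$ to get $a_{n}\forkindep^{K}_{e}ba_{<n}$ is precisely transitivity (\cref{transitkim}), which in this paper is \emph{deduced from} the present proposition; invoking it here is circular. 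Second, even if you had Kim-Morley, or total Kim-Morley, over $e$, you could not conclude "weak tree Morley over $e$": the equivalence of total Kim-Morley with weak tree Morley (\cref{thiskek}, \cref{totaltreemorley}) is only established later, via witnessing, which again rests on transitivity. So there is no route, at this stage, from properties of an already-fixed $eb$-indiscernible sequence to its being a path in a weak Morley tree over $e$; "fitting the extracted sequence as a path of a weak Morley tree" is a restatement of the goal, not an argument.

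The proof the paper imports from \cite[Proposition 2.3]{chernikov2020transitivity} avoids this by never fixing the sequence first: one builds, by induction on ordinals, a tree that is weakly spread out over $e$ and s-indiscernible over $eb$, whose nodes all realize $\tp(a_{0}/eb)$, keeping the whole tree Kim-independent from $b$ over $e$ and each node Kim-independent over $eb$ from the nodes above it; \cref{p732h} is what supplies the new root at successor stages with both requirements simultaneously, the chain condition and the modeling property (\cref{modprop}) restore indiscernibility over $eb$, and \cref{typedef1} preserves the two Kim-independence requirements under these extractions, after which \cref{morleytree0} shrinks the tree to a weak Morley tree over $e$ whose path is the desired sequence. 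Note also that \cref{typedef1} cannot, as your last sentence suggests, preserve the "$e$-tree-Morley data" itself: weak spread-outness is a condition about $\forkindep$-Morley sequences of subtrees and is not type-definable; in the paper's analogous constructions (\cref{morleytreehyp}, \cref{morleytree1}, \cref{5.5}) this is handled by the order in which the spreading-out and the s-indiscernibility extractions are performed, and your sketch would need to spell this out as well. As it stands, the proposal proves only the $eb$-part of the statement.
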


\begin{theorem}\label{transitkim} Transitivity of Kim-independence : Let $a,b,c,e \in \mathbb{M}^{heq}$. If $e \in dcl^{heq}(b)$, $a\forkindep^{K}_{e}b$ and $a\forkindep^{K}_{b}c$, then $a\forkindep^{K}_{e}bc$.\end{theorem}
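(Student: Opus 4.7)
Plan: The strategy is to find a hyperimaginary $c^*$ such that the sequence $I$ produced by \cref{p733h} becomes $ebc^*$-indiscernible while $c^* \equiv^{L}_{ba} c$, and then invoke \cref{remarkmorleytree2bis}. First I would apply \cref{p733h} to $a \forkindep^{K}_{e} b$ (noting that $eb$ and $b$ coincide for Kim-forking purposes since $e \in dcl^{heq}(b)$) to obtain an $eb$-indiscernible sequence $I = (a_i : i<\omega)$ with $a_0 = a$ that is weak tree Morley over $e$ and Kim-Morley over $b$. Once $c^*$ is produced as above, $I$ is weak tree Morley over $e$ and $ebc^*$-indiscernible with $a_0 = a$, so \cref{remarkmorleytree2bis} gives $bc^* \forkindep^{K}_{e} a$, hence $a \forkindep^{K}_{e} bc^*$ by \cref{symh}. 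Then an automorphism fixing $ba$ (and therefore $e$, since $e \in dcl^{heq}(b)$) sends $c^*$ to $c$ and yields $a \forkindep^{K}_{e} bc$.

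To construct $c^*$, I build $c_n$ by induction on $n$ satisfying $c_n \forkindep^{K}_{b} a_{\leq n}$ and $c_n \equiv^{L}_{ba_{\leq k}} c_k$ for every $k \leq n$, starting with $c_0 = c$ (using $c \forkindep^{K}_{b} a$ by \cref{symh}). At each step, $b$-indiscernibility of $I$ gives $a_{n+1} \equiv^{L}_{b} a_0$; combined with Lascar invariance of Kim-independence (\cref{bondedkimmorley}), this produces $c' \equiv^{L}_{b} c_n$ with $c' \forkindep^{K}_{b} a_{n+1}$. The Kim-Morley property gives $a_{n+1} \forkindep^{K}_{b} a_{\leq n}$, so the $3$-amalgamation theorem \cref{hypamalg2} applied over $b$ produces $c_{n+1}$ with $c_{n+1} \equiv^{L}_{ba_{\leq n}} c_n$, $c_{n+1} \equiv^{L}_{ba_{n+1}} c'$, and $c_{n+1} \forkindep^{K}_{b} a_{\leq n+1}$. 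Compactness then yields a single $c^*$ with $c^* \equiv^{L}_{ba_{\leq n}} c_n$ for every $n$, in particular $c^* \equiv^{L}_{ba_0} c$ and $c^* \forkindep^{K}_{b} \{a_i : i < \omega\}$ by finite character.

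To promote this type information to full $bc^*$-indiscernibility of $I$, I would extract via Erd\"os--Rado a $bc^*$-indiscernible sequence $(a'_i)_{i<\omega}$ locally based on $I$ over $bc^*$; since it is also $b$-indiscernible with the same $b$-EM-type as $I$, both being $b$-indiscernible sequences, we have $(a'_i) \equiv_b (a_i)$. A $b$-automorphism then transports $(a'_i)$ to $I$, carrying $c^*$ to a new hyperimaginary $c^{**}$ with $I$ being $ebc^{**}$-indiscernible; the argument of the first paragraph then concludes.

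The main obstacle is this extraction-and-transport step. The amalgamation induction produces the correct Lascar strong type data on $c^*$ over each finite $ba_{\leq n}$, but confirming that the transported $c^{**}$ still satisfies $c^{**} \equiv^{L}_{ba} c$, so that the final automorphism exists, requires careful tracking of types through the Ramsey extraction. The $b$-automorphism used to transport $(a'_i)$ to $I$ must respect both the EM-type of $I$ over $b$ and the pointwise amalgamation data encoded in $c^*$'s type over $ba_0$, and verifying this compatibility is the technical core of the argument.
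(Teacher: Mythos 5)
Your proof is correct, and its skeleton is the same as the paper's: use \cref{p733h} to produce $I=(a_{i} : i<\omega)$ with $a_{0}=a$, weak tree Morley over $e$ and Kim-Morley over $b$, arrange (after moving $c$ by an automorphism over $ba$) that $I$ is $bc$-indiscernible, and finish with \cref{remarkmorleytree2bis} plus symmetry. Where you genuinely differ is the middle step. The paper dispatches it by invoking ``the chain condition'' with a citation of \cref{kimdivh}, which as stated concerns forking-Morley sequences, while the sequence coming from \cref{p733h} is only Kim-Morley over $b$; you instead prove exactly the needed instance by hand, via the induction with \cref{hypamalg2} over the base $b$, compactness, and an Erd\"os-Rado extraction-and-transport. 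In effect you are inlining the chain condition for Kim-Morley sequences over a hyperimaginary base, which the paper only establishes in general later (\cref{chaintotal1}, whose proof goes through witnessing and hence through transitivity itself), so your more explicit route is self-contained at this point and buys a cleaner justification of the step the paper compresses, at the cost of redoing an amalgamation argument.

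The obstacle you flag at the end does close, and more easily than you fear. You only need the plain equality of types $c^{**}\equiv_{ba}c$: any automorphism over $ba$ fixes $e\in dcl^{heq}(b)$ and preserves $\forkindep^{K}_{e}$, so no Lascar-strong version is required; likewise in the compactness step it suffices to realize $\bigcup_{n}\tp(c_{n}/ba_{\leq n})$, which is consistent since $c_{n+1}\equiv_{ba_{\leq n}}c_{n}$, avoiding any compactness claim about Lascar types. The only datum that must survive the extraction is the single-element one: because $c'$ is the image of $c_{n}$ under a Lascar strong automorphism over $b$ sending $a_{0}$ to $a_{n+1}$ and $c_{n+1}\equiv_{ba_{n+1}}c'$, your induction yields $\tp(c_{n+1}a_{n+1}/b)=\tp(c_{n}a_{0}/b)=\tp(ca_{0}/b)$, hence $\tp(c^{*}a_{j}/b)=\tp(ca_{0}/b)$ for every $j<\omega$. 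Since $(a'_{i})$ is locally based on $I$ over $bc^{*}$, every formula of $\tp(c^{*}a'_{0}/b)$ is realized by $(c^{*},a_{j})$ for some $j$, so $\tp(c^{*}a'_{0}/b)=\tp(ca_{0}/b)$, and transporting by the $b$-automorphism taking $(a'_{i})$ to $I$ gives $\tp(c^{**}a_{0}/b)=\tp(ca_{0}/b)$, i.e. $c^{**}\equiv_{ba}c$. The higher-arity types of $c^{**}$ over $bI$ play no role in the conclusion, so no further compatibility between the EM-type of $I$ over $b$ and the amalgamation data needs to be tracked.
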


\justify
\begin{proof}

\justify
Since $a\forkindep^{K}_{e}b$ by \cref{p733h} there is a sequence $I = ( a_{i}$ : $i< \omega )$ with $a_{0} = a$ such that $I$ is a Kim-Morley sequence over $b$ and a weak tree Morley sequence over $e$. By symmetry $c\forkindep^{K}_{b}a$, so by the chain condition \cref{kimdivh} there is $I' \equiv_{ba} I $ such that $I' = ( a'_{i}$ : $ i< \omega )$ is $bc$-indiscernible. $I'$ is then also a weak tree Morley sequence over $e$ such that $a'_{0}=a$. Then by \cref{remarkmorleytree2bis} $bc\forkindep^{K}_{e}a$, and we conclude by symmetry.\end{proof}

\subsection{Witnessing over hyperimaginaries}

\justify
Thanks to transitivity we can now prove the following, which is \cite[Lemma 2.3]{kim2021weak} and is similar to \cref{kfhyp} but for Kim-Morley sequences, the proof is also similar : 

\begin{prop}\label{reprkimmorley} Let $e\in \mathbb{M}^{heq}$ and $I = ( a_{i}$ : $i<\omega )$ be a Kim-Morley sequence over $e$. For every $b\in \mathbb{M}^{heq}$ there is a Kim-Morley sequence $( a_{i}b_{i}$ : $i<\omega )$ over $e$ such that $b\equiv_{ea_{0}}b_{0}$.\end{prop}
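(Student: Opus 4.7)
The plan is to mimic the proof of \cref{kfhyp}, replacing $e$-Morley by Kim-Morley throughout, and to leverage the $e$-hyperdefinability of the Kim-Morley condition (\cref{typedef1}) to transfer this property from a long Kim-independent construction to an $\omega$-indexed indiscernible subsequence via Erd\"os-Rado.

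First I would extend $I$ to an $e$-indiscernible sequence $(a_{i} : i<\kappa)$ with $\kappa$ large enough for Erd\"os-Rado. This lengthening is still Kim-Morley over $e$: any failure of $a_{i}\forkindep^{K}_{e}a_{<i}$ would, by finite character of Kim-forking, involve only a finite subtuple on the right, and after reindexing by $e$-indiscernibility and applying monotonicity this would already contradict the hypothesis that the original $I$ is Kim-Morley.

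Next I would realize the partial type $\Sigma((y_{i})_{i<\kappa})$ over $e\cup\{a_{i}\}_{i<\kappa}\cup\{b\}$ asserting $a_{i}y_{i}\equiv_{e}a_{0}b$ and $a_{i}y_{i}\forkindep^{K}_{e}(a_{j}y_{j})_{j<i}$ for every $i<\kappa$; this is a partial type by \cref{typedef1}, so by compactness it suffices to realize every finite fragment, which I would do by induction on its length $m$. Given a realization $(a_{i_{k}}b'_{i_{k}})_{k<m}$, the second part of \cref{exthyp} applied to $a_{i_{m}}\forkindep^{K}_{e}(a_{i_{k}})_{k<m}$ yields a Lascar conjugate $a^{*}_{i_{m}}$ of $a_{i_{m}}$ over $e(a_{i_{k}})_{k<m}$ satisfying $a^{*}_{i_{m}}\forkindep^{K}_{e}(a_{i_{k}}b'_{i_{k}})_{k<m}$; conjugating back to $a_{i_{m}}$ by an automorphism fixing $e(a_{i_{k}})_{k<m}$ alters the $b'_{i_{k}}$'s but preserves all conditions in $\Sigma$ for $k<m$ (since these are $\mathrm{Aut}(\mathbb{M}/e(a_{i_{k}}))$-invariant and even preserve $b'_{0}\equiv_{ea_{0}}b$ because $a_{0}\in(a_{i_{k}})_{k<m}$). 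Finally, picking $b^{*}$ with $a_{i_{m}}b^{*}\equiv_{e}a_{0}b$ and applying the first part of \cref{exthyp}, I get $b'_{i_{m}}\equiv^{L}_{ea_{i_{m}}}b^{*}$ with $a_{i_{m}}b'_{i_{m}}\forkindep^{K}_{e}(a_{i_{k}}b'_{i_{k}})_{k<m}$, extending the fragment.

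Realizing $\Sigma$ globally yields $J' = (a_{i}b'_{i} : i<\kappa)$ every finite index-ordered subtuple of which satisfies the Kim-Morley conditions. I would then apply Erd\"os-Rado to extract an $e$-indiscernible sequence $\tilde J = (\tilde a_{i}\tilde b_{i} : i<\omega)$ finitely based on $J'$ over $e$. Since by \cref{typedef1} the Kim-Morley condition is an $e$-partial type in each finite arity and every finite index-ordered subtuple of $J'$ satisfies it (using monotonicity of Kim-forking and the fact that the full condition $a_{i}b'_{i}\forkindep^{K}_{e}(a_{j}b'_{j})_{j<i}$ restricts to any sub-prefix), $\tilde J$ inherits it and is Kim-Morley over $e$. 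As $(\tilde a_{i})_{i<\omega}\equiv_{e}(a_{i})_{i<\omega}$ by $e$-indiscernibility of $I$, an $e$-automorphism sends $(\tilde a_{i}\tilde b_{i})$ to a Kim-Morley sequence $(a_{i}b_{i} : i<\omega)$ with $a_{0}b_{0}\equiv_{e}a_{0}b$, i.e.\ $b_{0}\equiv_{ea_{0}}b$, as required.

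The main technical subtlety is the induction step, where achieving Kim-independence of $a_{i_{m}}b'_{i_{m}}$ from the earlier pairs requires first modifying the earlier $b'_{i_{k}}$'s via the conjugation step. This is why the construction is organized as a single partial type $\Sigma$ and assembled via compactness rather than as a linear induction that pins down the earlier $b'_{i_{k}}$'s once and for all; the conjugating automorphisms fix everything that $\Sigma$ constrains, so finite consistency is preserved throughout.
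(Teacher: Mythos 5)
Your proof is correct, and its overall skeleton is the one the paper intends (imitate \cref{kfhyp}: lengthen $I$, expand each $a_{i}$ by a witness $b'_{i}$ with all pairs of type $\tp(a_{0}b/e)$ and each pair Kim-independent from its predecessors, extract an $e$-indiscernible $\omega$-sequence by Erd\"os-Rado, and pull back by an $e$-automorphism). Where you genuinely diverge is in the expansion step. The paper flags transitivity as the enabling tool: the intended argument, following the pattern of \cref{kfhyp}, gets the new pair independent from the earlier pairs by choosing $b'_{i}$ with $b'_{i}\forkindep^{K}_{ea_{i}}(\text{earlier pairs})$ via existence and extension over $ea_{i}$ and then combining with the independence of $a_{i}$ via \cref{transitkim} and \cref{symh}. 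You instead never invoke transitivity: you handle the obstruction that $a_{i}$ need not be Kim-independent from the earlier \emph{pairs} by the conjugation trick (replace $a_{i}$ by a conjugate independent from the earlier pairs using the second part of \cref{exthyp}, then conjugate back over $e$ and the earlier $a_{j}$'s, which moves the earlier witnesses but preserves all constraints), and you assemble the construction by compactness, using \cref{typedef1} (together with symmetry, to put the pair of fixed type on the correct side of $\forkindep^{K}$, exactly as the paper's own proof of \cref{typedef1}(2) does implicitly) to see that your $\Sigma$ is a partial type; the finite fragments are then handled by monotonicity exactly as you say. What each approach buys: the paper's route is a shorter direct induction but needs transitivity, i.e. the full strength of the preceding section; yours only needs tools already available in the amalgamation and hyperdefinability subsections (\cref{exthyp}, \cref{typedef1}, symmetry, extension), at the modest cost of the $\Sigma$-plus-compactness packaging. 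Two small remarks: since the extracted sequence is $\omega$-indexed, only the finite-arity independence conditions are ever used, so you could state $\Sigma$ with finite restrictions only and avoid any fuss about infinite left-hand tuples in \cref{typedef1}; and your parenthetical about preserving $b'_{0}\equiv_{ea_{0}}b$ is not needed, since the condition recorded in $\Sigma$ is $a_{i}y_{i}\equiv_{e}a_{0}b$ at every index and the final $b_{0}\equiv_{ea_{0}}b$ comes out of the extraction exactly as you conclude.
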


\justify
The following results are from section 2 of \cite{chernikov2020transitivity}. This is an adaptation of \cite[Proposition 2.5]{chernikov2020transitivity} :

\begin{prop} Let $e\in \mathbb{M}^{heq}$, the following are equivalent : 
\begin{enumerate}
\item[(1)]  $a\forkindep^{K}_{e}b$
\item[(2)]  There is a model $M$ such that $e \in dcl^{heq}(M)$, $M\forkindep^{K}_{e}ab$ (or $M\forkindep_{e}ab$) and $a\forkindep^{K}_{M}b$.
\item[(3)] There is a model $M$ such that $e \in dcl^{heq}(M)$, $M\forkindep^{K}_{e}a$ (or $M\forkindep_{e}a$) and $a\forkindep^{K}_{M}b$.
\end{enumerate}

\end{prop}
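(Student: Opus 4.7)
I would treat the three directions in order of increasing difficulty. For $(2)\Rightarrow(3)$, the implication is immediate from right monotonicity of $\forkindep$ (or of $\forkindep^{K}$): weakening $ab$ to $a$ preserves (Kim-)nondividing. For $(3)\Rightarrow(1)$, note first that $M\forkindep_{e}a$ entails $M\forkindep^{K}_{e}a$ since dividing refines Kim-dividing, so in either version of (3) one has $M\forkindep^{K}_{e}a$. Symmetry of Kim-independence (\cref{symh}) converts this to $a\forkindep^{K}_{e}M$, and since $e\in\mathrm{dcl}^{heq}(M)$ we may apply transitivity (\cref{transitkim}) to $a\forkindep^{K}_{e}M$ and $a\forkindep^{K}_{M}b$ to obtain $a\forkindep^{K}_{e}Mb$; right monotonicity of Kim-forking then extracts $a\forkindep^{K}_{e}b$.

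The substantive direction is $(1)\Rightarrow(2)$. The plan is first to produce the model $M$ and then to verify the second condition. For the model: pick any small model $M_{0}$ containing a representative of $e$; existence for hyperimaginaries gives $M_{0}\forkindep_{e}e$, and the extension property of forking yields $M\equiv_{e}M_{0}$ with $M\forkindep_{e}ab$, automatically with $e\in\mathrm{dcl}^{heq}(M)$. This supplies the first half of (2), and a fortiori $M\forkindep^{K}_{e}ab$. It remains to prove $a\forkindep^{K}_{M}b$.

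For this I would verify the condition via \cref{kimdivh}: given an arbitrary $M$-Morley sequence $I=(b_{i})_{i<\omega}$ with $b_{0}=b$, I need to produce $a^{*}\equiv_{Mb}a$ so that $I$ is $Ma^{*}$-indiscernible. By stretching $I$ and applying extension we may further assume $M\forkindep_{e}aI$; left transitivity of forking applied to $M\forkindep_{e}b_{<i}$ and $b_{i}\forkindep_{M}b_{<i}$ yields $b_{i}\forkindep_{e}b_{<i}$, so $I$ is also $e$-Morley. Kim's lemma for hyperimaginaries (\cref{kimlemmahyp}, \cref{kimdivh}) applied to $a\forkindep^{K}_{e}b$ supplies $a'\equiv_{eb}a$ with $I$ being $ea'$-indiscernible, and \cref{kimlemmahyp2} gives $a'\forkindep^{K}_{e}I$. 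After reducing to bounded closures via \cref{bondedkimmorley} and invoking G-compactness of $NSOP_{1}$-theories with existence, we may take $a\equiv^{L}_{eb}a'$. Then I would apply the independence theorem \cref{hypamalg2} over $e$ with $a_{0}=a$, $a_{1}=a'$, second parameter $M$, third parameter $I$: the hypotheses $M\forkindep^{K}_{e}I$, $a\equiv^{L}_{e}a'$, $a\forkindep^{K}_{e}M$ and $a'\forkindep^{K}_{e}I$ all hold, and the conclusion produces $a^{*}$ with $a^{*}\equiv^{L}_{eM}a$, $a^{*}\equiv^{L}_{eI}a'$, and $a^{*}\forkindep^{K}_{e}MI$, together with $M\forkindep^{K}_{e}a^{*}I$. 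From $a^{*}\equiv^{L}_{eI}a'$ the $ea'$-indiscernibility of $I$ transfers to $ea^{*}$-indiscernibility, which combined with the $M$-indiscernibility of $I$ and the Kim-independence $M\forkindep^{K}_{e}a^{*}I$ promotes to joint $Ma^{*}$-indiscernibility; the two partial Lascar equivalences $a^{*}\equiv^{L}_{eM}a$ and $a^{*}\equiv^{L}_{eb}a$ assemble into $a^{*}\equiv_{Mb}a$ using that $M$ is a model and that $M\forkindep^{K}_{e}a^{*}b$ holds.

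The main obstacle is this final splice: upgrading the pieces produced by the amalgamation -- the partial indiscernibilities of $I$ over $M$ and over $ea^{*}$, and the partial equivalences $a^{*}\equiv^{L}_{eM}a$ and $a^{*}\equiv^{L}_{eb}a$ -- into the combined statement that $a^{*}\equiv_{Mb}a$ with $I$ being $Ma^{*}$-indiscernible. Both upgrades depend critically on the extra Kim-independence outputs of the amalgamation and on the fact that over a model Lascar strong types coincide with types, which is precisely why it is essential that we work over a model $M$ rather than directly over $e$; the careful bookkeeping distinguishing $\equiv$ from $\equiv^{L}$, and controlling which bounded closures can be absorbed into the base, is where the proof takes its real technical effort.
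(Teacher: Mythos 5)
Your directions $(2)\Rightarrow(3)$ and $(3)\Rightarrow(1)$ are fine and essentially what the paper intends (monotonicity, then symmetry and \cref{transitkim}). The genuine gap is in $(1)\Rightarrow(2)$. You fix \emph{in advance} an arbitrary model $M$ with $e\in dcl^{heq}(M)$ and $M\forkindep_{e}ab$ and then set out to prove $a\forkindep^{K}_{M}b$; this is a strictly stronger, universally quantified claim than the proposition, and nothing in the paper supports it: base monotonicity fails for $\forkindep^{K}$, and the weak transitivity of \cref{weaktrans} only yields $a\forkindep^{K}_{e}Mb$, which cannot be pushed down to the base $M$. Two concrete steps of your argument then fail. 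First, for a prescribed $M$-Morley sequence $I$ starting with $b$ you ``may further assume $M\forkindep_{e}aI$'': extension produces some $M'\equiv_{eab}M$ independent from $aI$, or an $eab$-conjugate of $I$, but it cannot keep both the fixed $M$ and the $M$-Morleyness of $I$ while adding this independence. Second, and more seriously, applying \cref{hypamalg2} over $e$ with sides $M$ and $I$ only produces $a^{*}$ realizing $\tp(a/eM)\cup\tp(a'/eI)$ together with the stated Kim-independences; it gives no control of $\tp(a^{*}/Mb)$ (a type over the union, whose formulas mix parameters from $M$ and from $b$) and no indiscernibility of $I$ over $Ma^{*}$. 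The two ``promotions'' you invoke --- from $a^{*}\equiv_{M}a$ and $a^{*}\equiv_{eb}a$ to $a^{*}\equiv_{Mb}a$, and from separate $M$- and $ea^{*}$-indiscernibility plus $M\forkindep^{K}_{e}a^{*}I$ to $Ma^{*}$-indiscernibility --- have no justification anywhere in the paper; they are exactly the hard content, and the independence theorem never amalgamates over the union of its two sides.

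The paper's proof sidesteps precisely this difficulty by reversing the order of quantifiers: it does not fix the model first. After \cref{liftkimmorley}, it takes an $e$-Morley sequence $I$ in $\tp(a/e)$ (copies of $a$, not of $b$) which is $eb$-indiscernible by \cref{kimhyp1}, then uses \cref{hyp6} to find a model $N$ with $e\in dcl^{heq}(N)$, $N\forkindep_{e}I$ and $I$ a coheir sequence over $N$, extends so that $N\forkindep_{e}Ib$, applies the pigeonhole principle to get an infinite subsequence whose elements share a type over $Nb$, concludes $a'\forkindep^{K}_{N}b$ for $a'$ in that subsequence by Kim's lemma over the model, and finally sets $M=f(N)$ for an $eb$-automorphism $f$ sending $a'$ to $a$. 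The model is constructed and then transported to fit $a$; any repair of your argument needs this existential choice of $M$ (with the coheir property) rather than an arbitrary $\forkindep_{e}$-independent one.
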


\justify
\begin{proof}
$(1)\Rightarrow(2)$ : By \cref{liftkimmorley} we can replace $a$ and $b$ by representatives. Since  $a\forkindep^{K}_{e}b$, there is a Morley sequence $I = (a_{i}$ : $i<\omega)$ over $e$ with $a_{0} = a$ such that $I$ is $eb$-indiscernible. By \cref{hyp6} (which we proved for real tuples), there is a
model $N$ such that $e \in dcl^{heq}(N)$, $N\forkindep_{e}I$ and $I$ is a coheir sequence over $N$. By compactness and extension we can clearly assume the length of $I$ is arbitrarily large, and $N\forkindep_{e}Ib$. Hence by the pigeonhole principle, there is an infinite subsequence
$J$ of $I$ such that all the elements in $J$ have the same type over $Nb$. Thus, for $a' \in J$, we have $a'\forkindep^{K}_{N}b$ (by Kim's Lemma for Kim-forking) and $N\forkindep_{e}a'b$. Hence $M = f(N)$ is a desired model, where $f$ is
an $eb$-automorphism sending $a'$ to $a$. $(2)\Rightarrow(3)$ is clear, and $(3)\Rightarrow(1)$ follows from transitivity.\end{proof}

\justify
In the following proposition by an increasing continuous sequence  of hyperimaginaries $(e_{i}$ : $i<\kappa^{+})$ we mean that if $\alpha < \beta < \kappa^{+}$ then $e_{\alpha} \in dcl^{heq}(e_{\beta})$ and if $\lambda < \kappa^{+}$ is a limit ordinal, then $e_{\lambda} \sim (e_{i}$ : $i<\lambda)$. By 'union $e$' in (3) we mean that $e_{\lambda} \sim (e_{i}$ : $i<\kappa^{+})$. This is an adaptation of \cite[Proposition 2.6]{chernikov2020transitivity} :

\begin{prop} Let $\kappa \geq \vert T \vert$ be a cardinal. The following are equivalent.
\begin{enumerate}
\item[(1)] $T$ is $NSOP_{1}$
\item[(2)]  There is no increasing continuous sequence  of hyperimaginaries $(e_{i}$ : $i<\kappa^{+})$ and finite hyperimaginary $d$ such that $\vert e_{i}\vert \leq \kappa$ and $d\centernot{\forkindep}^{K}_{e_{i}}e_{i+1}$ for all $i<\kappa^{+}$.
\item[(3)] There is no $e\in \mathbb{M}^{heq}$ such that $\vert e \vert = \kappa^{+}$ and $p(x) \in S_{E}(e)$ with $x$ a finite tuple of variables such that for some increasing and continuous sequence $(e_{i}$ : $i<\kappa^{+})$ of hyperimaginaries with union $e$, we have $\vert e_{i}\vert \leq \kappa$ and $p$ Kim-divides over
$e_{i}$ for all $i<\kappa^{+}$.
\end{enumerate}

\end{prop}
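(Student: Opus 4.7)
The plan is to prove the cycle $(1) \Rightarrow (2) \Rightarrow (3) \Rightarrow (1)$, adapting \cite[Proposition 2.6]{chernikov2020transitivity} to the hyperimaginary setting.

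The implication $(2) \Rightarrow (3)$ is immediate. Given a chain $(e_i)_{i < \kappa^+}$ and finite $d$ witnessing the failure of $(2)$, set $e := \bigcup_{i < \kappa^+} e_i$ and $p := \tp(d/e)$, which is a type in finitely many variables since $d$ is a finite hyperimaginary, and $|e| = \kappa^+$ because the chain is strictly increasing at every step. Since $d \centernot{\forkindep}^{K}_{e_i} e_{i+1}$, by \cref{kimfkimd} the restriction $p \upharpoonright e_{i+1}$ implies a formula that Kim-divides over $e_i$, so $p$ itself Kim-divides over every $e_i$, as required.

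For $(3) \Rightarrow (1)$ I would argue contrapositively. Assuming $T$ has $SOP_{1}$, apply \cref{syntax3} to obtain a formula $\varphi(x,y)$, an integer $k$, and a sequence $(c_{i,j})_{i<\omega, j<2}$ satisfying $(a)$--$(c)$. Stretch by compactness to length $\kappa^+$ and extract (using the Standard Lemma) so that each tail $(c_{j,1})_{j \geq i}$ is an appropriate $e_i$-Morley sequence. For each $i \leq \kappa^+$, let $e_i$ be the hyperimaginary coding the initial segment $(c_{j,0}, c_{j,1})_{j < i}$ under an $\emptyset$-type-definable equivalence relation whose classes remember the EM-type of the sequence over $\emptyset$; then $(e_i)$ is increasing continuous with $|e_i| \leq \kappa$. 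Take $d$ realizing $\lbrace \varphi(x, c_{i,0}) : i < \kappa^+ \rbrace$, consistent by $(b)$. By $(c)$, the set $\lbrace \varphi(x, c_{j,1}) : j \geq i \rbrace$ is $k$-inconsistent, so $\varphi(x, c_{i+1,1}) \in \tp(d/e_{i+1})$ Kim-divides over $e_i$, giving the required witness to the failure of $(3)$.

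For $(1) \Rightarrow (2)$, the main content, I would adapt the proof of the model-based analog in \cite{chernikov2020transitivity}, replacing models with boundedly closed hyperimaginaries and using the results of this paper in place of the real-tuple lemmas. Assuming $(2)$ fails, \cref{kimfkimd} yields for each $i$ a formula $\varphi_i(x, \overline{e}_{i+1}) \in \tp(d/e_{i+1})$ Kim-dividing over $e_i$; by pigeonhole (using $|T| \leq \kappa$ and regularity of $\kappa^+$) we may assume $\varphi_i = \varphi$ for all $i$ in a cofinal $S \subseteq \kappa^+$. Passing to representatives via \cref{liftkimmorley}, extracting a suitable indiscernible tree, and applying the independence theorem (\cref{hypamalg2}), transitivity (\cref{transitkim}), and the zig-zag lemma (\cref{5.5}), one builds a tree of parameters witnessing the $SOP_{1}$-pattern of \cref{syntax3}, contradicting $(1)$. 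The main obstacle lies in this last step: one must align Kim-Morley witnesses at different levels of the hyperimaginary chain into a coherent $SOP_{1}$-tree in a setting where Kim-forking lacks base monotonicity, and the weak tree Morley machinery (\cref{morleytree2}, \cref{kimlemmatree}, \cref{morleytree1}) is essential for converting the level-wise Kim-dividing data into the tree-like structure demanded by \cref{syntax3}.
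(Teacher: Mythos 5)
There are genuine gaps. First, your middle implication is proved in the wrong direction: starting from a chain $(e_{i})$ and finite $d$ witnessing the failure of $(2)$ and producing $e=\bigcup e_{i}$, $p=\tp(d/e)$ with $p$ Kim-dividing over every $e_{i}$ establishes $\neg(2)\Rightarrow\neg(3)$, i.e.\ $(3)\Rightarrow(2)$, not $(2)\Rightarrow(3)$. Together with your $(1)\Rightarrow(2)$ and $(3)\Rightarrow(1)$ this does not close the cycle: nothing yields $(2)\Rightarrow(1)$ or $(2)\Rightarrow(3)$. The paper's argument for $(2)\Rightarrow(3)$ runs the other way: assuming $(3)$ fails, take $d\models p$ and extract an increasing continuous sequence of ordinals $(\alpha_{i})$ so that a formula of $p$ Kim-dividing over $e_{\alpha_{i}}$ is already implied by $\tp(d/e_{\alpha_{i+1}})$ (possible because $e$ is the union of the chain), which produces the finite $d$ and subchain witnessing $\neg(2)$.

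Second, the main content, $(1)\Rightarrow(2)$, is not actually carried out: you propose to build an $SOP_{1}$-tree directly over the hyperimaginary chain via \cref{syntax3}, \cref{hypamalg2}, \cref{5.5}, and you yourself flag the alignment of the Kim-dividing witnesses into a coherent tree as an unresolved obstacle. The paper avoids this construction entirely by reducing to the model case: using extension (hence existence) it builds, by induction, a copy $(e'_{i})$ of the chain together with an increasing continuous sequence of models $(M_{i})$ of size $\kappa$ with $e'_{i}\in dcl^{heq}(M_{i})$ and $e'_{i+1}\forkindep^{K}_{e'_{i}}M_{i}$; choosing $d'$ with $d'e'_{<\kappa^{+}}\equiv de_{<\kappa^{+}}$, symmetry and transitivity (\cref{transitkim}) transfer $d'\centernot{\forkindep}^{K}_{e'_{i}}e'_{i+1}$ to $d'\centernot{\forkindep}^{K}_{M_{i}}M_{i+1}$, and then the already-known local character theorem over models (\cite{kaplan2019local}, Theorem 3.9) gives $SOP_{1}$. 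The same citation is all the paper uses for $(3)\Rightarrow(1)$; your sketch for that direction also contains an error, since $d$ realizes $\varphi(x,c_{i,0})$ and not $\varphi(x,c_{i+1,1})$, so the claimed Kim-dividing formula does not lie in $\tp(d/e_{i+1})$ without the conjugation argument, and arranging the tails of the $1$-column to be Morley over $e_{i}$ is essentially the content of the theorem you would be re-proving.
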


\justify
\begin{proof}
$(1)\Rightarrow(2)$ : By replacing $d$ by a representative we can assume that $d$ is a real tuple. Assume that there is an increasing continuous sequence
of hyperimaginaries $(e_{i}$ : $i<\kappa^{+})$ and a finite tuple $d$ such that $\vert e_{i}\vert \leq \kappa$ and $d\centernot{\forkindep}^{K}_{e_{i}}e_{i+1}$ for all $i<\kappa^{+}$. We show that there is a continuous increasing sequence of models $(M_{i}$ : $i<\kappa^{+})$ and a tuple $d'$ such that $\vert M_{i}\vert \leq \kappa$ and $d'\centernot{\forkindep}^{K}_{M_{i}}M_{i+1}$ for all $i<\kappa^{+}$. This is enough as such a sequence of models implies that T has SOP1 by \cite[Theorem 3.9]{kaplan2019local}.

\justify
By induction on $i < \kappa^{+}$ we will build two increasing and continuous
sequences $(e'_{i}$ : $i<\kappa^{+})$ and $(M_{i}$ : $i<\kappa^{+})$ satisfying the following for all $i<\kappa^{+}$ :
\begin{enumerate}
\item[(a)] $e'_{0}=e_{0}$ and $e'_{\leq i}\equiv e_{\leq i}$.
\item[(b)] $M_{i} \models T$ with $\vert M_{i} \vert = \kappa$ and $e'_{i} \in dcl^{heq}(M_{i})$.
\item[(c)] $e'_{i+1}\forkindep^{K}_{e'_{i}}M_{i}$.
\end{enumerate}

\justify
Let $e'_{0}:=e_{0}$ and $M_{0}$ be any model of size $\kappa$ such that $e'_{0} \in dcl^{heq}(M_{0})$. Given $e'_{\leq i}$ and $M_{\leq i}$ satisfying the requirements, we pick $e''_{i+1}$ such that $e'_{\leq i}e''_{i+1}\equiv e_{\leq i+1}$. Then we apply extension to obtain $e'_{i+1}\equiv_{e'_{\leq i}} e''_{i+1}$ such that $e'_{i+1}\forkindep^{K}_{e'_{i}}M_{i}$ (this is due to the fact that $e_{i+1}\forkindep^{K}_{e_{i}}e_{\leq i}$, which is a consequence of existence and $e_{\leq i} \subseteq dcl^{heq}(e_{i})$). Note that $e'_{\leq i+1}\equiv e_{\leq i+1}$. We define $M_{i+1}$ to be any model
containing $e'_{i+1}M_{i}$ of size $\kappa$. This satisfies the requirements.

\justify
At limit $\delta$, we take $e'_{\delta}$ such that $e'_{\leq \delta}\equiv e_{\leq \delta}$ and $M_{\delta} = \bigcup\limits_{i< \delta }^{} M_{i}$. This satisfies (a), (b) is clear, and (c) empty. Therefore this completes the construction.

\justify
Let $M =\bigcup\limits_{i< \kappa^{+} }^{} M_{i}$. Choose $d'$ such that $d'e'_{< \kappa^{+} }\equiv de_{< \kappa^{+} }$, which is possible by (a). Then we have $d'\centernot{\forkindep}^{K}_{e'_{i}}e'_{i+1}$ for all $i<\kappa^{+}$. If there is some $i<\kappa^{+}$ such that $d'\forkindep^{K}_{M_{i}}M_{i+1}$ then we have $d'\forkindep^{K}_{M_{i}}e'_{i+1}$ by (b). Additionally, because $M_{i}\forkindep^{K}_{e'_{i}}e'_{i+1}$, we know, by symmetry and transitivity, that $e'_{i+1}\forkindep^{K}_{e'_{i}}d'M_{i}$. By symmetry once more, we get $d'\forkindep^{K}_{e'_{i}}e'_{i+1}$, a contradiction. This shows that $d'\centernot{\forkindep}^{K}_{M_{i}}M_{i+1}$ for all $i<\kappa^{+}$, completing the proof of this direction.

\justify
$(2)\Rightarrow(3)$ :  Suppose (3) fails, i.e. we are given $e$ of size $\kappa^{+}$, $p \in S_{E}(e)$, and an increasing and continuous sequence $(e_{i}$ : $i<\kappa^{+})$ of hyperimaginaries with union $e$ such that $\vert e_{i}\vert \leq \kappa$ and $p$ Kim-divides over $e_{i}$ for all $i<\kappa^{+}$. Let $d \models p$, we will define an increasing continuous sequence of ordinals  $(\alpha_{i}$ : $i<\kappa^{+})$ such that $\alpha_{i} \in \kappa^{+}$ and $d\centernot{\forkindep}^{K}_{e_{i}}e_{i+1}$ for all $i < \kappa^{+}$.

\justify
We set $\alpha_{0} = 0$. Given $(\alpha_{j}$ : $j \leq i)$ we know that there is some formula $\varphi(x, \overline{e}) \in p$ that Kim-divides over $e_{\alpha_{i}}$ by our assumption on $p$. Let $\alpha_{i+1}$ be the
least ordinal $<\kappa^{+}$ and $\geq \alpha_{i}$ such that $\tp(d/e_{\alpha +1}) \vdash \varphi(x, \overline{e}) $ (such an ordinal exist overwise we would have $d'\equiv_{e_{i}}d$ for all $i<\kappa^{+}$ such that $d'\centernot{\equiv}_{e}d$, contradicting the fact that $e$ is the union of the $e_{i}$). For limit $i$, if we are given $(\alpha_{j}$ : $j < i)$, we put $\alpha_{i} = sup_{j<i} \alpha_{j}$. Then we define $(e'_{i}$ : $i<\kappa^{+})$ by $e'_{i}= e_{\alpha_{i}}$ for all $i < \kappa^{+}$. By construction we have $d\centernot{\forkindep}^{K}_{e'_{i}}e'_{i+1}$ for all $i < \kappa^{+}$, which witnesses the failure of (2).

\justify
$(3)\Rightarrow(1)$ : This was established for real tuples as basis : \cite[Theorem 3.9]{kaplan2019local}.
\end{proof}

\justify
Kim-Morley sequences are witnesses to Kim-dividing over hyperimaginaries :

\begin{theorem}\label{witnessinghyp}
Witnessing : Let $e\in \mathbb{M}^{heq}$ and $I = (a_{i}$ : $i<\omega)$ be a Kim-Morley sequence of real tuples over $e$. If $\varphi (x,a_{0})$ Kim-divides over $e$ then $\lbrace \varphi (x,a_{i})$ : $i<\omega \rbrace$ is inconsistent.
\end{theorem}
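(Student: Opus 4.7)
The plan is to derive a contradiction with the NSOP$_1$ characterisation from the proposition immediately preceding the theorem: NSOP$_1$ forbids any increasing continuous sequence of small hyperimaginaries $(e_i)_{i<\kappa^+}$ together with a finite tuple $d$ satisfying $d\centernot{\forkindep}^{K}_{e_i}e_{i+1}$ at every step. Suppose for contradiction that some finite real tuple $b$ realises $\{\varphi(x,a_i):i<\omega\}$.

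Fix $\kappa\geq|T|+|e|$. Since being Kim-Morley over $e$ is type-definable by \cref{typedef1}, compactness combined with the EM-type of $I$ over $e$ yields a Kim-Morley sequence $\tilde I=(a_i:i<\kappa^+)$ over $e$ with the same EM-type as $I$; a further compactness argument using the $e$-indiscernibility of $\tilde I$ and the consistency of the original set produces a finite real tuple $b'$ realising $\{\varphi(x,a_i):i<\kappa^+\}$. Set $e_i:=(e,a_{<i})$ for $i\leq\kappa^+$; this is an increasing continuous sequence of hyperimaginaries of size at most $\kappa$, with $e_0\sim e$.

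The main step is to verify $b'\centernot{\forkindep}^{K}_{e_i}e_{i+1}$ for each $i<\kappa^+$, for which I would use the model characterisation from the preceding proposition. If $b'\forkindep^{K}_{e_i}e_{i+1}$, there would exist a model $M\supseteq e_i$ with $M\forkindep_{e_i}b'e_{i+1}$ and $b'\forkindep^{K}_{M}e_{i+1}$, hence in particular $b'\forkindep^{K}_{M}a_i$. But $\varphi(x,a_i)$ Kim-divides over $e$ by invariance (as $a_i\equiv_e a_0$), and this transfers to Kim-dividing over $M$: any $M$-Morley sequence starting at $a_i$ is also an $e$-Morley sequence, since $M\supseteq e$ and both $M$-indiscernibility and $M$-non-forking imply their $e$-counterparts. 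Kim's lemma \cref{kimlemmahyp} over $e$ then forces $\{\varphi(x,\cdot)\}$ to be inconsistent along any such sequence, and Kim's lemma applied over $M$ gives that $\varphi(x,a_i)$ Kim-divides over $M$. Since $b'\models\varphi(x,a_i)$, this yields $b'\centernot{\forkindep}^{K}_{M}a_i$, contradicting the choice of $M$. Hence $b'\centernot{\forkindep}^{K}_{e_i}e_{i+1}$ for every $i$, and the NSOP$_1$ characterisation applied to $(e_i)_{i<\kappa^+}$ with $d=b'$ contradicts the ambient assumption of NSOP$_1$.

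The central point of the argument is the transfer of Kim-dividing from $e$ to any model $M\supseteq e$, which succeeds cleanly because $M$-Morley sequences refine $e$-Morley sequences. This, together with the model equivalence from the preceding proposition, lets one deduce Kim-forking over $(e,a_{<i})$ without directly constructing an $(e,a_{<i})$-Morley sequence witnessing Kim-dividing of $\varphi(x,a_i)$, which would otherwise be the main obstacle.
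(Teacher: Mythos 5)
Your overall skeleton matches the paper's: stretch $I$ to length $\kappa^{+}$, set $e_{i}\sim ea_{<i}$, take a realisation of $\lbrace\varphi(x,a_{i})\rbrace$, prove $d\centernot{\forkindep}^{K}_{e_{i}}e_{i+1}$ for every $i<\kappa^{+}$, and contradict clause (2) of the preceding NSOP$_{1}$ characterisation. However, your central step rests on a false claim. After passing to a model $M$ with $e_{i}\in dcl^{heq}(M)$, $M\forkindep_{e_{i}}b'e_{i+1}$ and $b'\forkindep^{K}_{M}a_{i}$, you transfer Kim-dividing of $\varphi(x,a_{i})$ from $e$ to $M$ on the grounds that any $M$-Morley sequence in $\tp(a_{i}/M)$ is also $e$-Morley, ``since $M$-indiscernibility and $M$-non-forking imply their $e$-counterparts.'' The indiscernibility part is fine, but non-forking over the larger base $M$ does not imply non-forking over $e$: base monotonicity of forking goes in the opposite direction ($a\forkindep_{e}Mb$ implies $a\forkindep_{M}b$, not conversely). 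For instance, in the theory of an equivalence relation with infinitely many infinite classes, a coheir sequence over a model $M$ consisting of distinct new points of a class named in $M$ is $M$-Morley but not $\emptyset$-Morley. So the inclusion of $M$-Morley sequences among $e$-Morley sequences fails, and with it your justification of the Kim-dividing transfer; as written, the proof has a genuine gap at exactly the point you call the ``central point.''

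The desired contradiction is available by a direct symmetry-plus-transitivity computation, which is what the paper does and which needs no model at all: if $d\forkindep^{K}_{e_{i}}e_{i+1}$, i.e. $d\forkindep^{K}_{ea_{<i}}a_{i}$, then by symmetry $a_{i}\forkindep^{K}_{ea_{<i}}d$, and since $I$ is Kim-Morley over $e$ we also have $a_{i}\forkindep^{K}_{e}a_{<i}$; transitivity (\cref{transitkim}) gives $a_{i}\forkindep^{K}_{e}a_{<i}d$, hence $d\forkindep^{K}_{e}a_{i}$, contradicting the fact that $\varphi(x,a_{i})$ Kim-divides over $e$ (by invariance, as $a_{i}\equiv_{e}a_{0}$) while $d\models\varphi(x,a_{i})$. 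The same chain of transitivities would also repair your model detour (from $a_{i}\forkindep^{K}_{e_{i}}M$, $a_{i}\forkindep^{K}_{M}b'$ and $a_{i}\forkindep^{K}_{e}e_{i}$ one gets $b'\forkindep^{K}_{e}a_{i}$), but once you invoke it the model $M$ and the transfer of Kim-dividing become superfluous.
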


\justify
\begin{proof}
Suppose towards contradiction that $\varphi(x, a_{0})$ Kim-divides over $e$ and $I = (a_{i}$ : $i<\omega)$ is an $e$-Morley sequence such that $\lbrace \varphi (x,a_{i})$ : $i<\omega \rbrace$ is consistent. We may stretch
$I$ such that $I = (a_{i}$ : $i<\kappa^{+})$ with $\vert T \vert +\vert e \vert \leq \kappa$. Let $e_{i}\sim ea_{<i}$. Then $(e_{i}$ : $i<\kappa^{+})$ is increasing and continuous and $\vert e_{i} \vert \leq \kappa$.

\justify
Let $d \models \lbrace \varphi (x,a_{i})$ : $i<\kappa^{+} \rbrace$. We claim that $d\centernot{\forkindep}^{K}_{e_{i}}e_{i+1}$ for all $i < \kappa^{+}$. If not, then for some  $i < \kappa^{+}$, we have $d\forkindep^{K}_{e_{i}}e_{i+1}$, or, in other words $d\forkindep^{K}_{ea_{<i}}a_{i}$. Since $I$ is an Kim-Morley sequence over $e$ we also have $a_{i}\forkindep^{K}_{e}a_{<i}$, hence $da_{<i}\forkindep^{K}_{e}a_{i}$ by transitivity. This entails, in particular, that $d\forkindep^{K}_{e}a_{i}$, which is a contradiction, since $\varphi(x, a_{i})$ Kim-divides over $e$. This completes the proof.
\end{proof}

\justify
From \cref{reprkimmorley} and \cref{witnessinghyp} we deduce :

\begin{cor}\label{witnessinghyp2}
Let $a_{0},e \in \mathbb{M}^{heq}$, $I = (a_{i}$ : $i<\omega)$ a Kim-Morley sequence of hyperimaginaries of sort $E$ over $e$ and $p(x,a_{0})$ a partial type over $a_{0}$ such that $\lbrace p(x,a_{i})$ : $i<\omega \rbrace$ is consistent. Then $p(x,a_{0})$ does not Kim-fork over $e$.
\end{cor}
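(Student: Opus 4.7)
The plan is to reduce the corollary to its real-tuple counterpart \cref{witnessinghyp}, lifting the Kim-Morley sequence $I$ to one of representatives via \cref{reprkimmorley}.

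First, I would argue by contradiction: assuming $p(x,a_{0})$ Kim-forks over $e$, I choose a representative $\bar{a}_{0}$ of $a_{0}$ and view $p$ as a partial type over $\bar{a}_{0}$, closed under finite conjunctions. By the definition of Kim-forking (together with closure of $p$ under conjunctions and compactness), some formula $\theta(x,\bar{a}_{0}) \in p$ implies a finite disjunction of formulas each of which Kim-divides over $e$, so $\theta(x,\bar{a}_{0})$ itself Kim-forks over $e$. Since $\bar{a}_{0}$ is a real tuple, \cref{kimfkimd} upgrades this to: $\theta(x,\bar{a}_{0})$ Kim-divides over $e$.

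Next, I would invoke \cref{reprkimmorley} with $b := \bar{a}_{0}$ to lift $I$ to a Kim-Morley sequence $(a_{i}\bar{a}_{i} : i<\omega)$ over $e$ whose first representative is (after a harmless $e a_{0}$-conjugation) $\bar{a}_{0}$ itself. By $e$-indiscernibility of this lifted sequence combined with $(\bar{a}_{0})_{E} = a_{0}$, each $\bar{a}_{i}$ is a representative of $a_{i}$; and by monotonicity of $\forkindep^{K}$, the reduct $(\bar{a}_{i} : i<\omega)$ is itself a Kim-Morley sequence of real tuples over $e$.

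Finally, since $\theta(x,\bar{a}_{0})$ Kim-divides over $e$, \cref{witnessinghyp} applied to $(\bar{a}_{i} : i<\omega)$ yields that $\{\theta(x,\bar{a}_{i}) : i<\omega\}$ is inconsistent. On the other hand, the hypothesis that $\{p(x,a_{i}) : i<\omega\}$ is consistent translates — via the $e$-automorphisms taking $\bar{a}_{0}$ to $\bar{a}_{i}$ and simultaneously $a_{0}$ to $a_{i}$ — into consistency of $\{\theta(x,\bar{a}_{i}) : i<\omega\}$, a contradiction. The only substantial bookkeeping hurdle is to make precise that the partial type $p(x,a_{i})$ corresponds, through the chosen representatives, to a partial type containing $\theta(x,\bar{a}_{i})$; this reduces to the observation that the $e$-automorphisms sending $a_{0}$ to $a_{i}$ are exactly those sending the representative $\bar{a}_{0}$ to the representative $\bar{a}_{i}$ selected by \cref{reprkimmorley}.
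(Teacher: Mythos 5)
Your proof is correct and is essentially the paper's own argument: the corollary is stated there as a direct consequence of \cref{reprkimmorley} and \cref{witnessinghyp}, with the reduction to a single Kim-dividing formula via compactness and \cref{kimfkimd} exactly as you carry it out. The only nitpick is the phrase ``exactly those'': an $e$-automorphism sending $a_{0}$ to $a_{i}$ need not send $\bar{a}_{0}$ to the chosen $\bar{a}_{i}$; what you need (and have, by $e$-indiscernibility of the lifted sequence) is merely the existence of one automorphism doing both simultaneously, together with the fact that $p(x,a_{i})$ is well defined up to equivalence.
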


\begin{cor}\label{hypwitn+} Let $e\in \mathbb{M}^{heq}$, $I = (a_{i}$ : $i<\omega)$ be a Kim-Morley sequence of hyperimaginaries over $e$ and $b\in \mathbb{M}^{heq}$ such that all the $a_{i}$ have the same type over $eb$. Then $b\forkindep^{K}_{e}a_{0}$.
\end{cor}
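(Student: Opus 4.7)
The plan is to reduce to Corollary \ref{witnessinghyp2} via an easy automorphism argument. First I set $p(y, a_0) := \tp(b/ea_0)$, so $b \models p(y,a_0)$ by definition. Our goal is to show that $p(y, a_0)$ does not Kim-fork over $e$, since this is exactly the statement $b \forkindep^{K}_{e} a_0$.

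The key observation is that the hypothesis $a_i \equiv_{eb} a_0$ for all $i < \omega$ produces, for each $i$, an automorphism $\sigma_i \in \mathrm{Aut}(\mathbb{M}/eb)$ with $\sigma_i(a_0) = a_i$. Since $\sigma_i$ fixes $b$, applying it to the relation $b \models p(y,a_0)$ yields $b = \sigma_i(b) \models p(y, \sigma_i(a_0)) = p(y, a_i)$. Thus $b$ simultaneously realises every $p(y, a_i)$, so the partial type $\{p(y, a_i) : i < \omega\}$ is consistent.

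Now since $I = (a_i : i<\omega)$ is a Kim-Morley sequence of hyperimaginaries over $e$ and $\{p(y, a_i) : i<\omega\}$ is consistent, Corollary \ref{witnessinghyp2} (the hyperimaginary form of witnessing) applies and gives that $p(y, a_0)$ does not Kim-fork over $e$. This is precisely $b \forkindep^{K}_{e} a_0$, as required. There is no real obstacle in the argument; the only subtle point is that Corollary \ref{witnessinghyp2} is already stated for hyperimaginary $a_i$'s and an arbitrary partial type $p(y, a_0)$, so no additional passage to representatives is needed.
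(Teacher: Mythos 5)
Your proof is correct and is exactly the route the paper intends: \cref{hypwitn+} is stated as an immediate consequence of \cref{witnessinghyp2}, the point being precisely that an automorphism over $eb$ shows $b$ realises every $p(y,a_{i})$, so the union is consistent and witnessing applies. The only cosmetic remark is that $p(y,a_{0})=\tp(b/ea_{0})$ has $e$ among its parameters, so formally one applies \cref{witnessinghyp2} to the sequence $(ea_{i}:i<\omega)$ (still Kim-Morley over $e$), which is within the paper's conventions and changes nothing.
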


\justify
The converse is already known to be true as Morley sequences are Kim-Morley sequences. We had already proven the following for weak tree Morley sequences (\cref{remarkmorleytree2bis}) :

\begin{cor}\label{chaintotal0} Let $e\in \mathbb{M}^{heq}$, $I = (a_{i}$ : $i<\omega)$ a total Kim-Morley sequence of hyperimaginaries over $e$ and $b\in \mathbb{M}^{heq}$ such that $I$ is $eb$-indiscernible. Then $b\forkindep^{K}_{e}I$.
\end{cor}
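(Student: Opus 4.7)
The plan is to reduce to finite sub-tuples of $I$ and then apply \cref{hypwitn+} to a suitably grouped version of $I$.

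First, by finite character of non-Kim-forking (a type Kim-forks iff it implies some formula that Kim-divides), it suffices to show that $b \forkindep^{K}_{e} a_{[0,n)}$ for every $n < \omega$. So fix $n$ and consider the grouped sequence $\overline{a}_{j} := a_{[jn,(j+1)n)}$ for $j < \omega$, viewed as a sequence of hyperimaginaries (by taking the product of the relevant type definable equivalence relations on the coordinates, as in \cref{kfhyp}).

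Next I would verify that $(\overline{a}_{j} : j < \omega)$ is a Kim-Morley sequence over $e$. It is $e$-indiscernible because $I$ is $e$-indiscernible. Moreover, for each $j < \omega$, the total Kim-Morley hypothesis on $I$ gives $a_{\geq jn} \forkindep^{K}_{e} a_{<jn}$, and in particular $\overline{a}_{j} = a_{[jn,(j+1)n)} \forkindep^{K}_{e} a_{<jn} = \overline{a}_{<j}$, as required. Similarly, since $I$ is $eb$-indiscernible, all $\overline{a}_{j}$ share the same type over $eb$.

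Finally, applying \cref{hypwitn+} to $(\overline{a}_{j} : j < \omega)$ and $b$ yields $b \forkindep^{K}_{e} \overline{a}_{0} = a_{[0,n)}$. Since $n$ was arbitrary, the finite character reduction gives $b \forkindep^{K}_{e} I$.

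There is no serious obstacle here; the only thing to double-check is that grouping a total Kim-Morley sequence into blocks of length $n$ yields a Kim-Morley sequence of hyperimaginaries, which is immediate from the defining inequality $a_{\geq i} \forkindep^{K}_{e} a_{<i}$ read off at the indices $i = jn$, together with the standard identification of a finite tuple of hyperimaginaries with a single hyperimaginary.
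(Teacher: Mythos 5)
Your proof is correct and follows essentially the same route as the paper: reduce by finite character to $b\forkindep^{K}_{e}a_{<n}$, observe that the blocked sequence $(a_{[jn,(j+1)n)} : j<\omega)$ is Kim-Morley over $e$ (from the total Kim-Morley hypothesis) with all blocks of the same type over $eb$, and apply \cref{hypwitn+}. No gaps to report.
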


\justify
\begin{proof}
By finite character to show that $b\forkindep^{K}_{e}I$ it is enough to show that $b\forkindep^{K}_{e}a_{<n}$ for all $n<\omega$. We have that $(a_{n\cdot i},...a_{n\cdot i + n-1}$ : $i<\omega)$ is Kim-Morley over $e$ by hypothesis and also $e\hat{b}$-indiscernible. Then, by \cref{hypwitn+} we have $b\forkindep^{K}_{e}a_{<n}$.
\end{proof}

\begin{cor}\label{chaintotal1} Chain condition for Kim-Morley sequences : Let $e\in \mathbb{M}^{heq}$, $I = (a_{i}$ : $i<\omega)$ be a Kim-Morley sequence of hyperimaginaries over $e$ and $b\in \mathbb{M}^{heq}$ such that $b\forkindep^{K}_{e}a_{0}$, then there is $b' \equiv^{L}_{ea_{0}}b$ such that $I$ is $eb'$-indiscernible and $b'\forkindep^{K}_{e}I$.
\end{cor}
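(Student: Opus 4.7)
The plan is to build $b'$ in three stages: extension, modeling, and transport. First I would use extension to produce a candidate $b^{*}$ Lascar strongly equivalent to $b$ over $ea_{0}$ and Kim-independent from the entire sequence $I$; then apply the modeling property to extract an $eb^{*}$-indiscernible copy $I'$ of $I$; then transport back via an $e$-automorphism sending $I'$ to $I$.

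First, applying the extension statement in \cref{exthyp} to $b\forkindep^{K}_{e}a_{0}$ with the parameter $d$ taken to be the tail $(a_{i}$ : $i\geq 1)$ (handled by finite character and compactness if one prefers to avoid treating the infinite sequence directly as a hyperimaginary), I obtain $b^{*}\equiv^{L}_{ea_{0}}b$ with $b^{*}\forkindep^{K}_{e}I$. At this stage $I$ is not yet $eb^{*}$-indiscernible.

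Second, I would apply the modeling property for $\omega$-indexed indiscernibles (\cref{modprop0}) over $eb^{*}$ to extract a sequence $I'=(a'_{i}$ : $i<\omega)$ that is $eb^{*}$-indiscernible and locally based on $I$ over $eb^{*}$. The Kim-independence $b^{*}\forkindep^{K}_{e}I'$ transfers from $b^{*}\forkindep^{K}_{e}I$ using the type-definability of Kim-independence (\cref{typedef1}(1)) together with finite character: every finite tuple from $I'$ has its $eb^{*}$-type realized by a matching increasing tuple from $I$, on which the (type-definable) Kim-independence condition holds. Since $I$ is $e$-indiscernible, so is $I'$; since both have the same $e$-EM-type, $I'\equiv_{e}I$; and $I'$ remains Kim-Morley over $e$ by the type-definability from \cref{typedef1}(2). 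Picking an $e$-automorphism $\tau$ with $\tau(I')=I$ and setting $b':=\tau(b^{*})$, invariance yields that $I$ is $eb'$-indiscernible and $b'\forkindep^{K}_{e}I$.

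The main obstacle will be verifying the Lascar strong type equality $b'\equiv^{L}_{ea_{0}}b$, since a generic $e$-automorphism sending $I'$ to $I$ need not be Lascar strong over $ea_{0}$. I would address this by pinning the first element of $I'$ to be $a_{0}$ during extraction, applying the modeling property to the tail $(a_{i}$ : $i\geq 1)$ over $eb^{*}a_{0}$ and verifying full $eb^{*}$-indiscernibility of $(a_{0})\frown(a'_{i}$ : $i\geq 1)$ using that the $a_{i}$ are pairwise Lascar equivalent over $e$ in the Kim-Morley sequence $I$. Then working over $bdd(ea_{0})$, where by \cref{bondedkimmorley} the Kim-Morley structure is preserved, and invoking G-compactness of NSOP1e theories (noted just before \cref{exthyp}) to collapse $\equiv_{bdd(ea_{0})}$ to $\equiv^{L}_{ea_{0}}$, the transport of $b^{*}$ under $\tau$ (now fixing $ea_{0}$ pointwise) yields $b'\equiv^{L}_{ea_{0}}b^{*}\equiv^{L}_{ea_{0}}b$ as required.
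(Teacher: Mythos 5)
There is a genuine gap at the point where you try to recover indiscernibility of the sequence over the new element while keeping control of the type over $ea_{0}$. Your extension step only controls $\tp(b^{*}/ea_{0})$ and the single relation $b^{*}\forkindep^{K}_{e}I$; it gives no uniformity of $\tp(b^{*}/ea_{i})$ across $i$, and Kim-independence certainly does not force it (there can be a formula $\varphi(x,y)$ with $\models\varphi(b^{*},a_{0})$ but $\models\neg\varphi(b^{*},a_{i})$ for $i\geq 1$, compatibly with $b^{*}\forkindep^{K}_{e}I$). Consequently, after pinning $a_{0}$ and extracting an $eb^{*}a_{0}$-indiscernible tail $(a'_{i}:i\geq 1)$ locally based on $(a_{i}:i\geq 1)$, the concatenation $(a_{0})\frown(a'_{i}:i\geq 1)$ need not be $eb^{*}$-indiscernible: the extracted tail reflects the types of far-out tuples over $eb^{*}$, and nothing makes $a_{0}$ fit that pattern. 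The justification you offer, pairwise Lascar equivalence of the $a_{i}$ over $e$, is about types over $e$ only and says nothing about types over $eb^{*}$, so it cannot close this gap. This is precisely where an amalgamation input is unavoidable: the paper first attaches representatives of $bdd(ea_{i})$ to the sequence via \cref{reprkimmorley}, and then uses \cref{remarkmorley} (a consequence of the independence theorem \cref{amalghyp}) to realize the union of the copies of $\tp(b/bdd(ea_{0}))$ along the sequence without Kim-forking; this single step is what simultaneously makes the witness relate to every $a_{i}$ in the same way as to $a_{0}$ and fixes its type over $bdd(ea_{0})$. Your outline never amalgamates, so the key difficulty is not addressed.

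A secondary problem is the final transport: an $e$-automorphism (or even an $ea_{0}$-fixing one) sending your extracted sequence to $I$ only yields $b'\equiv_{ea_{0}}b^{*}$, not $b'\equiv_{bdd(ea_{0})}b^{*}$, since fixing $ea_{0}$ fixes $bdd(ea_{0})$ only setwise; so G-compactness cannot be invoked at that point. To make the Lascar equality survive the transport one must carry (representatives of) $bdd(ea_{i})$ inside the sequence itself, exactly as the paper does, so that the final identification preserves the type over $bdd(ea_{0})$ and only then collapses to $\equiv^{L}_{ea_{0}}$. Your first step via \cref{exthyp} is fine but, without the amalgamation along the sequence, the rest of the argument does not go through.
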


\justify
\begin{proof}
We begin by extending $I$ to $(a_{i}$ : $i<\kappa)$ for a large enough $\kappa$. By naturality, symmetry and \cref{bondedkimmorley} we have $b\forkindep^{K}_{e}bdd(ea_{0})$. We can assume that $\alpha'_{0}= bdd(ea_{0})$ is a single hyperimaginary. Let $\overline{\alpha}'_{0}$ be a representative of $\alpha'_{0}$, by \cref{reprkimmorley} we have $I' = (a_{i}\overline{\alpha}_{i}$ : $i<\kappa)$ an Kim-Morley sequence over $e$ with $\overline{\alpha}_{0}\equiv_{ea_{0}}\overline{\alpha}'_{0} $, since an $ea_{0}$-automorphism globally fixes $bdd(ea_{0})$ we have $\alpha_{0}\sim\alpha'_{0} $ so we still have $b\forkindep^{K}_{e}\alpha_{0}$.

\justify
Now let $p(x,\overline{\alpha}_{0})= \tp(b/\alpha_{0})$. By \cref{remarkmorley} we have that $\lbrace p(x,\overline{\alpha}_{i})$ : $i< \kappa $ does not Kim-fork over $e$. So by extension there is $b''\models \lbrace p(x,\overline{\alpha}_{i})$ : $i< \kappa $ such that $b''\forkindep^{K}_{e}(\overline{\alpha}_{i}$ : $i<\kappa)$. By Erdös-Rado there is an $eb''$-indiscernible sequence $(\hat{\alpha}_{i}$ : $i<\omega)$ which is finitely based on $(\overline{\alpha}_{i}$ : $i<\kappa)$ over $eb''$. $b''\forkindep^{K}_{e}(\overline{\alpha}_{i}$ : $i<\kappa)$ so $b''\forkindep^{K}_{e}(\hat{\alpha}_{i}$ : $i<\omega)$. We have $b'$ such that $b'(\overline{\alpha}_{i}$ : $i<\omega) \equiv_{e} b''(\hat{\alpha}_{i}$ : $i<\omega)$. Then $\tp(b'\overline{\alpha}_{0})=\tp(b''\hat{\alpha}_{0})=\tp(b\overline{\alpha}_{0})$, so $b\equiv^{L}_{ea_{0}}b$.
\end{proof}

\justify
We now show that total Kim-Morley sequences are weak tree Morley sequences, we had already shown the converse in \cref{totalmorleyh}, using witnessing we can complete the equivalence.

\begin{definition} Let $e \in \mathbb{M}^{heq}$ and $(a_{i}$ : $i<\omega)$ be an $e$-indiscernible sequence of hyperimaginaries of sort $E$.
\begin{enumerate}
\item[(1)] Say $(a_{i}$ : $i<\omega)$ is a witness for Kim-dividing over $e$ if, whenever $\overline{a}_{0}$ is a representative of $a_{0}$ and $p(x, \overline{a}_{0})$ a partial type over $a_{0}$ Kim-divides over $e$, $\lbrace p(x, \overline{a}_{i})$ : $i < \omega\rbrace$ is inconsistent for some (any) sequence of representatives.
\item[(2)] Say $(a_{i}$ : $i<\omega)$ is a strong witness for Kim-dividing over $e$ if for all $n<\omega$, the sequence $(a_{[n\cdot i, n\cdot i+1)}$ : $i<\omega )$ is a witness to Kim-dividing over $e$.
\end{enumerate}
\end{definition}

\begin{lemma}\label{remarkstrongwitness}(\cref{witnessinghyp2}) : Kim Morley sequences over $e$ are witnesses for Kim-dividing over $e$ and so strong Kim-Morley sequences (so in particular weak tree Morley sequences) over $e$ are strong witnesses for Kim-dividing over $e$.
\end{lemma}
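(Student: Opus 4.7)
The proof plan is to apply the results just established and stitch them together; essentially no new ideas are needed, but care must be taken with representatives of hyperimaginaries.

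First, I would prove that every Kim-Morley sequence $(a_i : i<\omega)$ over $e$ is a witness to Kim-dividing over $e$. Let $\overline{a}_0$ be a representative of $a_0$ and $p(x,\overline{a}_0)$ a partial type that Kim-divides over $e$. Pick representatives $\overline{a}_i$ of each $a_i$ with $\overline{a}_0\overline{a}_1\dots\equiv_e\overline{a}_i\overline{a}_{i+1}\dots$, which is possible by \cref{hypindisc}. Suppose toward contradiction that $\{p(x,\overline{a}_i):i<\omega\}$ is consistent. Then \cref{witnessinghyp2} (applied to the hyperimaginary sort $E$ of the $a_i$'s, with the type $p$ viewed as a type over $a_0$) gives that $p(x,a_0)$ does not Kim-fork over $e$. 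But Kim-dividing implies Kim-forking, contradicting our hypothesis on $p$. The independence of the choice of representatives of $a_i$ follows from the fact that Kim-forking (and hence the inconsistency conclusion) does not depend on the chosen representatives.

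Next, the strong witness statement for strong Kim-Morley sequences is immediate: if $I=(a_i:i<\omega)$ is strong Kim-Morley over $e$, then for every $n<\omega$ the block sequence $(a_{[n\cdot i,n\cdot(i+1))}:i<\omega)$ is by definition Kim-Morley over $e$, hence a witness to Kim-dividing over $e$ by the previous paragraph. This is exactly the strong witness property.

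Finally, the ``in particular'' clause reduces to showing that weak tree Morley sequences are strong Kim-Morley. This follows at once from \cref{totalmorleyh}, which states that weak tree Morley sequences over $e$ are total Kim-Morley over $e$, combined with the proposition immediately following it, where total Kim-Morley and strong Kim-Morley are shown to coincide. Composing these two facts with the strong witness property already proved concludes the proof.

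The main ``obstacle'' is not a conceptual one but rather the bookkeeping: one must verify that the reformulation of Kim-dividing via representatives (\cref{kimdivbasic}) lets us pass between the hyperimaginary statement in the definition of witness and the real-tuple version of \cref{witnessinghyp2}; beyond this observation, the lemma is a direct corollary of the results already collected.
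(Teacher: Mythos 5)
Your proof is correct and takes essentially the same route as the paper, which obtains the lemma directly from \cref{witnessinghyp2} (taking its contrapositive, since Kim-dividing implies Kim-forking) together with the definitions of witness, strong witness and strong Kim-Morley, plus the earlier facts that weak tree Morley sequences are total Kim-Morley (\cref{totalmorleyh}) and that total and strong Kim-Morley coincide. The representative bookkeeping you flag is exactly what \cref{kimdivbasic} and the ``some (any) representatives'' clause in the definition of witness already handle, so nothing is missing.
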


\begin{lemma}\label{kimlemmawitness} Let $I =(a_{i}$ : $i<\omega)$ be a witness of Kim-dividing over $e$. Then if $I$ is $eb$-indiscernible $b\forkindep^{K}_{e}a_{0}$, and if $I$ is a strong witness we have $b\forkindep^{K}_{e}I$.\end{lemma}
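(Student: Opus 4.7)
The plan is to establish the first assertion by contradiction, then deduce the strong-witness case via finite character. Assume towards contradiction that $b\centernot{\forkindep}^{K}_{e}a_0$, so that $\tp(b/ea_0)$ Kim-forks over $e$. By \cref{kimfkimd} (Kim-forking equals Kim-dividing for formulas), compactness, and \cref{kimdivbasic}, I extract a single formula $\varphi$ in $\tp(b/ea_0)$ that Kim-divides over $e$. I then apply \cref{hypindisc} to the $eb$-indiscernible sequence $(a_i)_{i<\omega}$ of hyperimaginaries to obtain representatives $\hat{e}$ of $e$, $\hat{b}$ of $b$, and $(\hat{a}_i)_{i<\omega}$ of $(a_i)_{i<\omega}$ such that $(\hat{a}_i)_{i<\omega}$ is $\hat{e}\hat{b}$-indiscernible in the real-tuple sense. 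Since Kim-dividing of the hyperimaginary partial type $\tp(b/ea_0)$ is independent of the choice of representatives, I may assume $\varphi = \varphi(\overline{x}, \hat{e}, \hat{a}_0)$ in these representatives, with $\hat{b} \models \varphi(\overline{x}, \hat{e}, \hat{a}_0)$.

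Applying the $\hat{e}\hat{b}$-indiscernibility of $(\hat{a}_i)_{i<\omega}$ to the formula $\varphi(\hat{b}, \hat{e}, \overline{y})$ in the free variable $\overline{y}$ yields $\hat{b} \models \varphi(\overline{x}, \hat{e}, \hat{a}_i)$ for every $i<\omega$, so the family $\{\varphi(\overline{x}, \hat{e}, \hat{a}_i) : i < \omega\}$ is consistent. However, $\varphi$ is a Kim-dividing formula in parameters from the representative $\hat{e}\hat{a}_0$ of $ea_0$, and $I$ is by hypothesis a witness for Kim-dividing over $e$; the witness property therefore forces the same family to be inconsistent---the usage here, with the formula parameters involving a representative of $e$, is exactly parallel to the argument in the proof of \cref{witnessinghyp}---yielding the desired contradiction, so $b \forkindep^{K}_{e} a_0$.

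For the strong-witness case, finite character of Kim-forking together with the $eb$-indiscernibility of $I$ reduces the assertion $b \forkindep^{K}_{e} I$ to showing $b \forkindep^{K}_{e} a_{[0,n)}$ for every $n<\omega$. Fixing $n$, the block sequence $(a_{[ni, n(i+1))} : i<\omega)$ is a witness for Kim-dividing over $e$ by the strong-witness hypothesis and is still $eb$-indiscernible since $I$ is. Applying the first part of the lemma to this block sequence---with the base position formerly occupied by $a_0$ now occupied by $a_{[0,n)}$---gives $b \forkindep^{K}_{e} a_{[0,n)}$ as required. The principal obstacle throughout is the coherent choice of representatives: the Kim-dividing formula extracted from $\tp(b/ea_0)$ must be written in the same representatives that \cref{hypindisc} furnishes so that the indiscernibility of $(\hat{a}_i)$ over $\hat{e}\hat{b}$ can be directly applied to the formula, and this alignment is what makes the witness property of $I$ directly applicable to the extracted formula.
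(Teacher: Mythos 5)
Your argument is correct and is essentially the paper's own proof: the paper disposes of the first claim as ``clear from the definition'' (which you unpack in the natural way via \cref{kimfkimd}, \cref{kimdivbasic} and \cref{hypindisc}, reading the witness property as the paper itself uses it, with parameters from a representative of $e$ allowed), and handles the second claim exactly as you do, by finite character applied to the $eb$-indiscernible block sequences as in \cref{chaintotal0}.
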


\justify
\begin{proof} Clear from the definition, and the second point by local character and the same argument as in \cref{chaintotal0}.\end{proof}

\justify
The following proposition is an adaptation of \cite[Proposition 7.9]{kaplan2020kim}.

\begin{prop}\label{thiskek} Let $e \in \mathbb{M}^{heq}$, then $(a_{i}$ : $i<\omega)$ is a strong witness for Kim-dividing over $e$ if and only if it is a weak tree Morley sequence over $e$.
\end{prop}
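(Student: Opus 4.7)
For the implication $(\Leftarrow)$, suppose $(a_i : i<\omega)$ is a weak tree Morley sequence over $e$. By \cref{totalmorleyh} it is a total Kim-Morley sequence over $e$, which coincides with being strong Kim-Morley by the preceding proposition, and by \cref{remarkstrongwitness} every strong Kim-Morley sequence is a strong witness for Kim-dividing over $e$. This direction is essentially a direct quotation of previous results.

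For the converse $(\Rightarrow)$, assume $I = (a_i : i<\omega)$ is a strong witness for Kim-dividing over $e$. The plan is to construct a weakly Morley tree $(b_\eta)_{\eta \in \mathcal{T}_\omega}$ over $e$ whose all-zeros path $(b_{\zeta_i})_{i<\omega}$ has the same type over $e$ as $I$; since being a weak tree Morley sequence over $e$ is preserved under $e$-automorphisms (just transport the whole ambient tree), moving this tree by an appropriate automorphism will yield $I$ itself as a weak tree Morley path, finishing the proof.

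Concretely, I will inductively build an s-indiscernible, weakly spread out tree $(b^\alpha_\eta)_{\eta \in \mathcal{T}_\alpha}$ over $e$ for $\alpha$ as large as we wish, together with a coherence/homogeneity condition ensuring that any path is $e$-equivalent to a segment of a strong witness $e$-equivalent to $I$. The successor step $\alpha \to \alpha+1$ passes from $(b^\alpha_\eta)$ to a tree with a new root and $\omega$ children, each child being the root of a copy of the previous tree: using extension (\cref{exthyp}) and existence for hyperimaginaries together with the chain condition (\cref{chaintotal1}) and the strong witness property of $I$ via \cref{kimlemmawitness}, we realize these $\omega$ copies so that the collection of their roots forms a forking Morley sequence over $e$, which is what weakly spread out demands at the new internal node. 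The modeling property for $\mathcal{T}_\alpha$-indexed trees (\cref{modprop}, with its hyperimaginary variant from \cref{modprophyperimag}) is then applied to extract an s-indiscernible tree satisfying the EM-type of this weakly spread out candidate, so the Morley condition survives. The limit step is handled by compactness along the coherent inverse system. Finally, once $\alpha$ is sufficiently large, \cref{morleytree0} shrinks $(b^\alpha_\eta)_{\eta \in \mathcal{T}_\alpha}$ down to a weakly Morley tree $(b_\eta)_{\eta \in \mathcal{T}_\omega}$ over $e$; by s-indiscernibility and the path-equivalence condition carried through the construction, its all-zeros path is $e$-equivalent to $I$.

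The main obstacle is the successor step, where we must simultaneously arrange (i) the forking Morley condition for the children, (ii) the s-indiscernibility of the whole tree, and (iii) the path-equivalence with a strong witness $e$-equivalent to $I$. The difficulty is that the strong witness hypothesis delivers Kim-independence rather than the forking independence required by ``weakly spread out''. The strong witness property of $I$ is used precisely to propagate Kim-independence information through the path of the growing tree and, via \cref{kimlemmawitness}, to convert it into the amalgamation inputs needed; existence for hyperimaginaries and weak transitivity (\cref{weaktrans}) are what allow us to upgrade to the forking Morley condition for the newly adjoined children at each stage.
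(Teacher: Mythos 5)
Your left-to-right direction (weak tree Morley $\Rightarrow$ strong witness) is exactly the paper's: it is immediate from \cref{remarkstrongwitness}. For the converse, your skeleton --- inductively building weakly spread out, s-indiscernible trees over $e$, shrinking with \cref{morleytree0}, and transporting by an $e$-automorphism so that the all-zeros path becomes $I$ --- is also the route the paper takes (it imports the argument of \cite[Proposition 7.9]{kaplan2020kim}, citing \cref{kimlemmawitness}, \cref{kimdivh} and \cref{morleytree0}). The problem is the successor step, which is where all the content lies, and your account of it does not work as stated. Being weakly spread out demands that the sequence of sibling subtrees be a \emph{nonforking} $e$-Morley sequence, and you propose to get this by ``upgrading'' the Kim-independence coming from the strong-witness hypothesis using existence, \cref{weaktrans}, \cref{exthyp} and \cref{chaintotal1}. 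There is no such upgrade: Kim-independence does not imply forking independence, and all of \cref{weaktrans}, \cref{exthyp} and \cref{chaintotal1} output only $\forkindep^{K}$-statements. In the actual argument the nonforking Morley sequence of copies of the stage-$\alpha$ tree comes directly from existence for hyperimaginaries applied to the type of that tree over $e$ (plus extension and Erd\"os--Rado); the strong-witness hypothesis enters at a different point, namely your requirement (iii).

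Concretely, the induction must carry, in addition to ``every path looks like an initial segment of $I$'', the datum that the remaining tail of $I$ is indiscernible (in blocks --- this is precisely where \emph{strong} witness rather than witness is needed) over $e$ together with the tree built so far. Then \cref{kimlemmawitness} and symmetry give that the tail is Kim-independent from the tree over $e$, and Kim's lemma in the form \cref{kimdivh} applied to the nonforking Morley sequence of sibling copies produces a \emph{single} new root-and-tail consistent with all copies simultaneously; that is the amalgamation step which makes the enlarged tree both weakly spread out and path-correct. Your stated invariant (``any path is $e$-equivalent to a segment of a strong witness $e$-equivalent to $I$'') records none of this independence/indiscernibility data, so the induction as you set it up cannot be run, and nowhere in your sketch is it explained where strength of the witness (as opposed to mere witnessing) is used. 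A further unjustified point: after extracting an s-indiscernible tree via the EM-type (\cref{modprop}, \cref{modprophyperimag}) you assert ``the Morley condition survives,'' but being a nonforking Morley sequence is not type-definable and is not part of the EM-type; its preservation has to be argued (as is done, in the analogous constructions, in \cref{morleytreehyp} and \cref{morleytree1}) rather than obtained for free. So the proposal has the right skeleton but a genuine gap at its key step.
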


\justify
\begin{proof}
The right to left direction is given by \cref{remarkstrongwitness}. For the other direction the proof carries over verbatim using \cref{kimlemmawitness}, \cref{kimdivh} and \cref{morleytree0}.\end{proof}

\begin{cor}\label{totaltreemorley} Let $e\in \mathbb{M}^{heq}$. A sequence $I$ of hyperimaginaries is total Kim-Morley over $e$ if and only if it is a weak tree Morley sequence over $e$.
\end{cor}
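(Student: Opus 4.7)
The plan is to assemble the result from pieces already established. The right-to-left direction is immediate from \cref{totalmorleyh}, so the content is in the left-to-right direction, i.e.\ showing that a total Kim-Morley sequence over $e$ is a weak tree Morley sequence over $e$.

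First, I would recall the equivalence already noted in the paper between total Kim-Morley and strong Kim-Morley sequences: if $I = (a_i : i<\omega)$ is total Kim-Morley over $e$, then for every $n < \omega$ the blocked sequence $\overline{a}_i := (a_{n\cdot i}, \ldots, a_{n\cdot i + n-1})$ is itself Kim-Morley over $e$, simply because by finite character plus indiscernibility, $a_{> m} \forkindep^K_e a_{\leq m}$ for all $m$ already yields $\overline{a}_{\geq i} \forkindep^K_e \overline{a}_{<i}$.

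Next, I would invoke the witnessing content of the section. By \cref{witnessinghyp2} (packaged as \cref{remarkstrongwitness}), every Kim-Morley sequence is a witness for Kim-dividing over $e$; applied to each blocked sequence $\overline{a}_i$ above, this shows $I$ satisfies the definition of a strong witness for Kim-dividing over $e$.

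Finally, I would appeal to \cref{thiskek}, which asserts exactly that a sequence is a strong witness for Kim-dividing over $e$ if and only if it is a weak tree Morley sequence over $e$. This delivers the left-to-right implication and completes the proof. No step is really an obstacle here — the work was all done in the preceding propositions; the only mild care is to keep track of the fact that the blocking construction used for "strong Kim-Morley" and for "strong witness" coincide, so that the application of \cref{remarkstrongwitness} to each block feeds directly into the hypothesis of \cref{thiskek}.
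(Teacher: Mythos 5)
Your proposal is correct and follows exactly the route the paper intends: \cref{totalmorleyh} for the right-to-left direction, and for the converse the identification of total with strong Kim-Morley, then \cref{remarkstrongwitness} to see that the blocked sequences witness Kim-dividing (so $I$ is a strong witness), and finally \cref{thiskek} to conclude it is a weak tree Morley sequence. The paper leaves this corollary without a written proof precisely because it is this assembly of the preceding results, so nothing is missing from your argument.
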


\section{Weak Canonical Bases in NSOP1 Theories}

\justify
We will now adapt the results of \cite{kim2021weak} to the context of hyperimaginaries. In this article Kim uses the properties of Kim-Forking that we proved in the previous section for hyperimaginaries and does not return to the level of formulas. This means that similar proofs will work.

\subsection{More Properties of Kim-Forking}

\justify
First a reminder of some properties we use a lot :

\begin{fact}\label{doublemorley} Let $a_{0},b,e \in \mathbb{M}^{heq}$ with $e \in dcl^{heq}(b)$ such that $a_{0}\forkindep^{K}_{e}b$. Then there is a total Kim-Morley sequence over $e$ $J = (a_{i}$ : $i<\omega)$ such that $J\forkindep^{K}_{e}b$ and $J$ is Kim-Morley over $b$.
\end{fact}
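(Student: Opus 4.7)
Essentially every ingredient needed is already stated in the excerpt; the work is just to assemble them in the right order, with \cref{p733h} doing almost all of the heavy lifting.

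First, I would apply \cref{p733h} to the hypothesis $a_{0}\forkindep^{K}_{e}b$. This directly produces an $eb$-indiscernible sequence $J=(a_{i}:i<\omega)$ starting with the given $a_{0}$ such that $J$ is simultaneously a weak tree Morley sequence over $e$ and a Kim-Morley sequence over $eb$. Since by assumption $e\in dcl^{heq}(b)$, indiscernibility and Kim-independence over $eb$ coincide with those over $b$, so $J$ is already Kim-Morley over $b$, which takes care of the second half of the conclusion.

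Next, I would upgrade the qualifier ``weak tree Morley over $e$'' to ``total Kim-Morley over $e$''. This is immediate from \cref{totaltreemorley}, which asserts the equivalence of these two notions for sequences of hyperimaginaries; thus $J$ is total Kim-Morley over $e$, as required.

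It remains to establish $J\forkindep^{K}_{e}b$. Here I would invoke \cref{remarkmorleytree2bis}: $J$ is a weak tree Morley sequence over $e$ that is also $eb$-indiscernible, so $b\forkindep^{K}_{e}J$. Applying symmetry of Kim-independence (\cref{symh}) gives $J\forkindep^{K}_{e}b$, finishing the proof. There is no real obstacle to overcome — the statement is essentially a packaging of \cref{p733h}, \cref{totaltreemorley}, \cref{remarkmorleytree2bis} and \cref{symh}, with the mild observation that $e\in dcl^{heq}(b)$ lets us drop the $e$ from ``Kim-Morley over $eb$''.
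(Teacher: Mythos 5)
Your proposal is correct and follows essentially the same route as the paper, whose proof is simply the observation that the statement is \cref{p733h} combined with \cref{totalmorleyh} (the weak-tree-Morley-to-total-Kim-Morley direction that you obtain from \cref{totaltreemorley}). Your explicit derivation of $J\forkindep^{K}_{e}b$ via \cref{remarkmorleytree2bis} and symmetry just spells out a step the paper leaves implicit, so there is nothing genuinely different here.
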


\justify
This is just \cref{p733h} and \cref{totalmorleyh}. We will rather use the term total Kim-Morley sequences than weak tree Morley sequences since it corresponds to the use that will be made of it.

\begin{fact}\label{chaincondhyp} Chain condition for Kim-Morley sequences (\cref{chaintotal1}): Let $e\in \mathbb{M}^{heq}$, $I = (a_{i}$ : $i<\omega)$ a Kim-Morley sequence over $e$ of hyperimaginaries and $b\in \mathbb{M}^{heq}$ such that $b\forkindep^{K}_{e}a_{0}$. Then there is $b' \equiv^{L}_{ea_{0}}b$ such that $I$ is $eb'$-indiscernible and $b'\forkindep^{K}_{e}I$.
\end{fact}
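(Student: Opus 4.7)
The plan is to mimic the argument already given for \cref{chaintotal1} (which this fact restates). The key idea is to pass to the bounded closure of $ea_0$, lift the Kim-Morley sequence so that its terms are representatives of $bdd(ea_0)$, propagate $\tp(b/bdd(ea_0))$ along the lifted sequence by the Lstp-amalgamation result \cref{remarkmorley}, and finally extract indiscernibility via Erd\H{o}s-Rado.

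Concretely, I first lengthen $I$ to $(a_i : i<\kappa)$ for a suitably large $\kappa$ and replace $a_0$ with $\alpha_0 := bdd(ea_0)$, viewed as a single hyperimaginary. By \cref{bondedkimmorley} and symmetry the hypothesis $b\forkindep^K_e a_0$ upgrades to $b\forkindep^K_e \alpha_0$. Pick any representative $\overline{\alpha}_0$ of $\alpha_0$, and apply \cref{reprkimmorley} to obtain a Kim-Morley sequence $(a_i\overline{\alpha}_i : i<\kappa)$ over $e$ with the chosen $\overline{\alpha}_0$ in position $0$; each $\overline{\alpha}_i$ represents some $\alpha_i$, and since any $ea_0$-automorphism fixes $bdd(ea_0)$ setwise one gets $\alpha_i\sim \alpha_0$ for all $i$.

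Next, set $p(x,\overline{\alpha}_0):=\tp(b/\alpha_0)$. Each $p(x,\overline{\alpha}_i)$ is a non-Kim-forking partial type over $\alpha_i$ with all realizations pairwise Lstp-equivalent over $e$ (the latter because we are working over a boundedly closed hyperimaginary, so $\equiv_e$ and $\equiv^L_e$ coincide there). \cref{remarkmorley} then gives that $\bigcup_{i<\kappa} p(x,\overline{\alpha}_i)$ does not Kim-fork over $e$, and by extension there is $b''\models \bigcup_i p(x,\overline{\alpha}_i)$ with $b''\forkindep^K_e (\overline{\alpha}_i : i<\kappa)$. Erd\H{o}s-Rado supplies an $eb''$-indiscernible sequence $(\hat{\alpha}_i : i<\omega)$ finitely based on $(\overline{\alpha}_i)_{i<\kappa}$ over $eb''$; finite character and invariance preserve $b''\forkindep^K_e(\hat{\alpha}_i)$. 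Choose $b'$ with $b'\,(\overline{\alpha}_i)_{i<\omega}\equiv_e b''\,(\hat{\alpha}_i)_{i<\omega}$: then $I$ is $eb'$-indiscernible, $b'\forkindep^K_e I$, and $\tp(b'\overline{\alpha}_0)=\tp(b''\hat{\alpha}_0)=\tp(b\overline{\alpha}_0)$.

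The main subtlety I expect is the final upgrade from $\equiv_{ea_0}$ to $\equiv^L_{ea_0}$: this is precisely why one must first pass to $bdd(ea_0)$, so that equality of types over $\alpha_0$ is automatically equality of Lascar strong types, and why the hypotheses of \cref{remarkmorley} (Lstp-equivalent realizations along the sequence) are satisfied without extra work. A secondary bookkeeping point is ensuring that the lifted sequence produced by \cref{reprkimmorley} really can be chosen with $\overline{\alpha}_0$ our preferred representative of $\alpha_0$, which is exactly what that proposition provides.
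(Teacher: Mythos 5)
Your proposal is correct and follows essentially the same route as the paper's proof of \cref{chaintotal1}: pass to $bdd(ea_{0})$, lift the sequence via \cref{reprkimmorley}, amalgamate the translated types via \cref{remarkmorley}, then use extension and Erd\H{o}s--Rado and pull back. One small wording slip: what the $ea_{0}$-automorphism argument actually yields is $\alpha_{0}\sim bdd(ea_{0})$ (so that $b\forkindep^{K}_{e}\alpha_{0}$ is preserved), not $\alpha_{i}\sim\alpha_{0}$ for all $i$; this claim is not needed anywhere, since the Lstp-equivalence of realizations is justified, as you say, by working over boundedly closed hyperimaginaries containing $bdd(e)$.
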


\begin{fact}\label{chaintotal3} (\cref{chaintotal0}) Let $e\in \mathbb{M}^{heq}$, $I = (a_{i}$ : $i<\omega)$ a total Kim-Morley sequence of hyperimaginaries over $e$ and $b\in \mathbb{M}^{heq}$ such that $I$ is $eb$-indiscernible. Then $b\forkindep^{K}_{e}I$.
\end{fact}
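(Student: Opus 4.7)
The plan is to reduce the statement to finite chunks of $I$ using finite character of $\forkindep^{K}$, and then invoke witnessing (\cref{hypwitn+}) on suitably blocked Kim-Morley sequences. Explicitly, since $\forkindep^{K}$ has finite character, it suffices to establish $b\forkindep^{K}_{e}a_{<n}$ for every fixed $n<\omega$.

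Fixing such an $n$, I would consider the blocked sequence $(\overline{a}_{i}$ : $i<\omega)$ defined by $\overline{a}_{i}:=(a_{n\cdot i},a_{n\cdot i+1},\ldots,a_{n\cdot i+n-1})$. By the characterisation of total Kim-Morley sequences established earlier in the section (total Kim-Morley is the same as strong Kim-Morley, and indeed the same as weak tree Morley by \cref{totaltreemorley}), the blocked sequence $(\overline{a}_{i}$ : $i<\omega)$ is itself a Kim-Morley sequence over $e$. Moreover $eb$-indiscernibility of $I$ immediately transfers to the blocked sequence, so in particular every $\overline{a}_{i}$ has the same type over $eb$ as $\overline{a}_{0}=a_{<n}$.

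At this point \cref{hypwitn+} applies directly to the Kim-Morley sequence $(\overline{a}_{i}$ : $i<\omega)$ with the parameter $b$: since all its entries share the same type over $eb$, we conclude $b\forkindep^{K}_{e}\overline{a}_{0}$, i.e.\ $b\forkindep^{K}_{e}a_{<n}$. Letting $n$ vary and invoking finite character gives $b\forkindep^{K}_{e}I$.

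There is essentially no obstacle here: the statement really is a repackaging of \cref{chaintotal0}, and all the work has been done by witnessing over hyperimaginaries (\cref{witnessinghyp}, \cref{hypwitn+}) together with the equivalence of total Kim-Morley and strong Kim-Morley sequences. The only thing to keep an eye on is that the blocking argument preserves both the Kim-Morley property (which is why "total" is the right hypothesis rather than plain Kim-Morley) and the $eb$-indiscernibility (which is immediate since sub-blocks of an indiscernible sequence remain indiscernible).
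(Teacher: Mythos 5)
Your proposal is correct and is essentially identical to the paper's proof of \cref{chaintotal0}: reduce to $b\forkindep^{K}_{e}a_{<n}$ by finite character, note that the blocked sequence of $n$-tuples is Kim-Morley over $e$ (via the equivalence of total and strong Kim-Morley) and $eb$-indiscernible, and apply \cref{hypwitn+}.
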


\justify
We now give results about sequences that are Kim-Morley for two bases. The following is \cite[Proposition 2.2]{kim2021weak}, the proof carries over verbatim.

\begin{prop}\label{2.2wcb} Let $a_{0}\forkindep^{K}_{e}b$ with $e\in dcl^{heq}(b)$ and $I = ( a_{i}$ : $i<\omega )$ a Kim-Morley sequence over $e$. Then there is $I'\equiv_{ea_{0}}I$ such that $I'\forkindep^{K}_{e}b$ and $I'$ is Kim-Morley over $b$ (and Kim-Morley over $e$).\end{prop}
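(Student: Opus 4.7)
The strategy is to construct $I'$ in two stages: first produce a Lascar-strong-type variant of $I$ over $ea_0$ that is $eb$-indiscernible and Kim-independent from $b$ over $e$, then show such a variant is automatically Kim-Morley over $b$.

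For the first stage, I would use that $a_0 \forkindep^{K}_{e} b$: by extension for Lascar strong types (\cref{exthyp}) applied to $\tp^{L}(I/ea_0)$, I find $I^{*} \equiv^{L}_{ea_0} I$ with $I^{*} \forkindep^{K}_{e} b$; the sequence $I^{*}$ remains Kim-Morley over $e$ since this property is determined by the $e$-type. Next, the chain condition for Kim-Morley sequences (\cref{chaincondhyp}) applied to $I^{*}$ and $b$ (with $b \forkindep^{K}_{e} a_0$) produces $b' \equiv^{L}_{ea_0} b$ such that $I^{*}$ is $eb'$-indiscernible and $b' \forkindep^{K}_{e} I^{*}$. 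Applying an $ea_0$-Lascar automorphism sending $b'$ to $b$ to the whole sequence yields $I' \equiv^{L}_{ea_0} I$ (so in particular $I' \equiv_{ea_0} I$) with $I'$ being $eb$-indiscernible and $b \forkindep^{K}_{e} I'$.

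For the second stage I must verify that $I' = (a'_i)_{i<\omega}$ is Kim-Morley over $b$, i.e.\ that $a'_i \forkindep^{K}_{b} a'_{<i}$ for every $i < \omega$. Fix $i$: the tail $(a'_{i+j})_{j<\omega}$ is Kim-Morley over $e$ (subsequence of a Kim-Morley sequence) and, using the $eb$-indiscernibility of $I'$ on increasing tuples of the form $(a'_0,\dots,a'_{i-1},a'_{i+j_0},\dots,a'_{i+j_k})$, it is also $ba'_{<i}$-indiscernible. The witnessing property of Kim-Morley sequences (\cref{hypwitn+}) then gives $ba'_{<i} \forkindep^{K}_{e} a'_i$, hence $a'_i \forkindep^{K}_{e} ba'_{<i}$ by symmetry. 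Combined with $b \forkindep^{K}_{e} a'_{<i}$ (monotonicity on $b \forkindep^{K}_{e} I'$), I then apply \cref{p732h} to pass from these two $e$-independences to Kim-independence over the enlarged base $eb$ (replacing $a'_{<i}$ by a suitable $eb$-conjugate if needed), and use the amalgamation \cref{amalghyp} to transport the result back onto $a'_{<i}$ while preserving the already-fixed $ea_0$-type, giving $a'_i \forkindep^{K}_{b} a'_{<i}$.

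The main obstacle is this final base-enlargement step: Kim-independence does not obey unrestricted base monotonicity, so $a'_i \forkindep^{K}_{e} ba'_{<i}$ does not by itself yield $a'_i \forkindep^{K}_{b} a'_{<i}$. The workaround exploits the genericity of $b$ relative to the entire sequence (the condition $b \forkindep^{K}_{e} I'$), which is precisely the hypothesis that makes \cref{p732h} and the amalgamation machinery produce a compatible configuration where the base change goes through; coordinating this with the requirement $I' \equiv_{ea_0} I$ is the delicate part, and in practice may be organised as a term-by-term inductive construction rather than a single global move.
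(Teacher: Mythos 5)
Your Stage 1 is essentially fine, and in fact can be compressed: applying the chain condition \cref{chaincondhyp} directly to $I$ and $b$ and then moving $b'$ back to $b$ by a Lascar strong automorphism over $e$ together with the first element already yields $I'\equiv^{L}_{ea_{0}}I$ which is $eb$-indiscernible, Kim-Morley over $e$, and satisfies $b\forkindep^{K}_{e}I'$ (the only bookkeeping point is that the Lascar equivalence produced by \cref{chaincondhyp} is over $e$ together with the first element of the sequence you feed it, so you should arrange, e.g. via \cref{exthyp} applied to the tail of $I$, that this first element is $a_{0}$ itself). The genuine gap is Stage 2. You correctly obtain $ba'_{<i}\forkindep^{K}_{e}a'_{i}$ from \cref{hypwitn+}, but nothing in the paper converts this into $a'_{i}\forkindep^{K}_{b}a'_{<i}$ for the sequence you have already pinned down. \cref{p732h} only provides some $c'\equiv_{eb}a'_{<i}$ with $a'_{i}\forkindep^{K}_{eb}c'$, equivalently an $eb$-conjugate of $a'_{i}$ that is Kim-independent from $a'_{<i}$ over $b$; to transfer this back to the fixed pair $(a'_{i},a'_{<i})$ by invariance you would need the conjugation to preserve $\tp(a'_{i}/eba'_{<i})$, which is exactly what is not given. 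Nor can \cref{amalghyp}, \cref{hypamalg2} or \cref{2a2indeph} repair this: every independence in the conclusions of the amalgamation results is over the small base $e$ (they produce new conjugates, they do not transport an over-$b$ independence onto prescribed tuples). In effect your Stage 2 needs the implication ``$I'$ $eb$-indiscernible, Kim-Morley over $e$, $I'\forkindep^{K}_{e}b$ $\Rightarrow$ $I'$ Kim-Morley over $b$'', a disguised form of base monotonicity which is not available in $NSOP_{1}$ theories; if it were true, \cref{p733h} and the present proposition would be immediate corollaries of the chain condition, which they are not.

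This is why the proof the paper imports (Kim's Proposition 2.2, taken over verbatim) does not fix the sequence first and verify afterwards: the conjugate of $I$ is built term by term, and at each step the new element is chosen, using the independence theorem over (the bounded closure of) $e$ together with base-change steps of the kind \cref{p732h} furnishes, so that it simultaneously realizes the right type over $e$ with its predecessors and is Kim-independent from them over $b$ at the moment it is created, the independence from $b$ over $e$ being carried along the induction; only at the end does one extract and conjugate to land on $\equiv_{ea_{0}}I$. Your closing remark that the argument ``may be organised as a term-by-term inductive construction'' points at precisely this; but that construction is the actual content of the proof, and as written your proposal does not supply it, so the proposition is not yet proved.
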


\justify
The following is \cite[Proposition 2.4]{kim2021weak}.

\begin{cor}\label{2.4wcb} Let $a_{0}\forkindep^{K}_{e}b$ with $e\in dcl^{heq}(b)$ and $I = ( a_{i}$ : $i<\omega )$ a Kim-Morley sequence over $e$. Then there is $I'\equiv^{L}_{ea_{0}}I$ such that $I'\forkindep^{K}_{e}b$ and $I'$ is Kim-Morley over $b$ (and Kim-Morley over $e$).\end{cor}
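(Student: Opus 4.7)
The plan is to combine Proposition \cref{2.2wcb} (which provides the three structural conditions only up to $\equiv_{ea_0}$) with the chain condition for Kim-Morley sequences (\cref{chaincondhyp}), which naturally operates at the Lascar-strong-type level.

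First I would apply \cref{chaincondhyp} to $I$ and $b$: since $I$ is Kim-Morley over $e$ with first element $a_0$ and $b \forkindep^{K}_{e} a_0$ holds by symmetry of the hypothesis, the chain condition yields $b' \equiv^{L}_{ea_0} b$ such that $I$ is $eb'$-indiscernible and $b' \forkindep^{K}_{e} I$. Picking a Lascar automorphism $\sigma \in \mathrm{Autf}(\mathbb{M}/ea_0)$ with $\sigma(b') = b$, the sequence $I_1 := \sigma(I)$ satisfies $I_1 \equiv^{L}_{ea_0} I$, is Kim-Morley over $e$, is $eb$-indiscernible, and satisfies $I_1 \forkindep^{K}_{e} b$ by transport along $\sigma$. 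Thus $I_1$ already delivers everything required except possibly the Kim-Morley-over-$b$ property, because $eb$-indiscernibility together with $I_1 \forkindep^{K}_{e} b$ does not yield $a^{(1)}_i \forkindep^{K}_{b} a^{(1)}_{<i}$ directly (base monotonicity is not available for Kim-independence in general NSOP1 theories).

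To obtain the missing Kim-Morley condition over $b$ I would then apply \cref{2.2wcb} to $I_1$ and $b$ (the assumptions $a_0 \forkindep^{K}_{e} b$ and $e \in dcl^{heq}(b)$ are preserved), producing $I' \equiv_{ea_0} I_1$ with $I' \forkindep^{K}_{e} b$, $I'$ Kim-Morley over $e$, and $I'$ Kim-Morley over $b$. To conclude $I' \equiv^{L}_{ea_0} I$ by transitivity of $\equiv^{L}_{ea_0}$, it suffices to verify that this second invocation of \cref{2.2wcb} actually preserves Lascar strong types when supplied with a Lascar-strong-type input. Each step of the verbatim adaptation of Kim's proof (extension for Kim-independence, the chain condition, Erd\"os-Rado, automorphism transfer) admits a Lascar-strong-type version via \cref{exthyp}, \cref{chaincondhyp} and the G-compactness of NSOP1 theories with existence recorded in \cite[Corollary 5.9]{dobrowolski2022independence}.

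The main obstacle is precisely this final bookkeeping: ensuring that every replacement of $\equiv$ by $\equiv^{L}$ in the proof of \cref{2.2wcb} remains valid. The crucial reduction is that extension in the form of \cref{exthyp} is already stated for Lascar strong types, so each use of type extension in the original argument transports unchanged, and the automorphism moves at the end of that construction can be chosen to be Lascar over $ea_0$. Once this is checked, the combination \textit{chain-condition-then-2.2} outputs the desired $I'$ with $I' \equiv^{L}_{ea_0} I$ and the full package of Kim-Morley properties.
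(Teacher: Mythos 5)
There is a genuine gap, and it sits exactly at the point you flag yourself: the whole content of \cref{2.4wcb} is the upgrade from $\equiv_{ea_{0}}$ (which \cref{2.2wcb} already gives) to $\equiv^{L}_{ea_{0}}$, and your argument ends by asserting that this upgrade follows because ``every replacement of $\equiv$ by $\equiv^{L}$ in the proof of \cref{2.2wcb} remains valid.'' That claim is never verified, and it is not routine: the proof of \cref{2.2wcb} is only cited as carrying over verbatim from Kim's paper, and its steps include Erd\"os-Rado extractions and automorphism transfers that a priori only control EM-types and ordinary types, not Lascar strong types. Reducing the corollary to ``redo that proof with Lascar bookkeeping'' is therefore restating the problem, not solving it. Your preliminary step via \cref{chaincondhyp} also buys nothing: after producing $I_{1}\equiv^{L}_{ea_{0}}I$ you must still invoke \cref{2.2wcb}, which returns only $I'\equiv_{ea_{0}}I_{1}$, so the composite conclusion is again just $I'\equiv_{ea_{0}}I$ --- exactly what a direct application of \cref{2.2wcb} to $I$ would have given.

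The paper avoids any re-examination of the proof of \cref{2.2wcb} by the standard bounded-closure trick (the same one used in \cref{chaintotal1}): replace $I=(a_{i}:i<\omega)$ by a sequence $\tilde{I}=(\alpha_{i}:i<\omega)$ with $\alpha_{i}\sim bdd(ea_{i})$, still Kim-Morley over $e$ and with $\alpha_{0}\forkindep^{K}_{e}b$; apply \cref{2.2wcb} to $\tilde{I}$ and $b$ to get $\tilde{I}'\equiv_{\alpha_{0}}\tilde{I}$ with the required independence and Kim-Morley properties; then pull back to an $I'$ with $I'\tilde{I}'\equiv_{\alpha_{0}}I\tilde{I}$. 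Since equality of types over the boundedly closed hyperimaginary $\alpha_{0}=bdd(ea_{0})$ implies equality of Lascar strong types over $ea_{0}$, this yields $I'\equiv^{L}_{ea_{0}}I$ for free, and the properties of $\tilde{I}'$ descend to $I'$. If you want to salvage your write-up, this is the missing idea to insert in place of the unverified ``Lascar bookkeeping'' claim.
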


\begin{proof}
\justify
As in the proof of \cref{chaintotal1} we can replace $I = ( a_{i}$ : $i<\omega )$ by an $e$-Morley sequence $\tilde{I} = ( \alpha_{i}$ : $i<\omega )$ with $\alpha_{i} \sim bdd(ea_{i})$ for all $i<\omega$. We still have $\alpha_{0}\forkindep^{K}_{e}b$. Applying \cref{2.2wcb} to $\tilde{I}$ and $b$ gives us $\tilde{I}'\equiv_{\alpha_{0}}\tilde{I}$ such that $\tilde{I}'\forkindep^{K}_{e}b $ and $\tilde{I}'$ is Kim-Morley over $b$. Then there is $I'$ such that $I'\tilde{I}'\equiv_{\alpha_{0}}I\tilde{I}$, so in particular $I'\equiv^{L}_{ea_{0}}I$ and $I'$ clearly satisfies our conditions.
\end{proof}

\justify
The following is \cite[Theorem 2.5]{kim2021weak}, the proof carries over verbatim.

\begin{theorem}\label{2a2indeph}
Let $a,b_{0},c \in \mathbb{M}^{heq}$ be pairwise Kim-independent over $e\in \mathbb{M}^{heq}$. Then, there is $b$ such that  : 
\vspace{-10pt}
\begin{center}$b\equiv^{L}_{ea} b_{0}$, $b\equiv^{L}_{ec} b_{0}$, $b\forkindep^{K}_{ea}c$ and $b\forkindep^{K}_{ec}a$.\end{center}
\justify
So by transitivity of $\forkindep^{K}$ :  $a\forkindep^{K}_{e}bc$, $b\forkindep^{K}_{e}ac$ and $c\forkindep^{K}_{e}ab$.
\end{theorem}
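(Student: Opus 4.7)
The plan is to apply the strong independence theorem \cref{hypamalg2} (already established in the preceding subsection) with a careful identification of parameters, and then read off the base-enlarged independences using transitivity. Specifically, in \cref{hypamalg2} take its ``$b$'' to be our $a$, its ``$c$'' to be our $c$, and $a_{0}=a_{1}=b_{0}$. The four hypotheses of \cref{hypamalg2} all hold: $a\forkindep^{K}_{e}c$ is given, $b_{0}\equiv^{L}_{e}b_{0}$ is trivial, and $b_{0}\forkindep^{K}_{e}a$, $b_{0}\forkindep^{K}_{e}c$ follow from pairwise Kim-independence together with symmetry (\cref{symh}). The conclusion produces a $b$ satisfying $b\equiv^{L}_{ea}b_{0}$, $b\equiv^{L}_{ec}b_{0}$, $b\forkindep^{K}_{e}ac$, $a\forkindep^{K}_{e}bc$ and $c\forkindep^{K}_{e}ab$, so the two Lascar-type conditions required by the statement are immediate.

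To extract the base-enlarged conditions $b\forkindep^{K}_{ea}c$ and $b\forkindep^{K}_{ec}a$, I use the decomposition direction of transitivity of Kim-independence. Reading $b\forkindep^{K}_{e}ac$ as independence against the product $ac$, decomposition splits it as $b\forkindep^{K}_{e}a$ together with $b\forkindep^{K}_{ea}c$, and equally as $b\forkindep^{K}_{e}c$ together with $b\forkindep^{K}_{ec}a$, which furnishes both of the remaining conditions. The final ``so by transitivity of $\forkindep^{K}$'' clause in the statement is then the composition direction \cref{transitkim} applied three times: $b\forkindep^{K}_{e}a$ together with $b\forkindep^{K}_{ea}c$ yields $b\forkindep^{K}_{e}ac$, and symmetrically for the other two joint relations.

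The main subtlety lies in the decomposition step, since \cref{transitkim} as stated in the paper records only the composition direction. The converse is nevertheless a standard companion property of Kim-independence in NSOP$_{1}$ theories with existence for hyperimaginaries: it can be derived by combining \cref{chaincondhyp} with \cref{transitkim} and symmetry, via a short argument that selects a Kim-Morley sequence witnessing the joint independence and transports it to the enlarged base. Alternatively, one can bypass the invocation altogether by building $b$ directly through a two-step chain-condition construction, mirroring Kim's original proof of Theorem 2.5 in \cite{kim2021weak} and applying \cref{chaincondhyp} once with base $ea$ and once with base $ec$ so that the base-enlarged independences appear by construction rather than by decomposition.
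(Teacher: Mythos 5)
Your use of \cref{hypamalg2} is fine as far as it goes: with its $b,c$ taken to be $a,c$ and $a_{0}=a_{1}=b_{0}$ it does produce some $b$ with $b\equiv^{L}_{ea}b_{0}$, $b\equiv^{L}_{ec}b_{0}$ and $b\forkindep^{K}_{e}ac$. The gap is the next step, where you pass from $b\forkindep^{K}_{e}ac$ to $b\forkindep^{K}_{ea}c$ and $b\forkindep^{K}_{ec}a$ by a ``decomposition direction of transitivity''. That implication is exactly base monotonicity for $\forkindep^{K}$, and Kim-independence does \emph{not} satisfy base monotonicity in NSOP$_{1}$ theories; indeed base monotonicity of $\forkindep^{K}$ is known to force simplicity, and in a non-simple NSOP$_{1}$ theory such as the model companion of parametrized equivalence relations one has concrete failures: for $a$ a generic parameter and $b,c$ generic with $E_{a}(b,c)$, one gets $b\forkindep^{K}_{M}ac$ while $b\centernot{\forkindep}^{K}_{Ma}c$. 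So there is no ``standard companion property'' to invoke, no short argument from \cref{chaincondhyp}, \cref{transitkim} and symmetry can deliver it, and the $b$ furnished by \cref{hypamalg2} simply need not satisfy the enlarged-base independences. Note that this also inverts the logical content of the theorem: the statements $b\forkindep^{K}_{ea}c$ and $b\forkindep^{K}_{ec}a$ are the substance, and the joint independences are then recovered from them by \cref{transitkim} (which is the direction recorded in the statement); deriving the joint independence first and trying to descend to the larger bases is precisely the direction that is unavailable.

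Your fallback remark --- build $b$ directly, mirroring Kim's proof of \cite[Theorem 2.5]{kim2021weak} so that the independences over $ea$ and over $ec$ hold by construction --- is indeed where the real argument lives, and it is what the paper does: the proof is Kim's, carried over verbatim using the machinery set up just before the theorem (sequences that are Kim-Morley over two bases, \cref{2.2wcb} and \cref{2.4wcb}, together with the chain condition and type amalgamation), now legitimate for hyperimaginaries thanks to Section 2. But in your proposal this alternative is only gestured at, with no construction given, so it cannot carry the proof; the only route you actually execute is the one that breaks at the base-monotonicity step.
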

    
\begin{theorem}\label{amalg1h} \cite[Theorem 2.7]{kim2021weak} : Let $a_{0}\forkindep^{K}_{b}a_{1}$, $c_{0}\equiv^{L}_{b} c_{1}$ and $c_{i}\forkindep^{K}_{b}a_{i} $ for $i=0,1$. Then there is $c\equiv^{L}_{ea_{i}}c_{i}$ for $i=0,1$ such that : \begin{center}
$c\forkindep^{K}_{b}a_{0}a_{1}$, $c\forkindep^{K}_{ba_{0}}a_{1}$, $c_{i}\forkindep^{K}_{ba_{1}}a_{0}$.
 \end{center}\end{theorem}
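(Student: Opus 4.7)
\emph{Proof proposal.} The plan is to combine \cref{hypamalg2} (the basic $3$-amalgamation of Lascar strong types) with \cref{2a2indeph} (which upgrades pairwise Kim-independence to full amalgamated independence) in two successive steps; a single application of either is not enough, because \cref{hypamalg2} only produces base-level independence while \cref{2a2indeph} requires the Lascar strong types on both sides to already be arranged correctly.

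First I would apply \cref{hypamalg2} with the hyperimaginary base being $b$, with the pair having matching Lascar strong type being $(c_{0},c_{1})$, and with the independent pair being $(a_{0},a_{1})$: the hypotheses $a_{0}\forkindep^{K}_{b}a_{1}$, $c_{0}\equiv^{L}_{b}c_{1}$, $c_{0}\forkindep^{K}_{b}a_{0}$, $c_{1}\forkindep^{K}_{b}a_{1}$ are exactly what is given. This yields a preliminary amalgam $\tilde{c}$ with $\tilde{c}\equiv^{L}_{ba_{0}}c_{0}$, $\tilde{c}\equiv^{L}_{ba_{1}}c_{1}$, and $\tilde{c}\forkindep^{K}_{b}a_{0}a_{1}$. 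From the last statement and monotonicity one has $\tilde{c}\forkindep^{K}_{b}a_{i}$ for $i=0,1$, so together with the given $a_{0}\forkindep^{K}_{b}a_{1}$ the triple $\tilde{c},a_{0},a_{1}$ is pairwise Kim-independent over $b$. I would then apply \cref{2a2indeph} with $e=b$, $a=a_{0}$, $b_{0}=\tilde{c}$, $c=a_{1}$ to obtain $c$ satisfying $c\equiv^{L}_{ba_{0}}\tilde{c}$, $c\equiv^{L}_{ba_{1}}\tilde{c}$, $c\forkindep^{K}_{ba_{0}}a_{1}$ and $c\forkindep^{K}_{ba_{1}}a_{0}$; the ``so by transitivity'' clause of \cref{2a2indeph} additionally gives $c\forkindep^{K}_{b}a_{0}a_{1}$. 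Concatenating the Lascar strong type equalities then yields $c\equiv^{L}_{ba_{0}}c_{0}$ and $c\equiv^{L}_{ba_{1}}c_{1}$, which is the desired conclusion (reading the last assertion $c_{i}\forkindep^{K}_{ba_{1}}a_{0}$ as $c\forkindep^{K}_{ba_{1}}a_{0}$, the subscripted $c_{i}$ being a typo, and the Lascar type base $ea_{i}$ as $ba_{i}$).

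The main point to notice is \emph{why two applications are needed}: \cref{hypamalg2} alone gives the correct Lascar strong types on each side but only the ``uniform'' independence $\tilde{c}\forkindep^{K}_{b}a_{0}a_{1}$, and Kim-independence in NSOP$_{1}$ theories does not satisfy base monotonicity for free, so one cannot simply deduce $\tilde{c}\forkindep^{K}_{ba_{0}}a_{1}$ or $\tilde{c}\forkindep^{K}_{ba_{1}}a_{0}$. This is precisely what \cref{2a2indeph} provides for a pairwise-Kim-independent triple, and feeding it $(\tilde{c},a_{0},a_{1})$ in Step~2 rebuilds a new element $c$ with the same Lascar strong types over both $ba_{0}$ and $ba_{1}$ but with the side-base independences now built in. No part of the argument is computationally delicate; the work lies entirely in chaining the two independence theorems in the right order.
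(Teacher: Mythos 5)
Your proposal is correct and follows essentially the same route as the paper: the paper's proof also first applies type-amalgamation (\cref{amalghyp}/\cref{hypamalg2}) over $b$ to get $c'\equiv^{L}_{ba_{i}}c_{i}$ with $c'\forkindep^{K}_{b}a_{0}a_{1}$, and then applies \cref{2a2indeph} to the pairwise Kim-independent triple $a_{0},c',a_{1}$ over $b$. Your reading of the statement's typos ($ea_{i}$ for $ba_{i}$, $c_{i}$ for $c$) also matches the intended conclusion.
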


\begin{proof}
\justify
By type-amalgamation there is $c'\equiv^{L}_{ba_{i}}c_{i}$ for $i=0,1$ such that $c'\forkindep^{K}_{b}a_{0}a_{1}$. By \cref{2a2indeph} applied to $a_{0},c,a_{1}$ over $b$, we have $c\equiv^{L}_{ba_{i}}c_{i}$ for $i=0,1$ satisfying the conditions.\end{proof}

\justify
Two analogous results of \cref{2.2wcb} and \cref{2.4wcb} for total Kim-Morley sequences, \cref{2.7wcbnew} is new. The following is an adaptation of \cite[Theorem 2.14]{kim2021weak}, the proof carries over verbatim.

\begin{theorem}\label{2.7wcb}
Let $a_{0}\forkindep^{K}_{e}b$ with $e\in dcl^{heq}(b) $ and $I = ( a_{i}$ : $i<\omega)$ be total Kim-Morley over $e$. Then there is $I'\equiv_{ea_{0}}I$ such that $I'\forkindep^{K}_{e}b $ and $I'$ is total Kim-Morley over $b$ (and also over $e$).\end{theorem}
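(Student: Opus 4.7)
\justify
I would apply \cref{2.2wcb} to the hypothesis $a_{0}\forkindep^{K}_{e}b$ with $I$ Kim-Morley over $e$, obtaining a sequence $I_{1}=(a^{(1)}_{i}$ : $i<\omega)\equiv_{ea_{0}}I$ with $I_{1}\forkindep^{K}_{e}b$ and $I_{1}$ Kim-Morley over both $e$ and $b$. Because $I_{1}\equiv_{ea_{0}}I$ implies $I_{1}\equiv_{e}I$, and because ``total Kim-Morley over $e$'' is $e$-type-definable by applying \cref{typedef1} blockwise for each $n<\omega$, the sequence $I_{1}$ inherits total Kim-Morley-ness over $e$ from $I$; in particular $a^{(1)}_{[i,j)}\forkindep^{K}_{e}a^{(1)}_{<i}$ for all $i<j<\omega$. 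The $b$-indiscernibility of $I_{1}$ is already furnished by its Kim-Morley-ness over $b$, so the only remaining point is to check that $I_{1}$ is actually total Kim-Morley over $b$, i.e.\ $a^{(1)}_{[i,j)}\forkindep^{K}_{b}a^{(1)}_{<i}$ for all $i<j<\omega$.

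\justify
For the upgrade, I would argue as follows for a fixed pair $i<j$. Starting from $I_{1}\forkindep^{K}_{e}b$, left monotonicity gives $a^{(1)}_{[i,j)}a^{(1)}_{<i}\forkindep^{K}_{e}b$; applying base monotonicity for Kim-forking I would rewrite this as $a^{(1)}_{[i,j)}\forkindep^{K}_{ea^{(1)}_{<i}}b$, then combine with $a^{(1)}_{[i,j)}\forkindep^{K}_{e}a^{(1)}_{<i}$ via transitivity (\cref{transitkim}) to obtain $a^{(1)}_{[i,j)}\forkindep^{K}_{e}ba^{(1)}_{<i}$. A second application of base monotonicity gives $a^{(1)}_{[i,j)}\forkindep^{K}_{eb}a^{(1)}_{<i}$, which equals $a^{(1)}_{[i,j)}\forkindep^{K}_{b}a^{(1)}_{<i}$ since $e\in dcl^{heq}(b)$. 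Together with the already established $b$-indiscernibility, this shows $I_{1}$ is total Kim-Morley over $b$ and completes the proof, with $I':=I_{1}$.

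\justify
The hard part is the use of base monotonicity for Kim-forking over hyperimaginary bases, $a\forkindep^{K}_{e}bc\Rightarrow a\forkindep^{K}_{eb}c$ when $e\in dcl^{heq}(b)$, which is not explicitly recorded earlier in the excerpt. This is standard over models by \cite{kaplan2020kim} and should transfer to hyperimaginary bases via \cref{liftkimmorley} together with Kim's lemma (\cref{kimlemmahyp}). Should this direct route prove more delicate than expected, an alternative is to bypass the upgrade altogether by building a tree that is weakly Morley over both $e$ and $b$ using the inductive construction of \cref{morleytree1} (replacing ``$e$-Morley'' by ``$e$-Morley and $b$-Morley'' at each step, which is enabled by \cref{2.2wcb} and \cref{amalghyp}), extracting a path through \cref{morleytree0}, and invoking \cref{totaltreemorley} to identify total Kim-Morley sequences with weak tree Morley sequences.
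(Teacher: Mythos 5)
Your reduction step is fine: \cref{2.2wcb} gives $I_{1}\equiv_{ea_{0}}I$ with $I_{1}\forkindep^{K}_{e}b$, Kim-Morley over $e$ and over $b$, and total Kim-Morley-ness over $e$ passes to $I_{1}$ simply by invariance under $e$-automorphisms (the appeal to \cref{typedef1} is not needed). The gap is in the ``upgrade'': both of its decisive steps are instances of base monotonicity for Kim-independence, namely $xy\forkindep^{K}_{e}b\Rightarrow x\forkindep^{K}_{ey}b$ and $x\forkindep^{K}_{e}by\Rightarrow x\forkindep^{K}_{eb}y$. Base monotonicity is \emph{not} a property of $\forkindep^{K}$, and it is not proved in \cite{kaplan2020kim}, over models or otherwise: Kim-independence satisfies the remaining Kim--Pillay style axioms, so if it also satisfied base monotonicity the theory would be simple, and it visibly fails in standard non-simple $NSOP_{1}$ examples such as the model companion of parametrized equivalence relations. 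Note also that \cref{transitkim} only lets you enlarge the right-hand side when the second hypothesis is over a base containing the first; it never lets you move material from the right-hand side into the base, which is exactly what your two base-monotonicity steps attempt. So the main route collapses precisely where the real work of the theorem lies: $a^{(1)}_{[i,j)}\forkindep^{K}_{b}a^{(1)}_{<i}$ is not a formal consequence of $I_{1}\forkindep^{K}_{e}b$ together with total Kim-Morley-ness over $e$.

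Your fallback paragraph points in the right direction: the paper's proof is imported verbatim from Kim's Theorem 2.14, which is a genuine construction broadly in the spirit you gesture at (producing, by an inductive construction, a sequence simultaneously controlled over $e$ and over $b$, and identifying it as total Kim-Morley via the weak tree Morley / strong witness characterization, cf. \cref{morleytree0} and \cref{totaltreemorley}), not a short deduction from symmetry and transitivity. But in your write-up this is a two-sentence contingency plan: you do not say how ``$e$-Morley and $b$-Morley at each step'' would be maintained, how the resulting branch keeps the type of $I$ over $ea_{0}$, how $I'\forkindep^{K}_{e}b$ survives the extraction, or what exactly \cref{2.2wcb} and \cref{amalghyp} are applied to. Since the primary argument is invalid and the alternative is only sketched, the proposal does not yet prove the theorem.
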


\begin{cor}\label{2.7wcbnew}
Let $a_{0}\forkindep^{K}_{e}b$ with $e\in dcl^{heq}(b) $ and $I = ( a_{i}$ : $i<\omega)$ be total Kim-Morley over $e$. Then there is $I'\equiv^{L}_{ea_{0}}I$ such that $I'\forkindep^{K}_{e}b $ and $I'$ is total Kim-Morley over $b$ (and also over $e$).\end{cor}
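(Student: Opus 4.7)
The plan is to mimic exactly the strategy used in the proof of \cref{2.4wcb}, but invoking \cref{2.7wcb} instead of \cref{2.2wcb}. The key observation is that total Kim-Morleyness is preserved under replacing each term by its bounded closure (by \cref{bondedkimmorley} and the fact that a sequence is $e$-indiscernible iff it is $bdd(e)$-indiscernible), and that two tuples that agree over a boundedly closed hyperimaginary automatically have the same Lascar strong type over its representatives.

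More concretely, the first step is to pass to the sequence $\tilde{I} = (\alpha_{i}\; : \; i<\omega)$ with $\alpha_{i}\sim bdd(ea_{i})$. Using \cref{bondedkimmorley} together with the fact that every $ea_{i}$-automorphism fixes $bdd(ea_{i})$ setwise, one sees that $\tilde{I}$ is again total Kim-Morley over $e$ and that $\alpha_{0}\forkindep^{K}_{e}b$ still holds. The second step is to apply \cref{2.7wcb} to the pair $\tilde{I},b$: this provides $\tilde{I}' = (\alpha'_{i}\; : \; i<\omega)$ with $\tilde{I}'\equiv_{\alpha_{0}}\tilde{I}$, $\tilde{I}'\forkindep^{K}_{e}b$, and $\tilde{I}'$ total Kim-Morley over $b$ (and over $e$).

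The third step is the lift. Using an $\alpha_{0}$-automorphism sending $\tilde{I}$ to $\tilde{I}'$, pull back $I$ to obtain $I'$ with $I'\tilde{I}'\equiv_{\alpha_{0}}I\tilde{I}$. Since $\alpha_{0}\sim bdd(ea_{0})$ and two tuples that have the same type over $bdd(ea_{0})$ have the same Lascar strong type over $ea_{0}$, this gives $I'\equiv^{L}_{ea_{0}}I$. Moreover, $I'$ and $\tilde{I}'$ are interdefinable over $e$ in the same way $I$ and $\tilde{I}$ are, so the Kim-independence and total Kim-Morleyness properties transfer from $\tilde{I}'$ to $I'$.

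The only genuine subtlety — and the part that needs some care — is verifying that total Kim-Morleyness for the bounded-closure sequence implies total Kim-Morleyness for the original sequence and conversely, since total Kim-Morleyness is defined in terms of $\forkindep^{K}$ of finite blocks. This follows directly from \cref{bondedkimmorley} applied term-by-term and from the stability of indiscernibility under passing to bounded closure, so no new technical difficulty arises beyond what was already used in \cref{2.4wcb}.
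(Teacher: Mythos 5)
Your proposal is correct and follows essentially the same route as the paper: the paper's proof of this corollary simply says to repeat the argument of \cref{2.4wcb} with \cref{2.7wcb} in place of \cref{2.2wcb}, i.e.\ pass to the sequence of bounded closures $\tilde{I}$ with $\alpha_{i}\sim bdd(ea_{i})$, apply \cref{2.7wcb}, and pull back by an $\alpha_{0}$-automorphism, exactly as you do. The only cosmetic difference is that the paper justifies the replacement step via the construction of \cref{chaintotal1} (i.e.\ the \cref{reprkimmorley}/\cref{morleytree1}-style expansion, using \cref{totaltreemorley} in the total case) rather than a literal term-by-term appeal to \cref{bondedkimmorley}, but this is the same idea and your flagged subtlety is handled there.
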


\begin{proof}
By the exact same reasoning as in \cref{2.4wcb} : we can change the sequence $I$ and apply \cref{2.7wcb} over boudedly closed hyperimaginaries.
\end{proof}

\subsection{Colinearity}

\begin{definition} For $I$ and $J$ indiscernible sequences $l(I,J)$ means that $I\frown J$ is indiscernible. For two given order type $\overline{x},\overline{y}$, $l(\overline{x},\overline{y})$ is type-definable.\end{definition}

\begin{definition} : \begin{enumerate}
\item We define $Q(I,I',J,J',L):=l(I,I')\wedge l(J,J') \wedge l(I',L) \wedge l(J',L)$.
\item $I\approx J$ means $Q(I,I',J,J',L)$ for some indiscernible sequences $I',J',L$.
\item  $Q_{0}(I,I',J,J',L)$ means that $Q(I,I',J,J',L)$ is true and that for any $K\in A=\lbrace I',J,J',L \rbrace $, $K$ is $IA\setminus K$-indiscernible.
\item  In the same fashion, $Q_{1}(I,I',J,J',L)$ means that $Q(I,I',J,J',L)$ is true and that for any $K\in B=\lbrace I,I',J',L \rbrace $, $K$ is $JB\setminus K$-indiscernible.
\item $I$ and $J$ are called colinear if there is an indiscernible sequence containing both $I$ and $J$ as increasing subsequences.
\end{enumerate}\end{definition}

\justify
Colinearity and $\approx $ are reflexive, symmetric and type-definable for a given order type. 

\justify
If $I$ and $J$ are colinear then $I\approx J$. Conversely if $I\approx J$ then they are in the transitive closure of collinerarity (which is also the transitive and symmetric closure of $l$). We will show that $\approx$ is an equivalence relation, and so that it is the transitive closure of collinerarity, which is going to be an important result. Now let $q(x_{i}$ : $i<\omega )$ be the type of an indiscernible sequence indexed by $\omega$.

\begin{remark}\label{hypcol1}\begin{enumerate}
\item[(1)] Assume $I\approx J$ for $I,J \models q $, then by compactness there is $I',J',L$ such that $Q(I,I',J,J',L)$ and $I',J',L \models q$. Moreover for any linearly ordered set $\delta \supseteq \omega$ by compactness we can find indiscenibles sequences indexed by $\delta $, $I_{0}$ and $J_{0}$ such that $I\subseteq I_{0}, J\subseteq J_{0}$ and  $I'_{0},I'_{0},L_{0}\equiv J_{0}$ such that $Q(I_{0},I_{0}',J_{0},J_{0}',L_{0})$, so $I_{0}\approx J_{0}$.
\item[(2)] In the proof of the following theorem, we will use indiscenible sequences indexed by $\mathbb{Z}$, for the following reason : since the order of $\mathbb{Z}$ has no end points we can use finite satisfiability on "both sides". More concretely : 
Let $I^{0}, I^{1}$ be $\mathbb{Z}-$indexed indiscernibles sequences with $l(I^{0},I^{1})$, then $I^{i}$ is a total Kim-Morley sequence over $I_{j}$ for $i \neq j \in \lbrace 0,1 \rbrace $. If in addition $I^{i}$ is $I^{j}B$-indiscernible then $I^{i}\forkindep^{K}_{I^{j}}B$  follows.
\end{enumerate}\end{remark}
\justify
Proof of (2) : $I^{0}$ is total Kim-Morley over $I^{1}$ : For $k <\omega $ we have that $I^{0}_{\geq k}\forkindep^{\mathcal{U}}_{I^{1}}I^{0}_{< k}$, and so $I^{0}_{\geq k}\forkindep^{K}_{I^{1}}I^{0}_{< k}$. We can see that this only uses the fact that the order type of $I^{1} $ has no minimal element. $I^{1}$ is total Kim-Morley over $I^{0}$ : For $k <\omega $ we have that $I^{1}_{< k}\forkindep^{\mathcal{U}}_{I^{0}}I^{1}_{\geq k}$ so $I^{1}_{< k}\forkindep^{K}_{I^{0}}I^{1}_{\geq k}$ and by symmetry $I^{1}_{\geq k}\forkindep^{K}_{I^{0}}I^{1}_{< k}$ (this only uses the fact that the order type of $I^{1} $ has no greater element). The last point is then just \cref{chaintotal3}.
 
\justify
We will also make use of the following fact, which is true in any G-compact theory : 

\begin{fact}\label{hypcol3} Let $I$ be an indiscernible sequence with no maximal element. Let $J$ and $K$ be sequences of the same order type. If $l(I,J)$ and $l(I,K)$ we have that $J\equiv_{I}^{L}K$.\end{fact}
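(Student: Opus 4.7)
Plan.

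The proof splits into two stages. First I establish the weaker statement $J \equiv_I K$, using crucially that $I$ has no maximal element; then I lift this to $J \equiv^L_I K$ via G-compactness, by exhibiting an $I$-indiscernible sequence of generalized $|J|$-tuples containing both $J$ and $K$.

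For $J \equiv_I K$, fix finite subtuples $\bar i \subseteq I$ at order positions $Q$ and $\bar j \subseteq J$ at positions $P$. Indiscernibility of $I \frown J$ makes $\tp(\bar i \bar j)$ depend only on the order type of $Q$ followed by $P$. Since $I$ has no maximal element, there exists a finite tuple $\bar i' \subseteq I$ at positions $P'$ strictly above $Q$ with the same internal order type as $P$; by indiscernibility of $I \frown J$ this yields $\tp(\bar i \bar j) = \tp(\bar i \bar i')$, a type computed entirely inside $I$. Running the analogous recipe with $\bar k \subseteq K$ at the positions $P$, now using indiscernibility of $I \frown K$, gives $\tp(\bar i \bar k) = \tp(\bar i \bar i')$. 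Hence $\bar j \equiv_{\bar i} \bar k$; varying $\bar i$ gives $J \equiv_I K$, and in particular the EM-types of $I \frown J$ and $I \frown K$ coincide.

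For the upgrade to $\equiv^L_I$, the plan is to place $J$ and $K$ as the first two terms of a single $I$-indiscernible sequence of $|J|$-tuples, which by definition yields Lascar distance one. From $J \equiv_I K$ pick $\sigma \in \mathrm{Aut}(\mathbb{M}/I)$ with $\sigma(J) = K$ and consider the orbit sequence $(\sigma^n(J))_{n<\omega}$; by the standard Ramsey-type modeling property extract from it an $I$-indiscernible sequence $(\bar L_n)_{n<\omega}$ of $|J|$-tuples locally based on $(\sigma^n(J))$ over $I$. Then all $\bar L_n$ lie in a single $\equiv^L_I$-class. Using the common EM-type of $I \frown J$ and $I \frown K$ established in Stage 1, together with the indiscernibility of $(\bar L_n)$ and an $I$-homogeneity transport, one arranges the first two terms to be precisely $J$ and $K$; G-compactness bounds the length of the resulting chain of $I$-indiscernible pairs, giving $J \equiv^L_I K$.

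The main obstacle is this final transport: producing an $I$-indiscernible sequence of $|J|$-tuples whose initial pair is exactly $(J,K)$, rather than merely a pair $\equiv_I$-conjugate to it. This is where the hypothesis ``$I$ has no maximal element'' is used most sharply, because it is what allows the EM-types of $I \frown J$ and $I \frown K$ to coincide in the first place and thus to dictate compatible extensions on the right, and it is where G-compactness is essential, since it converts the local-basing/finite-chain data into actual Lascar strong type equality of the prescribed tuples.
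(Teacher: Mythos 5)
Your Stage 1 is fine: using that $I$ has no maximal element, every finite configuration $\bar i\bar j$ with $\bar i\subseteq I$, $\bar j\subseteq J$ can be matched by a configuration $\bar i\bar i'$ lying entirely inside $I$, and the same for $K$, so $J\equiv_I K$ and the EM-types of $I\frown J$ and $I\frown K$ agree. The gap is Stage 2, and it is exactly the step you flag as ``the final transport''. Taking $\sigma\in \mathrm{Aut}(\mathbb{M}/I)$ with $\sigma(J)=K$ and extracting an $I$-indiscernible sequence $(\bar L_n)$ locally based on the orbit $(\sigma^n(J))_n$ only produces a sequence whose finite types are consistent with those of the orbit; there is no ``$I$-homogeneity transport'' that lets you prescribe its first two terms to be the actual tuples $J$ and $K$. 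If such a transport existed from the mere hypothesis $J\equiv_I K$, it would show that equality of types over $I$ implies equality of Lascar strong types over $I$ in every theory, which is false; and G-compactness does not repair this, since it bounds the Lascar distance of tuples already known to be Lascar equivalent (or identifies $\equiv^L$ with the KP-type) but never upgrades $\equiv_I$ to $\equiv^L_I$. So as written the proof does not go through.

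What the hypothesis on $I$ actually buys, beyond the common EM-type, is an interpolant: since $I$ has no maximal element, any finite fragment of the condition ``$I\frown I'\frown J$ is indiscernible'' (and simultaneously ``$I\frown I'\frown K$ is indiscernible'') is realized by taking the finite part of $I'$ to be a tuple of $I$ sitting above the relevant finite part of $I$, using only $l(I,J)$, respectively $l(I,K)$; by compactness one gets a single $I'$ of the same order type as $J$ and $K$ with both $I\frown I'\frown J$ and $I\frown I'\frown K$ indiscernible. Extending each of these on the right by $\omega$ further blocks of the order type of $J$ yields $I$-indiscernible sequences of $|J|$-tuples beginning with $(I',J)$ and with $(I',K)$, so $J\equiv^L_I I'\equiv^L_I K$ directly, with Lascar distance at most $2$ over $I$ and no need for the orbit construction. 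This interpolating copy $I'$ is the missing idea in your argument.
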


\justify
\begin{proof}
Since $I$ has no maximal element by compactness we can find $I'$ of the same order type as $J$ and $K$ such that $I \frown I' \frown J$ and $I \frown I' \frown K$ are indiscernible. By extending these two sequences by omega times the order type of $J$ and $K$ we get two indiscernible sequences of sequences that begins with $(I',J)$ and $(I',k)$, so $J \equiv^{L}_{I} I'\equiv^{L}_{I} K$.\end{proof}

\justify
This result is \cite[Theorem 3.3]{kim2021weak}, the proof works similarly as the original using the previous results. It is the key result of Kim's article and we will apply it in the next subsection.

\begin{theorem}\label{hypcolth}$\approx$ is transitive in $q$, so in an $NSOP_{1}$ theory with existence for hyperimaginaries $\approx$ is the transitive closure of colinearity for indiscernible sequences of hyperimaginaries of the same order type.
\end{theorem}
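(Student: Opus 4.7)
The plan is to mimic the strategy of \cite[Theorem 3.3]{kim2021weak} using the hyperimaginary tools now available: the 3-amalgamation of Lstp (\cref{amalghyp,hypamalg2}), transitivity of Kim-independence (\cref{transitkim}), the chain condition (\cref{chaincondhyp}), and the $\mathbb{Z}$-indexed Kim-Morley observation (\cref{hypcol1}(2)) together with Lascar equality of continuations (\cref{hypcol3}). Suppose $I\approx J$ and $J\approx K$ in $q$; I want to produce witnesses for $I\approx K$. First, using \cref{hypcol1}(1), lift everything to $\mathbb{Z}$-indexed extensions $I_0\supseteq I$, $J_0\supseteq J$, $K_0\supseteq K$ together with $\mathbb{Z}$-indexed witnesses: $Q(I_0,I_0',J_0,J_1',L_1)$ for the first relation and $Q(J_0,J_2',K_0,K_0',L_2)$ for the second. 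The point of working over $\mathbb{Z}$ is that \cref{hypcol1}(2) converts every $l$-concatenation into a total Kim-Morley pair, while \cref{hypcol3} forces any two indiscernible continuations of a common $\mathbb{Z}$-indexed head to be Lascar-equal over it.

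Next, strengthen the two configurations. Applying the modeling property (as in the construction of $Q_0,Q_1$ refinements used throughout Section 3) extract from the given data new witnesses satisfying the $Q_1$-condition relative to $J_0$: all remaining sequences in each configuration are indiscernible over $J_0$ together with the others. By \cref{hypcol1}(2) this produces the Kim-independences $L_1\forkindep^K_{J_1'}I_0'$ and $L_2\forkindep^K_{J_2'}K_0'$, and also (over the base $J_0$) the independences of $I_0'$, $K_0'$, $L_1$, $L_2$ from $J_1'$, $J_2'$ that I will need to move them around freely.

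Now align the two configurations. Since $l(J_0,J_1')$ and $l(J_0,J_2')$ both hold and $J_0$, being $\mathbb{Z}$-indexed, has no last element, \cref{hypcol3} gives $J_1'\equiv^L_{J_0}J_2'$; an $\operatorname{Autf}(\mathbb M/J_0)$-automorphism lets me assume $J_1'=J_2'=:J'$. The same fact applied to $l(J',L_1)$ and $l(J',L_2)$ yields $L_1\equiv^L_{J'}L_2$. With the Kim-independences produced in the previous paragraph in hand, I can then feed the data $\bigl(L_1,L_2;\,I_0',K_0';\,J'\bigr)$ into the 3-amalgamation theorem (\cref{amalghyp}, or its strengthening \cref{hypamalg2}) to obtain an $L$ with $L\equiv^L_{I_0'J'}L_1$ and $L\equiv^L_{K_0'J'}L_2$. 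In particular $I_0'\frown L$ has the EM-type of $I_0'\frown L_1$ and $K_0'\frown L$ has the EM-type of $K_0'\frown L_2$, so $l(I_0',L)$ and $l(K_0',L)$. Combining with $l(I_0,I_0')$ and $l(K_0,K_0')$ gives $Q(I_0,I_0',K_0,K_0',L)$, hence $I_0\approx K_0$ and therefore $I\approx K$, as required.

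The main obstacle is ensuring that all the Kim-independence hypotheses demanded by \cref{amalghyp}/\cref{hypamalg2} genuinely hold after the modeling-property strengthening; in particular one must verify $I_0'\forkindep^K_{J'}K_0'$, which is not immediate from the separate configurations. This will be obtained by first arranging $I_0'\forkindep^K_{J_0}J'$ and $K_0'\forkindep^K_{J_0}J'$ from the $Q_1$-refinement, then using transitivity (\cref{transitkim}) to push the independences over the enlarged base $J'$, followed by a further application of the chain condition (\cref{chaincondhyp}) to replace $K_0'$ by a $J'$-conjugate independent from $I_0'$ over $J'$ while preserving $l(K_0,K_0')$ and $l(K_0',L_2)$. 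Once this careful bookkeeping is done, the amalgamation step goes through.
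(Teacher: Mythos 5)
Your toolkit is the right one (the paper's own proof simply defers to Kim's argument for \cite[Theorem 3.3]{kim2021weak}, using exactly the $\mathbb{Z}$-indexed finite-satisfiability observation \cref{hypcol1}, \cref{hypcol3}, and the amalgamation machinery), but two steps of your execution fail as written. First, the reduction ``an $Autf(\mathbb{M}/J_{0})$-automorphism lets me assume $J_{1}'=J_{2}'$'' is not a legitimate WLOG: the automorphism carrying $J_{2}'$ to $J_{1}'$ carries the whole second configuration with it, in particular $K_{0}$ and hence $K$, which is a fixed parameter of the statement. What your argument then produces is $I\approx f(K)$ for some $f\in Autf(\mathbb{M}/J_{0})$, and nothing available converts $K\equiv^{L}_{J_{0}}f(K)$ into $K\approx f(K)$ (the statement that conjugate sequences are $\approx$-equivalent, \cref{wcb2}(1), is proved later, under independence hypotheses, and uses the present theorem). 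The Lascar equality $J_{1}'\equiv^{L}_{J_{0}}J_{2}'$ has to be consumed by an amalgamation over $J_{0}$ producing a \emph{common} realization $J'$ of $\tp(J_{1}'/J_{0}L_{1}\ldots)\cup\tp(J_{2}'/J_{0}L_{2}\ldots)$, and that already requires a cross-independence over $J_{0}$ between the two groups of (re-chosen) witnesses, which your sketch never arranges.

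Second, your plan for the independence $I_{0}'\forkindep^{K}_{J'}K_{0}'$ --- which you correctly identify as the crux --- does not work. From $I_{0}'\forkindep^{K}_{J_{0}}J'$ you cannot ``push the independence over the enlarged base $J'$'': that is base monotonicity, which $\forkindep^{K}$ does not satisfy; transitivity (\cref{transitkim}) only composes independences over the \emph{smaller} base ($a\forkindep^{K}_{e}b$ and $a\forkindep^{K}_{b}c$ give $a\forkindep^{K}_{e}bc$), it never relocates the base from $J_{0}$ to $J'$. Likewise, replacing $K_{0}'$ by a $J'$-conjugate via \cref{chaincondhyp} destroys exactly what you must keep: a conjugate over $J'$ (or $J'L_{2}$) need not satisfy $l(K_{0},\cdot)$, since the fixed end $K_{0}$ is not in the base of the conjugation; conjugating instead over $J'K_{0}L_{2}$ requires a starting independence over that base plus extension (\cref{exthyp}), and nothing you have established supplies it. The test case $K=I$ shows why this cannot be glossed over: the final amalgamation is only viable because $I_{0}',K_{0}',L_{1},L_{2},J_{1}',J_{2}'$ are re-choosable, so the entire weight of the proof lies in re-choosing them with the required mutual indiscernibility and Kim-independence over $J_{0}$ while preserving the ties $l(I_{0},I_{0}')$ and $l(K_{0},K_{0}')$ to the fixed ends (this is what the ``Claim 1''-type constructions in Kim's proof do, and note that your $Q_{1}$-refinement even imposes indiscernibility conditions on the fixed sequences $I_{0},K_{0}$ themselves, which needs its own justification when $I_{0},J_{0},K_{0}$ may not be moved). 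Until that bookkeeping is actually carried out, the amalgamation step, and with it the proposal, does not go through.
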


\subsection{Weak Canonical Bases in NSOP1 Theories}

\begin{definition} Assume that $p(x)= \tp(c/b)$ is a strong Lascar type, meaning that $c'\equiv_{b}^{L} c$ if $c' \models p$.
\begin{enumerate}
\item[(1)] Let $\mathcal{B}= \lbrace a \in dcl^{heq}(b)$ : $ c \forkindep^{K}_{a}b$ and $\tp(c/a)$ is a Lascar strong type$ \rbrace $

A canonical base of $p$, written $\cb(p)$ or $\cb(c/b)$, is the smallest element (up to definable closures) in $\mathcal{B}$, that is : 
$\cb(p)\in \mathcal{B}$ and $\cb(p)\in dcl^{heq}(a)$ for any $a \in \mathcal{B}$. We say that $T$ has
canonical bases if so have all Lascar strong types in $T$. $\cb(p)$ is unique (if it exists) up to interdefinability.

\item[(2)] We additionally assume that $b$ is boundedly closed. Let $\mathcal{WB}= \lbrace a \in dcl^{heq}(b)$ : $ c \forkindep^{K}_{a}b$ and $a$ is boundedly closed$ \rbrace $. By the weak canonical base of $p$, written $\wcb(p)$ or $\wcb(c/b)$, we mean the smallest element in $\mathcal{WB}$ (if it exists), that is : $\wcb(p) \in \mathcal{WB}$ and $\wcb(p) \in dcl^{heq}(a)$ for any $a \in \mathcal{WB} $. When $b$ is not boundedly closed then $\wcb(c/b)$ means $\wcb(c/bdd(b))$. We say that $T$ has weak canonical bases if $\wcb(c/b)$ exist for all $c,b \in \mathbb{M}^{heq}$.
\end{enumerate}\end{definition}

\justify
If $\cb(p)$ exists, then so does $\wcb(p)$ and we have that $\wcb(p) =bdd(\cb(p))$.

\justify
Now let $I = ( c_{i}$ : $i<\omega )$ be a total Kim-Morley sequence over $b$ with $c=c_{0}$. By compactness there is $J = ( c_{\omega + i}$ : $i<\omega ) $ such that $I \frown J$ is a total Kim-Morley sequence over $b$. We also fix $e \in \mathbb{M}^{heq}$ which is $I/ \approx$ $ = J / \approx $, which makes sense by \cref{hypcolth}. Then $ e \in dcl^{heq}(I) \cap dcl^{heq}(J)$. We show that $bdd(e)= \wcb(I/b)$.

\begin{prop}\label{wcb2} Suppose that $I \forkindep^{K}_{a}b$ for some boundedly closed $a$ such that $a \in dcl^{heq}(b)$.
\begin{enumerate}
\item[(1)] If $I_{0}\equiv_{a}I$ then $I_{0}\approx I$.
\item[(2)] $e\in dcl^{heq}(a)$ (so $bdd(e)\in dcl^{heq}(a)$ since $a$ is boundedly closed).
\end{enumerate}\end{prop}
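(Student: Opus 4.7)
\justify
The plan is to prove (1) first and obtain (2) as a Galois-theoretic consequence. For (1), the first observation is that since $a$ is boundedly closed, $\equiv_a$ coincides with $\equiv^L_a$, so the hypothesis $I_0 \equiv_a I$ upgrades to $I_0 \equiv^L_a I$. By the definition of Lascar strong type we get a finite chain $I_0 = H^0, H^1, \ldots, H^n = I$ of $\omega$-indexed sequences such that each consecutive pair $(H^i, H^{i+1})$ lies inside a common $a$-indiscernible sequence of sequences. Combined with the transitivity of $\approx$ established in \cref{hypcolth}, the problem reduces to the following claim: whenever two $\omega$-length sequences $L$ and $L'$ both appear as entries of a single $a$-indiscernible sequence of sequences, then $L \approx L'$. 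Each such entry inherits the property of being total Kim-Morley over $a$, since this is $a$-invariant and type-definable by \cref{typedef1}.

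\justify
To handle the reduced claim I would stretch the $a$-indiscernible sequence of sequences to arbitrary length by compactness, then use Erd\H os--Rado together with extension of Kim-independence to build a long $a$-indiscernible family in which $L$ and $L'$ are the first two entries and whose tail is sufficiently generic. Concretely, I would extract a deep tail segment $L_{*}$ that plays the role of the common bridge in $Q(L, I', L', J', L_{*})$, and define $I'$ and $J'$ as intermediate continuations that glue $L$, respectively $L'$, to $L_{*}$. The existence of these compatible continuations should follow from the chain condition \cref{chaincondhyp} applied row by row and the three-amalgamation theorem \cref{2a2indeph}, exploiting that all entries of the $a$-indiscernible sequence of sequences share the same Lascar strong type over $a$ and are Kim-independent from one another over $a$ once one passes to a Morley-extracted subarray.

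\justify
For (2), the deduction from (1) is direct. Given any $a$-automorphism $\sigma$ of $\mathbb{M}$, we have $\sigma(I) \equiv_{a} I$, so by (1), $\sigma(I) \approx I$. Therefore $\sigma(e) = \sigma(I)/{\approx} = I/{\approx} = e$, showing that $e$ is fixed by every $a$-automorphism. This gives $e \in dcl^{heq}(a)$, and since $a$ is boundedly closed the parenthetical conclusion $bdd(e) \in dcl^{heq}(a)$ follows immediately.

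\justify
The main obstacle is the construction of the $Q$-diagram witnesses in the reduced case of (1). The definition of $\approx$ requires a \emph{single} common bridge $L_{*}$ gluing simultaneously to both $L$ and $L'$ through intermediate sequences $I'$ and $J'$; independently producing a bridge for $L$ and one for $L'$ would not suffice. The delicate point is therefore arranging both gluings inside one coherent $a$-indiscernible structure, which is precisely why the reduction to ``two entries of a common $a$-indiscernible sequence'' is essential, and why the construction must proceed inside an enlarged Erd\H os--Rado subarray rather than by two separate extension steps.
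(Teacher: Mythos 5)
Your part (2) is exactly the paper's argument and is fine. The problem is part (1): you reduce it, via G-compactness over the boundedly closed $a$ and a Lascar chain, to the claim that any two entries of a single $a$-indiscernible sequence of sequences are $\approx$-related, and then you only sketch that claim ("should follow from the chain condition and three-amalgamation"). This is where the actual mathematical content lies, and it is not supplied. The inputs needed to run \cref{2a2indeph} or \cref{amalg1h} are pairwise Kim-independence over $a$ of the sequences being glued and a common Lascar strong type of the continuations; but two entries of an arbitrary $a$-indiscernible sequence of sequences need not be Kim-independent over $a$, and your fix ("once one passes to a Morley-extracted subarray") replaces $L$ and $L'$ by new sequences without explaining how the $\approx$-relation transfers back to the originals. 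Moreover, your proposal never makes visible use of the hypothesis $I\forkindep^{K}_{a}b$, which is precisely what makes the amalgamation possible in the paper: by \cref{hypcol1}(2) one has $I\forkindep^{K}_{b}J$ for the given continuation $J$, and transitivity with $I\forkindep^{K}_{a}b$ makes $I\frown J$ total Kim-Morley over $a$, giving $J\forkindep^{K}_{a}I$ -- the independence that feeds the amalgamation. Without locating where this hypothesis enters, the reduced claim is not going to close.

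A second, structural point: your final paragraph treats the construction of a single common bridge $L_{*}$ realizing the full $Q$-diagram as unavoidable, but it is not, and insisting on it makes the task harder than necessary. By \cref{hypcolth}, $\approx$ \emph{is} the transitive closure of colinearity, so it suffices to connect $I$ and $I_{0}$ by a finite chain of colinear sequences. That is exactly what the paper does, with no Lascar-chain decomposition at all: choose $J_{0}$ with $I_{0}J_{0}\equiv_{a}IJ$, use extension to get $I'J'\equiv_{a}IJ$ with $I'J'\forkindep^{K}_{a}IJI_{0}J_{0}$, note that boundedly closed $a$ turns all the relevant $\equiv_{a}$'s into $\equiv^{L}_{a}$'s, and apply amalgamation twice to produce $J_{1}\models\tp(J'/aI')\cup\tp(J/aI)$ and $J_{2}\models\tp(J'/aI')\cup\tp(J_{0}/aI_{0})$; then $l(I,J_{1}),l(I',J_{1}),l(I',J_{2}),l(I_{0},J_{2})$ put $I$ and $I_{0}$ in the transitive closure of colinearity, and \cref{hypcolth} gives $I_{0}\approx I$. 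So the gap is twofold: the key claim in your reduction is unproven (and its amalgamation hypotheses fail as stated for arbitrary entries of an indiscernible sequence), and the single-bridge requirement you impose on yourself is a detour that \cref{hypcolth} lets you avoid entirely.
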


\justify
\begin{proof} (1) By \cref{hypcol1} (2) we have that $I \forkindep^{K}_{b}J$, so by transitivity $I \forkindep^{K}_{a}J$. From this, $a$-indiscernibility and finite character we deduce that $I\frown J$ is total Kim-Morley over $a$. $I_{0}\equiv_{a}I$, so there is $J_{0}$ such that  $I_{0}J_{0}\equiv_{a}IJ$. By extension there is $I'J'\equiv_{a}IJ$ such that $I'J' \forkindep^{K}_{a}IJI_{0}J_{0}$.
\justify
$a$ is boundedly closed so $J_{0}\equiv_{a}^{L}J'\equiv_{a}^{L}J$, and we have the following :\begin{center}
 $\left\{ \begin{array}{l}
        \ J'\equiv_{a}^{L}J$, $J' \forkindep^{K}_{a}I'$, $J \forkindep^{K}_{a}I$, $I' \forkindep^{K}_{a}I   \\
        \ J'\equiv_{a}^{L}J_{0}$, $J' \forkindep^{K}_{a}I'$, $J_{0} \forkindep^{K}_{a}I_{0}$, $I' \forkindep^{K}_{a}I_{0} 
    \end{array}
    \right\} $ 
\end{center}
    
\justify By amalgamation there is $J_{1},J_{2}$ such that $\left\{ \begin{array}{l}
        \ J_{1}\models \tp(J'/aI')\cup \tp(J/aI) \\
        \ J_{2}\models \tp(J'/aI')\cup \tp(J_{0}/aI_{0})
    \end{array}
    \right. $, so $\left\{ \begin{array}{l}
        \ l(I,J_{1}),l(I',J_{1}) \\
        \ l(I',J_{2}),l(I_{0},J_{2})
    \end{array}
    \right. $, and $I,I_{0}$ are in the transitive closure of colinearity. By \cref{hypcolth} $I_{0}\approx I$.
    
    \justify
(2) Let $f$ be an $a$-automorphism and $I_{0}:=f(I)$. Since $I_{0}\equiv_{a}I$ by (1) $I_{0}\approx I$, and $e$ is fixed by $f$, so $e \in dcl^{heq}(a)$.
\end{proof}
 
\begin{prop}\label{wcb3} $J \forkindep^{K}_{I}b$,  $J \forkindep^{K}_{e}I$,  $I \forkindep^{K}_{e}b$ and  $c \forkindep^{K}_{e}b$, so $I$ is total Kim-Morley over $e$. \end{prop}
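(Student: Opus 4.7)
The plan is to establish the four independence claims and the total Kim-Morley conclusion using two structural observations. First, since $I\frown J$ is total Kim-Morley (hence $b$-indiscernible) over $b$, we have $J\forkindep^{K}_{b}I$ and $J$ is $bI$-indiscernible (symmetrically $I$ is $bJ$-indiscernible). Second, since $e = I/\approx = J/\approx$, any $b$-automorphism sending $I$ to an $\approx$-equivalent sequence fixes $e$; in particular the $b$-automorphism sending $J$ to $I$ fixes $e$, and the same holds for any $b$-automorphism between two $\omega$-subsequences of $I$, since such subsequences are colinear with $I$ and hence $\approx$-equivalent to it.

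For (1), $J\forkindep^{K}_{I}b$, I would use the characterisation of Kim-dividing (\cref{kimdivh}) together with witnessing (\cref{witnessinghyp2}). The idea is to stretch $I\frown J$ to an arbitrarily long total Kim-Morley sequence over $b$ and extract from its tail a $bI$-indiscernible $I$-Kim-Morley sequence starting with $c_{\omega}$; then any Kim-fork of $b$ with $J$ over $I$ would propagate along this sequence, contradicting the consistency of $\tp(b/IJ)$. For (2), $J\forkindep^{K}_{e}I$, I would use that $J$ is $eI$-indiscernible (order-preserving extractions of $J$ are colinear with $J$, hence $\approx$-equivalent, hence fix $e$), and combine this with a Kim-Morley witnessing argument analogous to \cref{hypwitn+}, applied to block extractions from $J$.

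Steps (3) and (4) are then routine: since $e\in dcl^{heq}(I)$, transitivity (\cref{transitkim}) applied to (1) and (2) yields $J\forkindep^{K}_{e}Ib$, hence $J\forkindep^{K}_{e}b$; the $b$-automorphism sending $J$ to $I$ (which fixes $e$) transports this to $I\forkindep^{K}_{e}b$, giving (3), and monotonicity gives (4) since $c=c_{0}$ is a subtuple of $I$. For (5), the $e$-indiscernibility of $I$ follows from the colinearity trick: given two increasing index tuples $\bar i,\bar j$, extend each to an $\omega$-subsequence of $I$ starting with $c_{\bar i}$ respectively $c_{\bar j}$; both are colinear with $I$, hence share the $\approx$-class $e$, so the $b$-automorphism between them fixes $e$ and witnesses $c_{\bar i}\equiv_{e}c_{\bar j}$. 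The Kim-independence $c_{>n}\forkindep^{K}_{e}c_{\leq n}$ then follows from (3) (which gives $c_{>n}\forkindep^{K}_{e}b$) combined with the total Kim-Morley-ness $c_{>n}\forkindep^{K}_{b}c_{\leq n}$ over $b$, via a witnessing / base-change argument exploiting $e\in dcl^{heq}(I)$.

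The main obstacle is step (1): upgrading $J\forkindep^{K}_{b}I$ to $J\forkindep^{K}_{I}b$ is the NSOP1 analog of the classical stationarity of Morley sequences in simple theories and requires a careful combination of the chain condition, witnessing, and amalgamation, rather than a routine manipulation.
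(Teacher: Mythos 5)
Your steps (3)--(5) and the automorphism trick (a $b$-automorphism sending $J$ to $I$ fixes $e=J/\!\approx\,=I/\!\approx$) are fine, modulo one slip: the final transitivity step needs $e\in dcl^{heq}(b)$ (which the paper gets from \cref{wcb2}, since $I\forkindep^{K}_{b}b$ by existence), not $e\in dcl^{heq}(I)$. But you have inverted where the work lies. Step (1) is not the ``NSOP1 analog of stationarity'' and needs no amalgamation: since $I\frown J$ is indiscernible and $I$ is an infinite initial segment, $J_{<i}\forkindep^{\mathcal{U}}_{I}J_{\geq i}$ for all $i$, so $J$ is total Kim-Morley over $I$; as $J$ is $Ib$-indiscernible, \cref{chaintotal3} gives $J\forkindep^{K}_{I}b$ immediately.

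The genuine gap is your step (2). Observing that $J$ is $eI$-indiscernible is vacuous ($e\in dcl^{heq}(I)$), and a witnessing argument in the style of \cref{hypwitn+} or \cref{chaintotal3} with base $e$ requires the blocks of $J$ to form a Kim-Morley sequence \emph{over $e$} --- but Kim-independence over the quotient $e$ is exactly what is being proved; finite satisfiability only yields independence over the concrete sequences $I$ or $J$, never over $e$. You give no mechanism for producing any Kim-independence over $e$, and this is where the paper's proof spends almost all of its effort: starting from extension over $e$ (which supplies $U_{0}\equiv_{e}I_{0}$ with $U_{0}\forkindep^{K}_{e}I_{0}$), it runs the $Q_{0}$-configuration argument behind \cref{hypcolth} together with \cref{2a2indeph} and type amalgamation to produce $U\equiv_{e}I$ with $U\forkindep^{K}_{e}I$ and a common continuation $L$ with $l(I,L)$ and $l(U,L)$; then $L\forkindep^{K}_{U}I$ by finite satisfiability plus \cref{chaintotal3}, transitivity over $Ue$ (using $e\in dcl^{heq}(U)$) yields $L\forkindep^{K}_{e}I$, and $L\equiv_{Ie}J$ (via \cref{hypcol3}) transfers this to $J\forkindep^{K}_{e}I$. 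Without something of this kind your outline does not reach $J\forkindep^{K}_{e}I$, and everything downstream of it collapses.
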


\justify
\begin{proof} $J$ is total Kim-Morley over $I$ : $J_{<i} \forkindep^{\mathcal{U}}_{I}J_{\geq i}$, so $J_{\geq i} \forkindep^{K}_{I}J_{<i}$. By \cref{chaintotal3}, because $J$ is $Ib$-indiscernible, we have that $J \forkindep^{K}_{I}b$. We have $e\in dcl^{heq}(b)$ by \cref{wcb2} and $c\equiv_{b}c'$ for every $c'$ element of $J$.

\justify
\textbf{Claim :} There are $U,L$ such that $U\equiv_{e}I$, $U \forkindep^{K}_{e}I$ and $l(I,L),l(U,L)$. 

\justify
Proof of the Claim : By compactness we find a negative part for $I$ : there is an $e$-indiscernible sequence $I_{0}(\supseteq I)$ indexed by $\mathbb{Z}$. By extension there is $U_{0}\equiv_{e}I_{0}$ such that $U_{0} \forkindep^{K}_{e}I_{0}$. Then $U^{\geq 0}_{0}\equiv_{e}I$ and $e = I/\approx $, so $U^{\geq 0}_{0}\approx I$. Now $I_{0}$ and $I$ are colinear, and also $U_{0}$ and $U^{\geq 0}_{0}$, so $I_{0} \approx U_{0}$.

\justify
We can find a sufficiently large $e$-indiscernible sequence $\textbf{U}_{0}$ containing $U_{0}$ such that $\textbf{U}_{0} \forkindep^{K}_{e}I_{0}$. By applying the same method as in Claim 1 of \cite[Theorem 3.3]{kim2021weak} (and reducing $\textbf{U}_{0}$) we can assume that we have $I',J',K\equiv U_{0}$ such that $Q_{0}(I_{0},I',U_{0},J',K)$ and  $I_{0} \forkindep^{K}_{e}U_{0}$, $U_{0}\equiv_{e}I_{0}$.

\justify
Then $I_{0},I',U_{0},J',K$ are $\mathbb{Z}$-indexed sequences of indiscernibles, so we can apply \cref{hypcol1} to them, giving us $U_{0}\forkindep^{K}_{J'}I_{0}I$ and $K\forkindep^{K}_{J'}U_{0}I_{0}I'$, so $K$, $U_{0}$ and $I_{0}I'$ are pairwise Kim-independent over $J'$.
    
\justify
By \cref{2a2indeph} there is $U'_{0}$ such that $U'_{0}\equiv^{L}_{J'I_{0}I'} U_{0}$, $U'_{0}\forkindep^{K}_{J'I_{0}I'}K$, $U'_{0}\equiv^{L}_{J'K} U_{0}$, $U'_{0}\forkindep^{K}_{J'K}I_{0}I'$. $e\in dcl^{heq}(I)$ so  $I_{0} \forkindep^{K}_{e}U'_{0}$ and $U'_{0}\equiv_{e}I_{0}$. Also $U'_{0}\forkindep^{K}_{K}J' $ so by transitivity $U'_{0}\forkindep^{K}_{K}I_{0} $. Since $l(J',K),l(I',K)$ by \cref{hypcol1} (and since these are $\mathbb{Z}$-indexed indiscernible) $J'\equiv^{L}_{K}I'$.
    
 \justify
We have $J'\equiv^{L}_{K} I'$, $U'_{0}\forkindep^{K}_{K}I_{0}$, $U'_{0}\forkindep^{K}_{K}J'$, $I'\forkindep^{K}_{K}I_{0}$. By type amalgamation there is $L \models \tp(J'/U'_{0}K)\cup \tp(I'/I_{0}K)$ such that $L\forkindep^{K}_{K}U'_{0}I_{0}$. Now $l(U'_{0},L),l(I_{0},L)$ and by taking $U=U'^{\geq 0}_{0}$ $U\equiv_{e}I$, $U \forkindep^{K}_{e}I$ and $l(I,L),l(U,L)$, which proves the Claim.

\justify
By compactness we can lengthen $L$ (and keep  $l(I,L),l(U,L)$) and apply Erdös-Rado, so we can assume that $L$ is $IU$-indiscernible. $L$ is total Kim-Morley over $U$ by finite satisfiability, so by \cref{hypcol1} $L \forkindep^{K}_{U}I$, and since $U \equiv_{e} I$ and $e \in dcl^{heq}(I)$, we have that $e \in dcl^{heq}(U) $ and $L \forkindep^{K}_{Ue}I$ so by transitivity, since $U \forkindep^{K}_{e}I$, $L \forkindep^{K}_{e}I$ is satisfied. We have $l(I,L)$ and $l(I,J)$, so $L \equiv^{L}_{I} J$ as sequences indexed on $\omega $, so $L \equiv_{Ie} J$ and $L \forkindep^{K}_{e}I$ whence $J \forkindep^{K}_{e}I$.

\justify
The rest follows : we have $J \forkindep^{K}_{Ie}b$, so by transitivity $J \forkindep^{K}_{e}Ib$.

\justify
We have that $I_{\geq i} \forkindep^{\mathcal{U}}_{J}I_{< i}$ for all $i<\omega $, so $I$ is total Kim-Morley over $J$ and is $Jb$-indiscernible. By \cref{chaintotal3} $I \forkindep^{K}_{J}b$, and by transitivity (with $I \forkindep^{K}_{e}J$) $I \forkindep^{K}_{e}b$. Then, clearly $c\forkindep^{K}_{e}b$.\end{proof}

\begin{theorem}\label{4.4wcb} The weak canonical base of the type of a total Kim-Morley sequence over a boundedly closed hyperimaginary exists. Namely, $\wcb(I/b) = bdd(e)$. Moreover if $\wcb(p)$ exists then $\wcb(p) \in bdd(e)$.
\end{theorem}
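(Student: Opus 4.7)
The plan is to verify that $bdd(e)$ satisfies the universal property of the weak canonical base of $\tp(I/b)$ by directly combining the two preceding results, \cref{wcb2} and \cref{wcb3}. First I would check $bdd(e)\in\mathcal{WB}$: it is boundedly closed by construction, so I only need $bdd(e)\in dcl^{heq}(b)$ and $I\forkindep^{K}_{bdd(e)}b$. For the former, since $b$ is boundedly closed and $I\forkindep^{K}_{b}b$ holds by existence for hyperimaginaries, applying \cref{wcb2}(2) with $a=b$ gives $e\in dcl^{heq}(b)$, whence $bdd(e)\in dcl^{heq}(b)$. For the latter, \cref{wcb3} gives $I\forkindep^{K}_{e}b$, and \cref{bondedkimmorley} upgrades this to $I\forkindep^{K}_{bdd(e)}b$.

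For the minimality, let $a\in\mathcal{WB}$, so that $a$ is boundedly closed, $a\in dcl^{heq}(b)$, and $I\forkindep^{K}_{a}b$. Applying \cref{wcb2}(2) directly yields $e\in dcl^{heq}(a)$; since $a$ is boundedly closed, every element of $bdd(e)$ lies in $bdd(a)=a$, hence $bdd(e)\in dcl^{heq}(a)$. This establishes $\wcb(I/b)=bdd(e)$. For the \emph{moreover} clause, take $p=\tp(c/b)$ and suppose $\wcb(p)$ exists: by \cref{wcb3} we have $c\forkindep^{K}_{e}b$ and hence $c\forkindep^{K}_{bdd(e)}b$ by \cref{bondedkimmorley}; combined with $bdd(e)\in dcl^{heq}(b)$ and bounded closedness, this shows that $bdd(e)$ lies in the class defining $\wcb(c/b)$, so the minimality of $\wcb(p)$ gives $\wcb(p)\in dcl^{heq}(bdd(e))$, i.e., $\wcb(p)\in bdd(e)$.

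I do not anticipate any serious obstacle: the arguments in \cref{wcb2} and \cref{wcb3} have already performed the conceptual work of showing that $e$ is definable over any candidate parameter and that $I$ (resp.\ $c$) is Kim-independent from $b$ over $e$, so the theorem is essentially the assembly of those facts via the universal property defining weak canonical bases. The only point requiring care is the initial application of \cref{wcb2}(2) establishing $e\in dcl^{heq}(b)$: one must verify that $b$ itself is admissible as the parameter $a$, which is immediate since the theorem's hypothesis demands $b$ boundedly closed and since existence for hyperimaginaries supplies $I\forkindep^{K}_{b}b$.
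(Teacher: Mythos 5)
Your proposal is correct and follows essentially the same route as the paper: existence plus \cref{wcb2}(2) applied with $a=b$ gives $e\in dcl^{heq}(b)$, \cref{wcb3} (together with \cref{bondedkimmorley}) gives $I\forkindep^{K}_{bdd(e)}b$, minimality comes from \cref{wcb2}(2) applied to an arbitrary boundedly closed $a\in dcl^{heq}(b)$ with $I\forkindep^{K}_{a}b$, and the ``moreover'' clause uses $c\forkindep^{K}_{bdd(e)}b$ from \cref{wcb3} exactly as in the paper. Your write-up is in fact slightly more careful than the paper's, making explicit where existence for hyperimaginaries and the bounded closedness of $a$ are used.
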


\justify
\begin{proof} $I \forkindep^{K}_{b}b$ so by \cref{wcb2} $e \in dcl^{heq}(b)$. By \cref{wcb3} $I \forkindep^{K}_{e}b$, so $I \forkindep^{K}_{bdd(e)}J$. If $a$ is boudedly closed such that $a\in dcl^{heq}(b)$ and $I \forkindep^{K}_{a}b$, by \cref{wcb2} $bdd(e)\in dcl^{heq}(a)$, so $\wcb(I/b)=bdd(e)$. For the second point, $c\forkindep^{K}_{bdd(e)}b$, so if $\wcb(p)$ exists then $\wcb(p) \in bdd(e)$.\end{proof}

\begin{remark}\label{remarkwcb} Let $p(x)=\tp(c/b)$, we define : \begin{center}
$\textbf{e}= \bigcap  \lbrace dcl^{heq}(\wcb(L/b))$ : $L$ is a total Kim-Morley sequence in $p \rbrace $.
\end{center}
\begin{enumerate}
\item[(1)] Assume that $g=\wcb(p)$ exists.
\begin{enumerate}

\item[(a)] $c \forkindep^{K}_{g}b$ holds and by \cref{2.7wcb} (and \cref{doublemorley}) there is $L_{0}= ( c_{i}$ : $i<\omega)$ with $c_{0}=c$ such that $L_{0}\forkindep^{K}_{g}b$ and $L_{0}$ is a total Kim-Morley sequence over both $g$ and $b$. By \cref{4.4wcb} $e_{0}:= \wcb(L_{0}/b)$ exists and $L_{0}\forkindep^{K}_{e_{0}}b$ so $c\forkindep^{K}_{e_{0}}b$ and $g \in  dcl^{heq}(e_{0})$. But since $L_{0}\forkindep^{K}_{g}b$ we have $ e_{0} \in dcl^{heq}(g)$ and $g \sim e_{0}$.

\item[(b)] Moreover we have $g \sim \textbf{e}$ : Due to (a) we have $\textbf{e}\in dcl^{heq}(g) $. Let $e_{L}=\wcb(L/b)$, we have $L \forkindep^{K}_{e_{L}}b$ so $c \forkindep^{K}_{e_{L}}b$, hence $g \in dcl^{heq}(e_{L})$, so $g \in dcl^{heq}(\textbf{e})$ and $g \sim \textbf{e}$.
\end{enumerate}

\item[(2)] Conversely if $c\forkindep^{K}_{\textbf{e}}b$ then $\textbf{e} = \wcb(p)$ : We have $\textbf{e} \sim bdd(\textbf{e}) \in dcl^{heq}(b)$.

Now assume that $c\forkindep^{K}_{a}b$ with $a \sim bdd(a)\in dcl^{heq}(b)$. Then by \cref{2.7wcb} and \cref{doublemorley} there is a total Kim-Morley sequence $L$ over both $a$ and $b$ such that $L\forkindep^{K}_{a}b$. Hence for $e_{L}=\wcb(L/b)$, we have that $\textbf{e} \in dcl^{heq}(e_{L}) \in dcl^{heq}(a)$.
\end{enumerate}\end{remark}

\begin{prop}\label{hypwcb1} The following are equivalent :

\begin{enumerate}
\item[(1)] For $p(x)=\tp(c/b)$, $\wcb(p)$ exists.
\item[(2)] There is a total Kim-Morley sequence $L = ( c_{i},i<\omega)$ over $b$ with $c_{0} = c$ such that for any $a \in dcl^{heq}(b)$ with  $c\forkindep^{K}_{a}b$, some $L'\equiv_{c}L$ with $L'\approx L $ is total Kim-Morley over $a$.
\end{enumerate}\end{prop}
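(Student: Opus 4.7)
The plan is to show that both conditions pin down $\wcb(p)$ as $bdd(e)$ where $e = L/\approx$, using \cref{4.4wcb} and \cref{wcb2} together with \cref{2.7wcbnew} as the base-change mechanism for total Kim-Morley sequences. Without loss of generality $b$ is boundedly closed (otherwise replace by $bdd(b)$), and for any $a \in dcl^{heq}(b)$ we may assume $a = bdd(a) \in \mathcal{WB}$, since a sequence is total Kim-Morley over $a$ iff it is total Kim-Morley over $bdd(a)$.

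For $(1) \Rightarrow (2)$, suppose $g = \wcb(p)$ exists and pick $L = (c_i : i < \omega)$ as in \cref{remarkwcb}(1)(a): total Kim-Morley over both $g$ and $b$ with $c_0 = c$ and $L \forkindep^{K}_{g} b$. Given boundedly closed $a \in dcl^{heq}(b)$ with $c \forkindep^{K}_{a} b$, we have $g \in dcl^{heq}(a)$, and monotonicity of Kim-independence on the right yields $L \forkindep^{K}_{g} a$, hence $c \forkindep^{K}_{g} a$. Apply \cref{2.7wcbnew} with base $g$ and with $a$ playing the role of $b$ to obtain $L' \equiv^{L}_{gc} L$ such that $L' \forkindep^{K}_{g} a$ and $L'$ is total Kim-Morley over $a$. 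Then $L' \equiv_c L$ is immediate; for $L' \approx L$ apply \cref{wcb2}(1) to the total Kim-Morley sequence $L$ with witnessing boundedly closed base $g \in dcl^{heq}(b)$: the hypothesis $L' \equiv_g L$ (inherited from $L' \equiv^{L}_{gc} L$) upgrades to $L' \approx L$.

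For $(2) \Rightarrow (1)$, set $e := L/\approx$; by \cref{4.4wcb}, $\wcb(L/b) = bdd(e)$, so $c \forkindep^{K}_{bdd(e)} b$ and $bdd(e) \in dcl^{heq}(b)$. We claim $\wcb(p) = bdd(e)$, so only minimality remains. Given a boundedly closed $a \in dcl^{heq}(b)$ with $c \forkindep^{K}_{a} b$, hypothesis (2) provides $L' \equiv_c L$ with $L' \approx L$ and $L'$ total Kim-Morley over $a$. Apply \cref{2.7wcbnew} with base $a$ and target $b$ to get $L'' \equiv^{L}_{ac} L'$ with $L'' \forkindep^{K}_{a} b$ and $L''$ total Kim-Morley over $b$. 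Now \cref{wcb2}(1), applied to the total Kim-Morley sequence $L''$ with witnessing base $a$ and hypothesis $L' \equiv_a L''$, yields $L' \approx L''$. Combining, $L \approx L' \approx L''$, so $L''/\approx = e$; finally \cref{wcb2}(2) applied to $L''$ places $L''/\approx = e$ in $dcl^{heq}(a)$, whence $bdd(e) \in dcl^{heq}(a)$.

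The main technical point is bridging the gap between Lascar-type equality over the chosen base (output by \cref{2.7wcbnew}) and $\approx$-equivalence of sequences: in both directions this is engineered by producing an intermediate sequence ($L'$ in one direction, $L''$ in the other) that is Kim-independent from $b$ over its boundedly closed base, which is precisely the setup in which \cref{wcb2}(1) converts $\equiv_a$ into $\approx$.
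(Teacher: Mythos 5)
Your proof is correct and follows essentially the same route as the paper: both directions run through \cref{remarkwcb}, \cref{2.7wcb}/\cref{2.7wcbnew}, \cref{wcb2} and \cref{4.4wcb}, with the identification of $\wcb(p)$ with $bdd(L/\approx)$ at the core, and your handling of $bdd$'s mirrors the paper's. The only (harmless) deviation is in $(2)\Rightarrow(1)$: the paper applies \cref{4.4wcb} over the competing base $a$ to form $\wcb(L'/a)$ directly, whereas you transfer $L'$ back to a sequence $L''$ total Kim-Morley over $b$ via \cref{2.7wcbnew} and then use \cref{wcb2}; both give $bdd(e)\in dcl^{heq}(a)$.
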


\justify
\begin{proof} $(1)\Rightarrow (2)$ : Let $g=\wcb(p)$. As in \cref{remarkwcb} (1), $g = e_{L}:= \wcb(L/b)$ where $L$ is a total Kim-Morley sequence over $g$ and $b$ such that $L\forkindep^{K}_{g}b$. Now if $c\forkindep^{K}_{a}b$, we have $g \in bdd(a)$ and $c\forkindep^{K}_{g}bdd(a)$. So by \cref{2.7wcb} there is $L'\equiv_{cg}L$ such that $L'$ is total Kim-Morley over both $a$ and $g$, then $g\sim e_{L}$ and $L'\equiv_{e_{L}}L$, so $L'\approx L$.

\justify
$(2)\Rightarrow (1)$ :  Let $g=\wcb(L/b)$ with $L$ a total Kim-Morley sequence in $p$ satisfying $(2)$. So $g \in dcl^{heq}(b)$ and $c\forkindep^{K}_{g}b$. Let $a \sim  bdd(a) \in dcl^{heq}(b)$ such that $c\forkindep^{K}_{a}b$, so by hypothesis there is $L'\equiv_{c}L$ such that $L'\approx L $  and $L'$ is total Kim-Morley over $a$. Let $g'=\wcb(L'/a) \in dcl^{heq}(a)$. Now $L'/\approx$ $ = L/\approx $ is interbounded with $g'$, so $g$ and $g'$ are interbounded and both boundedly closed, so $g' \sim g \in dcl^{heq}(a)$ and $g = \wcb(p)$.\end{proof}

\justify
The following proposition is a translation of \cite[Proposition 4.7]{kim2021weak} in the context of hyperimaginaries.

\begin{prop}\label{4.7wcb} We assume that T has weak canonical bases. Let $c\in \mathbb{M}^{heq}$ be interdefinable with $C$ a set of hyperimaginaries. Then :
\begin{enumerate}
\item[(1)] $\wcb(c/b) = bdd(\wcb(c'/b)$ : $c'$ finite tuples from $C$)
\item[(2)] For some $a$ boundedly closed in $dcl^{heq}(c)$, if $\wcb(b/ac') \in dcl^{heq}(a)$ for every finite tuple $c' \in C$, then $\wcb(b/c) \in dcl^{heq}(a)$.
\end{enumerate}\end{prop}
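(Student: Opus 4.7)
\justify
The plan is to use the same template for both parts: build a natural candidate $h$ from $\wcb$'s of finite pieces and verify that $h$ is interbounded with the desired weak canonical base via the characterization of $\wcb(p)$ as the smallest boundedly closed element of $\mathcal{WB}$ (\cref{remarkwcb}(2)). For part (1), write $g = \wcb(c/b)$, set $g_{c'} = \wcb(c'/b)$ for each finite tuple $c'$ from $C$, and put $h := bdd(g_{c'}$ : $c'$ finite tuple from $C)$. The containment $h \in dcl^{heq}(g)$ is easy: for each $c' \in dcl^{heq}(c)$, left monotonicity of Kim-independence promotes $c \forkindep^{K}_{g} b$ to $c' \forkindep^{K}_{g} b$, placing $g$ in $\mathcal{WB}$ for $\tp(c'/b)$ and hence $g_{c'} \in dcl^{heq}(g)$ by minimality; bounded closure of $g$ finishes this direction. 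For the converse $g \in dcl^{heq}(h)$, it suffices to establish $c \forkindep^{K}_{h} b$ and appeal to \cref{remarkwcb}(2). By finite character of Kim-forking together with the interdefinability of $c$ with $C$, this reduces to $c' \forkindep^{K}_{h} b$ for each finite $c'$ from $C$, with starting data $c' \forkindep^{K}_{g_{c'}} b$ where $g_{c'} \subseteq h \subseteq dcl^{heq}(b)$.

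\justify
The upgrade from $c' \forkindep^{K}_{g_{c'}} b$ to $c' \forkindep^{K}_{h} b$ is the delicate step. Using \cref{2.7wcb} and \cref{doublemorley}, choose a total Kim-Morley sequence $L = (c'_{i}$ : $i<\omega)$ over $g_{c'}$ with $c'_{0} = c'$ that is also total Kim-Morley over $b$ and satisfies $L \forkindep^{K}_{g_{c'}} b$. Since $h \in dcl^{heq}(b)$, the sequence $L$ is $h$-indiscernible, and right monotonicity upgrades $L \forkindep^{K}_{g_{c'}} b$ to $L \forkindep^{K}_{g_{c'}} h$. Combining the Kim-Morley chain $c'_{\geq i} \forkindep^{K}_{g_{c'}} c'_{<i}$ with $L \forkindep^{K}_{g_{c'}} h$ via transitivity (\cref{transitkim}) promotes $L$ to a total Kim-Morley sequence over $h$. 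Since $L$ is then total Kim-Morley over $h$ and $hb$-indiscernible, \cref{chaintotal3} yields $b \forkindep^{K}_{h} L$, and hence $c' \forkindep^{K}_{h} b$ by symmetry and finite character. Part (2) applies the same template with $g := \wcb(b/c)$, $g'_{c'} := \wcb(b/ac')$, and $h := bdd(g'_{c'}$ : $c'$ finite tuple from $C)$: the hypothesis and bounded closure of $a$ give $h \in dcl^{heq}(a)$, and the target $g \in dcl^{heq}(a)$ follows from $b \forkindep^{K}_{h} c$ via \cref{remarkwcb}(2). Finite character combined with $a \in dcl^{heq}(c)$ reduces the latter to $b \forkindep^{K}_{h} ac'$ for finite $c'$ from $C$; starting from $b \forkindep^{K}_{g'_{c'}} ac'$ (with $g'_{c'} \subseteq h \subseteq dcl^{heq}(ac')$), one runs the identical total Kim-Morley sequence upgrade.

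\justify
The main obstacle is precisely this base upgrade: passing from Kim-independence over a smaller boundedly closed base to Kim-independence over a larger one when both bases lie inside $dcl^{heq}$ of the right-hand side but neither is boundedly contained in the other. Kim-independence in NSOP1 does not enjoy the unconditional base monotonicity that forking enjoys in simple theories, so the argument cannot rest on a bare monotonicity principle; one must construct an explicit total Kim-Morley sequence simultaneously adapted to both bases (the role of \cref{2.7wcb}) and combine the chain condition (\cref{chaincondhyp}), transitivity (\cref{transitkim}), and the $\approx$-equivalence machinery developed in Subsection 3.2 to yield the desired Kim-Morley structure over the larger base $h$.
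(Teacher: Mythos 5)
Your overall skeleton (easy direction by minimality, reduction of the hard direction to finite tuples by symmetry and finite character, then an upgrade of the base from $\wcb$ of a finite tuple to the bounded closure of all of them) is the same as the paper's, and your endgame tools (chain condition, witnessing/\cref{chaintotal3}) are the right ones. But the pivotal step is wrong as written: from ``$L$ is total Kim-Morley over $g_{c'}$ and $L\forkindep^{K}_{g_{c'}}h$'' you cannot conclude via \cref{transitkim} that $L$ is (total) Kim-Morley over $h$. Transitivity has the shape $a\forkindep^{K}_{e}b$, $a\forkindep^{K}_{b}c$ $\Rightarrow$ $a\forkindep^{K}_{e}bc$ with $e\in dcl^{heq}(b)$: it enlarges the right-hand side over the \emph{smaller} base and never raises the base. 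What you actually need, $c'_{>i}\forkindep^{K}_{g_{c'}}c'_{\leq i}h$ implying $c'_{>i}\forkindep^{K}_{h}c'_{\leq i}$, is exactly an instance of base monotonicity (over a Kim-independent extension of the base), which fails for $\forkindep^{K}$ in NSOP1 theories --- as you yourself note in your last paragraph; for instance, in $T^{*}_{feq}$ one can have a total Kim-Morley sequence of objects over $M$, all lying in one $E_{p}$-class for a parameter $p$ with $L\forkindep^{K}_{M}p$, and the sequence is not even Kim-Morley over $Mp$. So the application of \cref{chaintotal3} over $h$ has no basis and the argument does not go through; the extra features of your configuration ($L$ also Kim-Morley over $b$, $h\in dcl^{heq}(b)$) are not used and would require a genuine new argument to exploit.

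The paper circumvents precisely this point by never changing the base of an already-built sequence: it applies \cref{doublemorley} with the \emph{large} base $W$ (the tuple of all $w^{j}=\wcb(c^{j}/b)$) on the right-hand side, which is legitimate because $c^{k}\forkindep^{K}_{w^{k}}b$ and $W\in dcl^{heq}(b)$ give $c^{k}\forkindep^{K}_{w^{k}}W$ by monotonicity on the right, and $w^{k}\in dcl^{heq}(W)$. This produces a sequence $(c^{k}_{i})$ starting at $c^{k}$ that is total Kim-Morley over $w^{k}$ and Kim-Morley over $W$ \emph{by construction}. Then the chain condition \cref{chaincondhyp} over $w^{k}$ (using $b\forkindep^{K}_{w^{k}}c^{k}$) yields $b'\equiv_{w^{k}c^{k}_{0}}b$ with the sequence $b'w^{k}$-indiscernible, so $\lbrace p(x,w^{k}c^{k}_{i})\, :\, i<\omega\rbrace$ is consistent for $p(x,w^{k}c^{k}_{0})=\tp(b/w^{k}c^{k}_{0})$, and \cref{witnessinghyp2} over $W$ gives that this type does not Kim-fork over $W$; note that plain Kim-Morleyness over $W$ suffices here, you never need total Kim-Morleyness over $h$. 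The same swap (sequence built over the big base directly, then chain condition and witnessing) is what repairs your part (2); also, the $\approx$-machinery you invoke plays no role in this proposition.
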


\justify
\begin{proof} (1) : Let $U:= \wcb(c/b)(\in dcl^{heq}(b))$, and let $V= bdd(\wcb(c'/b)$ : $c'$ finite tuples from $C$).

\justify
Let $(c^{j}$ : $j \in F)$ be the collection of all finite tuples from $C$, and let $W=(w^{j}$ : $j \in F)$ where $w^{j}=\wcb(c^{j}/b)$. Hence $V=bdd(W)$. $c\forkindep^{K}_{U}b$, so $c^{j}\forkindep^{K}_{U}b$ for all $j\in F$ and $w^{j} \in dcl^{heq}(U)$, so $V \in dcl^{heq}(U)$.

\justify
To show that $U \in dcl^{heq}(V)$ it is enough to show that $c\forkindep^{K}_{W}b$ (since then $c\forkindep^{K}_{bdd(W)}b$). By local character and symmetry is is enough to show that for every $k\in F$ $\tp(b/c^{k}w^{k})$ does not Kim-fork over $W$. We have that $c^{k}\forkindep^{K}_{w^{k}}b$, so $c^{k}\forkindep^{K}_{w^{k}}W$. By applying \cref{doublemorley} there is a total Kim-Morley sequence $(c^{i}_{k}$ : $i<\omega )$ over $w^{k}$ that is also Kim-Morley over $W$. Then since $c^{k}\forkindep^{K}_{w^{k}}b$ by \cref{chaincondhyp} there is $b'\equiv_{w^{k}c^{k}_{0}}b$ such that $(c^{i}_{k}$ : $i<\omega)$ is $b'w^{k}$-indiscernible. Let $p = \tp(b'w^{k}c^{k}_{0})=\tp(bw^{k}c^{k}_{0})$, $\lbrace p(x,w^{k}c^{k}_{i})$ : $ i< \omega \rbrace$ is consistent because $b'$ satisfies it, so by \cref{witnessinghyp2} $p$ does not Kim-fork over $W$.

\justify
(2) : Let $(c_{j}$, $j\in F)$ be as in (1). Let $R=(r^{j}$, $j\in P)\in dcl^{heq}(a)$ with $r^{j}=\wcb(b/ac^{j})$. It is enough to show that $b\forkindep^{K}_{R}c$ since then $\wcb(b/c) \in dcl^{heq}(R)$, and by hypothesis $R \in dcl^{heq}(a)$. As in (1) by local character it is enough to show that $p(x,r^{k}b) = \tp(c^{k}/r^{k}b)$ does not Kim-fork over $R$ for any $k\in F$. 

\justify
We have that $b\forkindep^{K}_{r^{k}}ac^{k} $, so $b\forkindep^{K}_{r^{k}}R $. By \cref{doublemorley} there is a sequence $(b_{j}$ : $j< \omega)$ with $b_{0}=b$ that is total Kim-Morley over $r^{k}$ and Kim-Morley over $R$. Then since $b\forkindep^{K}_{r^{k}}ac^{k}$ by \cref{chaincondhyp} there is $c'^{k}\equiv_{r^{k}b}c^{k}$ such that $(b_{j}$ : $i<\omega)$ is $c'^{k}w^{k}$-indiscernible, so in particular $c'^{k}\models \lbrace p(x,r^{k},b_{i})$ : $i<\omega\rbrace $, and we conclude by \cref{witnessinghyp2}. \end{proof}

\bibliographystyle{plain}
\bibliography{References.bib}

\end{document}